\documentclass[10pt,a4paper,reqno]{amsart} 

\usepackage{amsmath}
\usepackage{mathtools}
\usepackage{amssymb}
\usepackage{graphicx}
\usepackage{color}
\usepackage{latexsym}
\usepackage[T1]{fontenc}


\newcommand{\ns}{{\mathbb N}} 
\newcommand{\zs}{{\mathbb Z}} 
\newcommand{\qs}{{\mathbb Q}}  
\newcommand{\cs}{{\mathbb C}} 

\newcommand{\al}{\alpha}
\newcommand{\be}{\beta}

\newcommand{\eps}{\epsilon}

\newcommand{\bx}{\bar x}
\newcommand{\by}{\bar y}


\newcommand{\GK}{\mathbb{K}}


\DeclareMathOperator{\Pol}{Pol}
\DeclareMathOperator{\CT}{CT}
\DeclareMathOperator{\LHS}{LHS}
\DeclareMathOperator{\RHS}{RHS}
\DeclareMathOperator{\dig}{dig}
\DeclareMathOperator{\ddig}{ddig}
\DeclareMathOperator{\df}{df}
\DeclareMathOperator{\dv}{dv}
\DeclareMathOperator{\cc}{c}
\DeclareMathOperator{\vv}{v}
\DeclareMathOperator{\ff}{f}
\DeclareMathOperator{\ee}{e}

\DeclareMathOperator{\Tpol}{T}
\DeclareMathOperator{\Ppol}{P}

\newcommand{\mM}{\mathcal{M}}
\newcommand{\gM}{M}
\newcommand{\gtM}{\tilde{M}}
\newcommand{\mN}{\mathcal{N}}
\newcommand{\gN}{N}

\newcommand{\mR}{\mathcal{R}}
\newcommand{\gR}{R}
\newcommand{\mS}{\mathcal{S}}
\newcommand{\gS}{S}
\newcommand{\mT}{\mathcal{T}}
\newcommand{\gT}{T}

\newcommand{\R}{R}

\newcommand{\mQ}{\mathcal{Q}}
\newcommand{\vQ}{\vec{Q}}
\newcommand{\vR}{\vec{R}}
\newcommand{\vmQ}{\vec{\mQ}}

\newcommand{\gQ}{Q}

\newcommand{\gtQ}{\tilde{Q}}

\newcommand{\titre}[1]{\noindent\textbf{#1} }

%


\newcommand{\cD}{\mathcal D}
\newcommand{\cI}{\mathcal I}


\newcommand{\parfrac}[2]{\left( \frac{#1}{#2} \right)}

%
\newtheorem{Theorem}{Theorem}
\newtheorem{Proposition}[Theorem]{Proposition}
\newtheorem{Claim}[Theorem]{Claim}
\newtheorem{Corollary}[Theorem]{Corollary}
\newtheorem{Conjecture}[Theorem]{Conjecture}

\newtheorem{Lemma}[Theorem]{Lemma}



\newcommand{\beq}{\begin{equation}}
\newcommand{\eeq}{\end{equation}}

\newcommand{\gf}{generating function}
\newcommand{\gfs}{generating functions}
\newcommand{\fps}{formal power series}

\def\emm#1,{{\em #1}}




\newcommand{\J}{J}
\newcommand{\JI}{H}
\newcommand{\ji}{h}


\catcode`\@=11
\def\section{\@startsection{section}{1}%
 \z@{.7\linespacing\@plus\linespacing}{.5\linespacing}%
 {\normalfont\bfseries\scshape\centering}}

\def\subsection{\@startsection{subsection}{2}%
  \z@{.5\linespacing\@plus\linespacing}{.5\linespacing}%
  {\normalfont\bfseries\scshape}}

\def\subsubsection{\@startsection{subsubsection}{3}%
 \z@{.5\linespacing\@plus\linespacing}{-.5em}
  {\normalfont\bfseries\itshape}}
\catcode`\@=12

%
\addtolength{\textheight}{-1mm} \topmargin5mm
\addtolength{\textwidth}{20mm} 
\hoffset -6mm

\def\qed{$\hfill{\vrule height 3pt width 5pt depth 2pt}$}


%



\begin{document}
\title
[Counting colored planar maps: algebraicity results]
{Counting colored planar maps: algebraicity results}

\author[O. Bernardi]{Olivier Bernardi}
\address{O. Bernardi: CNRS, Laboratoire de Math\'ematiques,
B\^at. 425, Universit\'e Paris-Sud,
91405 Orsay Cedex, France}
\email{olivier.bernardi@math.u-psud.fr}

\author[M. Bousquet-M\'elou]{Mireille Bousquet-M\'elou}
\address{M. Bousquet-M\'elou: CNRS, LaBRI, Universit\'e Bordeaux 1, 
351 cours de la Lib\'eration, 33405 Talence, France}
\email{mireille.bousquet@labri.fr}

\thanks{Both authors were supported by  the French ``Agence Nationale
de la Recherche'', projects SADA ANR-05-BLAN-0372 and A3 ANR-08-BLAN-0190.}

\keywords{Enumeration -- Colored planar maps -- Tutte polynomial --
  Algebraic generating functions}
\subjclass[2000]{05A15, 05C30, 05C31}

\begin{abstract}
We address the enumeration of properly $q$-colored planar maps, or
more precisely, the enumeration of rooted planar maps $M$ weighted by their
chromatic polynomial $\chi_M(q)$ and counted by the number of
vertices and faces. We prove that the associated \gf\ is algebraic 
when $q\not=0,4$ is of the form $2+2\cos (j\pi/m)$, for integers $j$ and $m$.
This includes the two integer values $q=2$ and $q=3$. We extend this
to planar maps weighted by their Potts polynomial $\Ppol_M(q,\nu)$, which
counts all $q$-colorings  (proper or not) by the number of
monochromatic edges. We then 
prove similar results for planar triangulations, thus generalizing
some results of Tutte which dealt with their proper $q$-colorings. In statistical
physics terms, the problem we study consists in solving the Potts model on random planar
lattices. From  a technical viewpoint, this means solving non-linear
equations with two ``catalytic'' variables. To our knowledge, this is the
first time such equations are being solved since Tutte's remarkable
solution of  properly $q$-colored triangulations. 
\end{abstract}

\date{18 November 2010}
\maketitle


\section{Introduction}
In 1973, Tutte began his enumerative study of colored triangulations
by publishing the following functional equation~\cite[Eq.~(13)]{lambda12}: 
\begin{multline}\label{eq-Tutte}
\gT(x,y)=xy^2q(q-1)+\frac{xz}{yq}\gT(1,y)\gT(x,y)
+xz\frac{\gT(x,y)-y^2\gT_2(x)}{y}-x^2yz\frac{\gT(x,y)-\gT(1,y)}{x-1},
\end{multline}
where $T_2(x)$ stands for $\frac 1 2 \frac{\partial ^2T}{\partial
  y^2}(x,0)$
(in other words, $T_2(x)$ is the coefficient of $y^2$ in $T(x,y)$).
This equation defines a  unique \fps\ 
in $z$, denoted $\gT(x,y)$, which has 
polynomial coefficients in $q$, $x$ and $y$.
 Tutte's interest in this series relied on the fact
that it ``contains'' the \gf\ of properly $q$-colored 
triangulations of the sphere 
(Figure~\ref{fig:triang-coloree}).
More precisely,  the coefficient of
$y^2$ in $\gT(1,y)$ is
$$
\gT_2(1)= [y^2]\gT(1,y)
= q(q-1)+\sum_{T}z^{\ff(T)}{\chi_T(q)},
$$
where the sum runs over all rooted triangulations of the sphere,
$\chi_T$ is the chromatic polynomial of $T$, and 
$\ff(T)$  the number of faces of $T$.

\begin{figure}[ht!]\begin{center} \input{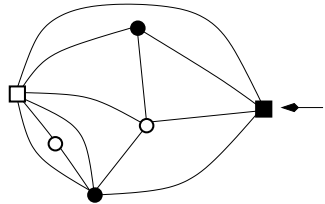}\caption{A (rooted) triangulation of the sphere, properly colored with 4 colors.}\label{fig:triang-coloree} \end{center}\end{figure}

In the ten years that followed, Tutte devoted at least nine papers
to the study of this
equation~\cite{lambda12,lambda3,lambda-tau,tutteIV,tutteV,tutte-pair,tutte-chromatic-sols,tutte-chromatic-solsII,tutte-differential}. His
work culminated in 1982, when he proved that the series $T_2(1)$
counting $q$-colored triangulations
satisfies a  non-linear differential
equation~\cite{tutte-chromatic-solsII,tutte-differential}. More
precisely, with $t=z^2$ and $H\equiv H(t)= t^2T_2(1)$,
\beq\label{Tutte-ED}
2q^2(1-q)t +(qt+10H-6tH')H''+q(4-q)(20H-18tH'+9t^2H'')=0.
\eeq

This \emm tour de force, has remained isolated since then, and it is
our objective to reach a better understanding of Tutte's rather
formidable approach, 
and to apply it to other problems in the enumeration of colored planar
maps. We recall here that a planar map is a connected planar graph
properly embedded in the sphere. More  definitions will be
given later.

\medskip

We focus in this paper on two main families of  maps: general planar
maps, and planar triangulations. We generalize Tutte's problem by counting
\emm all, colorings of these maps (proper and non-proper), assigning a
weight $\nu$ to every monochromatic edge. Thus a  typical series we 
consider is
\beq\label{M-def-simple}
\sum_{M} t^{\ee(M)}w^{\vv(M)}z^{\ff(M)}{\Ppol_M(q,\nu)},
\eeq
where $M$ runs over a given set of planar maps (general maps or
triangulations, for instance), $\ee(M)$, $\vv(M)$, $\ff(M)$
  respectively denote the number of edges, vertices and faces of $M$, and
$$
\Ppol_M(q,\nu)=\sum_{c: V(M) \rightarrow \{1, \ldots, q\}} \nu^{m(c)}
$$
counts all colorings of the vertices of $M$ in $q$ colors, weighted by
the number $m(c)$ of monochromatic edges.
As explained in Section~\ref{sec:def}, $\Ppol_M(q,\nu)$ is actually a
\emm polynomial, in $q$ and $\nu$ called, in statistical physics,
the \emm partition  function of the Potts model, on $M$. Up to a change of
variables, it coincides with the so-called \emm Tutte polynomial, of $M$.
Note that $\Ppol_M(q,0)$ is the chromatic polynomial of $M$.

In this paper, we  climb Tutte's scaffolding halfway. Indeed, one
key step in his solution of~\eqref{eq-Tutte} is to prove that, when
$q\not = 0,4$ is of the form  
\beq\label{q-form}
q=2+2\cos \frac{j\pi}m,
\eeq
for  integers $j$ and $m$, 
then $\gT_2(1)$ is an \emm
algebraic series,, that is, satisfies a polynomial
equation\footnote{Strictly speaking,
Tutte only proved this for certain values of $j$ and $m$. Odlyzko and Richmond~\cite{odlyzko-richmond}
 proved later that his work implies   algebraicity
for $j=2$. 
We prove here that it holds for all $j$ and $m$, except
those that yield the extreme values $q=0,4$.  For $q=0$ the
polynomials $\Ppol_M(q,\nu)$ vanish, but we actually weight our maps
by $\Ppol_M(q,\nu)/q$, which gives sense to the restriction $q\not = 0$.}
$$
P_q(\gT_2(1),z)=0
$$
for some polynomial $P_q$ that depends on $q$.  Numbers of the
form~\eqref{q-form} 
generalize  Beraha's numbers (obtained for $j=2$), 
which occur frequently in connection with  chromatic properties
 of planar graphs~\cite{beraha-kahane-weiss,fendley-chromatic,jacobsen-richard-salas,jacobsen-salas,martin,saleur}. Our main result is that \emph{the series defined by~\eqref{M-def-simple}  
is also algebraic for these values of $q$}, whether the sum runs over all planar
maps (Theorem~\ref{thm:alg-planaires}), non-separable planar maps
(Corollary~\ref{coro:non-sep}), or planar triangulations
(Theorem~\ref{thm:alg-triang}). 
These series are \emm not, algebraic for a generic value of $q$. In a forthcoming paper, we will 
establish the counterpart of~\eqref{Tutte-ED}, in the form of (a system of)
differential equations for these series, valid for all $q$. 

\medskip
Hence this paper generalizes in two directions the series of papers
devoted by Tutte to~\eqref{eq-Tutte}, which he then revisited in his
1995 survey~\cite{tutte-chromatic-revisited}: firstly, because we include
  non-proper colorings, and secondly, because we study
  two classes of  planar maps (general/triangulations), the
 second being more  complicated than the first. 
We provide in Sections~\ref{sec:two}
and~\ref{sec:three} explicit 
results (and a conjecture) for families of 2- and 3-colored maps. Some of them
have an attractive form, and should stimulate the research of
alternative proofs based on trees, in the spirit of what has been done
in the past 15 years for uncolored maps~(see for instance \cite{Sch97,BDG-planaires,mbm-schaeffer-ising,bouttier-mobiles,chassaing-schaeffer,fusy-dissections,chapuy,bernardi-fusy}). Finally, our results
constitute a springboard for the general solution (for a generic value
of $q$), in preparation.

The functional equations we start from are established in
Section~\ref{sec:eq-func} (Propositions~\ref{prop:eq-M}
and~\ref{prop:eq-Q}). As~\eqref{eq-Tutte}, they involve two \emm
catalytic, variables $x$ and $y$. Much progress has been made in the
past few years on the solution of \emm linear, equations of this
type~\cite{bousquet-motifs,Bous05,Mishna-Rechni,mbm-mishna}, but those that govern the enumeration of colored maps
are non-linear. In fact, Equation~\eqref{eq-Tutte} is so far, to our knowledge,  the only
instance of such an equation that has ever been solved. Our main two algebraicity
results are stated in Theorems~\ref{thm:alg-planaires}
and~\ref{thm:alg-triang}. 
In  Section~\ref{sec:example} below,
we describe on  a simple example (2-colored planar maps)
the steps that yield from an equation to an algebraicity theorem. 
It is then easier to give a more detailed outline of the
paper (Section~\ref{sec:outline}). 
Roughly speaking, the general idea  is to construct, for values of $q$
of the form~\eqref{q-form}, an equation with only \emm one, catalytic
variable satisfied by a relevant specialization of the main series
(like $T(1,y)$ in the problem studied by Tutte). For instance, we
 derive in Section~\ref{sec:example} the simple
 equation~\eqref{M-eq-y-20} from the more complicated
 one~\eqref{M-eq-xy-20}. 
One then applies a general algebraicity theorem (Section~\ref{sec:alg}), according to which
solutions of such equations are always algebraic.

\medskip
Most calculations were done using Maple: several Maple sessions
accompanying this paper are available on the second author's web page
(next to this paper in the publication list).

\medskip
To conclude this introduction, let us mention that the problems we
study here have also attracted attention in theoretical physics, and
more precisely in the study of models for 2-dimensional \emm quantum
gravity,. 
In particular, our results on triangulations share at least a common
flavour with a paper  by Bonnet and Eynard~\cite{eynard-bonnet-potts}.
Let us briefly describe their approach.
The solution of the Potts model on triangulations can be expressed
fairly easily in terms of a matrix integral. Starting from this
formulation, Daul and then
Zinn-Justin~\cite{daul,zinn-justin-dilute-potts} used a saddle point
approach to obtain certain \emm critical exponents,. 
Bonnet and Eynard went further using the \emph{equation of
  motion} method~\cite{eynard-bonnet-potts}. First, they derived from
the integral formulation 
a (pair of) polynomial equations with two catalytic variables
(the so-called \emph{loop-equations})\footnote{These equations
  differ from the functional equation~\eqref{eq:Q} we establish
  for the same problem. But they 
are  of a similar nature, and we actually believe that our method
   applies to them as well.}.
 From there, they postulated the
existence of a change of variables which 
transforms the loop-equations into an equation 
occurring in another classical model, the $O(n)$ model.
The results
of~\cite{Eynard-Kristjansen:On-model1,Eynard-Kristjansen:On-model2,Eynard-ZinnJustin:On-model}
on  the $O(n)$ model
then translate into results on the Potts model. In particular, when
the parameter $q$ of the Potts model is of the form $q=2+2\cos
\frac{j\pi}m$, Bonnet and Eynard obtain an equation  with one catalytic variable
\cite[Eq.~5.4]{eynard-bonnet-potts} which may correspond to our
equation~\eqref{eq-inv-triang}.

\section{A glimpse at our approach: properly 2-colored planar maps}
\label{sec:example}
The aim of this paper is to prove that, for certain values of $q$ (the
number of  colors), the \gf\ of $q$-colored planar maps, and of $q$-colored
triangulations, is algebraic. 
Our starting point will be the functional equations of
Propositions~\ref{prop:eq-M} and~\ref{prop:eq-Q}. In order to
illustrate our approach, we treat here  the case
of  properly $2$-colored planar maps counted by edges. 
It will follow from Proposition~\ref{prop:eq-M} that this means
solving the following equation: 
\begin{multline}
  \label{M-eq-xy-20}
M ( x,y ) =
1+xyt ( y+1 ) M ( x,y ) M ( 1,y ) -xytM ( x,y ) M ( x,1 ) 
\\-{\frac {txy
 ( xM ( x,y ) -M ( 1,y )  ) }{x-1}}+{
\frac {txy ( yM ( x,y ) -M ( x,1 ) 
 ) }{y-1}}.
\end{multline}
Here,
$$
M(x,y):= \frac 1 2 \sum_M t^{\ee(M)} x^{\dv(M)} y^{\df(M)} \chi_M(2)
$$
counts planar maps $M$, weighted by their chromatic polynomial
$\chi_M(q)$ at $q=2$,  by the number $\ee(M)$ of edges and
by the degrees
$\dv(M)$ and $\df(M)$ of the root-vertex and root-face (the precise
definitions of these statistics are not important for the moment). We are especially
interested in the specialization
$$
M(1,1)= \frac 1 2 \sum_M t^{\ee(M)} \chi_M(2).
$$
However, there is no obvious way to derive from~\eqref{M-eq-xy-20} an
equation for $M(1,1)$, or even for $M(x,1)$ or $M(1,y)$.
Still,~\eqref{M-eq-xy-20} allows us to determine, by
induction on $n$, the coefficient of $t^n$ in $M(x,y)$.
The variables
$x$ and $y$ are said to be \emm catalytic,. 

We can  see some readers frowning: there is a much simpler way
to approach this enumeration problem! Indeed, a planar map has a proper
2-coloring if and only if it is bipartite, and
every bipartite map admits exactly two proper
2-colorings. Thus $M(1,1)$ is simply  the \gf\ 
of  bipartite planar
maps, counted by edges.
But one has known for  decades how to find this series: a
recursive description of bipartite maps based on the deletion of the
root-edge easily gives:
\beq\label{M-eq-y-20}
M(y)=1+ ty^2 M(y)^2 +ty^2\, \frac{M(y)-M(1)}{y^2-1}
\eeq
where $M(y)\equiv M(1,y)$. This equation has only \emm one, catalytic variable, namely $y$, 
and can be solved using the quadratic
method~\cite[Section~2.9]{goulden-jackson}. In particular, $M(1)\equiv
M(1,1)$ is found to be algebraic:  
$$
M(1,1)=\frac {(1-8t)^{3/2}-1+12t+8\,{t}^{2}}
{32 {t}^{2}}.
$$

What our method precisely does is to \emm reduce  the number of catalytic
variables from two to one,: once this is done, a general algebraicity theorem
(Section~\ref{sec:alg}),
which states that all series satisfying a (proper) equation with one catalytic
variable are algebraic, allows us to conclude.
In  the above example, our approach derives the simple
equation~\eqref{M-eq-y-20} from the 
more difficult equation~\eqref{M-eq-xy-20}. We now detail the 
steps of this derivation.

\subsection{The kernel of the equation, and its roots}
\label{sec:kernel-20}
The functional equation~\eqref{M-eq-xy-20} is linear in $M(x,y)$
(though not globally in $M$, because of quadratic terms like
$M(x,y)M(1,y)$). 
It reads
\beq\label{eq-func-ex}
K(x,y) M(x,y)= R(x,y),
\eeq
where the \emm kernel, $K(x,y)$ is
$$
K(x,y)=1+{\frac {{x}^{2}yt}{x-1}}-{\frac {x{y}^{2}t}{y-1}}
 +xyt M ( x,1 ) -xyt \left( y+1\right) M ( 1,y ) , 
$$
and the right-hand side $R(x,y)$ is:
$$
R(x,y)=1+{\frac {xyt M ( 1,y ) }{x-1}}-{\frac {xytM ( x,1) }{y-1}}.
$$
 Following the principles of the \emm kernel
method,~\cite{hexacephale,banderier-flajolet,bousquet-petkovsek-1,prodinger},
we are  interested in the existence of series $Y\equiv Y(t;x)$ that 
 cancel the kernel. We seek solutions $Y$ in the space of 
 formal power series in $t$ with coefficients in $\qs(x)$
(the field of fractions in $x$). 
The equation $K(x,Y)=0$ can be rewritten
$$
Y-1=
tY\left( 
{ {x{Y}}}-{\frac {{x}^{2}(Y-1)}{x-1}}
 -x (Y-1) M (x,1 ) +x(Y^2-1)  M ( 1,Y )
\right) .
$$
This shows that there exists a unique power series solution $Y(t;x)$ (the
coefficient of $t^n$ in $Y$ can be determined by induction on $n$, 
once the expansion of $M(x,y)$ is known at order ${n-1}$).
However, 
the term having denominator $x-1$ suggests that we will find more
solutions if we  set $x=1+st$, 
with $s$ an indeterminate, and  look for
$Y(t;s)$ in the space of formal power series in $t$ with coefficients in
$\qs(s)$. Indeed, the equation $K(x,Y)=0$ now reads (with $x=1+st$):
$$
 ( Y-1 )(1+Y/s)
=tY\Big(
x {Y}- ( 1+x ) ( Y-1 ) 
-x   ( Y-1 ) M (x,1 ) 
+ x ( Y^2-1 )  M ( 1,Y )  
\Big),
$$
which shows that there exist \emm two series, $Y_1(t;s)$ and
$Y_2(t;s)$ that cancel the kernel for this choice of $x$. One of them
has constant term 1, the other has constant term $-s$. Again, the
coefficient of $t^n$ can be determined inductively. Here are the
first few terms of $Y_1$ and $Y_2$:
\begin{eqnarray*}
 Y_1&=&
1+{\frac {s}{1+s}}t+{\frac {{s}^{2} ( 1+3\,s+{s}^{2} ) }{
 ( 1+s ) ^{3}}}{t}^{2}+O ( {t}^{3} ) \\
Y_2&=&-s+{
\frac {{s}^{2} ( 2+2\,s+{s}^{2} ) }{1+s}}t-{\frac {{s}^{2}
 ( -1+7\,{s}^{2}+17\,{s}^{3}+15\,{s}^{4}+6\,{s}^{5}+{s}^{6}
 ) }{ ( 1+s ) ^{3}}}{t}^{2}+O ( {t}^{3} ). 
\end{eqnarray*}
Replacing $y$ by $Y_i$ in the
 functional equation~\eqref{eq-func-ex} gives  $R(x,Y_i)=0$. We thus
 have four   equations,
 \beq\label{syst1}
K(x,Y_1)=R(x,Y_1)=K(x,Y_2)=R(x,Y_2)=0,
\eeq
that relate $Y_1$, $Y_2$,
 $M(1,Y_1)$, $M(1,Y_2)$, $x$ and $M(x,1)$.

\subsection{Invariants}
\label{sec:inv-20}
We  now eliminate from the system~\eqref{syst1} the series $M(x,1)$
and the indeterminate $x$
to obtain two equations relating $Y_1$, $Y_2$,  $M(1,Y_1)$ and
$M(1,Y_2)$. 
 This elimination is performed in Section~\ref{sec:inv-maps} for a
 general value of $q$. So let us
just give the pair of equations we obtain. The first one is:
$$
2t\,Y_1\,M ( 1,Y_1 ) -2t\,Y_2\,M ( 1,Y_2) =
-{\frac { (1 -Y_1-Y_2+(1-t)Y_1\,Y_2 )  ( Y_1-Y_2 ) }{
Y_1\,Y_2 ( Y_1-1 ) ( Y_2-1 )  }}
$$
or equivalently,
$$
I(Y_1)=I(Y_2),
$$
with
\beq
  I(y)= 2t y M ( 1,y )+ {\frac {y-1}{y}}+{\frac {ty}{y-1}}.
\label{I-def-20}
\eeq
 Following Tutte~\cite{tutteV}, we say that $I(y)$, which
 takes the same value at  $Y_1$ and $Y_2$,  is an \emm invariant,.

Let us denote $\cI=I(Y_1)=I(Y_2)$. The second equation obtained by eliminating $x$ and  $M(x,1)$ from the  system~\eqref{syst1} then reads: 
\beq\label{Y-2nd}
  Y_1^2+Y_2^2
- \left(  \cI^{2}- 2\cI+2 t+2 \right)Y_1^2Y_2^2=0.
\eeq
Define
\beq\label{J-def-20}
J(y)=\left(I(y)^{2}-2\,I(y) +2\,t+2\right) ^{2}-8\by^2
  (I(y)^{2}-2\,I(y)+2\,t+2)+8\,\by^4,
\eeq
where $\by=1/y$.
Then  an elementary calculation shows that the
identity~\eqref{Y-2nd}, combined with 
$I(Y_1)=I(Y_2)=\cI$, implies
$$
J(Y_1)=J(Y_2).
$$
 We have thus obtained a second \emm invariant,\,\footnote{There is no real need to include the term $\left(I(y)^{2}-2\,I(y)
   +2\,t+2\right) ^{2}$ (which is itself an invariant) in $J(y)$. However, we will see later than
   this makes $J(y)$ a Chebyshev polynomial, a convenient property.}.

\subsection{The theorem of invariants}
\label{sec:thm-inv-20}
Consider the  invariants~\eqref{I-def-20} and~\eqref{J-def-20} that we
have constructed. Both are  series in $t$ with
coefficients in $\qs(y)$, the field of rational functions in $y$. 
In  $I(y)$, these coefficients are not singular at
$y=1$, except for the coefficient of $t$, which has a simple pole at
$y=1$. We say that $I(y)$ has \emm valuation, $-1$ in $(y-1)$.
Similarly, $J(y)$ has  valuation $-4$ in $(y-1)$ (because of the term $I(y)^4$). 

Observe that all polynomials in $I(y)$ and $J(y)$ with coefficients
in $\qs((t))$ (the ring of Laurent series in $t$) are  invariants. We
prove in Section~\ref{sec:inv-thm} a theorem --- the Theorem of 
invariants --- that says that there are ``few'' invariants, and that,
in particular, $J(y)$ must be a polynomial in $I(y)$ with  coefficients
in $\qs((t))$. Considering the valuations of $I(y)$ and
$J(y)$ in $(y-1)$ shows that this polynomial has degree 4. That is,
there exist Laurent series $C_0, \ldots, 
C_4$ in $t$, with coefficients in $\qs$, such that
\beq\label{eq-inv-20}
\left(I(y)^{2}-2\,I(y) +2\,t+2\right) ^{2}-8\by^2
  (I(y)^{2}-2\,I(y)+2\,t+2)+8\,\by^4=
\sum _{r=0}^{4} C_r\, I(y)^r.
\eeq

\subsection{An equation with one catalytic variable}
\label{sec:one-20}
%
In~\eqref{eq-inv-20}, replace $I(y)$ by its
expression~\eqref{I-def-20} in terms of $M(y)\equiv 
M(1,y)$. The resulting equation  involves $M(y)$, $t$, $y$, and  five
unknown series $C_r \in \qs((t))$.   The variable $x$ has
disappeared. 
Let us now write $M(y)=M(1)+ (y-1)M'(1)+ \cdots$, and
expand the equation in the neighborhood of $y=1$. 
This gives the values of  the  series $C_r$:
$$
  C_4=1, \quad 
C_3=-4,\quad
C_2=4t,\quad
C_1=8(1+t)
$$
and
$$
C_0=
-4-40\,t-4\,{t}^{2}+32\,{t}^{2}M (1).
$$
Let us  replace in~\eqref{eq-inv-20} each series $C_r$ by its
expression: we obtain 
$$
{y}^{2}t ( y^2-1 )  M ( y )^{2}
+ (1 -{y}^{2}+{y}^{2}t ) M ( y ) 
-t {y}^{2}M ( 1 )+  y^2-1 =0,
$$
which is exactly the equation with one catalytic variable~\eqref{M-eq-y-20}
obtained by deleting recursively the root-edge in bipartite planar
maps.
It can now be solved using the quadratic
method~\cite[Section~2.9]{goulden-jackson} or its extension (which works for equations of a higher degree in
$M(y)$) described in~\cite{mbm-jehanne} and generalized further in
Section~\ref{sec:alg}.

\subsection{Detailed outline of the paper}
\label{sec:outline}
%
With this example at hand, it is  easier  to describe the structure
of the paper. We begin with recalling in Section~\ref{sec:def}
standard definitions on maps, power series, and the Tutte (or Potts)
polynomial.  In Section~\ref{sec:eq-func} we establish functional equations
for $q$-colored planar maps and for $q$-colored triangulations. We
then construct a pair $(I(y), J(y)$) of invariants 
 in Sections~\ref{sec:inv-maps} (for planar maps) and~\ref{sec:inv-triang} (for
triangulations). 
The  construction of the invariant $J(y)$ is non-trivial, and relies on
  an independent result which is the topic of 
  Section~\ref{sec:source}. It is at this stage that the condition
  $q=2+2 \cos j\pi/m$ naturally occurs.  We then prove two ``theorems of invariants'',
  one for planar maps and one for triangulations
  (Section~\ref{sec:inv-thm}).  
Applying them  provides  counterparts of~\eqref{eq-inv-20}, where only
\emm one, catalytic variable $y$ is now involved. 
  Unfortunately, the general algebraicity theorem  of~\cite{mbm-jehanne}
  does not apply directly to these equations: we thus 
  extend it slightly (Section~\ref{sec:alg}). In
  Sections~\ref{sec:alg-maps} and~\ref{sec:alg-triang}, 
we prove that this extended theorem indeed applies to the  equations with one
catalytic variable derived from the theorems of invariants; we thus
obtain the main algebraicity results of the paper. 
Explicit results are given for two and three
 colors in Sections~\ref{sec:two} and~\ref{sec:three}. Finally, we explain in
Section~\ref{sec:non-sep} that the algebraicity results obtained for
general planar maps imply similar results for \emm non-separable, planar maps.

\section{Definitions and notation}
\label{sec:def}

\subsection{Planar maps}
%
A \emph{planar map} is a proper
 embedding of a connected planar graph in the
oriented sphere, considered up to orientation preserving
homeomorphism. Loops and multiple edges are allowed. The \emph{faces}
of a map are the connected components of 
its complement. The numbers of
vertices, edges and faces of a planar map $M$, denoted by $\vv(M)$,
$\ee(M)$ and $\ff(M)$,  are related by Euler's relation
$\vv(M)+\ff(M)=\ee(M)+2$.
 The \emph{degree} of a vertex or face is the number
of edges incident to it, counted with multiplicity. A \emph{corner} is
a sector delimited by two consecutive edges around a vertex;
hence  a vertex or face of degree $k$ defines $k$ corners. 
Alternatively, a corner can be described as an incidence between a
vertex and a face. The \emph{dual} of a
map $M$, denoted $M^*$, is the map obtained by placing a 
vertex of $M^*$ in each face of $M$ and an edge of $M^*$ across each
edge of $M$; see Figure~\ref{fig:example-map}. A \emph{triangulation}
is a map in which every face has degree 3. Duality transforms
triangulations into \emph{cubic} maps, that is, maps in which every vertex has
degree 3.

For counting purposes it is convenient to consider \emm rooted, maps. 
A map is rooted by choosing a corner, called  the \emm root-corner,.
The vertex and face that are incident at this corner are respectively
the \emm root-vertex, and the \emm root-face,.
 In figures, we  indicate the rooting by 
 an arrow pointing to the root-corner, and take the root-face
as the infinite face (Figure~\ref{fig:example-map}). 
This explains why we often call the root-face the \emm outer face, and
its degree the \emm outer degree,. 
This way of rooting maps is equivalent to the more standard way, where
an edge, called the \emm root-edge,, is distinguished and
oriented. For instance, one can choose the edge that follows the
root-corner in counterclockwise 
order around the root-vertex, and  orient it away from this vertex.
The reason why we prefer our convention is that it gives a natural
way to root  the dual of a rooted map $M$ in such a way the
root-vertex of $M$ becomes the root-face of $M^*$, and vice-versa: it
suffices to draw the vertex of $M^*$ corresponding to the root-face of
$M$ at the starting point of the arrow that points to the root-corner
of $M$,
and to reverse this arrow, to obtain a canonical rooting of $M^*$ (Figure~\ref{fig:example-map}). In this way, taking the dual of a map
exchanges the degree of the root-vertex and the degree of the
root-face, which will be  convenient for our study.

From now on, every {map}
is \emph{planar} and \emph{rooted}. By convention,
we include among rooted planar maps the \emph{atomic map}
$m_0$ having one vertex and no edge.

\begin{figure}[ht!]\begin{center} \input{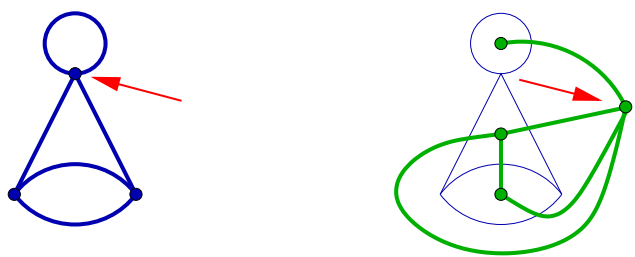}\caption{A rooted planar map and its dual
(rooted at the dual corner).}\label{fig:example-map} \end{center}\end{figure}
%

\subsection{Power series}
Let $A$ be a commutative ring and $x$ an indeterminate. We denote by
$A[x]$ (resp. $A[[x]]$) the ring of polynomials (resp. \fps) in $x$
with coefficients in $A$. If $A$ is a field, then $A(x)$ denotes the field
of rational functions in $x$, and $A((x))$ the field of Laurent series
in $x$. These notations are generalized to polynomials, fractions
and series in several indeterminates. We 
denote by bars the reciprocals of variables: that is, $\bx=1/x$, so that $A[x,\bx]$ is the ring of Laurent
polynomials in $x$ with coefficients in $A$.
The coefficient of $x^n$ in a Laurent  series $F(x)$ is denoted
by $[x^n]F(x)$, and the constant term by $\CT F(x):=[x^0]F(x)$.
 The \emm valuation, of a Laurent series
$F(x)$ is the smallest $d$ such that $x^d$ occurs in $F(x)$ with a
non-zero coefficient. If $F(x)=0$, then the valuation is $+\infty$.
More generally, for a series $F(t;x) =\sum_n F_n(x) t^n 
\in A(x)[[t]]$, and $a \in A$, we
say that $F(t;x)$ has valuation at least $-d$ in $(x-a)$  if no
coefficient $F_n(x)$ has a pole of order larger than $d$ at $x=a$.

Recall that a power series $F(x_1, \ldots, x_k) \in \GK[[x_1, \ldots, x_k]]$, where $\GK$ is a
field, is \emm algebraic , (over $\GK(x_1, \ldots, x_k)$) if it satisfies a
non-trivial polynomial equation $P(x_1, \ldots, x_k, F(x_1, \ldots,
x_k))=0$.

\subsection{The Potts model and the Tutte polynomial}
Let $G$ be a graph with vertex set $V(G)$ and edge set $E(G)$.
Let $\nu$ be an indeterminate, and take $q\in \ns$. 
A \emm coloring, of the vertices of $G$ in $q$ colors is a map $c : V(G)
\rightarrow \{1, \ldots, q\}$. An edge of $G$ is \emm monochromatic,
if its endpoints share the same color. Every loop is thus
monochromatic. The number of monochromatic edges is denoted by $m(c)$.
The \emm partition function of the  Potts model,
on $G$ counts colorings by the number of monochromatic edges:
$$
\Ppol_G(q, \nu)= \sum_{c  : V(G)\rightarrow \{1, \ldots, q\}}
\nu^{m(c)}.
$$
The Potts model is a classical magnetism model in statistical physics, which
includes (when $q=2$) the famous Ising model (with no magnetic
field)~\cite{welsh-merino}. Of course, $\Ppol_G(q,0)$ is the chromatic
polynomial of $G$. 

If $G_1$ and $G_2$ are disjoint graphs and $G=G_1 \cup G_2$, then clearly
\beq\label{Potts-disjoint}
\Ppol_{G}(q,\nu)=\Ppol_{G_1}(q,\nu)\Ppol_{G_2}(q,\nu).
\eeq
If $G$ is obtained by attaching $G_1$ and $G_2$ at one vertex, then
\beq\label{eq:Potts-1components}
\Ppol_{G}(q,\nu)=\frac 1 q \, \Ppol_{G_1}(q,\nu)\Ppol_{G_2}(q,\nu).
\eeq
The Potts partition function can be computed by induction on the
number of edges. If $G$ has no edge, then $\Ppol_G(q,\nu)=
q^{|V(G)|}$. Otherwise, let $e$ be an edge of $G$. Denote by $G\backslash
  e$ the graph obtained by deleting $e$, and by ${G\slash e}$ the graph obtained by contracting $e$ (if $e$ is a loop, then it is simply deleted). Then
\beq\label{Potts-induction}
\Ppol_{G}(q,\nu)=  \Ppol_{G\backslash e}(q,\nu)+(\nu-1) \Ppol_{G\slash e}(q,\nu).
\eeq
Indeed, it is not hard to see that $\nu\Ppol_{G\slash e}(q,\nu)$
counts colorings for which $e$ is monochromatic, while $\Ppol_{G\backslash
  e}(q,\nu)- \Ppol_{G\slash e}(q,\nu)$ counts those for which $e$ is
bichromatic.
One important consequence of this induction is that 
  $\Ppol_{G}(q,\nu)$ is
always a \emm polynomial, in $q$ and $\nu$. From now on, we call
it the \emm Potts polynomial, of $G$. We will often consider $q$
as an indeterminate, or evaluate  $\Ppol_{G}(q,\nu)$ at 
real values $q$. We also observe that $\Ppol_{G}(q,\nu)$ is a multiple
of $q$: this explains why we will  weight maps by $\Ppol_{G}(q,\nu)/q$.

Up to a change of variables, the Potts polynomial is equivalent to
another, maybe better known, invariant of graphs: the \emm Tutte polynomial,
$\Tpol_G(\mu, \nu)$ (see e.g. \cite{Bollobas:Tutte-poly}):
$$
\Tpol_G(\mu,\nu):=\sum_{S\subseteq
  E(G)}(\mu-1)^{\cc(S)-\cc(G)}(\nu-1)^{\ee(S)+\cc(S)-\vv(G)},
$$
where the sum is over all spanning subgraphs of $G$ (equivalently,
over all subsets of edges) and  $\vv(.)$, $\ee(.)$ and $\cc(.)$ denote
respectively the number of vertices, edges and connected
components. For instance, the Tutte polynomial of a graph with no edge
is 1. The equivalence with the Potts polynomial was established  by
Fortuin and Kasteleyn~\cite{Fortuin:Tutte=Potts}: 
\begin{eqnarray}\label{eq:Tutte=Potts}
\Ppol_G(q,\nu)~=~\sum_{S\subseteq E(G)}q^{\cc(S)}(\nu-1)^{\ee(S)}~=~(\mu-1)^{\cc(G)}(\nu-1)^{\vv(G)}\,\Tpol_G(\mu,\nu),
\end{eqnarray}
for $q=(\mu-1)(\nu-1)$. 
In this paper, we work with $\Ppol_G$ rather than $\Tpol_G$ because
we wish to assign real values to $q$ (this is more natural than
assigning real values to $(\mu-1)(\nu-1)$). However, we will use one
property that looks more natural in terms of $\Tpol_G$: 
if $G$ and $G^*$ are dual 
connected planar graphs  (that is, if $G$ and $G^*$ can be embedded
as dual planar maps) then  
\beq\label{eq:duality-Tutte-poly}
\Tpol_{G^*}(\mu,\nu)=\Tpol_G(\nu,\mu).
\eeq
Translating this identity in terms of   Potts polynomials thanks
to~\eqref{eq:Tutte=Potts} gives:  
\begin{eqnarray}
\Ppol_{G^*}(q,\nu)&=&q(\nu-1)^{\vv(G^*)-1}\Tpol_{G^*}(\mu,\nu)\nonumber\\
&=&q(\nu-1)^{\vv(G^*)-1}\Tpol_{G}(\nu,\mu)\nonumber\\
&=&\frac{(\nu-1)^{\ee(G)}}{q^{\vv(G)-1}}\Ppol_{G}(q,\mu),\label{eq:duality-Potts-poly}
\end{eqnarray}
where $\mu=1+q/(\nu-1)$ and the last equality uses Euler's relation:
$\vv(G)+\vv(G^*)-2=\ee(G)$.

\section{Functional equations}
\label{sec:eq-func}
We  now establish functional equations for the generating
functions of two families of colored planar maps: general planar maps, and
triangulations. We begin  with general planar maps, for which Tutte
already did most of the work. However, he did not attempt, or did not
succeed, to solve the equation he had established. 

\subsection{A functional equation for colored planar  maps}
\label{sec:eq-M}

Let $\mM$ be the set of rooted maps. For a rooted map $M$,
denote by $\dv(M)$ and $\df(M)$
the degrees of the root-vertex and root-face.
We define the \emm Potts \gf,\ of planar maps by:
\beq\label{potts-planar-def}
\gM(x,y)\equiv \gM(q,\nu,t,w,z;x,y)
=\frac 1 q \sum_{M\in\mM}t^{\ee(M)}w^{\vv(M)-1}z^{\ff(M)-1} x^{\dv(M)}y^{\df(M)}
\Ppol_M(q,\nu).
\eeq
Since there is a finite number of maps with a given number of edges, 
and $\Ppol_M(q,\nu)$ is a multiple of $q$,
the generating function $\gM(x,y)$ is a power series in $t$ with
coefficients in $\qs[q,\nu,w,z,x,y]$. 
 \begin{Proposition}\label{prop:eq-M}
  The Potts \gf\ of planar maps satisfies:   
\begin{eqnarray}\label{eq:M}
\gM(x,y)&\!\!=\!\!& 1
+xywt\left((\nu-1)(y-1)+qy\right)\gM(x,y)\gM(1,y)
\nonumber\\
&&+xyzt(x\nu-1)\gM(x,y)\gM(x,1)\\
&&
+xywt(\nu-1)\frac{x\gM(x,y)-\gM(1,y)}{x-1}+xyzt\frac{y\gM(x,y)-\gM(x,1)}{y-1}.
\nonumber
\end{eqnarray}
\end{Proposition}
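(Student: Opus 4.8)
The plan is to prove Proposition~\ref{prop:eq-M} by a deletion/contraction analysis of rooted planar maps with respect to their root-edge, in the spirit of Tutte's derivation of~\eqref{eq-Tutte}. The basic tools are the deletion/contraction identity~\eqref{Potts-induction} for the Potts polynomial and the two product formulas~\eqref{Potts-disjoint} and~\eqref{eq:Potts-1components}; note that the factor $1/q$ appearing in~\eqref{eq:Potts-1components} will combine with the normalising $1/q$ in the definition~\eqref{potts-planar-def}. The constant term $1$ in~\eqref{eq:M} is the contribution of the atomic map $m_0$, for which $\frac1q\Ppol_{m_0}(q,\nu)=1$ and all exponents vanish. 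For any other rooted map $M$, let $e$ be the root-edge (the edge following the root-corner in counterclockwise order around the root-vertex), and write $\Ppol_M=\Ppol_{M\setminus e}+(\nu-1)\Ppol_{M/e}$. I would then partition the maps $M\neq m_0$ according to whether $e$ is a loop, an isthmus, or neither, and read off from each case a contribution to~\eqref{eq:M}.

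When $e$ is neither a loop nor an isthmus, $M\setminus e$ is again a rooted map, obtained by merging the two distinct faces incident to $e$; conversely $M$ is recovered from an arbitrary rooted map $M'$ by splitting its root-face at the root-corner, an operation that creates one edge, one face and one extra unit of root-vertex degree, the freedom lying in how the corners of the root-face are shared between the two new faces. Translating this into generating functions turns $\sum_M(\text{weight})\,\Ppol_{M\setminus e}$ into the term $xyzt\,\frac{y\gM(x,y)-\gM(x,1)}{y-1}$, the rational function encoding the corner at which the face is split. Dually, contracting such an $e$ merges the root-vertex with its other endpoint, so running this backwards amounts to splitting the root-vertex (creating one edge and one vertex); this turns $(\nu-1)\sum_M(\text{weight})\,\Ppol_{M/e}$ into $xywt(\nu-1)\,\frac{x\gM(x,y)-\gM(1,y)}{x-1}$.

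The two bilinear terms come from the loop and isthmus cases, via the product formulas. If $e$ is a loop it bounds a disk, so $M$ is an ``inside'' rooted map $M_1$ and an ``outside'' rooted map $M_2$ glued at the root-vertex; since $\Ppol_M=\nu\,\Ppol_{M\setminus e}=\frac{\nu}{q}\Ppol_{M_1}\Ppol_{M_2}$ and the root-vertex degrees add up while the root-face degree comes from $M_2$, this produces a term bilinear in $\gM(x,y)$ and $\gM(x,1)$, with a factor $z$ for the new face. If $e$ is an isthmus, $M$ is obtained by joining two rooted maps by a new edge at their root-corners; here $\Ppol_M=\Ppol_{M_1}\Ppol_{M_2}+(\nu-1)\frac1q\Ppol_{M_1}\Ppol_{M_2}=\frac{q+\nu-1}{q}\Ppol_{M_1}\Ppol_{M_2}$, the root-face degrees add up and one root-vertex becomes internal, producing a term bilinear in $\gM(x,y)$ and $\gM(1,y)$, with a factor $w$ for the new vertex. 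Summing the five contributions and using Euler's relation $\vv+\ff=\ee+2$ to keep track of which variables pick up an extra power should give~\eqref{eq:M} exactly. The outcome should moreover be compatible with the duality relation~\eqref{eq:duality-Potts-poly}, which exchanges the roles of $(x,w)$ and $(y,z)$; this is a convenient consistency check.

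The main obstacle is the correct treatment of the degenerate configurations of this decomposition. When one of the two glued pieces is the atomic map, or more generally when a ``split'' produces a pendant edge, the newly added edge is incident twice to the new face (or to the new vertex) rather than once, so the relevant corner count --- and hence the exponent of $y$ (resp.\ $x$) --- is one more than the generic value; moreover such pendant-edge maps are reached by two of the five cases and must not be double-counted. It is exactly these corrections that are responsible for the precise shape of the coefficients $(\nu-1)(y-1)+qy$ and $x\nu-1$ in~\eqref{eq:M}: the ``clean'' isthmus and loop contributions carry the coefficients $q+\nu-1$ and $x\nu$ (up to a power of $y$), and the extra $-(\nu-1)$, resp.\ $-1$, compensates for the overlap with the vertex-splitting, resp.\ face-splitting, terms. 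One must check that, after these adjustments, every rooted planar map other than $m_0$ is counted once and only once, with the correct weight.
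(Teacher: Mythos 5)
Your approach is a from-scratch deletion/contraction derivation, whereas the paper's own proof is far shorter: it simply quotes Tutte's equation~\eqref{eq:tM} for the Tutte-polynomial generating function $\gtM$ from~\cite{Tutte:dichromatic-sums}, then observes that the Fortuin--Kasteleyn relation~\eqref{eq:Tutte=Potts} together with Euler's formula gives the substitution~\eqref{Potts-Tutte-gfs}, and~\eqref{eq:M} drops out immediately. So what you are proposing is not an alternative to the paper's argument but rather a re-derivation of the result the paper merely cites. That said, the decomposition you describe (root-edge is a loop, an isthmus, or ordinary; deletion relates to face-splitting and the $z,y$ variables, contraction to vertex-splitting and the $w,x$ variables; products of generating functions in the loop and isthmus cases via~\eqref{Potts-disjoint} and~\eqref{eq:Potts-1components}) is indeed the combinatorial content underlying~\eqref{eq:tM}, and the duality remark is a genuinely useful sanity check, since~\eqref{eq:M} is manifestly symmetric under $(x,w,\text{``delete''})\leftrightarrow(y,z,\text{``contract''})$ with $\nu\leftrightarrow\mu$.

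The gap is that you stop precisely where the real work begins. Everything you assert up to the bilinear coefficients $q+\nu-1$ and $x\nu$ is the easy, generic part; the entire point of the proposition is the \emph{exact} form of the coefficients $(\nu-1)(y-1)+qy$ and $x\nu-1$ and of the divided-difference terms. You correctly flag that these encode degenerate configurations (loops produced by face-splitting, isthmuses produced by vertex-splitting, pendant edges, the rooting of the pieces $M_1,M_2$ and the resulting shifts in $\dv,\df$), and you even guess a plausible assignment of the corrections $-(\nu-1)$ and $-1$ to overlaps with the contraction and deletion terms. But you do not actually carry out the degree bookkeeping that would verify this, and you say so explicitly: ``One must check that \dots every rooted planar map other than $m_0$ is counted once and only once, with the correct weight.'' That check is not a loose end; it \emph{is} the proof. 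In particular, the off-by-one questions (is the $y$-exponent of a merged face $\df(M_1)+\df(M_2)+1$ or $+2$? does the root corner of $M/e$ inherit the root corner of $M$ or a merged corner? exactly which $d$ or $d+1$ splittings of a degree-$d$ root vertex are counted by $\frac{xM(x,y)-M(1,y)}{x-1}$, and which of them produce an isthmus?) are exactly where such derivations go wrong, and your writeup does not settle any of them. As it stands the proposal is a reasonable plan for re-proving Tutte's functional equation, not a proof of it; it would be considerably less work, and closer to what the paper does, to take~\eqref{eq:tM} as known and simply perform the change of variables~\eqref{Potts-Tutte-gfs}.
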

Observe that~\eqref{eq:M} characterizes  $\gM(x,y)$ entirely
as a series in $\qs[q,\nu,w,z,x,y][[t]]$ (think of extracting
recursively the coefficient of $t^n$ in this equation).  
Note also
that if $\nu=1$, then $\Ppol_M(q,\nu) = q^{\vv(M)}$, so that we are
essentially counting planar maps by edges, vertices and faces, and by
the root-degrees $\dv$ and $\df$.  The variable $x$ is no longer
catalytic: it can be set to 1 in the functional equation, which
becomes an equation for $M(1,y)$ with only one catalytic variable~$y$.
 \begin{proof}
    In~\cite{Tutte:dichromatic-sums}, Tutte considered the closely
    related generating function 
$$
\gtM(x,y)\equiv\gtM(\mu,\nu,w,z; x,y)=
\sum_{M\in\mM}w^{\vv(M)-1}z^{\ff(M)-1}x^{\dv(M)}y^{\df(M)} \Tpol_M(\mu,\nu),
$$
which counts  maps weighted by their Tutte polynomial. He established the
following functional equation:
\begin{multline}\label{eq:tM}
  \gtM(x,y)= 1+xyw(y\mu-1)\gtM(x,y)\gtM(1,y)~+~xyz(x\nu-1)\gtM(x,y)\gtM(x,1) \\
+xyw\parfrac{x\gtM(x,y)-\gtM(1,y)}{x-1}+xyz\parfrac{y\gtM(x,y)-\gtM(x,1)}{y-1}.
\end{multline}
Now, the relation~\eqref{eq:Tutte=Potts} between the Tutte and Potts
polynomials and Euler's relation ($\vv(M)+\ff(M)-2=\ee(M)$) give
\beq\label{Potts-Tutte-gfs}
\gM(q,\nu,t,w,z;x,y)=\gtM\left(1+\frac{q}{\nu-1},\nu,(\nu-1)tw,tz;x,y\right),
\eeq
from which~\eqref{eq:M} easily follows.
 \end{proof}

\subsection{A functional equation for colored triangulations}
\label{sec:eq-quasi-triang}
%
Tutte obtained~\eqref{eq:tM} via a recursive description of
planar maps involving deletion and contraction of the root-edge. We
would like  to proceed similarly for triangulations, but the
deletion/contraction of the root-edge may change the degrees of the
 faces that are adjacent to the root-edge, so that the resulting
maps may not be triangulations. This  has led us to
consider a larger class of maps.

We call \emm quasi-triangulations,  rooted planar maps such that every internal
face is either a 
\emph{digon} (degree~2) \emm incident to the root-vertex,, or a \emph{triangle} (degree~3). 
The set of quasi-triangulations is denoted by $\mQ$.
It includes the set of \emm near-triangulations,, which we define as
the maps in which all internal faces have degree 3.
For  $Q$ in $\mQ$, we denote by
$\dig(Q)$ and $\ddig(Q)$ respectively the number of internal digons and
the number of internal digons 
that are doubly-incident to the
root-vertex. For instance, the map $Q$ of Figure~\ref{fig:exampleQ}(a) satisfies
$\dig(Q)=3$ and  $\ddig(Q)=1$. 
A map in $\mQ$ is \emph{incidence-marked} by choosing  for each
internal digon one of its incidences
with the root-vertex. An incidence-marked map
is shown in Figure~\ref{fig:exampleQ}(b).  

\begin{figure}[ht!]\begin{center} \input{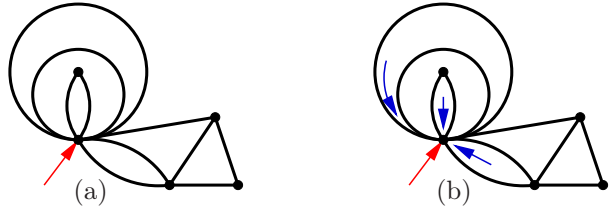}\caption{A map in $\mQ$ and one of the associated incidence-marked maps.}\label{fig:exampleQ} \end{center}\end{figure}

We define the \emm Potts generating function, of 
quasi-triangulations
by
\beq\label{Q-ser-def}
\gQ(x,y)\equiv
\gQ(q,\nu,t,w,z;x,y)=
\frac 1 q \sum_{Q\in\mQ}
t^{\ee(Q)}w^{\vv(Q)-1}z^{\ff(Q)-1}x^{\dig(Q)}y^{\df(Q)}
2^{\ddig(Q)}\Ppol_Q(q,\nu).
\eeq
As before, $\df(Q)$ denotes the degree of the root-face of $Q$.
Observe that a map $Q$ in $\mQ$ gives rise
 to $2^{\ddig(Q)}$ distinct incidence-marked maps. Hence the above
 series can be rewritten as
$$
\gQ(x,y)=\frac 1 q 
\sum_{\vQ\in\vmQ}
t^{\ee(Q)}w^{\vv(Q)-1}z^{\ff(Q)-1}x^{\dig(Q)}y^{\df(Q)} \Ppol_{Q}(q,\nu),
$$
where $\vmQ$ is the set of incidence-marked maps obtained from $\mQ$,
and for $\vQ\in\vmQ$, the underlying (unmarked) map is denoted $Q$.

\begin{Proposition}\label{prop:eq-Q}
The Potts \gf\ of quasi-triangulations, 
defined by~\eqref{Q-ser-def}, satisfies
\begin{multline}
  \label{eq:Q}
Q(x,y) = 1 
+ zt\, \frac{Q(x,y)-1-yQ_1(x)}{y}+  xzt (Q(x,y)-1) + xyzt Q_1(x) Q(x,y)
\\
+yzt(\nu-1)\gQ(x,y)(2x\gQ_1(x)+\gQ_2(x))
+y^2wt\left(q+ \frac{\nu-1}{1-xzt\nu}\right) \gQ(0,y)\gQ(x,y)
\\+\frac{ywt(\nu-1)}{1-xzt\nu} \frac{\gQ(x,y)-\gQ(0,y)}{x}
  \end{multline}
where $\gQ_1(x)=[y]\gQ(x,y)$ and $\displaystyle
\gQ_2(x)=[y^2]\gQ(x,y)=\frac{(1-2xzt\nu)}{zt\nu}\gQ_1(x)$.
\end{Proposition}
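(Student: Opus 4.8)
The plan is to derive~\eqref{eq:Q} by the same deletion/contraction recursion that Tutte used to obtain~\eqref{eq:tM}, but applied to the class $\vmQ$ of incidence-marked quasi-triangulations, where the root-edge is the edge following the root-corner counterclockwise around the root-vertex. Since a quasi-triangulation has no internal face of degree $>3$, the key structural fact is that the root-edge bounds on its inner side either (a) a triangle, (b) an internal digon incident to the root-vertex, or (c) the outer face when the map is reduced. I would first dispose of the atomic map (contributing the term $1$), then split the sum over $\vmQ$ according to which of these cases occurs, and read off which operation — deletion or contraction of the root-edge — returns a smaller object in $\vmQ$, keeping careful track of how $\ee$, $\vv$, $\ff$, $\dig$, $\df$ and the Potts polynomial transform, the latter via the induction relation~\eqref{Potts-induction} together with~\eqref{Potts-disjoint} and~\eqref{eq:Potts-1components} when deletion or contraction disconnects the map or merges the root-vertex with itself.

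Concretely, I expect the terms to arise as follows. Deleting the root-edge when it bounds a triangle on the inside merges that triangle's third vertex into the root-face and lowers the outer degree; contracting it instead identifies two vertices. The term $zt\,(Q(x,y)-1-yQ_1(x))/y$ and $xzt(Q(x,y)-1)$ should come from the two ways the outer face can grow/shrink along a triangular root-face, with the subtractions $-1-yQ_1(x)$ excising the small maps where the operation is not legal; the product terms $xyztQ_1(x)Q(x,y)$ and $yzt(\nu-1)Q(x,y)(2xQ_1(x)+Q_2(x))$ should come from the case where deletion/contraction splits the map at the root-vertex into two pieces, one of which is a near-triangulation with root-face a digon or triangle (hence the $Q_1$, $Q_2$, and the marked-incidence factor $2$, explaining the $2xQ_1(x)$). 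The two $w$-terms, involving $Q(0,y)$ and the geometric factor $1/(1-xzt\nu)$, should encode the recursive creation of a stack of internal digons at the root-vertex: each new digon multiplies by $xzt\nu$ (one new edge, one new face, a monochromatic edge up to the $\nu-1$ correction), and summing the stack geometrically produces $1/(1-xzt\nu)$; setting $x=0$ in $Q(0,y)$ reflects that the sub-map glued across such a digon carries no further internal digons of its own. Finally, the stated identity $Q_2(x)=\frac{1-2xzt\nu}{zt\nu}Q_1(x)$ should be an independent consequence: a near-triangulation with outer degree $2$ is obtained from one with outer degree $1$ by closing off a digon or a triangle, a relation one can extract by comparing the coefficients of $y$ and $y^2$ in the equation, or directly from the digon-stacking bijection.

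The main obstacle I anticipate is bookkeeping rather than conceptual: getting the \emph{exact} coefficients $q+\frac{\nu-1}{1-xzt\nu}$, the signs in $Q(x,y)-1-yQ_1(x)$, and the factor $2x$ versus $x$ in the $(2xQ_1(x)+Q_2(x))$ term right, since each is sensitive to how~\eqref{Potts-induction} and~\eqref{eq:Potts-1components} interact with the $\frac1q$ normalization in~\eqref{Q-ser-def} and with the incidence-marking multiplicity $2^{\ddig(Q)}$. In particular, the delicate point is handling the digons incident to the root-vertex: deletion or contraction of the root-edge when it lies on such a digon must be organized so that the marked incidence is transported consistently, and the case analysis must not double-count a configuration that is simultaneously ``root-edge on a digon'' and ``root-edge on a triangle'' — this cannot happen for a single root-edge but the adjacent internal face's type must be determined unambiguously, which is exactly where the definition of quasi-triangulation (digons only at the root-vertex) is used. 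A clean way to control this is to first prove~\eqref{eq:Q} for $\nu=1$ (pure map enumeration, where~\eqref{eq:Potts-1components} is just a $q$-bookkeeping) and then check that the $(\nu-1)$-terms are precisely the corrections demanded by~\eqref{Potts-induction}; alternatively one may mimic the derivation of~\eqref{eq:M} from~\eqref{eq:tM} by finding the analogue of~\eqref{Potts-Tutte-gfs} for quasi-triangulations, though the presence of internal digons makes the substitution less transparent here, so I would favor the direct combinatorial route.
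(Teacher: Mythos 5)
Your overall strategy --- recurse on the root-edge via deletion and contraction, tracking the Potts polynomial through~\eqref{Potts-induction}, \eqref{Potts-disjoint} and~\eqref{eq:Potts-1components} --- is exactly what the paper does, so the plan is sound in outline. But your organizing case split is not the paper's, and the difference matters. You propose to split by the type of the internal face bordering the root-edge (triangle, digon, outer). The paper instead splits by operation first, writing $\gQ(x,y)=1+\gQ_{\backslash}(x,y)+(\nu-1)\gQ_{\slash}(x,y)$ directly from~\eqref{Potts-induction}, and only then distinguishes isthmus versus non-isthmus root-edge inside $\gQ_{\backslash}$ and loop versus non-loop inside $\gQ_{\slash}$. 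This is not a cosmetic reindexing: whether $Q\backslash e$ stays in $\mQ$ is governed by isthmus-hood, whereas whether $Q\slash e$ stays in $\mQ$ is governed by loop-hood, and the inner-face type is only a secondary distinction within each of these. Your phrasing ``read off which operation returns a smaller object in $\vmQ$'' is also misleading, since in~\eqref{Potts-induction} both $Q\backslash e$ and $Q\slash e$ contribute for every non-atomic map; one never chooses between them.

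The point where your sketch would actually break without a new idea is the contraction of a non-loop root-edge $e_0$ whose inner face is a digon: $Q\slash e_0$ then has a face of degree~$1$ and leaves the class $\mQ$, so a naive recursion fails. The paper's fix is to pass to the maximal chain of edges $e_0,e_1,\ldots,e_k$ with each consecutive pair bounding an internal digon, contract $e_0$, delete the $k$ loops that result, and then split according to whether the face to the left of $e_k$ is the outer face ($\mQ''_{1\slash}$) or an internal triangle ($\mQ''_{2\slash}$); in the latter case, the incidence-marking of the resulting digon $\hat f$ is precisely what makes the reconstruction bijective. Your ``geometric stack of digons'' heuristic is pointing at this mechanism and correctly explains the factor $1/(1-xzt\nu)$, but you attribute it vaguely to ``recursive creation of digons'' rather than to this specific bijection, and without the marking the inverse map is not well-defined. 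Two smaller remarks: the paper tames the bookkeeping you worry about by setting $t=1$ up front via Euler's relation and restoring it at the end by $w\mapsto wt$, $z\mapsto zt$; and your proposal that the $Q_1$--$Q_2$ relation comes from extracting $[y^1]$ of the finished equation is the paper's argument and is fine.
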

As in the case of general maps, Eq.~\eqref{eq:Q} characterizes the
series $Q(x,y)$ entirely 
as a series in $\qs[q,\nu,w,z,x,y][[t]]$ (think of extracting
recursively the coefficient of $t^n$ in this equation). 
Moreover, the variable $x$ is no longer catalytic when $\nu=1$, and
the equation becomes much easier to solve. Finally, 
 Tutte's original equation~\eqref{eq-Tutte} can be derived
from~\eqref{eq:Q}, as we explain in Section~\ref{sec:non-sep-triang}.
\begin{proof}
We first observe that it suffices to establish the equation when
$t=1$, that is, when we do not keep track of the
number of edges.
 Indeed, this number is $\ee(Q)=(\vv(Q)-1)+ (\ff(Q)-1)$, by
Euler's relation,  so that
$\gQ(q,\nu,t,w,z;x,y)=\gQ(q,\nu,1,wt,zt;x,y)$. Let us thus set $t=1$.

Equation~\eqref{Potts-induction} gives  
$$
\gQ(x,y)=1+\gQ_{\backslash }(x,y)+(\nu-1)\gQ_{\slash }(x,y),
$$
where the term 1 is the contribution of the atomic map $m_0$ having one
vertex and no edge, 
$$
\gQ_{\backslash }(x,y)=\frac 1 q  \sum_{Q\in
  \mQ\setminus\{m_0\}}w^{\vv(Q)-1}z^{\ff(Q)-1}x^{\dig(Q)}y^{\df(Q)}
2^{\ddig(Q)}\Ppol_{Q\backslash e}(q,\nu),
$$ 
and 
$$
\gQ_{\slash }(x,y)=\frac 1 q  \sum_{Q\in
  \mQ\setminus\{m_0\}}w^{\vv(Q)-1}z^{\ff(Q)-1}x^{\dig(Q)}y^{\df(Q)} 
2^{\ddig(Q)}\Ppol_{Q\slash e}(q,\nu),
$$
where $Q\backslash e$ and $Q\slash e$ denote respectively the maps
obtained from $Q$ by deleting and contracting the root-edge $e$.

\titre{A. The series  $\gQ_{\backslash }$.}
We consider the partition
$\mQ\setminus\{m_0\}=\mQ'_{\backslash }\uplus\mQ''_{\backslash }$, where
$\mQ'_{\backslash }$  (resp. $\mQ''_{\backslash }$) is the subset of
maps in $\mQ\setminus\{m_0\}$ such that the root-edge is (resp. is not)
an isthmus. 
We denote respectively by  $\gQ_{\backslash}'(x,y)$ and
$\gQ_{\backslash}''(x,y)$ the contributions of 
$\mQ_{\backslash}'$ and $\mQ_{\backslash}''$ to the generating function $\gQ_{\backslash}(x,y)$, so that
$$
\gQ_{\backslash}(x,y)=\gQ'_{\backslash}(x,y)+\gQ''_{\backslash}(x,y).
$$

\titre{A.1. Contribution of $\mQ'_{\backslash }$.}
Deleting the root-edge of a map in $\mQ'_{\backslash }$ leaves two maps
in $\mQ$, as illustrated 
in Figure~\ref{fig:decompositionQa}. Hence there is 
a simple bijection between  $\mQ'_{\backslash }$ and the set of ordered pairs
$(R,R')$ of rooted maps in $\mQ$, such that $R'$ has no
internal digon. The Potts polynomial of this pair can be determined
using~\eqref{Potts-disjoint}. One thus obtains
\beq\label{Q-del-1}
\gQ'_{\backslash}(x,y)=qy^2w\, \gQ(0,y)\gQ(x,y),
\eeq
as  $\gQ(0,y)$ is the generating function of  maps with no internal digon.

\begin{figure}[ht!]\begin{center} \input{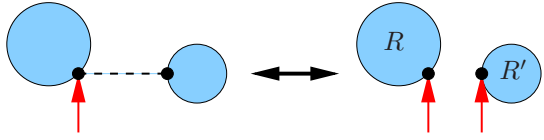}\caption{Decomposition of maps in $\mQ'_{\backslash }$.  The root-edge is dashed.}\label{fig:decompositionQa} \end{center}\end{figure}

\smallskip
\titre{A.2. Contribution of $\mQ''_{\backslash }$.}
Deleting the root-edge of a map $Q$ in $\mQ''_{\backslash }$ gives a
map $R$ in $\mQ$. Conversely, given  $R \in \mQ$, there are at most two
ways to  reconstruct a map of
$\mQ''_{\backslash }$ by adding a new edge and creating a new internal  face:
\begin{enumerate}
\item [--] If $\df(R)\ge 2$, one can create an internal triangle,
\item [--] If $\df(R)\ge 1$, one can create an internal digon; depending on
  whether the root-edge of $R$ is a loop, 
or not, this new digon will be doubly incident to the root, or not.
\end{enumerate}
In terms of incidence-marked maps, one can create an internal
triangle  (provided  $\df(R)\ge 2$), or an internal digon
marked at its first incidence with the root (provided  $\df(R)\ge 1$),
or an internal digon marked at its second incidence with the root
(provided the root-edge of $R$ is a loop). These three possibilities are
illustrated in Figure~\ref{fig:decompositionQa-bis}. In the third case,
the map  $R$ is obtained by gluing at the root two maps $R'$ and
$R''$ such that $R''$ has outer degree 1, and $\Ppol_R(q,\nu)$ is
easily determined using~\eqref{eq:Potts-1components}.
This gives:
\beq\label{Q-del-2}
\gQ''_{\backslash}(x,y)= z\, \frac{Q(x,y)-1-yQ_1(x)}{y}
+ xz (Q(x,y)-1) + xyz Q_1(x) Q(x,y)
\eeq
as  $\gQ_i(x):=[y^i] Q(x,y)$ 
is the generating function of  maps with outer degree $i$. 

\begin{figure}[ht!]\begin{center} \input{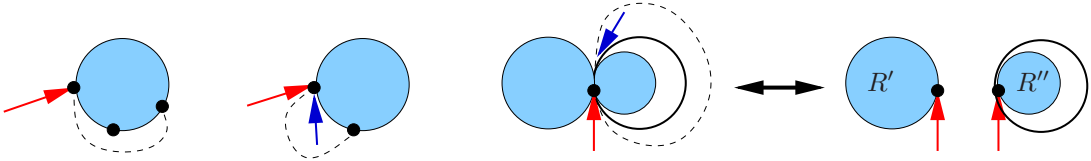}\caption{Decomposition of an incidence-marked map of $\mQ''_{\backslash }$.}\label{fig:decompositionQa-bis} \end{center}\end{figure}

\smallskip
\titre{B. The series  $\gQ_{\slash }$.}
 We now consider the partition $\mQ\setminus\{m_0\}=\mQ'_{\slash }\uplus\mQ''_{\slash }$, where
 $\mQ'_{\slash }$  (resp. $\mQ''_{\slash }$) is the subset of maps in
$\mQ$ such that the root-edge is (resp. is not) a loop.
We denote respectively by   $\gQ_{\slash}'(x,y)$ and 
$\gQ_{\slash}''(x,y)$ the contributions of 
$\mQ'_{\slash }$ and $\mQ''_{\slash }$ to the generating
function $\gQ_{\slash}(x,y)$, so that
$$
\gQ_{\slash}(x,y)=\gQ'_{\slash}(x,y)+\gQ''_{\slash}(x,y).
$$

\smallskip
\titre{B.1.  Contribution of $\mQ'_{\slash }$.}
Contracting the root-edge of a map in $\mQ'_{\slash }$ is
equivalent to deleting this edge. It gives a map $R$ in $\mQ$, formed
of two maps of $\mQ$  attached at a vertex. Hence there is 
a simple bijection, illustrated 
in Figure~\ref{fig:decompositionQb}, between  $\mQ'_{\slash }$ and the
set of ordered pairs 
$(R',R'')$ of rooted maps in $\mQ$, such that the map $R''$ has outer
degree 1 or 2. The Potts polynomial of the map $R$ obtained by gluing
$R'$ and $R''$  can be determined
using~\eqref{eq:Potts-1components}. One thus obtains 
\beq\label{Q-cont-1}
\gQ'_{\slash}(x,y)=yz\gQ(x,y)(2x\gQ_1(x)+\gQ_2(x)).
\eeq
The factor 2 accounts for the two ways of marking incidences in
the new digon that is created when $R''$ has outer degree 1.

\begin{figure}[ht!]\begin{center} \input{decompositionQb.pstex_t}\caption{Decomposition of maps in $\mQ'_{\slash }$.}\label{fig:decompositionQb} \end{center}\end{figure}

\smallskip
\titre{B.2. Contribution of $\mQ''_{\slash }.$}
Contracting the root-edge  $e_0$ of a map in $\mQ''_{\slash }$ gives a
map that may not belong to $\mQ$, as contraction may create faces of
degree 1. This happens when the face to
the left of $e_0$ is an internal digon (Figure~\ref{fig:contraction}).

\begin{figure}[ht!]\begin{center} \input{contraction.pstex_t}\caption{Top: the partition
  $\mQ''_{1\slash }\uplus\mQ_{2\slash }''$ of the set $\mQ''_{\slash }$. 
Bottom: the  result of contracting the root-edge $e_0$ and subsequently removing
  the loops $e_1,\ldots,e_k$.}\label{fig:contraction} \end{center}\end{figure}

For a map in $\mQ''_{\slash }$, we consider the maximal sequence of edges
$e_0,e_1,\ldots,e_k$, 
such that $e_0$ is the root-edge and for $i=1,\ldots, k$, the edges
$e_{i-1}$ and $e_i$ bound an internal digon. We partition $\mQ''_{\slash }$
further, writing $\mQ''_{\slash }=\mQ''_{1\slash } \uplus \mQ''_{2\slash }$,
depending on whether 
the face to the left of $e_k$ is the outer face, or not. We
consistently denote by $\gQ''_{1{\slash}}(x,y)$ and
$\gQ''_{2{\slash}}(x,y)$ the respective contributions of these sets to
$\gQ''_{\slash}(x,y)$.

\smallskip
\titre{B.2.1. Contribution of $\mQ_{1\slash }''$.}
As shown on the left of Figure~\ref{fig:contraction}, there is a bijection
between  the set $\mQ_{1\slash }''$ and the set of triples 
 $(k,R,R')$, where $k\ge 0$ and $R$, $R'$ are
maps in $\mQ$ such that $R'$ has no internal digon.
Let $Q$ be a map in $\mQ_{1\slash }''$ and let  $(k,R,R')$ be its image
by this bijection. By contracting the root-edge $e_0$ of $Q$, the edges
$e_1,\ldots,e_k$ become loops attached to the map  obtained
by gluing $R$ and $R'$ at their root-vertex.
Equation~\eqref{eq:Potts-1components} shows that the Potts polynomial of 
$Q\slash e_0$ is 
$\nu^{k}\Ppol_{R}(q,\nu)\Ppol_{R'}(q,\nu)/q$. 
Considering all triples $(k,R,R')$, one obtains 
\beq\label{Q-cont-2-1}
\gQ_{1\slash}''(x,y)=\frac{y^2w}{1-xz\nu}\, \gQ(0,y)\gQ(x,y).
\eeq

\titre{B.2.2. Contribution of $\mQ_{2\slash }''$.}
Here,  it is convenient to consider  incidence-marked maps. 
Recall that  $\vmQ$ is the set of incidence-marked
maps corresponding to $\mQ$. Similarly,  denote by
$\vmQ_{2\slash }''$ the set of incidence-marked maps corresponding to
$\mQ_{2\slash }''$. As observed above, a map $Q$ in $\mQ$ gives 
$2^{\ddig(Q)}$ incidence-marked maps in $\vmQ$. Hence  
$$
\gQ_{2\slash}''(x,y)=\frac 1 q
\sum_{\vQ\in\vmQ_{2\slash}''}w^{\vv(Q)-1}z^{\ff(Q)-1}x^{\dig(Q)}y^{\df(Q)}
\Ppol_{Q\slash  e_0}(q,\nu),
$$
where $e_0$ is the root-edge of $Q$.
Let $\vQ$ be an incidence-marked map in $\vmQ_{2\slash }''$. With the
notation   $e_1,e_2,\ldots,e_k$ introduced above, the edge $e_k$ is
incident to an internal triangle $f$. By contracting the root-edge
$e_0$, the edges $e_1,\ldots,e_k$ become loops. By deleting these $k$
loops, one obtains a map of $\vmQ$.  The face $f$ becomes a 
 digon $\hat{f}$ incident to
the root-vertex (and doubly incident to the root-vertex if $f$ 
is incident to only 2 vertices;
see Figure~\ref{fig:contraction}). One of
the incidences between the digon $\hat{f}$ and the root-vertex
indicates the position of the contracted edge $e_0$. By marking the digon
$\hat{f}$ at this incidence, one obtains a map $\vR$ in $\vmQ$ such that
 $\Ppol _{Q\slash   e_0}(q,\nu)=
\nu^{k}\,\Ppol_{R}(q,\nu)$ 
(we have used~\eqref{eq:Potts-1components} again).
 Moreover, the mark  created in  $\hat{f}$ is the first 
mark of $\vR$ encountered when turning around the root-vertex in
counter-clockwise direction, starting from the root-edge. This implies  that
the mapping which associates the pair $(k,\vR)$ to the map 
$\vQ$ is a bijection between maps of $\mQ_{2\slash }''$ and pairs $(k,\vR)$
made of a non-negative integer $k$ and an incidence-marked map $\vR$
in $\vmQ$ having at least one internal digon. Considering all pairs
$(k,\vR)$, one obtains
\beq\label{Q-cont-2-2}
\gQ_{2\slash}''(x,y)=\frac{yw}{1-xz\nu} \frac{\gQ(x,y)-\gQ(0,y)}{x}.
\eeq

\medskip
It remains to add up the contributions~(\ref{Q-del-1}--\ref{Q-del-2}),
then the contributions (\ref{Q-cont-1}--\ref{Q-cont-2-2}) 
multiplied by $(\nu-1)$,
and finally the contribution 1 of the  map $m_0$,
to obtain the functional equation of the proposition, at $t=1$. Then,  it
suffices to replace $w$ by $wt$ and $z$ by $zt$ to keep track of the
number of edges.
 The connection between $Q_1(x)$ and $Q_2(x)$ finally follows
 from~\eqref{eq:Q} by extracting  the coefficient of $y^1$.
\end{proof}

\section{A source of invariants}
\label{sec:source}
In this section, we establish an algebraic result that will be 
useful to construct  \emm invariants, (in the sense of Section~\ref{sec:inv-20})
associated with the functional equations of the previous section.

Let $T_m(x)$ be  the $m^{\hbox{\small th}}$ Chebyshev polynomial of the
first kind, defined by 
\beq\label{Cheb-def}
T_{m}(\cos \phi)=\cos(m\phi).
\eeq
This sequence of
polynomials satisfies the recurrence relation $T_0(x)=1$, $T_1(x)=x$,
and for $m\ge 2$, $T_m(x)=2xT_{m-1}(x)-T_{m-2}(x)$. In particular,
$T_m$ has degree $m$. 
Moreover, it is even or odd depending on whether $m$ is even or odd.
%
%
\begin{Proposition}\label{prop:source}
 Let $q \in \cs$, and let $K(u,v)= u^2+v^2-(q-2)uv-1$. 
There exists a polynomial $\J(u) \in \cs[u]$  such that  
$K(u,v)$ divides $\J(u)-\J(v)$ if and only if
$$
q=2+2\cos \frac {2k\pi} m
$$
for some integers $k$ and $m$ such that $0<2k<m$.

Assume this holds, with $k$ and $m$ relatively prime. Let
$J(u)=T_m\left(u\sin \frac{2k\pi}m\right)$. Then $J(u)$ is a solution of minimal degree. 
\end{Proposition}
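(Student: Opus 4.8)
The plan is to make the divisibility condition geometric, using a rational parametrization of the conic $K(u,v)=0$. Write $q-2=\zeta+\zeta^{-1}$ for some $\zeta\in\cs^\times$, so that $K(u,v)=u^2-(\zeta+\zeta^{-1})uv+v^2-1$. As a quadratic in $u$ its discriminant is $\big((q-2)^2-4\big)v^2+4$, which is a square in $\cs[v]$ only when $(q-2)^2=4$; hence $K$ is irreducible over $\cs$ exactly when $q\neq0,4$, i.e.\ $\zeta\neq\pm1$. I would first dispose of $q=0$ and $q=4$ directly: there $K=(u\mp v-1)(u\mp v+1)$, so $K\mid\J(u)-\J(v)$ forces $\J(s)=\J(s\pm2)$ and hence $\J$ constant — consistent with the statement, as $0$ and $4$ are not of the form $2+2\cos(2k\pi/m)$ with $0<2k<m$. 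From now on assume $\zeta\neq\pm1$.

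Next I would parametrize the smooth irreducible conic. Put
\[
v(w)=\frac{w-w^{-1}}{\zeta-\zeta^{-1}},\qquad u(w)=\frac{\zeta w-\zeta^{-1}w^{-1}}{\zeta-\zeta^{-1}}=v(\zeta w);
\]
a short expansion shows $K\big(u(w),v(w)\big)=0$ identically (with $w=e^{i\psi}$, $\zeta=e^{i\theta}$ this is the classical $u=\sin(\psi+\theta)/\sin\theta$, $v=\sin\psi/\sin\theta$). Both $u(w)$ and $v(w)$ lie in $\cs[w,w^{-1}]$ since $\zeta-\zeta^{-1}\neq0$, and $w\mapsto(u(w),v(w))$ is dominant onto $V(K)$; as $(K)$ is prime, $K\mid\J(u)-\J(v)$ if and only if $\J\big(u(w)\big)=\J\big(v(w)\big)$ in $\cs[w,w^{-1}]$. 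Using $u(w)=v(\zeta w)$, this is equivalent to $g(\zeta w)=g(w)$ for the Laurent polynomial $g(w):=\J\big(v(w)\big)$; writing $g(w)=\sum_n a_nw^n$, it says $a_n=0$ whenever $\zeta^n\neq1$.

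Now the characterization drops out. If $\zeta$ is not a root of unity, then $g$ is constant, and since $v(w)$ is non-constant, $\J$ is constant; so a non-constant $\J$ forces $\zeta$ to have finite order $m$. Writing then $\zeta=e^{2\pi i j/m}$ with $\gcd(j,m)=1$, and replacing $\zeta$ by $\bar\zeta$ and reducing $j$ into the appropriate range, we obtain $q=2+2\cos(2k\pi/m)$ with $\gcd(k,m)=1$ and $0<2k<m$ (the inequalities encode $m\ge3$, i.e.\ $\zeta\neq\pm1$). Conversely, when $\zeta$ has order $m$ the condition on $g$ says precisely that $g$ is a Laurent polynomial in $w^m$, and to finish both the existence claim and the minimal-degree claim it suffices to analyze which polynomials $\J$ have this property.

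For the candidate $\J(u)=T_m\!\big(u\sin\tfrac{2k\pi}m\big)$: from $\zeta-\zeta^{-1}=2i\sin\tfrac{2k\pi}m$ we get $\J\big(v(w)\big)=T_m\!\big(\tfrac{w-w^{-1}}{2i}\big)$, and since $\tfrac{w-w^{-1}}{2i}=\tfrac12(s+s^{-1})$ with $s=-iw$, the identity $T_m\big(\tfrac12(s+s^{-1})\big)=\tfrac12(s^m+s^{-m})$ yields $\J\big(v(w)\big)=\tfrac12\big((-i)^mw^m+i^mw^{-m}\big)$, a Laurent polynomial in $w^m$; so this $\J$ works, it is non-constant because $\sin\tfrac{2k\pi}m\neq0$ (as $0<\tfrac{2k\pi}m<\pi$), and $\deg\J=m$. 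For minimality, if $\J$ is non-constant of degree $d$ then the coefficient of $w^d$ in $\J\big(v(w)\big)$ equals $\lambda/(\zeta-\zeta^{-1})^d\neq0$, where $\lambda$ is the leading coefficient of $\J$; so $w^d$ occurs in $g$, and since $g$ must be a Laurent polynomial in $w^m$ we get $m\mid d$, hence $d\ge m$. Thus $T_m\!\big(u\sin\tfrac{2k\pi}m\big)$ achieves the minimal degree $m$. The only genuinely delicate points are the bookkeeping around the exceptional locus — knowing $K$ is irreducible so that vanishing on the parametrized curve equals divisibility by $K$, and treating $q=0,4$ by hand — and normalizing the parametrization correctly so that $u(w),v(w)$ are honest Laurent polynomials; beyond this the whole argument is the single observation that on $\{K=0\}$ the two coordinate functions differ by the multiplicative shift $w\mapsto\zeta w$.
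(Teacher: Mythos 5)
Your proof is correct, and it takes a genuinely different route from the paper's. The paper argues directly at the level of polynomials in $u,v$: it introduces the two involutions $\Phi(u,v)=(v,u)$ and $\Psi(u,v)=(u,(z+z^{-1})u-v)$ that preserve $K$, runs the orbit $u_i=\al_i u-\al_{i-1}v$ to produce a whole family of quadratics $K_j(u,v)$ dividing $\J(u)-\J(v)$, and then forces $z$ to be a root of unity by a finiteness argument on that family; the explicit identity $\J(u)-\J(v)=2^{m-1}\sin^m\theta\,\Delta_0(u,v)$ is only checked at the end by a root-matching computation. You instead pass immediately through a rational parametrization $w\mapsto\big(u(w),v(w)\big)$ of the irreducible conic $V(K)$, observing that the two coordinate functions differ by the multiplicative shift $w\mapsto\zeta w$; the whole divisibility question then collapses to the statement $g(\zeta w)=g(w)$ for the Laurent polynomial $g=\J\circ v$, from which the root-of-unity dichotomy, the construction via $T_m\big(\tfrac12(s+s^{-1})\big)=\tfrac12(s^m+s^{-m})$, and the minimality (via $m\mid\deg\J$) all fall out in a few lines. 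This buys a cleaner and more conceptual "only if" direction and a uniform treatment of the converse; in essence the paper's orbit $(u_i,u_{i\pm1})$ is the pullback of the shift $w\mapsto\zeta w$ through your parametrization, so your version makes the hidden structure explicit. Two small bookkeeping remarks: both arguments tacitly restrict to \emph{non-constant} $\J$ (a constant trivially satisfies the divisibility), and in your $q=4$ case the relevant shift is $\J(s)=\J(s+1)$ rather than $\J(s)=\J(s+2)$; neither affects the conclusion.
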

\noindent Observe that we do not require $m$ to be odd.


\medskip
\noindent {\bf Examples}  \\
Given the conditions $0<2k<m$, the smallest possible value of
$m$ is $3$. 
We focus on values of $k$ and $m$ such that $q$ is an integer, and
 give $\J(u)=T_m\left(u\sin {2k\pi}/m\right)$  up to a constant factor. 

\smallskip\noindent $\bullet$ For  $m=3$ and   $k=1$, we have
$q=1$ and $K(u,v)=u^2+v^2+uv-1$.  The polynomial
$\J(u)$ is  $u^3-u$. The
difference $\J(u)-\J(v)$ satisfies the required divisibility
property: 
$$
\J(u)-\J(v)=(u-v)(u^2+v^2+uv-1).
$$

\smallskip\noindent 
$\bullet$ For  $ m=4$ and $k=1$, we have $q=2$ and $K(u,v)=u^2+v^2-1$.
We find $\J(u)=8u^4-8u^2+1$, and observe that  $\J(u)-\J(v)$ is divisible by $K(u,v)$:
$$
\J(u)-\J(v)= 8(u-v)(u+v) (u^2+v^2-1).
$$

\smallskip\noindent 
$\bullet$ Finally for  $ m=6$ and $k=1$, we have $q=3$ and
$K(u,v)=u^2+v^2-uv-1$.
We find $\J(u)={u}^{6}-2{u}^{4}+{u}^{2}-2/27$ and observe that 
$$
\J(u)-\J(v)= (u-v)(u+v)(u^2+v^2+uv-1) (u^2+v^2-uv-1).
$$

\medskip
\noindent \emph{Proof of Proposition}~\ref{prop:source}.
  Let  $z\in \cs\setminus\{0\}$ be such that
  $q-2=z+z^{-1}$. Such a value $z$   always exists (there are two such
  values in general, but only one if   $q=0$ or $q=4$).

Let $\Phi$ and $\Psi$ be the following linear transformations:
$$
\Phi(u,v)= (v,u) 
\quad \hbox{and }
 \quad \Psi(u,v)=\left(u,(z+z^{-1})u-v \right).
$$
Observe that $\Phi$ and $\Psi$ are involutions, and that they leave
$K(u,v)$ unchanged: 
$$
K(u,v)= K(\Phi(u,v))= K(\Psi(u,v)).
$$
\begin{Lemma}\label{lem:source}
For $i \in  \zs$, denote
$$
u_i= \al_{i}u- \al_{i-1} v \quad \hbox{with} \quad 
\al_i  
= \frac{z^i-z^{-i}}{z-z^{-1}}.
$$  
When $z=1$ (resp. $z=-1$), we take for $\al_i$ the limit value
$\al_i=i$ (resp. $\al_i=(-1)^{i-1}i$).

The orbit of $(u,v)$ under the action of the group generated by
  $\Phi$ and $\Psi$ consists of all pairs $(u_i,u_{i\pm 1})$ for $i$
in $\zs$ (in particular, $(u,v)= (u_1,u_0)$).
Consequently, for all $i$ in $\zs$,
$$
K(u_i,u_{i\pm 1})=K(u,v).
$$
\end{Lemma}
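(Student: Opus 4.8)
The plan is to pin down the orbit explicitly by showing that each generator shifts the index $i$ by exactly one unit, so that the group acts on the doubly-infinite ``ladder'' of pairs $(u_i,u_{i\pm1})$ essentially like the infinite dihedral group. First I would record the two facts about the coefficients $\al_i$ on which everything rests. From the closed form $\al_i=(z^i-z^{-i})/(z-z^{-1})$, together with the stated limit conventions at $z=\pm1$, one reads off $\al_0=0$, $\al_1=1$ and $\al_{-1}=-1$, so that $u_0=v$ and $u_1=u$; this already gives $(u,v)=(u_1,u_0)$. And a one-line manipulation of the closed form yields the three-term recurrence $\al_{i+1}=(z+z^{-1})\al_i-\al_{i-1}$ for every $i\in\zs$; since both sides are polynomials in $z+z^{-1}$, this also covers the special values $z=\pm1$ (where the limit conventions are precisely the values of these polynomials at $\pm2$).

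Since $u_i=\al_i u-\al_{i-1}v$ depends $\zs$-linearly on the shift of indices, the recurrence for the $\al_i$ transfers verbatim to the $u_i$: $(z+z^{-1})u_i-u_{i-1}=u_{i+1}$ for all $i\in\zs$. With this identity in hand, the actions of the two generators on pairs of the prescribed shape are immediate. By definition of $\Phi$, one has $\Phi(u_i,u_{i+1})=(u_{i+1},u_i)$ and $\Phi(u_i,u_{i-1})=(u_{i-1},u_i)$, so $\Phi$ interchanges the ``increasing'' pairs $(u_i,u_{i+1})$ with the ``decreasing'' pairs $(u_j,u_{j-1})$. By definition of $\Psi$ and the identity above, $\Psi(u_i,u_{i-1})=\bigl(u_i,(z+z^{-1})u_i-u_{i-1}\bigr)=(u_i,u_{i+1})$, and, $\Psi$ being an involution, $\Psi(u_i,u_{i+1})=(u_i,u_{i-1})$.

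From here I would conclude in two steps. Let $S=\{(u_i,u_{i+1}):i\in\zs\}\cup\{(u_i,u_{i-1}):i\in\zs\}$. The computations above show that $S$ is stable under $\Phi$ and under $\Psi$, and $(u,v)=(u_1,u_0)\in S$, so the orbit of $(u,v)$ is contained in $S$. Conversely, starting from $(u_1,u_0)$ and applying $\Psi,\Phi,\Psi,\Phi,\dots$ produces the chain $(u_1,u_0)\mapsto(u_1,u_2)\mapsto(u_2,u_1)\mapsto(u_2,u_3)\mapsto\cdots$, which reaches every pair $(u_i,u_{i-1})$ and $(u_i,u_{i+1})$ with $i\ge1$; applying instead $\Phi,\Psi,\Phi,\Psi,\dots$ produces $(u_1,u_0)\mapsto(u_0,u_1)\mapsto(u_0,u_{-1})\mapsto(u_{-1},u_0)\mapsto\cdots$, covering those with $i\le0$. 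Hence $S$ is contained in the orbit, so the orbit equals $S$. The last assertion is then free: $K(u,v)=K(\Phi(u,v))=K(\Psi(u,v))$ means $K$ is constant along the orbit, so $K(u_i,u_{i\pm1})=K(u_1,u_0)=K(u,v)$ for every $i\in\zs$.

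I do not expect a genuine obstacle: once the recurrence for $\al_i$ is available the argument is mechanical. The two points deserving a little care are the limit cases $z=\pm1$ in that recurrence (cleanly handled by viewing $\al_i$ as a polynomial in $z+z^{-1}$), and the bookkeeping that keeps the ``increasing'' pairs $(u_i,u_{i+1})$ distinct from the ``decreasing'' pairs $(u_i,u_{i-1})$ when tracking which generator sends which type where — that is the one place where an index or a sign is easy to misplace.
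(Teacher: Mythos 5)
Your proof is correct and follows essentially the same route as the paper's: verify $(u,v)=(u_1,u_0)$, use the three-term recurrence $\al_{i+1}=(z+z^{-1})\al_i-\al_{i-1}$ (and hence $u_{i+1}=(z+z^{-1})u_i-u_{i-1}$) to compute how $\Phi$ and $\Psi$ act on pairs $(u_i,u_{i\pm1})$, then conclude the orbit description and the invariance of $K$. The paper simply leaves the recurrence and the two-sided containment argument implicit ("The description of the orbit follows"); you have spelled them out, including the handling of $z=\pm1$, with no errors.
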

\begin{proof}
One has $(u,v)=(u_1,u_0)$, and for all  $i\in \zs$,
  \begin{align*}
    \Phi(u_i,u_{i\pm 1})= (u_{i\pm 1},u_{i}), \quad
\Psi(u_i,u_{i+1})= (u_{i},u_{i-1}) \quad \hbox{and} 
 \quad \Psi(u_i,u_{i-1})=(u_{i},u_{i+1}).
  \end{align*}
The description of the orbit  follows. The second result follows from
the fact that $\Phi$ and $\Psi$ leave $K(u,v)$ unchanged.
\end{proof}

We now return to the proof of Proposition~\ref{prop:source}. 
Assume  there exists  a polynomial $\J(u)$ such that $K(u,v)$ divides
$\Delta(u,v):=\J(u)-\J(v)$. 
We will prove that $\Delta(u,v)$ has many factors other than $K(u,v)$.  

For a start, an obvious factor of  $\Delta(u,v)$ is $u-v$.

Now for all $i\geq 0$, the polynomial $K(u_{i+1},u_{i})$
divides $J(u_{i+1})-J(u_{i})$.  By Lemma~\ref{lem:source}, this means that $K(u,v)$
divides  $J(u_{i+1})-J(u_{i})$ for all $i\geq 0$,
and it also divides the sum  
$$
\sum_{i=0}^{j-1}\left(J(u_{i+1})-J(u_i)\right)=J(u_{j})-J(u_0)
$$
for all $j\ge 0$.
 Similarly, for all $i\geq 0$, the polynomial
$K(u_{-i-1},u_{-i})=K(u,v)$ divides $J(u_{-i-1})-J(u_{-i})$. 
Summing over $i=0, \ldots, j-1$
shows that $K(u,v)$ divides  $J(u_{-j})-J(u_0)$ for
all $j\ge 0$. Thus finally:
\beq \label{eq:division}
K(u,v) \ \hbox{ divides }\  J(u_j)-J(v)\ \hbox{ for all }\ 
j \in \zs.
\eeq

 If $\al_j\not =0$, we can  express $u$ in terms of $u_j:=w$ and $u_0=v$,
 and the divisibility property 
(between polynomials in $v$ and $w$) reads
 \beq \label{more-div}
 K\left( \frac{ w+\al_{j-1}v} {\al_{j}}, v\right) \ \big |
 \ \J(w)-\J(v) .
 \eeq
 Given that 
 $$
 K\left( \frac{ u+\al_{j-1}v} {\al_{j}}, v\right) = \frac 1 {\al_j^2}\ 
 K_j(u,v)
 $$
 {with}
 $$ 
 K_j(u,v)=u^2+v^2-uv (z^j+z^{-j}) - \left(\frac{z^j-z^{-j}}{z-z^{-1}}\right)^2,
 $$
 we can rewrite~\eqref{more-div} as
 $$
 K_j(u,v) \ \big | \ \J(u)-\J(v)=\Delta(u,v)
 $$
 for all $j$ such that  $\al_{j}\not =0$. Observe that  the
 polynomials $K_j(u,v)$ and $K_k(u,v)$ are relatively prime, unless
 they coincide. Consequently, the collection of polynomials $K_j(u,v)$
 such that $\al_{j}\not =0$ must be finite. 

If there exists  $j\in\zs$ such that $\al_j=0$, then $z^{j}=z^{-j}$
and $z$ is a root of unity. 
If $\al_{j}\not =0$ for
all $j\in\zs$, then there exist $j\not = k$ such that $K_j(u,v)$ and $K_k(u,v)$ 
coincide. This  implies that either $z^{j}=z^{k}$ or  $z^{j}=z^{-k}$. Again, $z$
is a root of unity.

Let us first prove that $z$ cannot be $\pm1$, or equivalently, that $q$
cannot be $4$ or $0$.
For   $z=1$ and $q=4$,
 $$
 K_j(u,v):=u^2+v^2-2uv  - j^2,
 $$
 while for $z=-1$ and $q=0$,
 $$
 K_j(u,v):=u^2+v^2-2uv (-1)^j - j^2.
 $$
 In both cases, $\al_j=0$ if and only if $j=0$, 
 so that the polynomials $K_j$  for which $\al_j\not =0$ form an
 infinite family. Hence $z$ cannot be $\pm 1$.

 Let us denote 
 $$
 z= e^{i \theta } \quad \hbox{with} \quad \theta=\frac{2k\pi}m,
 $$
 with $k$ and $m$ coprime (again, we allow $m$ to be even).  This means
 that we started from
 $$
 q=2+2\cos \frac{2k\pi} m,
 $$
 and thus we may assume $0< 2k < m$, that is, $\theta \in (0, \pi)$. 
 We can now write
 $$
 K_j(u,v)= u^2+v^2-2uv\cos j\theta - \frac{\sin^2j\theta}{\sin^2\theta}.
 $$
 This polynomial divides $\Delta(u,v)$ as soon as $\al_j\not = 0$, that
 is, as soon  as $\sin j\theta \not = 0$. This includes of course $K_1(u,v)=K(u,v)$.

 As $j$ varies in $\zs$, there are as many distinct polynomials
 $K_j(u,v)$ such that  $\sin j\theta \not = 0$  as values of $\cos j\theta= \cos
 2jk\pi/m$ distinct from $\pm 1$.
 Using the fact that $k$ and $m$ are relatively prime, it is easy to
 see that there are  $\lfloor(m-1)/2\rfloor$ such
 values, namely  all values  $\cos 2j\pi/m$ for $j=1, \ldots,
 \lfloor(m-1)/2\rfloor$. Hence for $1\le j \le \lfloor(m-1)/2\rfloor$,
 $$
  u^2+v^2-2uv\cos 2j\pi/m -\frac{\sin^22j\pi/m}{\sin^2\theta}
 $$
 is a divisor of $\Delta(u,v)=\J(u)-\J(v)$. Another divisor is
 $(u-v)$.
 Finally,  if $m$ is even, then $k$ is odd and
 it is easy to see that $u_{m/2}=-v$. By~\eqref{eq:division}, $K(u,v)$ divides $J(-v)-J(v)$,
 which means that 
$J(v)$ is an even polynomial. As $(u-v)$ divides $\J(u)-\J(v)$, it
 follows that $(u+v)$ is another divisor of $\J(u)-\J(v)$. 
 Putting together all divisors we have found, we conclude that
 $\Delta(u,v)=\J(u)-\J(v)$ is a multiple of  
 $$
 \Delta_0(u,v):= (u-v)(u+v)^{\chi_{m,0}} \prod_{j=1}^{\lfloor(m-1)/2\rfloor}
  \left( u^2+v^2-2uv\cos 2j\pi/m -\frac{\sin^22j\pi/m}{\sin^2\theta}\right)
 $$
 where $\chi_{m,0}$ equals $1$ if $m$
 is even, and $0$ otherwise. 
 In particular,  $\J(u)$ has degree at  least $m$. 

 We now claim that 
 \beq\label{D0}
 \left( 2^{m-1}\sin^m \theta\right) \   \Delta_0(u,v)=\J(u)-\J(v),
 \eeq
 for $\J(u)=T_m(u\sin \theta)$.
 This will prove that $T_m(u\sin \theta)$ is a solution to our problem (since $K(u,v)$ is a factor of  $\Delta_0(u,v)$), of minimal degree $m$.

 It suffices to prove~\eqref{D0} for $v=\cos \psi/\sin
 \theta$, for a generic value of 
 $\psi$. We observe that both  sides of~\eqref{D0}
 are polynomials in $u$ of degree $m$ and leading coefficient
 $2^{m-1}\sin^m \theta$.  We now want to prove that they have the same roots.
  We can easily factor $\Delta_0(u,v)$ in linear factors of $u$:
 $$
  \Delta_0(u,v)= \prod_{j=-\lfloor (m-1)/2\rfloor}^{\lfloor m/2\rfloor}
  \left( u -\frac{\cos  (\psi+2j\pi/m)}{\sin \theta}\right).
 $$
 For a generic value of $\psi$, this polynomial has  $m$ distinct roots.
 So it remains to prove that these roots also cancel
 $\J(u)-\J(v)$, \emm i.e.,, that
 $$
 T_m(\cos  (\psi+2j\pi/m))= T_m(\cos\psi).
 $$
 This clearly holds, given that  $T_m(\cos\phi)=\cos(m\phi)$ for any
 $\phi$. 

 This concludes the proof of Proposition~\ref{prop:source}.
 \qed

\section{Invariants for planar maps}
\label{sec:inv-maps}
Consider the functional equation~\eqref{eq:M} we have established
for colored planar maps. By Euler's relation, we do not lose
information by setting  the indeterminate  $z$ to
1: we thus decide to do so.
The functional equation is linear in the main unknown series,
$M(x,y)$. The coefficient of $M(x,y)$ is called the \emm kernel,, and
is denoted by $K(x,y)$:
$$
K(x,y)=1 
-\frac{x^2ywt(\nu-1)}{x-1}-\frac{xy^2t}{y-1}-xyt(x\nu-1)\gM(x,1)
-xywt\left((\nu-1)(y-1)+qy\right)\gM(1,y).
$$
\begin{Lemma}\label{lem:Yi-planar} 
Set  $x=1+ts$.
  The kernel $K(x,y)$, seen as a function of $y$, has two
  roots, denoted $Y_1$ and $Y_2$, in the ring $\qs(q,
  \nu,w,s)[[t]]$. Their constant terms are $1$ and
 $s/(w(\nu-1))$ respectively.
The coefficient of $t$ in $Y_1$ is $s/(w-w\nu+s)$, and in
particular, is non-zero.
\end{Lemma}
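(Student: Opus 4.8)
The plan is to apply the kernel method directly to the equation $K(x,Y)=0$, treating it as defining a power series in $t$. First I would write out the kernel explicitly with $z=1$ and $x=1+ts$:
\[
K(x,y)=1-\frac{x^2ywt(\nu-1)}{x-1}-\frac{xy^2t}{y-1}-xyt(x\nu-1)\gM(x,1)-xywt\bigl((\nu-1)(y-1)+qy\bigr)\gM(1,y),
\]
and then clear the denominators $x-1=ts$ and $y-1$. The factor of $t$ hidden in $x-1=ts$ is the crucial point: after multiplying through by $(x-1)(y-1)=ts(y-1)$, the term $\dfrac{x^2ywt(\nu-1)}{x-1}$ becomes $x^2yw(\nu-1)(y-1)$, which carries no factor of $t$, while every other term acquires at least one factor of $t$. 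So the cleared equation $\widetilde K(x,y):=(x-1)(y-1)K(x,y)/t^{?}$ — more precisely $s(y-1)K(x,y)$ — has the shape
\[
s(y-1)K(x,y)=x^2yw(\nu-1)(y-1)+t\cdot(\text{stuff}).
\]
Setting this to zero and isolating: at $t=0$ we have $x=1$, so the $t^0$ part reads $w(\nu-1)(y-1)\bigl(y-\tfrac{s}{w(\nu-1)}\bigr)=0$ after factoring (I would check the exact constant, but the two roots of the leading quadratic in $y$ are visibly $y=1$ and $y=s/(w(\nu-1))$, matching the claim). This shows the $t^0$-specialization of $K(1+ts,y)=0$, once denominators are cleared, is a polynomial in $y$ of degree $2$ with two simple roots (generically in the parameters $q,\nu,w,s$), namely $1$ and $s/(w(\nu-1))$.

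Next, the standard argument: since these two roots are simple, the implicit function theorem for formal power series (or an explicit order-by-order induction on the coefficient of $t^n$) shows that each lifts uniquely to a root $Y_i\in\qs(q,\nu,w,s)[[t]]$ of the full cleared equation. Here one must be a little careful that the cleared polynomial equation $s(y-1)K(1+ts,y)=0$ is equivalent to $K(1+ts,Y)=0$ for a series solution — i.e. that the spurious factor $(y-1)$ introduced by clearing does not create a fake root; but $Y_1$ has constant term $1$, so one checks separately that $Y_1-1$ is genuinely $O(t)$ and that after dividing the cleared equation by an appropriate power of $t$ the coefficient of $Y_1$ in the genuine kernel still vanishes. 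The coefficients of $\gM(1,y)$ and $\gM(x,1)$ appearing in $K$ are themselves series in $t$ whose coefficients are already determined (by recursion from~\eqref{eq:M}), so substituting $y=Y_i$ and matching powers of $t$ determines $Y_i$ coefficient by coefficient with no circularity.

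Finally, for the coefficient of $t$ in $Y_1$: write $Y_1=1+c_1t+O(t^2)$, substitute into the cleared equation $s(y-1)K(1+ts,y)=0$, and extract the coefficient of $t^1$ (or $t^2$, depending on the overall power of $t$ factored out). Since $\gM(x,1)=1+O(t)$ and $\gM(1,y)=1+O(t)$ at lowest order, and $x=1+ts$, this becomes an explicit linear equation for $c_1$; solving it should give $c_1=s/(w-w\nu+s)$, and in particular $c_1\neq 0$ as a rational function of the parameters. The main obstacle — really a bookkeeping obstacle rather than a conceptual one — is tracking exactly which powers of $t$ appear after clearing denominators and making sure the "division by $(y-1)$" and "division by a power of $t$" are done consistently so that the two series roots of the polynomial equation are exactly the two roots of $K$ itself (and not, say, $y=1$ masquerading as a root of $K$ because it is a root of the cleared polynomial). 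I expect the cleanest route is the explicit order-by-order induction, sidestepping any subtlety about the clearing by simply exhibiting the recursion that produces the coefficients of $Y_1$ and $Y_2$.
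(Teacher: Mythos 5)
Your approach is essentially the same as the paper's: multiply the kernel equation by $Y-1$ (using $x-1=ts$ to cancel the $t$ in the $w(\nu-1)$ term), observe that the $t^0$ part is the quadratic $(y-1)\bigl(s-yw(\nu-1)\bigr)$ with the two simple roots $1$ and $s/(w(\nu-1))$, and lift each by induction on the coefficient of $t^n$; the paper just packages this as $(Y-1)\bigl(1-\tfrac{Yw(\nu-1)}{s}\bigr)=tY[\cdots]$, which makes the induction and the absence of the spurious root $Y\equiv 1$ immediate (the RHS at $Y=1$ is $tx\neq 0$). Your explicit check of the coefficient of $t$ in $Y_1$ is correct and needed for the last assertion of the lemma.
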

\begin{proof}
With $x=1+st$, the equation $K(x,Y)=0$ reads
\begin{multline*}
  (Y-1)\left(1-\frac{Yw(\nu-1)}{s}\right)
=tY \Big({xY}+{(1+x)w(\nu-1)(Y-1)}+x(Y-1)(x\nu-1)\gM(x,1)\\
+xw(Y-1)\left((\nu-1)(Y-1)+qY\right)\gM(1,Y)
\Big).
\end{multline*}

In this form, it is clear that the constant term of a root $Y$ must be
$1$ or $s/w/(\nu-1)$. For each of these choices, the factor $t$
occurring on the right-hand side guarantees the existence of a unique
solution $Y$ (the coefficient of $t^n$ can be determined by induction on $n$).
\end{proof}

\begin{Proposition}\label{prop:inv-planaires}
Set $x=1+ts$ and let $Y_1$, $Y_2$ be the series defined in
Lemma~{\rm\ref{lem:Yi-planar}}. Define
$$
I(y)=
wtyq M ( 1,y ) +{\frac {y-1}{y}}+{\frac {ty}{y-1}}.
$$
Then $I(y)$ is an \emm invariant,. That is, $I(Y_1)=I(Y_2)$.

If, moreover,  $q$ is of the form 
$$
q=2+2\cos \frac{2k\pi}m,
$$
with $0 <2k <m$ and $k$ and $m$ coprime,
then there exists a second invariant,
$$
J(y)= D(y)^{m/2} \ 
T_m\left(
\frac{\be(4- q ) (\by-1)+( q+2\,\be ) I(y) -q}
{2\sqrt {D(y)}}
%
\right),
$$
where  $T_m$ is the $m^{\hbox{th}}$ Chebyshev
polynomial~\eqref{Cheb-def}, $\be=\nu-1$, and 
$$
D(y)= 
(q\nu+\be^2)I(y)^2-q (\nu+1 ) I(y)+ \be t ( q-4 )  ( wq+\be ) +q.
$$
\end{Proposition}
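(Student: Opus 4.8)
\textbf{Proof plan for Proposition~\ref{prop:inv-planaires}.}
The plan is to first verify that $I(y)$ is an invariant, and then to build $J(y)$ by composing $I(y)$ with the polynomial $\J$ supplied by Proposition~\ref{prop:source}. For the first part, I would go back to the four equations
$K(x,Y_i)=R(x,Y_i)=0$ for $i=1,2$ (exactly the system~\eqref{syst1} in the case $q=2$, now written for general $q$ with $x=1+ts$), and eliminate the two ``unknowns'' $x$ and $M(x,1)$ from this system. Since the $Y_i$ are the two roots of the kernel $K(x,\cdot)$ at $x=1+ts$, the quantities $\gM(x,1)$ and $x$ appear rationally, and the elimination is a finite rational computation (the analogue of what is announced for general $q$ in Section~\ref{sec:inv-maps}). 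One of the two resulting relations, after simplification, should read $I(Y_1)=I(Y_2)$ with $I(y)=wtyq\,M(1,y)+\frac{y-1}{y}+\frac{ty}{y-1}$; the point is that the combination $wtyq\,M(1,y)$ is exactly what is needed to absorb the asymmetric terms. I would double-check this identity at low order in $t$ against the explicit expansions of $Y_1,Y_2$, and note that it is well-defined as an element of $\qs(q,\nu,w,s)[[t]]$ because $Y_i-1$ is a series of positive valuation (Lemma~\ref{lem:Yi-planar}) and hence $(Y_i-1)/Y_i$, $tY_i/(Y_i-1)$ are honest power series in $t$.

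For the second invariant, the second relation obtained from the same elimination should be a \emph{quadratic} relation between $Y_1$ and $Y_2$ of the same shape as~\eqref{Y-2nd}, namely of the form $K_0(1/Y_1,1/Y_2)=0$ where, after the affine change of variable
$$
u\ \longmapsto\ \frac{\be(4-q)(u-1)+(q+2\be)\,\cdot\ -q}{2\sqrt{D}}\,,
$$
the quadratic $K_0$ becomes, up to the scalar normalisation and up to rescaling by $\sqrt{D(Y_1)D(Y_2)}$, precisely the polynomial $K(u,v)=u^2+v^2-(q-2)uv-1$ of Proposition~\ref{prop:source} evaluated at $u=U(Y_1)$, $v=U(Y_2)$ where $U(y)$ denotes the argument of $T_m$ in the statement. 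The discriminant-type quantity $D(y)$ is exactly the expression that makes this normalisation work: it is the quantity under the square root that turns the ``physical'' quadratic relation into the symmetric normal form $u^2+v^2-(q-2)uv-1=0$. Granting this, I would invoke Proposition~\ref{prop:source}: since $q=2+2\cos\frac{2k\pi}{m}$ with $0<2k<m$ and $\gcd(k,m)=1$, the polynomial $K(u,v)$ divides $\J(u)-\J(v)$ for $\J(u)=T_m\!\big(u\sin\frac{2k\pi}{m}\big)$. Because $I(Y_1)=I(Y_2)$ already forces $D(Y_1)=D(Y_2)=:D$ and hence $U(Y_1),U(Y_2)$ lie on the curve $K(u,v)=0$, we get $\J(U(Y_1))=\J(U(Y_2))$, i.e.\ $T_m\!\big(U(Y_i)\sin\frac{2k\pi}{m}\big)$ takes the same value at $i=1,2$; multiplying through by the common factor $D^{m/2}$ (legitimate since $D(Y_1)=D(Y_2)$) and absorbing the innocuous factor $\sin\frac{2k\pi}{m}$ into the definition of $U$, one obtains $J(Y_1)=J(Y_2)$. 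I would also remark, as the footnote of Section~\ref{sec:thm-inv-20} does, that any scalar multiple or shift can be folded in, and that $D^{m/2}$ is a genuine power series in $t$ when $D$ has nonzero constant term (which should be checked: $D(y)|_{t=0}$ is a nonzero rational function of $q,\nu,y$).

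The main obstacle is the explicit elimination producing the quadratic relation and the verification that it is \emph{exactly} $K(U(Y_1),U(Y_2))=0$ with the stated $D(y)$ and the stated affine substitution: this is where all the bookkeeping lives, and where the precise coefficients $\be(4-q)$, $q+2\be$, $q$, and the formula for $D(y)$ have to fall out rather than be guessed. Concretely I would (i) perform the resultant computation $\mathrm{Res}_x$ of the two kernel/RHS relations to eliminate $x$, then eliminate $\gM(x,1)$, obtaining a polynomial relation $P(Y_1,Y_2,q,\nu,t,w)=0$ symmetric in $Y_1,Y_2$; (ii) re-express it in terms of the invariant value $\mathcal I:=I(Y_1)=I(Y_2)$ and the reciprocals $\bar Y_i=1/Y_i$, where it should become affine in $\bar Y_1^2,\bar Y_2^2,\bar Y_1\bar Y_2$; (iii) complete the square so that it reads $U(Y_1)^2+U(Y_2)^2-(q-2)U(Y_1)U(Y_2)-1=0$, reading off $U$ and $D$ in the process. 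Steps (i)--(ii) are routine but heavy (done in Maple, cf.\ the remark in the introduction); step (iii) is the one genuinely needing care, since it is what ties the computation to the form $q=2+2\cos\frac{2k\pi}{m}$ via Proposition~\ref{prop:source}. Once (iii) is in hand, the conclusion $J(Y_1)=J(Y_2)$ is immediate from Proposition~\ref{prop:source} as explained above.
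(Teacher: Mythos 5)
Your proposal follows the same strategy as the paper's proof: eliminate $x$ and $M(x,1)$ from the system $K(x,Y_i)=R(x,Y_i)=0$ to obtain $I(Y_1)=I(Y_2)$ and a quadratic relation between $Y_1,Y_2$; then perform the affine-plus-rescaling change of variables in $1/Y_i$ to reach the normal form $U_1^2+U_2^2-(q-2)U_1U_2-1=0$, after which Proposition~\ref{prop:source} applied with $\J(u)=T_m(u\sin\theta)$ yields the second invariant, and multiplying by $D(y)^{m/2}$ (which depends on $y$ only through $I(y)$) preserves invariance. The only cosmetic difference is that the paper carries out the elimination by hand (solving linear pairs for $1/x$ and $S(x)=xM(x,1)$, then comparing) rather than via resultants, and it uses the parity of $T_m$ to see that $J(y)$ is a polynomial in $I(y)$ (so no square root is actually taken), instead of arguing about the constant term of $D(y)$; neither difference affects correctness.
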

Before proving this proposition, let us recall that
$T_m(x)$ is a polynomial in $x$ of degree $m$, which is even
(resp. odd) if $m$ is even (resp. odd). This implies that $J(y)$ only
involves non-negative integral powers of $D(y)$, and thus is a \emm polynomial,
in $q$, $\nu$, 
$w$, $t$, $\by$  and $I(y)$ with rational coefficients.
Moreover, it follows from the expressions of $I(y)$ and $J(y)$ that,   when
expanded in powers of $t$, $J(y)$ has rational coefficients in 
 $y$ with a pole at $y=1$ of  multiplicity at most $m$. 
\begin{proof}
  Denote $\be =\nu-1$.  The functional equation~\eqref{eq:M} reads
$$
K(x,y) M(x,y)= R(x,y),
$$
where the kernel $K(x,y)$ is
$$
K(x,y)=1-{\frac {{x}^{2}ytw\be}{x-1}}-{\frac {x{y}^{2}t}{y-1}}
-xyt \left( x\nu-1 \right) M ( x,1 ) -xytw \left( y(q+\be) -\be
\right) M ( 1,y ) , 
%
$$
and the right-hand side $R(x,y)$ is:
$$
R(x,y)=1-{\frac {xytM ( x,1) }{y-1}}-{\frac {xytw\be M ( 1,y ) }{x-1}}.
$$
 Both series $Y_i$ cancel the kernel. Replacing $y$ by $Y_i$ in the
 functional equation gives  $R(x,Y_i)=0$. We thus have four equations,
 $K(x,Y_1)=R(x,Y_1)=K(x,Y_2)=R(x,Y_2)=0$, with coefficients in
 $\qs(q,\nu, w,t)$, that relate $Y_1$, $Y_2$,
 $M(1,Y_1)$, $M(1,Y_2)$, $x$ and $M(x,1)$. We will eliminate from this
 system $x$ and $M(x,1)$ to obtain two equations relating $Y_1$, $Y_2$,
 $M(1,Y_1)$ and  $M(1,Y_2)$, and these equations will read
 $I(Y_1)=I(Y_2)$ and  $J(Y_1)=J(Y_2)$.

\smallskip
Let us write $xM(x,1)=S(x)$. We can solve the pair $R(x,Y_1)=0$,
$R(x,Y_2)=0$ for $x$ and $S(x)$. This gives:
$$
\frac 1 x=1 -{\frac {tw\be Y_1\,Y_2\, \left( Y_1-1 \right) M ( 1,Y_1
    ) }{Y_1-Y_2}}
-{\frac {tw\be Y_1\,Y_2\,
 \left( Y_2-1 \right) M ( 1,Y_2 ) }{Y_2-Y_1}},
$$
\beq\label{Ssol1}
S(x)=xM(x,1)=\frac {   \left( Y_1-1 \right) \left( Y_2-1 \right) \left( 
Y_1\,M ( 1,Y_1 ) -Y_2\,M ( 1,Y_2
 )  \right) }
{t{Y_1\,Y_2\, \left(
(Y_1-1)\,M ( 1,Y_1 )   -(Y_2-1)\,M ( 1,Y_2 ) 
\right) }}
\eeq

Let us now work with the equations $K(x,Y_1)=0$ and $K(x,Y_2)=0$. We
eliminate $M(x,1)$ between them. The resulting equation can be 
solved for $x$, yielding a second expression of $1/x$:
\begin{multline}\label{xsol2}
\frac 1 x= 
\frac {tY_1\,Y_2}{   \left( Y_1-1 \right) \left( Y_2-1 \right)}
-{\frac { twY_1\,Y_2 \left((q+\be)Y_1-\be\right)
   \,M ( 1,Y_1    ) }{Y_1-Y_2}}\\
-{\frac {twY_1\,Y_2 \left((q+\be)Y_2-\be\right)
M ( 1,Y_2 ) }{Y_2-Y_1}}.  
\end{multline}
Comparing the two expressions of $1/x$ gives an identity between $Y_1$, $Y_2$,
 $M(1,Y_1)$ and  $M(1,Y_2)$ which can be written as
$$
wt qY_1\, M ( 1,Y_1 ) 
-\frac 1 {Y_1}+{\frac {t}{Y_1-1}}
=
wt q Y_2\, M ( 1,Y_2 )-\frac 1 {Y_2} +{\frac {t}{Y_2-1}}.
$$
This shows that the series $I(y)$ defined in the proposition is indeed
an invariant, as
$$
I(y)=1+t+
 wt qy M ( 1,y ) -{\frac {1}{y}}+{\frac {t}{y-1}}.
$$
 Let us denote $\cI=I(Y_1)=I(Y_2)$.  The above equation gives an
 expression of $M(1,Y_i)$ in terms of $Y_i$ and $\cI$:
\beq\label{MI}
M ( 1,Y_i ) =
-{\frac {(1-{Y_i})^{2}+t{Y_i}^2+\cI \,Y_i(1-Y_i)}
{ tw q {Y_i}^{2} \left( Y_i-1 \right) }}.
\eeq

Now in $K(x,Y_1)=0$, set $M(x,1)=\bx S(x)$, replace $x$ by its expression
derived from~\eqref{xsol2}, and then each $M(1,Y_i)$ by its
expression in terms of $\cI$. Solve the resulting equation for $S(x)$,
and compare  the solution with~\eqref{Ssol1} (where, again,  each term
$M(1,Y_i)$ has been replaced by its expression~\eqref{MI}). This gives
an identity relating $Y_1$, $Y_2$ and $\cI$: 
\begin{multline*}
\be \left( Y_1^2+Y_2^2-(q-2)Y_1Y_2\right)
+ \left((q+2\be)(\cI-2)+q\nu\right)Y_1Y_2(Y_1+Y_2)\\
+ \left( 
\left( q+\be \right) \cI^{2}- \left( 3\,q+4\,\be \right)\cI
+q t\left( wq+\be \right) +2\,q-q\be+4\,\be
\right)
Y_1^2Y_2^2=0.
\end{multline*}
By an appropriate change of variables, we will transform this identity into 
\beq\label{forme-canonique}
U_1^2+U_2^2-(q-2)U_1U_2-1=0
\eeq
and then  apply Proposition~\ref{prop:source}.
First, setting $Y_i=1/Z_i$ gives an equation of total degree 2 in $Z_1$ and
$Z_2$.
Then, a well-chosen translation $Z_i := V_i+ a$ gives
an equation of total degree 2 in $V_1$ and $V_2$ having no linear term:
$$
V_1^2+V_2^2-(q-2)V_1V_2-\frac{q\, \cD}{(4-q)\be^2}=0,
$$
with
$$
\cD=(q\nu+\be^2) \cI^2- q(\nu+1) \cI
+\be t ( q-4 ) ( wq+\be ) +q.
$$
The value of the shift $a$ is
$$
a= 1-\frac{(q+2\be)\cI-q}{\be(4-q)}.
$$
Finally, we have reached an equation of the form~\eqref{forme-canonique}, with
$$
U_i=\frac{\be \sqrt{4-q}}{\sqrt{q\, \cD}}\, V_i=
\frac{\be (4-q)(1/Y_i-1)+(q+2\be)\cI -q}{\sqrt{q(4-q) {\cD}}}.
$$

Now assume $q=2+2\cos \theta$ with $\theta=2k\pi /m$, where $k$ and
$m$ are coprime and $0<2k<m$. Let $u$ and $v$
be two indeterminates. By
Proposition~\ref{prop:source},  the polynomial $u^2+v^2-(q-2)uv-1$
divides the polynomial $T_m(u\sin \theta)-T_m(v\sin
\theta)$. Returning to~\eqref{forme-canonique} shows that
$T_m(U_1\sin \theta)-T_m(U_2\sin \theta)=0$. Equivalently,
$$
T_m\left( \frac{ \be(4-q)(1/Y_1-1)+(q+2\be)\cI -q}{\sqrt{q(4-q)
   {D(Y_1)}}}
\sin \theta\right)=
T_m\left( \frac{ \be(4-q)(1/Y_2-1)+(q+2\be)\cI -q}{\sqrt{q(4-q)
   {D(Y_2)}}}
\sin \theta\right),
$$
where $D(y)$ is defined in the proposition.
In other words, 
$$
T_m\left(  \frac{ (4-q)\be(\by-1)+(q+2\be)\cI -q}{\sqrt{q(4-q)}
  \sqrt {D(y)}}
\sin \theta\right)
$$
is an invariant. Given that $\sin \theta= \sqrt{q(4-q)}/2$, we 
obtain, after multiplying the above invariant by $D(y)^{m/2}$, the
second invariant $J(y)$ given in the proposition.  The
multiplication by  $D(y)^{m/2}$ preserves the invariance, as $D(y)$
only depends on $y$ via the invariant $I(y)$.
 \end{proof}

\section{Invariants for planar triangulations}
\label{sec:inv-triang}
Consider the functional equation~\eqref{eq:Q} we have established
for colored triangulations. We do not lose information by setting
$z=w=1$: by counting edge-face incidences, we obtain, for any map
$Q\in \mQ$, 
$$
 3(\ff(Q)-\dig(Q)-1)+ 2\dig(Q)+\df(Q)=2\ee(Q),
$$
while Euler's relation reads
$$
\vv(Q)+\ff(Q)-2=\ee(Q).
$$
Thus $\vv(Q)$ and $\ff(Q)$ can be recovered from $\dig(Q), \df(Q)$ and
$\ee(Q)$. Let us thus set $w=z=1$.

Equation~\eqref{eq:Q} is linear in the main unknown series,
$\gQ(x,y)$. We call the coefficient of $Q(x,y)$  the \emm kernel,.
\begin{Lemma}\label{lem:kernel-triang}
Set  $x=ts$.
  The kernel of~\eqref{eq:Q}, seen as a function of $y$, has two
  roots, denoted $Y_1$ and $Y_2$, in the ring $\qs(q,
  \nu,s)[[t]]$. Their constant terms are $0$ and $s/(\nu-1)$
  respectively. Both series actually belong to 
$\qs(\nu)[q,s, 1/s][[t]]$.
\end{Lemma}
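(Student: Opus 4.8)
The plan is to mimic the argument already used in the proof of Lemma~\ref{lem:Yi-planar}, adapting it to the kernel of~\eqref{eq:Q} with $w=z=1$. First I would write out the kernel explicitly. Setting $w=z=1$ and isolating the coefficient of $Q(x,y)$ in~\eqref{eq:Q}, the kernel has the shape
\[
K(x,y)=1+\frac{t}{y}-xt-xyt\,Q_1(x)-yt(\nu-1)\bigl(2xQ_1(x)+Q_2(x)\bigr)
-y^2t\Bigl(q+\frac{\nu-1}{1-xt\nu}\Bigr)Q(0,y)-\frac{yt(\nu-1)}{x(1-xt\nu)},
\]
so that $y K(x,y)$ is, after clearing the denominators $x$ and $1-xt\nu$, a polynomial of degree $3$ in $y$ (the $y^3$ term coming from the $y^2\cdot y$ contribution of the $Q(0,y)$ summand once multiplied through), with coefficients that are power series in $t$. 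Substituting $x=ts$ turns every term carrying an explicit factor $x$ into something of order $t$ in $t$, which is exactly what makes the Newton-polygon/valuation bookkeeping work out.

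The heart of the matter is then a standard kernel-method existence argument. Write $yK(ts,y)=0$; after the substitution $x=ts$, the constant term in $t$ of this equation degenerates, and one sees that the equation $K(ts,Y)=0$ forces the constant term $Y_0$ of a solution $Y$ to satisfy $Y_0\bigl(1-Y_0(\nu-1)/s\bigr)=0$ — precisely as in the planar-maps case, except that here the ``trivial'' root $Y_1$ has constant term $0$ rather than $1$, because the $y^0$ coefficient of the kernel is $1+t/y$ read correctly, i.e. the $1/y$ term makes $y=0$ a root at order zero. So the two candidate constant terms are $0$ and $s/(\nu-1)$. For each choice, one checks that the derivative of the relevant branch of $K(ts,Y)=0$ with respect to $Y$ is invertible as a power series (equivalently, that an explicit factor of $t$ appears on the side of the equation that would otherwise obstruct the recursion), so that the implicit function theorem for formal power series — i.e. solving for the coefficient of $t^n$ by induction on $n$ — yields a unique $Y_i\in\qs(q,\nu,s)[[t]]$ with the prescribed constant term. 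This gives the two roots $Y_1,Y_2$.

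For the final sentence, that $Y_1,Y_2\in\qs(\nu)[q,s,1/s][[t]]$, I would run the same inductive solution but track denominators. The point is that in the recursion for the coefficient of $t^n$, the only quantities one divides by are (a) the ``Jacobian'' $\partial_Y K(ts,Y)|_{t=0}$ evaluated at the constant term, which for $Y_1$ is $1$ and for $Y_2$ is $-s/(\nu-1)$ up to a unit — no dependence on $q$ in the denominator — and (b) the factors $s$ introduced by clearing the $1/x=1/(ts)$ appearing in the kernel. Since $Q(x,y)$ itself has coefficients in $\qs[q,\nu,x,y]$ (polynomial, in particular polynomial in $q$), feeding these back in never introduces a denominator in $q$; only powers of $s^{-1}$ and denominators that are powers of $(\nu-1)$ — hence lie in $\qs(\nu)$ — can appear. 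Thus $Y_i\in\qs(\nu)[q,s,1/s][[t]]$.

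The main obstacle I anticipate is purely bookkeeping: writing down $yK(ts,y)$ correctly after clearing the two denominators $x$ and $1-xt\nu$, and verifying carefully that under $x=ts$ the relevant branch of $K(ts,Y)=0$ really does carry the saving factor of $t$ that makes the coefficient-by-coefficient recursion well posed (this is the analogue of the remark ``the factor $t$ occurring on the right-hand side guarantees the existence of a unique solution'' in the proof of Lemma~\ref{lem:Yi-planar}). Once that is checked, the existence, uniqueness, and the description of the coefficient ring are all routine. One should also note that $yK(ts,y)$ has degree $3$ in $y$ while we only produce two power-series roots; the third root is a Laurent series in $t$ with negative valuation (its constant term is $\infty$), which is why it does not live in $\qs(q,\nu,s)[[t]]$ and plays no role here — a one-line remark suffices.
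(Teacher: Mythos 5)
Your proposal follows the paper's own proof in outline: rewrite $K(ts,Y)=0$ so that an explicit factor of $t$ appears on one side, read off from the $t^0$ coefficient the quadratic $Y_0\bigl(1-(\nu-1)Y_0/s\bigr)=0$ for the constant term, and then determine each $[t^n]Y_i$ by induction, tracking denominators to see that only $s$, $1/s$, and rational functions of $\nu$ (not $q$) appear. That is exactly the paper's argument, which displays the reorganized kernel equation
$Y\bigl(1-(\nu-1)Y/s\bigr)=\tfrac{t}{1-\nu s t^2}\,P\bigl(Q(x,Y),Q_1(x),Q_2(x),Q(0,Y),q,\nu,t,s,Y\bigr)$
and then says ``the result follows, upon extracting inductively the coefficient of $t^n$.''

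One point to correct: your claim that $yK(ts,y)$, after clearing the denominators $x$ and $1-xt\nu$, is a \emph{polynomial of degree $3$ in $y$} is false. The kernel contains the term $y^2 t\bigl(q+\tfrac{\nu-1}{1-xt\nu}\bigr)Q(0,y)$, and $Q(0,y)$ is a full power series in $y$ (with coefficients in $\qs[q,\nu][[t]]$), not a constant; so $yK(ts,y)$ is not a polynomial in $y$ of any finite degree. The degree-$3$ picture and the concluding remark about a ``third root'' with negative $t$-valuation are therefore spurious and should be dropped. This does not damage the substance: the constant-term analysis still yields a quadratic in $Y_0$ (coming from the $1$, $-t/y$, and $-y(\cdot)Q_1(x)$ terms), because $Q(0,y)$ contributes only at order $t^1$ and higher once $x=ts$ is substituted, and the induction on $[t^n]Y_i$ uses that $[t^n]Q(0,Y)$ depends only on $[t^j]Y$ for $j<n$. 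You should also fix the sign on the $t/y$ term (it is $-t/y$, not $+t/y$), though this does not affect the constant-term quadratic. Finally, the step you flag as ``to be checked'' --- that an explicit factor of $t$ appears so the coefficient recursion is well posed --- is precisely what the paper makes explicit by displaying the reorganized kernel equation, so the proposal is really a restatement of the paper's proof with that one verification deferred.
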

\begin{proof}
  Denote by $K(x,y)$ the kernel of~\eqref{eq:Q}. After setting $x=st$,
  the equation $K(x,Y) =0$ reads
\beq\label{ker-tr}
Y\left( 1-\frac{(\nu-1)Y}s\right) = \frac{t}{1-\nu st^2} P(Q(x,Y),
Q_1(x), Q_2(x), Q(0,Y), q, \nu, t, s, Y),
\eeq
for some polynomial $P$. The result follows, upon extracting
inductively the coefficient of $t^n$ in the roots $Y_i$.
\end{proof}
The first few terms of $Y_1$ and $Y_2$ read:
\begin{eqnarray*}
Y_1&=&t+{\frac {\nu-1}{s}}{t}^{2}+O \left( {t}^{3} \right) ,\\
Y_2&=&
{\frac {s}{\nu-1}}-{\frac {{s}^{3}q+{s}^{3}\nu-{s}^{3}+{\nu}^{3}-3\,
{\nu}^{2}+3\,\nu-1}{ \left( \nu-1 \right) ^{3}}}t+O \left( {t}^{2}
 \right) .
\end{eqnarray*}

\begin{Proposition}\label{prop:inv-triang}
Let $x=ts$ and let $Y_1$, $Y_2$ be the series defined in
Lemma~{\rm\ref{lem:kernel-triang}}. Define
$$
I(y)=tyq Q ( 0,y ) -\frac 1 y+{\frac {t}{{y}^{2}}}.
$$
Then $I(y)$ is an invariant. That is, $I(Y_1)=I(Y_2)$.

If, moreover, $q$ is of the form 
$$
q=2+2\cos \frac{2k\pi}m,
$$
with $0 <2k <m$ and $k$ and $m$ coprime,
then there exists a second invariant,
$$
J(y)= D(y)^{m/2} \ 
T_m\left(
\frac{
 \be t ( 4-q ) \by+ tq \nu I(y)+\be(q-2)}
{2\sqrt {D(y)}}
\right),
$$
where  $T_m$ is the $m^{\hbox{th}}$ Chebyshev
polynomial~\eqref{Cheb-def}, $\be=\nu-1$, and 
$$
D(y)= 
q\nu^2 {t}^{2} I(y)^{2}
+\be  \left( 4\be +q \right)t I(y)
-q\be\nu {t}^{3}  \left( 4-q \right) +\be^2.
$$
\end{Proposition}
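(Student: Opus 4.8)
The plan is to mirror exactly the argument used for planar maps in Proposition~\ref{prop:inv-planaires}, adapting it to the functional equation~\eqref{eq:Q} for quasi-triangulations. First I would write~\eqref{eq:Q} (with $w=z=1$) in the kernel form $K(x,y)\gQ(x,y)=R(x,y)$, reading off the kernel $K(x,y)$ (the coefficient of $\gQ(x,y)$) and the right-hand side $R(x,y)$. By Lemma~\ref{lem:kernel-triang}, after the substitution $x=ts$ the kernel has two roots $Y_1,Y_2$ as formal power series in $t$; substituting $y=Y_i$ into the functional equation kills the left-hand side, so $R(x,Y_i)=0$ for $i=1,2$. Together with $K(x,Y_i)=0$ this gives four equations relating $Y_1,Y_2$, the specialised series $\gQ(0,Y_1),\gQ(0,Y_2)$, the auxiliary series $\gQ_1(x)$ (and $\gQ_2(x)$, which is proportional to $\gQ_1(x)$), and the indeterminate $x$.

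The core of the argument is an elimination. From the two equations $R(x,Y_1)=0$, $R(x,Y_2)=0$ I would solve for $1/x$ and for the combination $x\gQ_1(x)$ (the analogue of $S(x)=xM(x,1)$ in the maps case). Independently, eliminating $\gQ_1(x)$ between $K(x,Y_1)=0$ and $K(x,Y_2)=0$ yields a second rational expression for $1/x$. Equating the two expressions for $1/x$ collapses, after simplification, to a symmetric identity of the shape $\Lambda(Y_1)=\Lambda(Y_2)$; this forces the given $I(y)=tyq\,\gQ(0,y)-1/y+t/y^2$ to be an invariant. (One should check the bookkeeping: the denominators $1-xzt\nu=1-st^2\nu$ appearing in~\eqref{eq:Q} are units in the relevant ring, so all these manipulations are legitimate in $\qs(q,\nu)((t))$ or, as Lemma~\ref{lem:kernel-triang} notes, in $\qs(\nu)[q,s,1/s][[t]]$.) Using $I(Y_i)=\cI$ one then back-solves for $\gQ(0,Y_i)$ in terms of $Y_i$ and $\cI$, substitutes everything back into one of the $K$-equations together with the expression for $x$, and obtains a single polynomial identity in $Y_1,Y_2,\cI$. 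This identity should be of total degree~2 in $Y_1,Y_2$ after multiplying through by a suitable power of $Y_1Y_2$, exactly as in the maps case.

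To produce the second invariant I would then put this degree-2 identity into the canonical form $U_1^2+U_2^2-(q-2)U_1U_2-1=0$ of Proposition~\ref{prop:source}. As before this is done by first setting $Y_i=1/Z_i$ (or a similar Möbius move) to homogenise the degree, then a translation $Z_i\mapsto V_i+a$ with $a$ chosen to kill the linear terms, and finally a rescaling $V_i\mapsto U_i$ to normalise the constant to $-1$; the rescaling factor involves $\sqrt{D(y)}$ where $D(y)$ is the quadratic-in-$I(y)$ discriminant-type expression stated in the proposition. With $q=2+2\cos\theta$, $\theta=2k\pi/m$, Proposition~\ref{prop:source} tells us that $u^2+v^2-(q-2)uv-1$ divides $T_m(u\sin\theta)-T_m(v\sin\theta)$, so $T_m(U_1\sin\theta)=T_m(U_2\sin\theta)$. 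Rewriting $U_i$ in terms of $Y_i$, $\cI$ and $D(Y_i)$, using $\sin\theta=\sqrt{q(4-q)}/2$ to clear the awkward $\sqrt{q(4-q)}$ factors, and multiplying by $D(y)^{m/2}$ (legitimate since $D(y)$ depends on $y$ only through the invariant $I(y)$, and since $T_m$ has the correct parity so only integral powers of $D(y)$ survive) yields the claimed $J(y)$. Finally I would note, as in the maps case, that expanding $I(y)$ and $J(y)$ in powers of $t$ gives rational functions of $y$ with a pole at $y=0$ (here the relevant point is $y=0$ rather than $y=1$, reflecting the different denominators in~\eqref{eq:Q}) of order at most $m$.

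The main obstacle I expect is purely computational: the functional equation~\eqref{eq:Q} is heavier than~\eqref{eq:M} — it carries the extra series $\gQ(0,y)$ alongside $\gQ(x,y)$ and $\gQ_1(x)$, and the coefficients involve the factor $1/(1-xzt\nu)$ — so the elimination of $x$ and $\gQ_1(x)$ and the identification of the exact translation $a$ and rescaling factor producing $D(y)$ will be a delicate algebraic manipulation, best carried out with computer algebra (the paper mentions accompanying Maple sessions). The structural steps, however, are dictated entirely by the template of Proposition~\ref{prop:inv-planaires} together with Proposition~\ref{prop:source}; no new idea beyond careful bookkeeping should be needed.
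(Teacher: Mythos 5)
Your outline — write \eqref{eq:Q} in kernel form, use the two roots $Y_1,Y_2$ of the kernel from Lemma~\ref{lem:kernel-triang}, form the four equations $K(x,Y_i)=R(x,Y_i)=0$, eliminate $x$ and $Q_1(x)$ to obtain first the relation $I(Y_1)=I(Y_2)$ and then a degree-2 identity in $Y_1,Y_2,\cI$, bring it to the canonical form $U_1^2+U_2^2-(q-2)U_1U_2-1=0$ by an inversion $Y_i\mapsto 1/Y_i$ plus an affine change, and finally apply Proposition~\ref{prop:source} together with $\sin\theta=\sqrt{q(4-q)}/2$ — is structurally the paper's proof, and you also correctly spot that $Q_2(x)$ must first be expressed in terms of $Q_1(x)$ using the last identity of Proposition~\ref{prop:eq-Q}.

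However, the specific elimination scheme you propose is the one the paper uses for \emph{planar maps} (Proposition~\ref{prop:inv-planaires}), and the paper explicitly says it had to use a different scheme for triangulations. Your plan is to solve the $R$-pair for $1/x$ and then extract a second rational expression for $1/x$ from the $K$-pair after eliminating $Q_1(x)$. That step breaks here: in $R(x,y)$, the indeterminate $x$ enters not through a single simple factor like $1/(x-1)$ but through the combination $1/\bigl(x(1-x\nu t)\bigr)$ (plus a separate term $xt$), so subtracting $R(x,Y_1)-R(x,Y_2)$ hands you the quantity $1/\bigl(x(1-x\nu t)\bigr)$, not $1/x$, and recovering $x$ from it is quadratic, not rational. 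The same obstruction appears when you try to solve the $Q_1$-eliminated $K$-pair for $1/x$: the variable $x$ occurs there in three incompatible forms ($xt$, $1/(1-x\nu t)$, $x\nu q t/(1-x\nu t)$). What the paper actually does is eliminate $Q_1(x)$ separately from the $K$-pair and from the $R$-pair, producing two equations each involving $x$, and then eliminate $x$ between those two equations (effectively a resultant), never pretending to have a rational formula for $1/x$. The same asymmetry carries over to the second invariant: there the paper eliminates $Q_1(x)$ between $K(x,Y_1)=0$ and $R(x,Y_1)=0$ on one side, and between $R(x,Y_1)=0$ and $R(x,Y_2)=0$ on the other, and then eliminates $x$ between those — not by ``substituting an expression for $x$ back into a $K$-equation.'' So the gap is concrete: the step ``equating two expressions for $1/x$'' would never produce such expressions, and you would need to replace it with a resultant-in-$x$ elimination, exactly as the paper signals by saying its elimination strategy is different from the planar-maps case.

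One smaller slip: you say that $J(y)$ has a pole at $y=0$ of order at most $m$. Since $I(y)$ itself has a double pole at $y=0$ (the $t/y^2$ term), $J(y)$ — being, up to the parity argument, a polynomial of degree $m$ in $I(y)$ — has a pole of order at most $2m$ at $y=0$. This value is what is actually used in Theorem~\ref{thm:invariant-triang} and Corollary~\ref{coro:eq-inv-Q}, so getting it wrong would propagate.
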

As in the case of planar maps, the fact that
$T_m(x)$ is a polynomial in $x$ of degree $m$, which is even
(resp. odd) if $m$ is even (resp. odd) implies that $J(y)$  is a \emm polynomial, in $q$, $\nu$,
 $t$, $\by$  and $I(y)$ with rational coefficients.
Moreover, the expressions of $I(y)$ and $J(y)$ show that,   when
expanded in powers of $t$, $J(y)$ has rational coefficients in  $y$
with a pole at $y=0$ of  multiplicity at most $2m$.  
\begin{proof}
   The  proof is similar to the proof  of
Proposition~\ref{prop:inv-planaires}, but the
strategy we adopt to eliminate $x$, $Q_1(x)$ and $Q_2(x)$ is different.
First, in Eq.~\eqref{eq:Q}, we replace $Q_2(x)$ by its
expression in terms of $Q_1(x)$,  given in
Proposition~\ref{prop:eq-Q}. This yields
$$
K(x,y) Q(x,y)= R(x,y),
$$
where the kernel $K(x,y)$ is
\beq\label{ker-Q}
K(x,y)=
1-xt-{\frac {t}{y}}-{\frac {yt  \be}{ \left( 1-x\nu t\right) x}}
-{\frac {t{y}^{2} \left( \be +q -x\nu q t \right) 
Q( 0,y ) }{1-x\nu t}}
-{\frac {y \left( \nu+x\nu t-1 \right) Q_1 ( x) }{\nu}}
 ,
\eeq
and the right-hand side $R(x,y)$ is:
\beq\label{RHS-Q}
R(x,y)= 1-xt-{\frac {t}{y}}
-{\frac {\be ytQ ( 0,y ) }{x \left( 1-x\nu t \right) }}
-tQ_1 ( x ).
\eeq
 Both series $Y_i$ cancel the kernel. Replacing $y$ by $Y_i$ in the
 functional equation gives  $R(x,Y_i)=0$. We thus have four equations,
 $K(x,Y_1)=R(x,Y_1)=K(x,Y_2)=R(x,Y_2)=0$, with coefficients in
 $\qs(q,\nu,t)$, that relate $Y_1$, $Y_2$,
$Q(0,Y_1)$, $Q(0,Y_2)$, $x$ and $Q_1(x)$.
We will eliminate from this  system $x$ and $Q_1(x)$
to obtain two equations relating $Y_1$, $Y_2$,  
$Q(0,Y_1)$ and  $Q(0,Y_2)$, 
and these equations will read  $I(Y_1)=I(Y_2)$ and  $J(Y_1)=J(Y_2)$.

\smallskip
Here is the elimination strategy we adopt. We first form two equations
that do not involve $Q_1(x)$: the first one is obtained by eliminating
$Q_1(x)$ between $K(x,Y_1)=0$ and $K(x,Y_2)=0$, the second one is
obtained by eliminating $Q_1(x)$ between $R(x,Y_1)=0$ and
$R(x,Y_2)=0$. Eliminating $x$ between the two resulting equations
gives
$$
{Y_2}^{2}{Y_1}^{3}t q Q ( 0,Y_1 ) 
-{Y_1}^{2}{Y_2}^{3}t q Q( 0,Y_2 ) 
- \left(Y_1 -Y_2 \right) 
 \left( tY_1+tY_2-Y_2\,Y_2 \right),
$$
or equivalently,
$$
I(Y_1)=I(Y_2),
$$
where $I(y)$ is defined as in the proposition. We have thus proved
that $I(y)$ is an invariant. 

Let us denote $\cI=I(Y_1)=I(Y_2)$. From the definition of $I(y)$, we obtain
$$
Q ( 0,Y_i ) 
={\frac {Y_i-t+\cI\,{Y_i}^{2}}{qt{Y_i}^{3} }}.
$$
Let us now eliminate $Q_1(x)$ between $K(x,Y_1)=0$ and $R(x,Y_1)=0$,
on the one hand, and (again) between $R(x,Y_1)=0$ and $R(x,Y_2)=0$, on
the other hand.
Also, we  replace each occurrence of $Q(0,Y_i)$ by its
expression in terms of $Y_i$ and $\cI$. Eliminating $x$ between the
two resulting equations yields:
\begin{multline*}
\be t^2\left(Y_1^2+Y_2^2-(q-2)Y_1Y_2\right)
+t \left(tq\nu \cI+(q-2)\be \right) Y_1Y_2(Y_1+Y_2)\\
+ \left( q(1-2\nu)t\cI +t^3q^2\nu-(q-1)\be \right) Y_1^2Y_2^2
=0.
\end{multline*}
From this point on, the proof mimics the proof of
Proposition~\ref{prop:inv-planaires}. By  the change of variables
$$
U_i=\frac{ \beta t (4-q) \by +tq \nu \cI +\be(q-2)}
{\sqrt{q(4-q)\cD}},
$$ 
we transform the above identity into an identity of the form~\eqref{forme-canonique},
and conclude using Proposition~\ref{prop:source}.
\end{proof}

\section{Theorems of invariants}
\label{sec:inv-thm}

In the previous section, we have exhibited, for each of the two
problems we study, a pair $(I(y),J(y))$ of invariants. We prove here
that in both cases, $J(y)$ is a \emm polynomial, in $I(y)$ with coefficients
in $\qs(q,\nu,w)((t))$.

\subsection{General maps}

\begin{Theorem}\label{thm:inv-planaires} 
Denote $\GK=\qs(q,\nu,w)$.  
Let $Y_1=1+O(t)$ and
  $Y_2=\frac{s}{w(\nu-1)}+O(t)$ be the series of
  $\GK(s)[[t]]$  defined in Lemma~{\rm\ref{lem:Yi-planar}}.
Let $d \in \ns$ 
 and let $\JI(y)\equiv  \JI(q,\nu,t,w;y)$ be a series in $\GK(y)((t))$
having valuation at least $-d$ in $(y-1)$. By this, we mean that for
all  $n$, the coefficient $\ji_n(y):= [t^n]\JI(y)$  
either has no pole at $y=1$, or a pole of multiplicity at most $d$.
Then the composed series $\JI(Y_1)$ and $\JI(Y_2)$ are well-defined 
and belong to  $\GK(s)((t))$. 

If moreover $\JI(y)$ is an invariant (i.e., $\JI(Y_1)=\JI(Y_2)$), then
there  exist Laurent series $A_0,A_1,\ldots,A_d$ in $\GK((t))$ such
that   
$$
\JI(y)=\sum_{i=0}^d A_i\, {I(y)}^i,
$$
where  $I(y)$ is the first invariant defined in 
Proposition~{\rm\ref{prop:inv-planaires}}.  
\end{Theorem}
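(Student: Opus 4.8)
The plan is to follow the strategy of Tutte~\cite{tutteV}, organising the argument around the valuation of $\JI(y)$ at $y=1$ and a reduction step that removes one power of $I(y)$ at a time. Write $\JI(y)=\sum_n \ji_n(y)\,t^n$ with $\ji_n\in\GK(y)$ having a pole of order at most $d$ at $y=1$, say $\ji_n(y)=\tilde\ji_n(y)/(y-1)^d$ with $\tilde\ji_n$ regular at $y=1$. First I would check that $\JI(Y_1)$ and $\JI(Y_2)$ are well-defined. By Lemma~\ref{lem:Yi-planar}, $Y_1-1$ has $t$-valuation exactly $1$ (its coefficient of $t$ is $s/(w-w\nu+s)\neq 0$) and $Y_2-1$ has nonzero constant term $\tfrac{s}{w(\nu-1)}-1$. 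The poles of $\ji_n$ other than $y=1$ are algebraic over $\GK$, hence differ from the constant terms $1$ and $\tfrac{s}{w(\nu-1)}$ of $Y_1$ and $Y_2$ — the latter being transcendental over $\GK$ — so $\tilde\ji_n(Y_i)\in\GK(s)[[t]]$ is well defined; consequently $\ji_n(Y_1)=\tilde\ji_n(Y_1)/(Y_1-1)^d$ has $t$-valuation $\geq -d$ while $\ji_n(Y_2)\in\GK(s)[[t]]$. Since $\ji_n(Y_i)\,t^n$ has $t$-valuation at least $n-d$, the sums $\JI(Y_1)$ and $\JI(Y_2)$ converge in $\GK(s)((t))$.

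For the second assertion I would argue by induction on $d$, the engine being the following observation: if $G$ is an invariant with valuation at least $1$ in $(y-1)$ (that is, every $[t^n]G$ is regular at $y=1$ and vanishes there), then $G=0$. Indeed, let $n_0$ be the $t$-valuation of $G$; then $[t^{n_0}]G(Y_1)$ equals the constant term of $\big([t^{n_0}]G\big)(Y_1)$, which is $\big([t^{n_0}]G\big)(1)=0$ because $Y_1=1+O(t)$, whereas $[t^{n_0}]G(Y_2)$ equals $\big([t^{n_0}]G\big)\!\big(\tfrac{s}{w(\nu-1)}\big)$; the equality $G(Y_1)=G(Y_2)$ then forces the rational function $[t^{n_0}]G$ to vanish at the transcendental point $\tfrac{s}{w(\nu-1)}$, hence to vanish identically, contradicting the choice of $n_0$ unless $G=0$. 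This gives at once the base case $d=0$: set $A_0:=\sum_n \ji_n(1)\,t^n\in\GK((t))$, so that $\JI(y)-A_0$ is an invariant whose $t$-coefficients vanish at $y=1$, whence $\JI(y)=A_0=A_0\,I(y)^0$.

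For the inductive step ($d\geq 1$) I would exploit the precise shape of the singular part of $I(y)$ at $y=1$. Since $M(1,y)$ has polynomial coefficients in $y$ and $\tfrac{ty}{y-1}=t+\tfrac{t}{y-1}$, we have $I(y)=\tfrac{t}{y-1}+g(y)$ with $g\in\GK(y)[[t]]$ regular at $y=1$; hence $I(y)^d$ has a pole of order exactly $d$ at $y=1$ whose coefficient of $(y-1)^{-d}$ is the monomial $t^d$. Writing the coefficient of $(y-1)^{-d}$ in $\JI(y)$ as $B(t):=\sum_n b_n\,t^n\in\GK((t))$ with $b_n=\lim_{y\to1}(y-1)^d\ji_n(y)$, set $A_d:=B(t)/t^d\in\GK((t))$. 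Then $\JI(y)-A_d\,I(y)^d$ is still an invariant — because $I(y)$ is an invariant by Proposition~\ref{prop:inv-planaires} and $A_d$ is a Laurent series not involving $y$ — it still has a pole of order at most $d$ at $y=1$, and its coefficient of $(y-1)^{-d}$ has been arranged to vanish, so it has valuation at least $-(d-1)$ in $(y-1)$. By the induction hypothesis it equals $\sum_{i=0}^{d-1}A_i\,I(y)^i$, and adding $A_d\,I(y)^d$ yields the desired expression for $\JI(y)$.

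The two genuine technical points are, first, the legitimacy of the substitutions $y=Y_i$ in the well-definedness step — which rests on $Y_1-1$ having $t$-valuation exactly $1$ and on the constant term of $Y_2$ being transcendental over $\GK$, so that it is never a pole of the $\ji_n$ — and, second, the identification of the singular part of $I(y)$ as exactly $t/(y-1)$. I expect the latter to be the crux: it is precisely this feature that makes $A_d=B(t)/t^d$ a bona fide Laurent series in $t$ (division by the \emph{monomial} $t^d$), so that the reduction stays within $\GK((t))$; everything else is bookkeeping with valuations.
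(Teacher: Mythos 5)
Your argument is correct and is essentially the paper's own: both hinge on (i) the observation that $I(y)$ has singular part exactly $t/(y-1)$ at $y=1$, so the $(y-1)^{-d}$ coefficient of $I(y)^d$ is the monomial $t^d$ and the elimination step produces a Laurent series $A_d\in\GK((t))$; (ii) the zero-lemma that an invariant with coefficients vanishing at $y=1$ must be zero, proved by comparing the lowest $t$-coefficient of $G(Y_1)$ (which is $0$ since $Y_1=1+O(t)$) with that of $G(Y_2)$ (nonzero because the coefficient does not involve $s$); and (iii) the well-definedness of $\JI(Y_i)$ via $[t]Y_1\neq 0$ and $[t^0]Y_2$ not being a pole of the $\ji_n$. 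The only cosmetic difference is that you organize the elimination of the pole at $y=1$ as an induction on $d$ while the paper constructs $A_d,\ldots,A_0$ in one pass; and you phrase the non-pole condition for $Y_2$ via transcendence of $s$ over $\GK$, where the paper simply notes $\ji_n$ has no $s$-dependence — these are the same fact.
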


Before proving this theorem, let us apply it to the case where
$\JI(y)$ is the invariant $J(y)$
of Proposition~\ref{prop:inv-planaires}. As discussed 
just after this proposition,
$J(y)$ has valuation at least $-m$ in $(y-1)$. Hence the above theorem
gives:
\begin{Corollary}\label{coro:eq-inv-M}
Let $q=2+2\cos 2k\pi/m$, with $k$ and $m$ coprime and $0<2k<m$.
 Let $I(y)$ be the first invariant of
Proposition~{\rm\ref{prop:inv-planaires}}. There exist Laurent series
 $C_0, \ldots, C_m$ in $t$, with coefficients in $\qs(q, \nu, w)$, such that
\beq\label{eq-inv}
D(y)^{m/2} \ 
T_m\left(
\frac{\be(4- q ) (\by-1)+( q+2\,\be ) I(y) -q}
{2\sqrt {D(y)}}\right)=
\sum _{r=0}^{m } C_r\, I(y)^r,
\eeq
where $\be = \nu-1$, $T_m$ is the $m^{\hbox{th}}$ Chebyshev polynomial and
$$
D(y)= (q\nu+\be^2)I(y)^2-q (\nu+1 ) I(y)+ \be t ( q-4 )  ( wq+\be ) +q.
$$
\end{Corollary}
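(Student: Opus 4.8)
The plan is to deduce Corollary~\ref{coro:eq-inv-M} directly from Theorem~\ref{thm:inv-planaires} by taking $\JI(y)$ to be the second invariant $J(y)$ produced in Proposition~\ref{prop:inv-planaires}. Three facts must be checked: that $J(y)$ is indeed a series in $\GK(y)((t))$, that it has valuation at least $-m$ in $(y-1)$, and that it is an invariant; the theorem then yields the existence of Laurent series $A_0,\dots,A_m$ (renamed $C_0,\dots,C_m$) with $J(y)=\sum_{r=0}^m C_r\,I(y)^r$, which is precisely~\eqref{eq-inv} once one substitutes the explicit formula for $J(y)$ and recalls that $D(y)$ has the stated form. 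So the bulk of the work has already been done: this corollary is a bookkeeping consequence.

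First I would recall, as noted in the remarks following Proposition~\ref{prop:inv-planaires}, that since $T_m$ has degree $m$ and parity matching $m$, the expression $D(y)^{m/2}\,T_m\!\big((\cdots)/(2\sqrt{D(y)})\big)$ only involves non-negative integer powers of $D(y)$ and is therefore a genuine polynomial in $q,\nu,w,t,\by$ and $I(y)$ with rational coefficients; in particular $J(y)\in\GK[t,\by][I(y)]\subseteq\GK(y)((t))$, so it makes sense as an input to Theorem~\ref{thm:inv-planaires}. Next, the valuation statement: $I(y)$ has a pole of order exactly $1$ at $y=1$ (from the term $ty/(y-1)$), while the coefficients of $I(y)$ in $t^n$ for $n\neq1$ are regular at $y=1$; hence $D(y)$, being quadratic in $I(y)$, has valuation at least $-2$ in $(y-1)$, and $J(y)$, a polynomial of degree $m$ in $I(y)$ together with the factor $D(y)^{m/2}$, has valuation at least $-m$ in $(y-1)$. (One should double-check that no cancellation makes it worse — it cannot, since the stated formula is a polynomial of total "$I$-degree" $m$.) Finally, Proposition~\ref{prop:inv-planaires} asserts outright that $J(Y_1)=J(Y_2)$ when $q=2+2\cos(2k\pi/m)$ with $k,m$ coprime and $0<2k<m$, so $J(y)$ is an invariant.

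With these three verifications in hand, Theorem~\ref{thm:inv-planaires} applied with $d=m$ and $\JI(y)=J(y)$ gives Laurent series $A_0,\dots,A_m\in\GK((t))=\qs(q,\nu,w)((t))$ such that $J(y)=\sum_{r=0}^{m}A_r\,I(y)^r$. Writing $C_r:=A_r$ and inserting the explicit expression for $J(y)$ from Proposition~\ref{prop:inv-planaires} on the left-hand side yields exactly~\eqref{eq-inv}, with $D(y)=(q\nu+\be^2)I(y)^2-q(\nu+1)I(y)+\be t(q-4)(wq+\be)+q$ as stated. There is really no obstacle here beyond the valuation bookkeeping; the only point requiring a moment's care is confirming that the pole order of $J(y)$ at $y=1$ is at most $m$ and not larger, which follows because $J(y)$, after clearing $\sqrt{D(y)}$, is polynomial of degree $m$ in the quantity $I(y)$ (each carrying at worst a simple pole) times $D(y)^{m/2}$ (at worst a pole of order $m$), and these two contributions are not independent — the substitution is designed so that the total order in $(y-1)^{-1}$ is $m$.
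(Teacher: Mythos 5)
Your proposal is correct and follows exactly the paper's route: it applies Theorem~\ref{thm:inv-planaires} with $\JI(y)=J(y)$ and $d=m$, after verifying the three hypotheses (membership in $\GK(y)((t))$, valuation $\geq -m$ in $(y-1)$, and the invariance $J(Y_1)=J(Y_2)$), which is precisely what the paper does in the one-line remark preceding the corollary. The only blemish is the wording of your last paragraph, where you describe $J(y)$ as a degree-$m$ polynomial in $I(y)$ ``times $D(y)^{m/2}$'' — in fact the $D(y)^{m/2}$ factor is entirely absorbed when one expands $T_m$ using parity, so $J(y)$ is \emph{just} a polynomial of degree $m$ in $I(y)$ with coefficients in $\qs[q,\nu,w,t,\by]$ regular at $y=1$, and this already gives the $-m$ valuation bound cleanly without any talk of cancellation between ``two contributions''.
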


\noindent
\emph{Proof of Theorem~\ref{thm:inv-planaires}.}
Let us first prove that the series 
$\JI(Y_1)$ and $\JI(Y_2)$ are well-defined.  Each coefficient $\ji_n(y)$ of
$\JI(y)$ is a rational function in $y$ with coefficients in $\GK$,
with a pole of multiplicity at most $d$ at $y=1=[t^0]Y_1$. 
Given that $[t]Y_1\not = 0$, this implies that
$t^d\ji_n(Y_1)$ is a power series in $\GK(s)[[t]]$, and $\JI(Y_1)$ is
well-defined. 
Moreover, $\ji_n(y)$ has no pole at $y=\frac{s}{w(\nu-1)}=[t^0]Y_2$
since $\ji_n(y)$ does not depend on $s$. Hence $\ji_n(Y_2)$ is a 
series in $\GK(s)[[t]]$, and  $\JI(Y_2)$ is well-defined.

Observe now that $I(y)=\frac{ty}{y-1}+R(y)$, 
where $R(y)$ is a series in 
whose coefficients have no pole at $y=1$. Hence,  
there exist Laurent series $A_0,\ldots,A_d$ in $\GK((t))$ such that the series
$$
G(y):= \JI(y)-\sum_{i=0}^d A_i{I(y)}^i
$$
has coefficients $g_n(y) := [t^n]G(y)$ which are rational in  $y$ and
cancel at   $y=1$.   
(One begins by cancelling the coefficient of $(y-1)^{-d}$ in $\JI(y)-A_d
 I(y)^d$ by an appropriate choice of $A_d$, and then proceeds up to the
 cancellation of the coefficient of $(y-1)^0$ by an appropriate choice
 of $A_0$.)

\medskip
We now suppose that $\JI(y)$ is an invariant and proceed to prove that
$G(y)=0$. 
Note that  $G(y)$ is an invariant (as $I(y)$ and $\JI(y)$
themselves). Thus it suffices to prove the following statement:
\begin{quote}
   \emph{An invariant $G(y)\in \GK(y)((t))$ whose
coefficients $g_n(y)$ vanish at $y=1$ is zero.}
\end{quote}
Let $G(y)=\sum_{n} g_n(y)t^n$ be such an invariant. 
Assume $G(y)\not =0$, and that $G(y)$
has valuation $0$ in $t$ (a harmless assumption, upon multiplying
$G(y)$ by a power of $t$).
We will  prove that the coefficients $[t^0]G(Y_1)$ and
$[t^0]G(Y_2)$ are not equal, which contradicts the fact that $G(y)$ is
an invariant. 

Given that the constant terms of $Y_1$ and $Y_2$ are not poles of any
$g_n(y)$, both  $g_n(Y_1)$ and $g_n(Y_2)$ are formal power series in $t$,
and
$$
[t^0]G(Y_i)= g_0([t^0]Y_i) \quad \hbox{ for } i=1,2.
$$
On the one hand, $[t^0]Y_1=1$ and  $g_n(1)=0$ for all $n$, 
so that $[t^0]G(Y_1)=0$.
On the other hand, $[t^0]Y_2=\frac{s}{w(\nu-1)}$ and  
$g_0(y)$ is different from $0$ by assumption and does not depend on
$s$. Thus $g_0(\frac{s}{w(\nu-1)})\not =0$ 
and we have reached a contradiction. This proves that $G(y)=0$.
\qed

\subsection{Triangulations}
\begin{Theorem} \label{thm:invariant-triang}
Denote  $\GK=\qs(q,\nu)$. 
Let $Y_1=t+O(t^2)$ and $Y_2=\frac{s}{\nu-1}+O(t)$ be the series in
$\GK[s,1/s][[t]]$ defined in Lemma~{\em\ref{lem:kernel-triang}}.
Let $d \in \ns$ and let $\JI(y)\equiv  \JI(q,\nu,t;y)$ be a
series  in $\GK[y,1/y][[t]]$ 
of valuation at least $-2d-1$ in $y$.
Then the composed series $\JI(Y_1)$ and $\JI(Y_2)$ are well-defined 
and belong to $\GK(s)((t))$. 

If moreover $\JI(y)$ is an invariant (i.e. $\JI(Y_1)=\JI(Y_2)$), then there
exist series $A_0,A_1,\ldots,A_d$ in $\GK[[t]]$ such that   
$$
\JI(y)=\sum_{i=0}^d A_i\, {I(y)}^i,
$$
where  $I(y)$ is the first invariant defined in Proposition~{\rm\ref{prop:inv-triang}}.
\end{Theorem}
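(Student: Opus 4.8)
The plan is to adapt the proof of Theorem~\ref{thm:inv-planaires}, replacing the point $y=1$ (where the planar‑maps invariant has a simple pole, and which is the constant term of $Y_1$) by the point $y=0$ (where the invariant $I$ of Proposition~\ref{prop:inv-triang} has a pole, and which is the constant term of $Y_1$, by Lemma~\ref{lem:kernel-triang}). The essential new feature is that $I(y)$ has a \emph{double} pole at $y=0$: since $y^2I(y)=ty^3qQ(0,y)-y+t\in\GK[y][[t]]$, the series $I(y)^i$ has a pole of order at most $2i$ at $y=0$, and, because $[y^0](y^2I(y))=t$, the coefficient of $y^{-2i}$ in $I(y)^i$ equals exactly $t^i$. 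Thus $I(y)^i$ supplies, for the triangular elimination, only polar parts of \emph{even} order; compensating for this is where the (somewhat generous) valuation hypothesis $-(2d+1)$ comes in.

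\textbf{Well-definedness.} By Lemma~\ref{lem:kernel-triang}, $Y_1=t(1+\alpha_1t+\alpha_2t^2+\cdots)$ with $\alpha_j\in\GK[s,1/s]$ and $\alpha_1=(\nu-1)/s$; in particular $Y_1$ is $t$ times a unit of $\GK[s,1/s][[t]]$. Since each $\ji_n(y):=[t^n]\JI(y)$ is a Laurent polynomial in $y$ with a pole of order at most $2d+1$ at $0$, the series $\ji_n(Y_1)$ has $t$-valuation $\ge-(2d+1)$, so $t^n\ji_n(Y_1)$ has $t$-valuation $\ge n-(2d+1)\to\infty$, and $\JI(Y_1)=\sum_nt^n\ji_n(Y_1)$ is a well-defined element of $\GK(s)((t))$. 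The constant term $s/(\nu-1)$ of $Y_2$ is invertible in $\GK[s,1/s]$ and is not a pole of any $\ji_n$ (these do not involve $s$), so $\ji_n(Y_2)$, and hence $\JI(Y_2)$, lie in $\GK[s,1/s][[t]]\subset\GK(s)((t))$.

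\textbf{The key lemma.} Now assume $\JI$ is an invariant, so that $\JI(Y_1)=\JI(Y_2)$ is a power series in $t$, with no negative powers. Writing out $[t^{-j}]\JI(Y_1)=0$ for $j\ge1$ produces $\GK$-linear relations among the top polar coefficients $[y^{-k}]\ji_n$ (with $k$ near $2d+1$) whose coefficients are polynomials in the $\alpha_j$; since the $\alpha_j$ genuinely involve $s$ while the $[y^{-k}]\ji_n$ do not, these relations split, and one deduces the following statement: \emph{an invariant of $\GK[y,1/y][[t]]$ of valuation at least $-(2p+1)$ in $y$ in fact has valuation at least $-2p$, and the coefficient of $y^{-2p}$ in it is divisible by $t^p$.} This is the exact counterpart, in the triangulations setting, of the fact that $I$ has no odd‑order pole; it has no analogue in the proof of Theorem~\ref{thm:inv-planaires}, and I expect it to be the main obstacle.

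\textbf{Degree reduction and conclusion.} Granting the lemma, construct $A_d,\dots,A_1\in\GK[[t]]$ by descending induction. Starting from the invariant $\JI$ of valuation $\ge-(2d+1)$, the lemma gives valuation $\ge-2d$ with $y^{-2d}$-coefficient divisible by $t^d$; let $A_d$ be that coefficient divided by $t^d$, and replace $\JI$ by $\JI-A_dI(y)^d$. As $[y^{-2d}]I(y)^d=t^d$, this kills the pole of order $2d$, leaving an invariant (a difference of invariants) of valuation $\ge-(2d-1)$, hence, by the lemma applied with $p=d-1$, of valuation $\ge-2(d-1)$ with the required divisibility; iterate down to $i=1$. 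After the step $i=1$ the remainder $G'\in\GK[y][[t]]$ is regular at $y=0$; set $A_0:=G'(0)\in\GK[[t]]$ and $G:=\JI-\sum_{i=0}^dA_iI(y)^i=G'-G'(0)$. Then $G$ is an invariant all of whose coefficients $g_n(y):=[t^n]G(y)$ are polynomials in $y$ vanishing at $y=0$. If $G\ne0$, multiply it by a suitable power of $t$ so that its $t$-valuation is $0$; then $g_0\ne0$. Since every $g_n$ is regular at $0$ and $[t^0]Y_1=0$, we get $[t^0]G(Y_1)=g_0(0)=0$, whereas $[t^0]G(Y_2)=g_0(s/(\nu-1))\ne0$ because $g_0$ is a nonzero polynomial over $\GK$ and $s/(\nu-1)$ is transcendental over $\GK$. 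This contradicts $G(Y_1)=G(Y_2)$, so $G=0$ and $\JI(y)=\sum_{i=0}^dA_iI(y)^i$, as claimed.
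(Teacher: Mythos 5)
Your proposal diverges from the paper's proof at the central step, and the divergence is where the gap lies. The paper constructs the $A_i$ by solving the linear system $[y^{-k}]\JI(y)=\sum_i A_i\,[y^{-k}]I(y)^i$ for $0\le k\le d$; since $I(y)=-\by+tR(y)$ with $R(y)\in\GK[y,\by][[t]]$, the matrix $\bigl([y^{-k}]I(y)^i\bigr)_{0\le i,k\le d}$ is $(-1)^i\delta_{i,k}+O(t)$, hence invertible over $\GK[[t]]$, and one gets $A_0,\dots,A_d\in\GK[[t]]$ unconditionally (no need to know anything yet about the odd-order polar parts of $\JI$). The remainder $G=\JI-\sum A_iI^i$ may then still have a pole of order anywhere between $d+1$ and $2d+1$; the paper shows $G=0$ by comparing $[s^{-n}t^n]G(Y_1)$ with $[s^{-n}t^n]G(Y_2)$ for $n=0,\dots,d$, using Lemma~\ref{lem:degreeYi} (the $s$-valuation bound $[s^{-n}t^{n-i}]Y_j=0$) to kill the $Y_2$ side, and a Vandermonde/Taylor argument on the auxiliary series $A(t)=\sum a_nt^n$ to kill the $Y_1$ side. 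This two-sided evaluation at $s^{-n}t^n$ is the engine of the proof.

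Your route instead relies on a \emph{key lemma} asserting that an invariant in $\GK[y,\by][[t]]$ of valuation $\ge-(2p+1)$ in $y$ automatically has valuation $\ge-2p$, with $[y^{-2p}]$ divisible by $t^p$. You state this, sketch that it ``should'' follow from splitting the relations $[t^{-j}]\JI(Y_1)=0$ according to their $s$-dependence, and then explicitly flag it as ``the main obstacle.'' That is a genuine gap, and it is not a small one: your lemma, taken for $p=d$, is exactly the case of the theorem you are trying to prove (if the theorem holds, then $\JI=\sum_{i\le d}A_iI^i$ has valuation $\ge-2d$ with $[y^{-2d}]\JI=A_dt^d$), so no shortcut is available without essentially re-doing the work. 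The ``splitting by $s$'' idea is in the right spirit — it is close to what Lemma~\ref{lem:degreeYi} formalizes, and to the extraction of $[s^{-n}t^n]$ that the paper performs — but turning it into a proof requires controlling, for all $n$, which products of the coefficients $\alpha_j$ (or rather the series $A(t)$) can appear, and that is precisely the non-trivial content of the second half of the paper's argument. Once you assume the lemma, the rest of your write-up (the descending construction of $A_d,\dots,A_0$, and the final contradiction from $[t^0]G(Y_1)=g_0(0)=0\ne g_0(s/(\nu-1))=[t^0]G(Y_2)$) is correct, and is pleasingly shorter than the paper's conclusion; but as written the proposal does not constitute a proof.
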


Before proving this theorem, let us apply it to the case where
$\JI(y)$ is the invariant $J(y)$
of Proposition \ref{prop:inv-triang}. 
As discussed just after this proposition, $J(y)$ has valuation (at
least) $-2m$ in $y$. Hence the above theorem gives, with $A_r=t^rC_r$:
\begin{Corollary}\label{coro:eq-inv-Q}
  Let $q=2+2\cos 2k\pi/m$, with $k$ and $m$ coprime and $0<2k<m$.
 Let $I(y)$ be the first invariant  defined in
  Proposition~{\rm\ref{prop:inv-triang}}. There exist Laurent series
$C_0, \ldots, C_m$ in $t$, with coefficients in  $\qs(q, \nu)$ such that 
\beq\label{eq-inv-triang}
D(y)^{m/2} \ 
T_m\left(
\frac{
 \be ( 4-q ) t\by+ q \nu t I(y)+\be(q-2)}
{2\sqrt {D(y)}}
\right)=
\sum _{r=0}^{m } C_r\,( t I(y))^r,
\eeq
where $\be = \nu-1$, $T_m$ is the $m^{\hbox{th}}$ Chebyshev polynomial and
$$
D(y)= 
q\nu^2 {t}^{2} I(y)^{2}
+\be  \left( 4\be +q \right)t I(y)
-q\be\nu {t}^{3}  \left( 4-q \right) +\be^2.
$$
\end{Corollary}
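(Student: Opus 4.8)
The plan is to deduce Corollary~\ref{coro:eq-inv-Q} directly from Theorem~\ref{thm:invariant-triang}, applied to the second invariant $J(y)$ of Proposition~\ref{prop:inv-triang} with $d=m$. All the substance has already been established: Proposition~\ref{prop:source} (and hence Proposition~\ref{prop:inv-triang}) produces the invariant $J(y)$ for the required values of $q$, and Theorem~\ref{thm:invariant-triang} is the theorem of invariants that forces $J(y)$ to be a polynomial in $I(y)$. So the only thing to do is to check that $J(y)$ satisfies the hypotheses of that theorem.

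First I would verify that $J(y)\in\GK[y,1/y][[t]]$, where $\GK=\qs(q,\nu)$. Because $T_m$ has the same parity as $m$, only the powers $z^j$ with $j\equiv m\pmod 2$ occur in $T_m(z)$, and writing $N(y)=\be t(4-q)\by+q\nu t\,I(y)+\be(q-2)$ and $c_j\in\zs$ for the coefficients of $T_m$ we get
$$
D(y)^{m/2}\,T_m\!\left(\frac{N(y)}{2\sqrt{D(y)}}\right)
=\sum_{\substack{0\le j\le m\\ j\equiv m\,(2)}}\frac{c_j}{2^j}\,N(y)^j\,D(y)^{(m-j)/2},
$$
in which every exponent $(m-j)/2$ is a non-negative integer; hence the right-hand side is a genuine polynomial in $I(y)$, $\by$, $q$, $\nu$ and $t$ (with rational coefficients), and in particular no square root survives. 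Since $I(y)=tyq\,\gQ(0,y)-1/y+t/y^{2}$ and each coefficient $[t^n]\gQ(0,y)$ is a polynomial in $y$ (a map with $n$ edges has all face-degrees at most $2n$, and $\Ppol_Q/q\in\zs[q,\nu]$), we get $I(y)\in\GK[y,1/y][[t]]$, and therefore $J(y)\in\GK[y,1/y][[t]]$ as well. The $y$-valuation of $J(y)$ was recorded, just after Proposition~\ref{prop:inv-triang}, to be at least $-2m$; since $-2m\ge-2m-1=-2d-1$ for $d=m$, the valuation hypothesis of Theorem~\ref{thm:invariant-triang} holds. Finally, $J(y)$ is an invariant by Proposition~\ref{prop:inv-triang} for $q=2+2\cos(2k\pi/m)$ with $0<2k<m$ and $k,m$ coprime.

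Theorem~\ref{thm:invariant-triang} then yields series $A_0,\dots,A_m\in\GK[[t]]$ with $J(y)=\sum_{i=0}^{m}A_i\,I(y)^i$. Substituting the explicit formulas for $J(y)$ and $D(y)$ from Proposition~\ref{prop:inv-triang} turns the left-hand side into that of~\eqref{eq-inv-triang}; setting $C_r:=A_r/t^r\in\qs(q,\nu)((t))$ rewrites $\sum_i A_i\,I(y)^i$ as $\sum_{r=0}^m C_r\,(t\,I(y))^r$, which is precisely the claimed identity.

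As for the main obstacle: there is really none of substance, but the one point needing care is the valuation bookkeeping. The theorem is stated with the odd bound $-2d-1$, whereas the bound $-2m$ for $J(y)$ (coming from the term $I(y)^m$ via the $t/y^{2}$ summand of $I(y)$) is even, so one must take $d=m$, not $d=m-1$; this is exactly why the sum in~\eqref{eq-inv-triang} runs up to $r=m$. The replacement of $I(y)^r$ by $(tI(y))^r$ is only cosmetic --- it absorbs a factor $t^r$ and turns the power series $A_r$ into the Laurent series $C_r$ --- but it is the natural normalisation, since it is $t\,I(y)$, not $I(y)$ itself, that appears inside both $N(y)$ and $D(y)$.
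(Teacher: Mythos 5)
Your proposal is correct and takes the same route as the paper: apply Theorem~\ref{thm:invariant-triang} to the invariant $J(y)$ of Proposition~\ref{prop:inv-triang}, using the valuation bound $-2m$ to justify taking $d=m$, and then reindex $A_r=t^r C_r$. The paper's own proof is a single sentence; you have simply filled in the verifications (that $J(y)\in\GK[y,1/y][[t]]$ via the parity of $T_m$ and the polynomiality of $[t^n]Q(0,y)$, and that $-2m\ge -2m-1$ forces $d=m$ rather than $d=m-1$), which is exactly what the paper leaves implicit.
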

\noindent 
(The convention  $A_r=t^rC_r$ happens to be convenient in
Section~\ref{sec:alg-triang}.)

\begin{Lemma}\label{lem:degreeYi}
Let $j=1$ or $2$. 
For all $n\in\ns$, the coefficient $[t^n]Y_j$ 
has valuation larger than $-n$ in $s$.
Equivalently,  for all $n,i\geq 0$, $[s^{-n}t^{n-i}]Y_j=0$.
This also means that replacing $t$ by $st$ in $Y_j$ gives a series of
$s\GK[s][[t]]$. The same properties hold for $Y_j^k$, for $k>0$.
\end{Lemma}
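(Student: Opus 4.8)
The three formulations in the statement are equivalent: writing $Y_j=\sum_{n\ge0}c_n(s)\,t^n$ with $c_n\in\GK[s,1/s]$ (by Lemma~\ref{lem:kernel-triang}; here $\GK=\qs(q,\nu)$), the series obtained by replacing $t$ by $st$ in $Y_j$ is $\widehat Y_j:=\sum_n c_n(s)\,s^n\,t^n$, and $c_n(s)\,s^n\in s\GK[s]$ if and only if $c_n$ has valuation $>-n$ in $s$. So the plan is to prove that $\widehat Y_j\in s\GK[s][[t]]$, by strong induction on the order in $t$, using the functional equation \eqref{ker-tr} satisfied by $Y_j$.

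The first step is to apply the substitution $t\mapsto st$ to the identity \eqref{ker-tr}. Since the symbol $x$ there abbreviates the series $st$, this substitution replaces $x$ by $s^2t$, replaces every remaining occurrence of $t$ by $st$, and replaces $Y$ by $\widehat Y_j$; the identity becomes
$$\widehat Y_j\left(1-\frac{(\nu-1)\widehat Y_j}{s}\right)=\frac{st}{1-\nu s^3t^2}\,\widetilde P,$$
where $\widetilde P$ is the image of the polynomial $P$ under the same substitution. Two structural remarks will be used. First, $Q(x,y)$, $Q_1(x)$, $Q_2(x)$ and $Q(0,y)$ are power series in $t$ whose coefficients are \emph{polynomials} in $x$ and $y$ over $\GK$ (there are finitely many maps with a prescribed number of edges); since the symbol $x$ has been replaced by $s^2t\in t\GK[s][[t]]$, these specialized series land in $\GK[s][[t]]$ as soon as their second argument does, so $\widetilde P\in\GK[s][[t]]$ whenever $\widehat Y_j\in\GK[s][[t]]$, and, being polynomial in $\widehat Y_j$, the coefficient $[t^n]\widetilde P$ depends only on $[t^{0}]\widehat Y_j,\dots,[t^{n}]\widehat Y_j$. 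Second, the prefactor expands as $st/(1-\nu s^3t^2)=\sum_{k\ge0}\nu^k s^{3k+1}t^{2k+1}$: it has $t$-valuation $1$ and all of its $t$-coefficients lie in $s\GK[s]$. This last point is the source of the extra power of $s$ in the conclusion.

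The induction then runs as follows. For $n=0$ the claim is immediate, since $[t^0]\widehat Y_j=[t^0]Y_j$ equals $0$ (case $j=1$) or $s/(\nu-1)$ (case $j=2$), both in $s\GK[s]$. For the inductive step, extract the coefficient of $t^n$ from the displayed equation. On the right-hand side, $[t^n]$ of $\frac{st}{1-\nu s^3t^2}\,\widetilde P$ involves only $[t^{\le n-1}]\widetilde P$ (the prefactor has $t$-valuation $1$), hence only $[t^{\le n-1}]\widehat Y_j$, which lie in $s\GK[s]$ by the induction hypothesis; since the $t$-coefficients of the prefactor lie in $s\GK[s]$ and those of $\widetilde P$ in $\GK[s]$, this coefficient lies in $s\GK[s]$. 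On the left-hand side,
$$[t^n]\!\left(\widehat Y_j-\frac{\nu-1}{s}\widehat Y_j^{\,2}\right)=\left(1-\frac{2(\nu-1)}{s}[t^0]\widehat Y_j\right)[t^n]\widehat Y_j-\frac{\nu-1}{s}\sum_{a+b=n,\ a,b\ge1}[t^a]\widehat Y_j\,[t^b]\widehat Y_j;$$
the sum lies in $s^2\GK[s]$ by the induction hypothesis, so after division by $s$ it remains in $s\GK[s]$, and the scalar factor $1-\frac{2(\nu-1)}{s}[t^0]\widehat Y_j$ equals $1$ (if $j=1$) or $-1$ (if $j=2$), a unit of $\GK$. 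Solving for $[t^n]\widehat Y_j$ therefore exhibits it as an element of $s\GK[s]$, completing the induction.

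Finally, the assertion for $Y_j^{\,k}$ with $k>0$ is immediate: the substitution $t\mapsto st$ is a ring homomorphism, so it sends $Y_j^{\,k}$ to $\widehat Y_j^{\,k}$, and $s\GK[s][[t]]$ is closed under multiplication, whence $\widehat Y_j^{\,k}\in s^k\GK[s][[t]]\subseteq s\GK[s][[t]]$. I expect the only delicate part to be the bookkeeping in the inductive step: checking that the coefficient of $[t^n]\widehat Y_j$ on the left-hand side is a unit of $\GK$ (so that the equation genuinely determines $[t^n]\widehat Y_j$), and that every other contribution — in particular the division by $s$ coming from the $\frac{\nu-1}{s}\widehat Y_j^{\,2}$ term — is absorbed by the induction hypothesis together with the extra factor of $s$ carried by the prefactor. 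Everything else is formal.
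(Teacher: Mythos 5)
Your proof is correct and follows essentially the same route as the paper's: you note the equivalence of the formulations, substitute $t\mapsto ts$ into the kernel equation \eqref{ker-tr}, and run an induction in $t$ to place the coefficients in $s\GK[s]$. The only cosmetic difference is that the paper rewrites the equation for the second root so that $Z=Y_2(ts)/s$ appears in a denominator before extracting coefficients, while you treat $Y_1$ and $Y_2$ uniformly by solving the linear relation for $[t^n]\widehat Y_j$ (after checking the coefficient is the unit $\pm1$); this is a minor variation, not a different argument.
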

\begin{proof}
It is easy to see that, for a series $Y\equiv Y(t)\in \GK[s,1/s][[t]]$, the
following properties are equivalent:
\begin{itemize}
\item[--]  for all $n,i\geq 0$ $[s^{-n}t^{n-i}]Y=0$,
\item[--] $Y(ts)$ belongs to $s\GK[s][[t]]$.
\end{itemize}
The second statement shows that these properties hold for $Y^k$ if
they hold for $Y$. 

We now prove that  each $Y_j$ satisfies the second property.
We start with the series $Y_1=O(t)$.
It satisfies~\eqref{ker-tr}, which implies that the series $Z:=
Y_1(ts)/s$ satisfies
$$
Z= \frac{t}{(1-\nu s^3t^2) ( 1-(\nu-1)Z )} P(Q(x,sZ),
Q_1(x), Q_2(x), Q(0,sZ), q, \nu, ts, s, sZ),
$$
from which it is clear that $Z$ has coefficients in $\GK[s]$.

Similarly, the series $Y_2=\frac{s}{\nu-1}+O(t)$
satisfies~\eqref{ker-tr}, which implies that the series $Z:=Y_2(ts)/s$
satisfies
$$
 Z=\frac 1{\nu-1}- \frac{t}{(\nu-1)(1-\nu s^3t^2)Z} P(Q(x,sZ),
Q_1(x), Q_2(x), Q(0,sZ), q, \nu, ts, s, sZ),
$$
from which it is clear that $Z$ has coefficients in $\GK[s]$.
\end{proof}

\begin{proof}[Proof of Theorem~{\rm\ref{thm:invariant-triang}}]
As in the proof of Theorem~\ref{thm:inv-planaires}, the fact that the
valuation of $\JI(y)$ in $y$ 
is bounded from below, combined with
the fact that $Y_1$ is a power series in $t$, 
implies that $\JI(Y_1)$ is well-defined and is a
Laurent series in $t$. The fact that
$\ji_n(y)$ is independent of $s$, while $Y_2=s/(\nu-1)+O(t)$, implies
that $\JI(Y_2)$ is well-defined and is a formal power series in $t$.

\medskip
Let us construct  series $A_0 \ldots, A_d$ in
$\GK[[t]]$  such that, for $0\le k \le d$,  the coefficient of $y^{-k}$ in
$$
G(y):=\JI(y)- \sum_{i=0}^d A_i I(y)^i
$$
is zero.  
This condition gives a system of linear equations that relates the series $A_i$:
\beq\label{syst}
[y^{-k}] \JI(y)= \sum_{i=0}^d A_i \ [y^{-k}] I(y)^i \quad \hbox{ for } 0\le k \le
d.
\eeq
Recall that
$I(y)=-\frac{1}{y}+tR(y)$ where $R(y)= 1/y^2+qy Q(0,y)$ is a formal
power series in $t$. This implies  that 
$$
[y^{-k}] I(y)^i =
\left\{
\begin{array}{rl}
  (-1)^i + O(t) & \hbox{ if } i=k,\\
 \hskip 3mm O(t) & \hbox{ otherwise.}
\end{array}
\right.
$$
Hence the determinant of the system~\eqref{syst} is $\pm 1 +O(t)$. Hence this
system determines   a unique $(d+1)$-tuple $(A_0,\ldots, A_d)$ of
series of $\GK[[t]]$ satisfying the required conditions.
Note  that the valuation of $G(y)$ in $y$ is at least $-2d-1$.

\medskip
We now suppose that $\JI(y)$ is an invariant, and  proceed to prove that
$G(y)=0$. Note  that  $G(y)$ is an invariant (as 
$\JI(y)$ and $I(y)$ themselves). 
Thus it suffices to prove the following statement:
\begin{quote}
  \emph{An invariant  $G(y) \in \GK[y,1/y]((t))$ 
whose coefficients $g_n(y)$  contain no monomial $y^{-k}$ for $0\le k \le d$ and $k>2d+1$ is zero.}
\end{quote}
Let $G(y)$  be such an invariant. Assume that $G(y) \not = 0$, and 
let $r$ be the valuation of $G(y)$ in $t$.  Write 
$$
G(y)=\sum_{i\ge r} g_i(y)t^i=\sum_{i\ge r,j\ge
  -2d-1} g_{i,j}t^i y^j
$$
 with $g_{i,j}\in \GK$. By assumption, $g_{i,j}=0$ for $-d\le j \le
 0$. Upon multiplying $G(y)$
by a suitable power of $t$, we may assume that
\beq\label{p-def}
 \min\{ i+j : g_{i,j}\not = 0\}=0. 
\eeq
This property is illustrated in Fig.~\ref{fig:diagramme}.
We now want to prove that $r\ge 0$.
This will follow from  studying the valuation of $G(Y_1)=G(Y_2)$ in $t$. 
Recall that $Y_2$ is a formal power series in $t$ with constant term
$\frac{s}{\nu-1}$. This implies that $G(Y_2)$, as $G(y)$ itself, has
valuation $r$ in $t$, the coefficient of $t^r$ in $G(Y_2)$ being
$$
g_r\left(\frac{s}{\nu-1}\right) \not = 0
$$
(since $g_r(y)$ is independent of $s$). Now $Y_1= t+O(t^2)$ and
$$
G(Y_1)= \sum_{i\ge r, j\ge -2d-1} g_{i,j} t^i Y_1^j,
$$
which, according to~\eqref{p-def}, shows that the valuation of
$G(Y_1)$ in $t$ is non-negative. Given that $G(Y_1)=G(Y_2)$, we have
proved that $r\ge 0$.  
By~\eqref{p-def}, there are non-zero coefficients of the form
$g_{i,-i}$.  As $ r\ge 0$ and $g_{0,0}=0$ (by assumption on
$G(y)$), $g_{i,-i}$ can only be non-zero if 
  $i>0$. But then, the assumption on $G(y)$ implies that
the non-zero coefficients  $g_{i,-i}$ are such that $d+1\le i\le 2d+1$.

{\begin{figure}[b]
\begin{center} 
\input{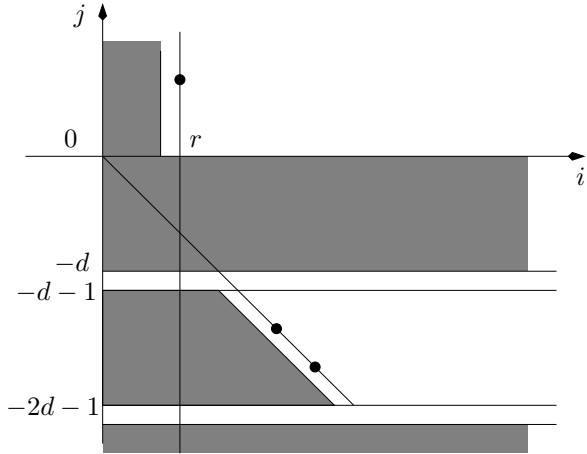}
\caption{The coefficients $g_{i,j}$ of $G(y)$. If $(i,j)$ lies
  in one of the shaded areas,  then $g_{i,j}=0$. The dots indicate
   non-zero coefficients.}
\label{fig:diagramme} 
\end{center}
\end{figure}}

For $i\in [d+1, 2d+1]$, let us denote
  $c_i:=g_{i,-i}$.  One of these coefficients at least is non-zero.
We will now obtain an homogeneous system of $d+1$ linear equations
relating the $c_i$'s 
by writing $[s^{-n}t^{n}]G(Y_1)=[s^{-n}t^{n}]G(Y_2)$ for $n=0,\ldots,d$.

We start with the series $G(Y_2)$. 
 For all $i\ge r$, $g_i(Y_2)$ is a power series of $\GK
 [s,1/s][[t]]$. Hence, for $n=0,\ldots,d$, 
$$
[s^{-n}t^{n}]G(Y_2)~=~[s^{-n}t^{n}]\sum_{i=0}^{n} t^{i}g_{i}(Y_2)
=\sum_{i=0}^{n}[s^{-n}t^{n-i}]g_{i}(Y_2).
$$
Given that $g_{i,j}=0$ for $-d\le j \le 0$ and for $i+j<0$,
the coefficient $g_{i}(y)$ is a \emph{polynomial} in $y$ 
for all $i\leq d$,
with constant term $0$ (see Fig.~\ref{fig:diagramme}).
This, combined with the last statement of
Lemma~\ref{lem:degreeYi}, implies that 
for all $i\leq d$, and all $n\geq 0$,
$[s^{-n}t^{n-i}]g_{i}(Y_2)=0$. Hence
\begin{equation}\label{eq:G(Y2)}
[s^{-n}t^{n}]G(Y_2)~= ~0~~~\textrm{ for all } n=0,\ldots,d.
\end{equation}

Let us  now determine the coefficients $[s^{-n}t^{n}]G(Y_1)$, for
$n=0,\ldots,d$. One has:
$$
[s^{-n}t^{n}]G(Y_1)~=~[s^{-n}t^{n}]\sum_{i\ge
  0}t^ig_i(Y_1)~=~\sum_{i\ge 0}[s^{-n}t^{n-i}]g_i(Y_1).
$$
By Lemma \ref{lem:degreeYi}, the coefficient $[s^{-n}t^{n-i}]Y_1^j$
is $0$ for $j>0$.  Given that, by assumption on $G(y)$,
$$
g_i(y)= \sum_{j=d+1}^{\min(i,2d+1)}g_{i,-j} y^{-j}
+\sum_{j>0} g_{i,j}y^j,
$$
we are left with
\beq \label{eq:coeff-G(Y1)}
  [s^{-n}t^{n}]G(Y_1)=\sum_{i>d}
\sum_{j=d+1}^{\min(i,2d+1)}g_{i,-j} [s^{-n}t^{n-i}]Y_1^{-j}.
\eeq
Let $W=t/Y_1=1+O(t)$. By Lemma~\ref{lem:degreeYi}, 
the coefficient of $t^n$ in $Y_1/t$ has valuation at least $-n$ in $s$.
Hence the same holds for $[t^n]W$, and more generally, for 
$[t^n]W^j$, for all $j>0$.
Thus, for  $n\geq 0$ and  $0<j<i$, 
\beq\label{j<i}
[s^{-n}t^{n-i}]{Y_1}^{-j}~=~[s^{-n}t^{n-i+j}]W^j~=~0.
\eeq
To capture the case  $j=i$, let us denote
$a_n=[s^{-n}t^n]W=[s^{-n}t^{n-1}]1/Y_1$ 
and introduce the series $ A(t)=\sum_{n\ge 0} a_nt^n$, the first terms
of which are found to be
\beq\label{A-def}
A\equiv A(t)~=~\sum_{n\ge 0} a_nt^n~=~1-(\nu-1)t+O(t^2).
\eeq
Then
\beq\label{j=i}
[s^{-n}t^{n-i}]{Y_1}^{-i}~=~[s^{-n}t^{n}]W^i~=~[t^{n}]A^i.
\eeq
The second equality only holds because $[t^n]W$ has valuation at least
$-n$ in $s$.
Recall that $g_{i,-i}=c_i$. Returning to~\eqref{eq:coeff-G(Y1)}
and using~\eqref{j<i} and \eqref{j=i} now gives
\begin{equation}\label{eq:G(Y1)}
[s^{-n}t^{n}]G(Y_1)~=~\sum_{i=d+1}^{2d+1}c_i\,[t^{n}]A^i, ~~~\textrm{ for } n=0,\ldots,d.
\end{equation}
Given that $G(y)$ is an invariant, we can now
equate \eqref{eq:G(Y2)} and \eqref{eq:G(Y1)}. This gives a homogeneous system of
$d+1$ equations  
\begin{equation}\label{eq:system-c}
\sum_{i=d+1}^{2d+1}c_i\,[t^{n}]A^i~=~0, ~~~\textrm{ for } n=0,\ldots,d,
\end{equation}
that relates  $d+1$ unknown coefficients $c_i$,
$i=d+1,\ldots,2d+1$. We will now prove that this system implies that
each $c_i$ is zero, thereby reaching a contradiction.

Define the polynomials $C(x)$ and $D(x)$ by $C(x)=\sum_{i=d+1}^{2d+1}c_i
x^i=x^{d+1}D(x)$.  Note that $D(x)$ has degree at most $d$. The above system means that $C(A)=O(t^{d+1})$, or
equivalently, $D(A)=O(t^{d+1})$ (since $a_0\not = 0$). Write $A= a_0+tB$,
with $B=\sum_{n\ge 1} a_nt^{n-1}$.  By Taylor's formula,
$$
D(A)=\sum_{k=0}^{d} \frac{D^{(k)}(a_0) } {k!} (tB)^k=O(t^{d+1}).
$$
Extracting the coefficient of $t^0$ in this identity gives
$D(a_0)=0$. Then extracting the coefficient of $t^1$ gives
$D'(a_0)a_1=0$. But $a_1\not = 0$ (see~\eqref{A-def}), and thus
$D'(a_0)=0$. Extracting inductively the coefficients of $t^2, \ldots,
t^{d}$ gives finally $D(a_0)=D'(a_0)=\cdots = D^{(d)}(a_0)
=0$. But a polynomial of degree (at most) $d$ with a root of
multiplicity $d+1$ must be zero, hence $D(x)=C(x)=0$ and all
coefficients $c_i$ vanish. We have reached a contradiction, and the
invariant $G(y)$ must be zero.
\end{proof}

\section{Equations with one catalytic variable and algebraicity}
\label{sec:alg}
 One key tool of this paper is an
  algebraicity theorem which applies to series satisfying a
 polynomial equation   with one catalytic variable. It generalizes
 slightly Theorem 3  in~\cite{mbm-jehanne}.

Let $\GK$ be a field of characteristic 0. 
Let $F(u)\equiv F(t,u)$ be a power series in $\GK(u)[[t]]$, that is, a series in
$t$ with rational coefficients in $u$. Assume that these coefficients 
have no pole at $u=0$. 
The following divided difference (or discrete derivative) is then well-defined:
$$
\Delta F(u) = \frac{F(u)-F(0)}u.
$$
Note that 
$$
\lim _{u\rightarrow 0} \Delta F(u) =F'(0),
$$
where the derivative is taken with respect to $u$. The operator
$\Delta ^{(i)}$ is obtained by applying $i$ times $\Delta$, so that:
$$
\Delta ^{(i)} F(u) = \frac{F(u)-F(0)-uF'(0) -\cdots -
  u^{i-1}/(i-1)!\,F^{(i-1)}(0)}{u^i}. 
$$
Now
$$
\lim _{u\rightarrow 0} \Delta ^{(i)} F(u) =\frac{F^{(i)}(0)}{i!}.
$$
Assume $F(t,u)$ satisfies a functional equation of the form
\beq
\label{main-eq}
F(u)\equiv F(t,u) = F_0(u)+ t\  Q\Big( F(u), \Delta F(u), \Delta
^{(2)}F(u),\ldots ,\Delta ^{(k)}F(u), t; u\Big) ,
\eeq
where $F_0(u)\in \GK(u)$ and $Q(y_0, y_1, \ldots , y_k, t; v)$ is  a polynomial in the $k+2$ first
indeterminates $y_0, y_1, \ldots , y_k, t$, and a \emm rational function, in
the last indeterminate $v$, having  coefficients
 in  $\GK$. 
Extract from~\eqref{main-eq} the coefficient of $t^0$: this gives
 $F_0(u)=F(0,u)$. In particular, $F_0(u)$ has no pole at $u=0$.
\begin{Theorem} 
\label{generic-thm}
Under the above assumptions, the series $F(t,u)$  is algebraic over $\GK(t,u)$.
\end{Theorem}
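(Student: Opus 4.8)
The plan is to reduce this statement to Theorem~3 of~\cite{mbm-jehanne} --- of which it is only a mild variant --- and then to run the strategy of that proof.

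First I would \emph{reduce to a polynomial equation}. Set $f_i \equiv f_i(t) = \frac{1}{i!}\,\partial_u^i F(t,0)$ for $0\le i \le k-1$; these are well-defined power series in $t$ since, by hypothesis, the coefficients of $F(t,u)$ have no pole at $u=0$. By definition of the divided difference,
$$\Delta^{(i)}F(u)=\frac{1}{u^i}\Big(F(u)-\sum_{j=0}^{i-1}u^j f_j\Big), \qquad 0\le i\le k,$$
so substituting these into~\eqref{main-eq} and clearing denominators in $u$ (legitimate as $F_0$ and $Q$ are rational in $u$) turns~\eqref{main-eq} into a polynomial identity $P\big(F(t,u),f_0(t),\dots,f_{k-1}(t),t,u\big)=0$ with $P$ a polynomial over $\GK$. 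I would also note that~\eqref{main-eq} determines $F(t,u)$ uniquely as a power series in $t$ (extract the coefficient of $t^n$ by induction, using the factor $t$ in front of $Q$), so the equation is \emph{proper} in the sense of~\cite{mbm-jehanne}: $P$ is non-trivial and has positive degree in its first slot, since at $t=0$ it reduces, after clearing denominators, to $F(0,u)=F_0(u)$.

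Next I would introduce \emph{the kernel of the differentiated equation}. Differentiating $P(F(u),\mathbf f,t,u)=0$ with respect to $u$ gives $(\partial_{y_0}P)\,F'(u)+\partial_u P=0$, an equation linear in $F'(u)$ whose kernel is $\partial_{y_0}P$ evaluated along $F$. As in the kernel method, I expect exactly $k$ distinct fractional-power-series solutions $U_1(t),\dots,U_k(t)$, vanishing at $t=0$, of this kernel; at $u=U_j$ both $\partial_{y_0}P$ and (consequently) $\partial_u P$ vanish, and together with $P=0$ one obtains, for $j=1,\dots,k$, a system of $3k$ polynomial equations relating the $3k$ unknown series $U_1,\dots,U_k$, $F(U_1),\dots,F(U_k)$ and $f_0,\dots,f_{k-1}$. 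Following~\cite{mbm-jehanne}, I would argue this square system is zero-dimensional over $\overline{\GK(t)}$, so that eliminating all but one unknown leaves a non-trivial algebraic relation for it: each $f_i$, each $U_j$ and each $F(U_j)$ is algebraic over $\GK(t)$. Finally, reading the polynomial identity of the first step as an equation for $F(t,u)$ over the field $\GK(t,u)(f_0,\dots,f_{k-1})$ --- which is a finite, hence algebraic, extension of $\GK(t,u)$, and over which the equation is still non-trivial of positive degree in $F$ --- shows that $F(t,u)$ itself is algebraic over $\GK(t,u)$.

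The hard part is the kernel-root analysis: proving that $\partial_{y_0}P$, evaluated along $F$, has \emph{precisely} $k$ small fractional-power-series roots in $t$ and that the resulting $3k\times 3k$ system is genuinely zero-dimensional, so that the elimination step does not collapse to a trivial relation. This is exactly the technical core of~\cite{mbm-jehanne} (a Newton-polygon / Puiseux series argument). By contrast, the only genuinely new features here --- the divided differences and the rational dependence on $u$ --- are harmless: they are absorbed once and for all by the substitution and denominator-clearing of the first step, and do not touch the core of that argument.
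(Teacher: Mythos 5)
Your outline correctly identifies the two-stage structure of the argument --- reduce to a polynomial identity, then run the kernel/Newton-polygon analysis of~\cite{mbm-jehanne} on the kernel roots $U_1,\dots,U_k$ --- and this is also the structure of the paper's proof. The problem is the concluding claim that ``the rational dependence on $u$ \ldots is absorbed once and for all by the substitution and denominator-clearing of the first step, and does not touch the core of that argument.'' That is exactly wrong: a pole of $Q$ at $v=0$ \emph{does} touch the core, and removing it is the genuinely new step of this theorem relative to~\cite{mbm-jehanne}.

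Concretely, if $Q$ has a pole of order $m'>0$ at $v=0$ then the least common denominator you clear by is $u^mD(u)$ with $m>k$, the kernel $\partial_{y_0}P$ carries this outer factor $u^mD(u)$, and its companion bracket is still singular at $U=0$ because of the terms $u^{-i}\partial Q/\partial y_i$. The Newton-polygon count of ``small'' Puiseux branches $U(t)$ with $U(0)=0$ then no longer returns $k$, and once the number $k'$ of such branches differs from $k$ the balance $3k'=k+2k'$ that your $3k\times3k$ elimination scheme requires is destroyed. The paper's cure, which your plan has no counterpart for, is a preliminary reduction \emph{before} clearing denominators: writing $Q=v^{-m'}Q_{m'}(y_0,\dots,y_k,t)+\bar Q$, multiply~\eqref{main-eq} by $u^{m'}$ and let $u\to0$ to extract the constraint $Q_{m'}\bigl(F(0),F'(0),\dots,F^{(k)}(0)/k!,t\bigr)=0$; then the telescoping identity $\Delta^{(i)}F(u)=F^{(i)}(0)/i!+u\,\Delta^{(i+1)}F(u)$ shows that $Q_{m'}$ evaluated along $F$ is a multiple of $u$, trading one order of the pole for an extra divided difference (so $k$ grows by one). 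Iterating removes the pole of $Q$ at $v=0$ entirely. Only then does denominator clearing give $m=k$ exactly, and only then --- together with the $\epsilon$-perturbation term $\epsilon^k z\,\Delta^{(k)}G(u)$, which your plan also leaves implicit although it is what forces the $k$ branches to exist and be distinct --- can one read off precisely $k$ small Puiseux solutions of the kernel from $U^k=\epsilon^k z+z^2\sum_i U^{k-i}\,\partial Q/\partial y_i$, whose right-hand side is now regular at $U=0$. The reduction is not bookkeeping; it is the theorem's real content, and the proposal skips it.
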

\begin{proof}
Let us first prove that $F(t,u)$ satisfies an equation
of the form~\eqref{main-eq} such that $Q(y_0,  \ldots , y_k,
t; v)$ has no pole at $v=0$
(but possibly with a larger value of $k$). 
 Assume  that $Q(y_0, y_1, \ldots , y_k,
t; v)$ has a pole of order $m>0$ at $v=0$. Write
$$
Q(y_0, y_1, \ldots , y_k, t; v)= \frac 1 {v^m} Q_m(y_0, y_1, \ldots ,
y_k, t)
+ \bar Q(y_0, y_1, \ldots , y_k, t; v)
$$
where $Q_m$ is a polynomial in its $k+2$ variables, and $\bar Q(y_0,
y_1, \ldots , y_k, t; v)$ is a polynomial in its first $k+2$ variables
and a rational function in $v$, having a pole of order at
most $m-1$ at $v=0$.
Multiply~\eqref{main-eq} by $u^m$, and take the limit as $u\rightarrow
0$. This gives 
\beq\label{id}
Q_m(F(0), F'(0), \ldots, F^{(k)}(0)/k!, t)=0.
\eeq
Note that  for all $i\ge 0$,
$$
\Delta^{(i)}F(u)=F^{(i)}(0)/i! + u \Delta^{(i+1)} F(u).
$$ 
In $Q_m \left( F(u),  \ldots ,\Delta ^{(k)}F(u), t\right)$, replace each
$\Delta^{(i)}F(u)$ by  the above expression.
This gives:
\begin{multline*}
 Q_m \Big( F(u),  \ldots ,\Delta ^{(k)}F(u), t\Big)
=\\
 Q_m(F(0), F'(0),\ldots, F^{(k)}(0)/k!, t)
+ u\, \hat Q_m\Big(F(0),\ldots, F^{(k)}(0), \Delta F(u),  \ldots
,\Delta ^{(k+1)}F(u), t\Big),
\end{multline*}
for some polynomial $\hat Q_m$, or, after replacing $F^{(i)}(0)$ by
$i! (\Delta^{(i)}F(u)- u \Delta^{(i+1)} F(u))$ in  $\hat Q_m$,
\begin{eqnarray*}
Q_m \Big( F(u),  \ldots ,\Delta ^{(k)}F(u), t\Big)
&=&Q_m(F(0), \ldots, F^{(k)}(0)/k!, t)
+ u \tilde Q_m\Big( F(u),  \ldots ,\Delta ^{(k+1)}F(u),
t\Big)\\
&=& u \,\tilde Q_m\Big( F(u), \ldots ,\Delta ^{(k+1)}F(u), t\Big).
\end{eqnarray*}
(by~\eqref{id}),
 for some polynomial $\tilde Q_m$.
Hence~\eqref{main-eq} can be rewritten as
\begin{eqnarray*}
F(u)&=&F_0(u)+ \frac t {u^{m-1} } \,\tilde Q_m\Big( F(u), \ldots ,\Delta ^{(k+1)}F(u), t\Big)
+t \bar Q \Big( F(u), \ldots ,\Delta ^{(k)}F(u), t;u\Big)
\\
&=&F_0(u)+ t \tilde Q \Big( F(u), \Delta F(u),\ldots ,\Delta ^{(k+1)}F(u), t;u\Big)
\end{eqnarray*}
where now $\tilde Q(y_0,y_1, \ldots , y_k, t; v)$ is a polynomial 
in its first $k+2$ variables
and a rational function in $v$, having a pole of order at
most $m-1$ at $v=0$. 
In this way, we can decrease step by step the order of the pole at
$v=0$ in $Q$, until we reach a rational function  $Q$ that has no pole at
$v=0$. Observe that  $k$ increases during this procedure.

\medskip
Let us now assume that~\eqref{main-eq} holds and that $Q(y_0, \ldots, t;v)$ has no pole
at $v=0$. We want to prove that $F(t,u)$ is algebraic.
 As in \cite{mbm-jehanne}, we first introduce a
small perturbation of ~\eqref{main-eq}. Let $\eps$ be a
new indeterminate, and  consider the equation
\beq
\label{epsilon-eq}
G(u)\equiv G(z,u, \epsilon) = F_0(u)+ \eps ^k z \Delta ^{(k)}G(u) +
z^2 Q\Big( G(u), \Delta G(u), \ldots 
,\Delta ^{(k)}G(u), z^2; u\Big) 
\eeq
where  $F_0$ and $Q$ are the same  as in the equation satisfied by $F$.
Given that $Q$ has no pole at $v=0$, one can see, by extracting
inductively the coefficient of $z^n$, for $n\ge0$,  that this equation
defines a unique solution $G(u)$ in the  ring of \fps \ in $z$ with  
coefficients in $\GK(u)[\eps]$. These coefficients have no pole at
$u=0$.
Moreover, $G(z,u, 0)= F(z^2,u)$, so
that it suffices to prove that $G(z,u, \epsilon)$ is algebraic over
$\GK(z,u,\eps )$.

\smallskip
Let $u^m D(u)$, with $D(0)\not = 0$, be a polynomial of $\GK[u]$ of minimal degree
such that multiplying~\eqref{epsilon-eq} by  $u^m D(u)$ gives  a \emm
polynomial, equation of the form
\beq
\label{eq-G-pol}
P\Big( G(u), G_1, \ldots , G_k, z;u \Big) =0
\eeq
for some polynomial $P(x_0, x_1, \ldots, x_k, z;u)$, with $G_i=
G^{(i-1)}(0)/(i-1)!$.  
Note that, because of the  
term $\eps ^k z \Delta ^{(k)}G(u)$ occurring in~\eqref{epsilon-eq}, we
have $m\ge k$.
Let us apply to~\eqref{eq-G-pol} the general strategy of~\cite{mbm-jehanne}. We 
need to find sufficiently many \emm fractional power series, $U$ in $z$
(that is, \fps\ in $z^{1/p}$ for some $p>0$),
 with coefficients in some algebraic closure of $\GK(\eps)$, satisfying
$$
\frac{\partial P}{\partial x_0}\Big( G(U),  G_1, \ldots , G_k,
z;U\Big) =0.
$$
This reads
$$
U^m D(U) \left( 
1- \frac{\eps ^k}{U^k} z - z^2 
\sum_{i=0}^k \frac 1{U^{i}} \, \frac{\partial Q}{\partial y_i}
\Big(G(U), \ldots, \Delta^{(k)} G(U), z^2; U\Big)
\right)=0.
$$
Among the solutions of this equation are the 
 solutions of
$$
U^k= \eps ^k z + z^2 
\sum_{i=0}^k U^{k-i} \, \frac{\partial Q}{\partial y_i}
\Big(G(U), \ldots, \Delta^{(k)} G(U), z^2; U\Big).
$$
Observe that the right-hand side has no pole at $U=0$.
Let us focus on solutions $U\equiv U(z)$ having constant term 0.
It is not hard to see, by an harmless extension of Puiseux's theorem~\cite[Chap.~4]{walker}, that this
equation has exactly $k$ such solutions
$U_1, \ldots , U_k$. Their coefficients lie in an algebraic closure of
$\GK(\eps)$. More precisely, the Newton-Puiseux algorithm shows that
these series  can be written as 
\beq \label{Ui-expansion}
U_i= \eps \, \xi ^i  s\, (1+V( \xi ^i  s))
 \eeq
where $s=z^{1/k}$, $\xi$ is a primitive $k$th root of unity and $V(s)$
is a \fps \ 
in $s$ with coefficients in $\GK( \eps)$, having constant term $0$. In
particular, the $k$ series $U_i$ are distinct, non-zero, and
$D(U_i)\not = 0$.

The rest of the proof is a simple adaptation
of~\cite[p.~636--638]{mbm-jehanne}. The only difference is the factor
$D(u)$ now involved in the construction of the polynomial $P$. One has
to use the fact that $D(U_i)=D(0)+O(s)$ where  $D(0)\not = 0$.
\end{proof}

\section{Algebraicity for colored planar maps}
\label{sec:alg-maps}
In this section, we prove our first algebraicity theorem for colored maps.
 We consider the Potts \gf\  $M(x,y)\equiv M(q, \nu, t, w ,z ; x,y)$ of
 planar maps, defined by~\eqref{potts-planar-def}.
This series is characterized by the functional equation~\eqref{eq:M}.

\begin{Theorem}\label{thm:alg-planaires}
 Let $q\not= 0,4$ be of the form
$2 +2 \cos j \pi/m$ for two integers $j$ and $m$.
Then the series $M( q, \nu, t, w,z ; x,y)$ 
is algebraic over $\qs(q, \nu,t,w,z,x,y)$. 
\end{Theorem}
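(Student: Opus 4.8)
The plan is to reduce the two-variable functional equation~\eqref{eq:M} to an equation with a single catalytic variable $y$, and then invoke the algebraicity result of Theorem~\ref{generic-thm}. The reduction is exactly the one illustrated on the $2$-colored example in Section~\ref{sec:example}, now carried out for a general $q$ of the prescribed form. First, observe that the hypothesis $q=2+2\cos(j\pi/m)$ can be rewritten as $q=2+2\cos(2k\pi/m')$ with $0<2k<m'$ and $\gcd(k,m')=1$: if $j$ is even set $k=j/2$, $m'=m$; if $j$ is odd, write $j\pi/m=2(j)\pi/(2m)$ and reduce the fraction $j/(2m)$ to lowest terms, noting that $q\neq 0,4$ guarantees the reduced fraction lies strictly between $0$ and $1/2$ after possibly replacing the angle by its supplement (using $\cos$'s symmetry). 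Thus Proposition~\ref{prop:source}, Proposition~\ref{prop:inv-planaires}, Theorem~\ref{thm:inv-planaires} and Corollary~\ref{coro:eq-inv-M} all apply with this $m'$, $k$.

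Next I would take the equation~\eqref{eq-inv} provided by Corollary~\ref{coro:eq-inv-M}: there exist Laurent series $C_0,\dots,C_m\in\qs(q,\nu,w)((t))$ with
$$
D(y)^{m/2}\,T_m\!\left(\frac{\be(4-q)(\by-1)+(q+2\be)I(y)-q}{2\sqrt{D(y)}}\right)=\sum_{r=0}^{m}C_r\,I(y)^r,
$$
where $I(y)=wtyq\,M(1,y)+\frac{y-1}{y}+\frac{ty}{y-1}$ and $D(y)$ is the quadratic in $I(y)$ from the proposition. Substituting the explicit formula for $I(y)$ in terms of $M(1,y)$ turns the left side into a polynomial in $M(1,y)$, $t$, $y$, $\by$ with coefficients in $\qs(q,\nu,w)$, and the right side into a polynomial in $M(1,y)$, $t$, $y$ whose coefficients involve the unknown series $C_r$. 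The key point is that the $C_r$ are genuine one-variable unknowns (Laurent series in $t$ only), so they can be eliminated: expanding the identity in powers of $(y-1)$ around $y=1$ and matching, as in Section~\ref{sec:one-20}, determines $C_m,\dots,C_1$ and expresses $C_0$ linearly in terms of $M(1,1)$ (more precisely in terms of finitely many $\partial_y^{j}M(1,y)|_{y=1}$). Reinserting these expressions back into~\eqref{eq-inv} yields a single polynomial functional equation
$$
\mathcal{E}\big(M(1,y),\ \partial_yM(1,y)|_{y=1},\ \dots,\ t,\ y\big)=0
$$
satisfied by $N(t,y):=M(1,y)$, in which $x$ no longer appears and $y$ is the only catalytic variable.

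Finally I would verify that this equation fits the hypotheses of Theorem~\ref{generic-thm}: writing $F(t,u)=N(t,1+u)$ (a shift to move the distinguished point to $u=0$, since $M(1,y)$ has a power-series expansion around $y=1$), the quantities $\partial_y^{j}M(1,y)|_{y=1}$ become $j!\,F^{(j)}(0)=j!\,\lim_{u\to 0}\Delta^{(j)}F(u)$, so the equation has exactly the shape~\eqref{main-eq}, with $F_0$ and $Q$ rational in $u$ and polynomial in the $\Delta^{(i)}F$; the only subtlety --- possible poles in the catalytic variable --- is precisely what the first part of the proof of Theorem~\ref{generic-thm} handles, and is also why we needed the \emph{extended} version of the theorem from~\cite{mbm-jehanne}. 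Theorem~\ref{generic-thm} then gives that $F$, hence $M(1,y)$, is algebraic over $\qs(q,\nu,t,w)(y)$. To conclude algebraicity of the full series $M(x,y)$, note that once $M(1,y)$ (and therefore $M(x,1)$, obtained similarly or by a symmetric argument / by using~\eqref{Ssol1}) is known to be algebraic, the original equation~\eqref{eq:M} is \emph{linear} in $M(x,y)$ with algebraic coefficients, so $M(x,y)=R(x,y)/K(x,y)$ is algebraic over $\qs(q,\nu,t,w,z,x,y)$; finally restoring the variable $z$ is free by~\eqref{potts-planar-def} and Euler's relation, as noted at the start of Section~\ref{sec:inv-maps}.

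The main obstacle I expect is the elimination step producing the one-catalytic-variable equation and checking it really has the form~\eqref{main-eq}: one must confirm that matching coefficients near $y=1$ in~\eqref{eq-inv} determines all of $C_1,\dots,C_m$ and leaves $C_0$ affine in finitely many derivatives of $M(1,y)$ at $y=1$ (this uses that $I(y)$ has a simple pole and $J(y)$ a pole of order exactly $m$ at $y=1$, i.e. the valuation bookkeeping from Theorem~\ref{thm:inv-planaires}), and that after substitution the resulting polynomial relation is \emph{nontrivial} --- i.e. does not degenerate to $0=0$ --- which is where the precise algebraic structure coming from Proposition~\ref{prop:source} (the Chebyshev identity $K\mid J(u)-J(v)$) is essential. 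Everything else is routine power-series manipulation and an appeal to Theorem~\ref{generic-thm}.
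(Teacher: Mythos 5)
Your proposal follows the same architecture as the paper's proof of Theorem~\ref{thm:alg-planaires}: set $z=1$, invoke Corollary~\ref{coro:eq-inv-M}, extract the $C_r$ by matching Laurent expansions around $y=1$, substitute back to get a one-catalytic-variable equation, feed it to Theorem~\ref{generic-thm}, and recover $M(x,y)$ via duality plus the linearity of~\eqref{eq:M} in $M(x,y)$. The normalization of $q$ to the form $2+2\cos(2k\pi/m)$ with $0<2k<m$, $\gcd(k,m)=1$ is also handled correctly (though the word ``supplement'' should be the reflection $\theta\mapsto 2\pi-\theta$, not $\theta\mapsto\pi-\theta$, since the latter flips the sign of the cosine).

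The genuine gap is exactly the step you flag as ``the main obstacle,'' but it is more delicate than ``not degenerating to $0=0$.'' After substituting the expressions for the $C_r$ back into~\eqref{eq-inv} and replacing $I(y)$ by its $M$-expression, one has to show that the resulting equation can be put in the form $M(y)=1+tP(\ldots)$ required by Theorem~\ref{generic-thm}. Concretely, this means showing (i) that once $K(y)=t+twyqM(1,y)$ is inserted, a factor $t$ comes out (this needs the precise fact that $\Pol(x_0,\ldots,x_{m+1},t;y)$ has no constant term \emph{and} no monomial $x_j$ for $j\ge1$, which is the content of Lemmas~\ref{lem:cr} and~\ref{lem:x0}); and (ii) that the coefficient of $x_0$, i.e.\ of $M(y)$, in the reduced equation is a \emph{non-zero} Laurent polynomial in $y$, so one may divide by it. Point (ii) is the hard part: the paper proves it by computing $\partial_y L_1(0;y)|_{y=1}$ in terms of $T_m'$ and $T_m''$ evaluated at $\cos(2k\pi/m)=q/2-1$, using $T_m'(q/2-1)=0$ and $T_m''(q/2-1)=-4m^2/(q(4-q))\ne0$. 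Your proposal gestures at Proposition~\ref{prop:source} as the reason this cannot degenerate, but the actual argument needs this explicit Chebyshev evaluation, and that computation is where the work of the proof is concentrated. A secondary inaccuracy: $C_0$ is not ``linear'' in $M(1,1)$ in general (already for $q=2$ it involves $M(1)^2$); what Lemma~\ref{lem:cr} gives is a polynomial, not affine, dependence on $K(1),\ldots,K^{(m)}(1)$, and this nonlinearity is precisely why one needs the $\Delta^{(i)}$-machinery of Theorem~\ref{generic-thm} rather than a naive linear elimination.

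For the last step ($M(1,y)\Rightarrow M(x,y)$), the ``symmetric argument'' you mention is what the paper does via the Tutte-polynomial duality $\gtM(\mu,\nu,w,z;1,y)=\gtM(\nu,\mu,z,w;y,1)$, exploiting that $(\mu-1)(\nu-1)=q$ is symmetric in $\mu,\nu$; appealing to~\eqref{Ssol1} would be harder to justify directly, since $Y_1,Y_2$ are series in the auxiliary variable $s$ rather than algebraic quantities over the ground field, so the duality route is the cleaner one.
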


\noindent{\bf Caveat.}
 The series $M( q, \nu, t, w,z ; x,y)$ is \emm not, algebraic for a
generic value of $q$. 
That is, there exists no non-trivial polynomial $P$ such that 
$P( q, \nu, t, w,z , x,y, M( q, \nu, t, w,z ; x,y))=0$ when $q, \nu,
t, w,z , x,y$ are indeterminates.
Otherwise, the series  $\gtM( \mu, \nu, w ,z; x,y)$ 
 counting maps weighted by their Tutte polynomial 
and related to $M$ by~\eqref{Potts-Tutte-gfs}
would be algebraic over
$\qs(\mu, \nu,  w ,z, x,y)$ for generic values of $\mu$ and
$\nu$. However, it is known
that~\cite{mullin-boisees,bernardi-boisees}: 
$$
\gtM(1,1, t, t ;1,1)=\sum_{n\ge 0} \frac 1{(n+1)(n+2)} {{2n}\choose
  n}{{2n+2}\choose {n+1}} t^n,
$$
and the asymptotic behaviour of the $n^{\hbox{th}}$ coefficient, being
$\kappa\, 16^n n^{-3}$, prevents this series from being
algebraic~\cite{flajolet-context-free}. 
By Tutte's original  
description of what was not yet  called the Tutte polynomial,
the above series counts planar maps enriched with a spanning
tree~\cite{tutte-dichromate}. 

\medskip
As the variable $z$ is redundant, it suffices to prove Theorem~\ref{thm:alg-planaires}
for $z=1$. We thus set $z=1$ and denote the series
$M(q,\nu,t,w,1; x,y)$ by $M(q,\nu,t,w;x,y)$.
The conditions on $q$ imply that there exist two coprime integers  $k$
and $m$  such  that $0<2k <m$ and  $q=2+2\cos
2k\pi/m$. Corollary~\ref{coro:eq-inv-M} thus applies, and gives a
polynomial equation  in $I(y)$ involving $m+1$ unknown series
$C_r$. We call this equation the \emm invariant equation,.
From this point, we prove 
Theorem~\ref{thm:alg-planaires} in two steps: we first show that the series $C_r$ can be expressed in
terms of the $y$-derivatives of $M(1,y)$, evaluated at $y=1$; then, we
prove that, once $I(y)$ and each $C_r$ are replaced, in the invariant equation, by
their expressions in terms of $M$, our general algebraicity theorem applies. That is to say, the equation
obtained for $M(1,y)$ has the form~\eqref{main-eq},  with $u$ replaced by
$(y-1)$. This second step is more delicate than the first.

Before we study the general case, let us examine thoroughly  a simple
example: $q=1$. We refer  the reader who would like to see more explicit
cases to Sections~\ref{sec:two-planar} and~\ref{sec:three-planar} (respectively
devoted to $q=2$ and $q=3$).

\subsection{A simple example: one-colored planar maps}
\label{sec:one-color}
Take $k=1$ and $m=3$, so that the number of colors is 
$q=1$. Of course, all edges of a $1$-colored
map are monochromatic, so that the variable $\nu $ becomes redundant,
but we keep it for the sake of generality (a degeneracy  actually
occurs if we set $\nu=1$ at this stage). 

The third Chebyshev polynomial is $T_3(x)= 4x^3-3x$. The invariant
equation~\eqref{eq-inv}  thus reads
\beq\label{eq-invq1}
\frac 1 2 N(y)^3 -\frac 3 2 N(y) D(y)-\sum _{r=0}^{3} C_r\, I(y)^r=0,
\eeq
with 
$$
N(y)=3 (\nu-1) (\by-1)+( 2\nu-1 ) I(y) -1,\quad 
I(y)= wty M ( 1,y ) +{\frac {y-1}{y}}+{\frac {ty}{y-1}}
$$
and
$$
D(y)= (\nu^2-\nu+1)I(y)^2- (\nu+1 ) I(y)-3  t (\nu-1) ( w+\nu-1 )+1.
$$
We write
$$
I(y)= K(y) +{\frac {y-1}{y}}+{\frac {t}{y-1}}
\quad \hbox{where} \quad K(y)=t+ wty M ( 1,y ).
$$
This is not crucial in this simple case, but will be
convenient in the general case. 

Recall that the  series $C_r$ depend on $\nu, t, w$, but not on
$y$. Expand the left-hand side of~\eqref{eq-invq1} around $y=1$: the first
non-trivial term is $O((y-1)^{-3})$, and one obtains:
$$
\frac{{t}^{3}}2 \left( 2-3\,\nu-3\,{\nu}^{2}+2\,{\nu}^{3}-2\,C_{{3}}
 \right)  \left( y-1 \right) ^{-3}+O (  \left( y-1 \right) ^{-2}
 ) =0,
$$
from which we  determine $C_3$ explicitly:
\beq\label{C3-sol-11} 
C_3=
\left( \nu-2 \right)  \left( 2\,\nu-1 \right)  \left( \nu+1
 \right) /2.
\eeq
By pushing the expansion of~\eqref{eq-invq1} around $y=1$ up to the
term $(y-1)^0$, we find explicit expressions of the other three series $C_r$:
\begin{eqnarray} 
C_2&=&
6\,\nu-3\,{\nu}^{2}/2-3/2,\nonumber
 \\ 
C_1&=&-9/2\, \left( \nu-1 \right)  \left( {\nu}^{2}-2\,\nu\,w-1+w \right) t-
3/2-3\,\nu/2, \label{Cr-q=1}
\\
C_0&=&-{\frac {27}{2}}\,\nu \left( \nu-1 \right) ^{2}t\, K(1) 
+{\frac {27}{2}}\,\nu \left( \nu-1 \right) ^{2}{t}^2
+\frac 9 2\, \left( \nu-1 \right)  \left( 2\,\nu-2-w \right) t +1.
\nonumber
\end{eqnarray}
Observe that the expression of $C_0$ involves the (unknown) series $K(1)$.

Now replace in~\eqref{eq-invq1} each series $C_r$ by its expression. This
gives
\beq\label{eq-K-1}
-27/2 (\nu-1)^2 
\Big( \nu (1-\by ) K(y) ^{2}+
 ( 2\,\by\nu\,t-\by+\by^2-\nu\,t ) K (y) 
-\nu  t K(1) +t (1-\by) ( w+\nu\,t+\by ) \Big)=0.
\eeq
This equation involves a single catalytic variable, $y$. However, it
cannot be immediately written in the form~\eqref{main-eq}: when $t=0$,
the expression between parentheses contains a quadratic term $K(y)^2$,
which is absent from~\eqref{main-eq}.

Let us  replace $K(y)$ by $t+twyM(y)$ and $K(1)$ by $t+twM(1)$, where
$M(y)\equiv M(1,y)$.  More factors come out, including a factor
$t$. Precisely, the equation now reads:
\beq\label{eq-M-1}
-27/2(\nu-1)^2 tw \by \Big( 
{y}^{2}t\nu\,w \left( y-1 \right) M( y )^{2}
+ \left( \nu\,t{y}^{2}+1-y \right) M( y )
-ty\nu\,M (1 ) +y-1
\Big) =0,
\eeq
or, after dividing by $27/2 (\nu-1)^2 t w\by (1-y)$ and isolating the
term $M(y)$, 
\beq\label{1cat-planaires1}
M(y)= 1+ y^2 t\nu w  M(y)^2 + \nu t y \, \frac{yM(y)-M(1)}{y-1}.
\eeq
This equation has the form~\eqref{main-eq} (with
$u$ replaced by $y-1$), so that Theorem~\ref{generic-thm} applies: The series
$M(1,y)\equiv M(1, \nu, t,w;1,y)$ is algebraic. 
The algebraicity of $M(1,\nu, t,w;x,y)$ easily follows, as explained at the end of this section.
The experts will have recognized
in~\eqref{1cat-planaires1} the standard functional equation obtained by
deleting recursively  the root-edge in planar maps~\cite{tutte-general}.

\subsection{The general case}

We now want to prove that the treatment we have
applied above to~\eqref{eq-inv} in the case $q=1$ can
  be applied for all values $q=2+2\cos 2 k\pi/m$. More precisely:
\begin{itemize}
\item expanding~\eqref{eq-inv} around $y=1$ and extracting the
  coefficient of $(y-1)^{-r}$
provides an expression of the series $C_r$, for $0\le r \le m$, as a
polynomial in $t$,  $K(1), K'(1), \ldots,  K^{(m-r)}(1)$ (where $K(y)=t+ wtyq M ( 1,y )$), with coefficients in $\GK:=\qs(q,\nu,w)$;
\item after expressing in~\eqref{eq-inv} the invariant $I(y)$ and each
series $C_r$ in terms of $K$,  then in terms of $M$, and finally dividing 
by $t$ and by a non-zero element of 
  $\GK(y)$, the resulting equation can be written in the form
\beq\label{generic-M}
M(y)=1 + t \, P\left(M(y), \Delta M(y), \ldots, \Delta^{m+1}M(y), t;y\right),
\eeq
where $M(y)\equiv M(1,y)$,  $\displaystyle ~\Delta F(y)=
\frac{F(y)-F(1)}{y-1},$ and $P(x_0, x_1, \ldots, x_{m+1}, t;v)$ is a
polynomial in its first $m+3$ variables, and a rational function in
the last one, having coefficients in $\GK$. 

\end{itemize}
One can then apply Theorem~\ref{generic-thm}, and conclude that the
series $M(y)\equiv M(q, \nu, t, w; 1,y)$ is algebraic. A duality argument, combined 
with the original equation~\eqref{eq:M}, finally proves that the
\gf\ $M(q, \nu, t, w; x,y)$ counting 
$q$-colored planar maps is  algebraic as well.

\medskip
\noindent{\bf Remark.} As suggested by the example of
Section~\ref{sec:one-color}, the series $C_r$ can be
expressed in terms of $M(1), \ldots, M^{(m-r-3)}(1)$ only, 
but we do not need so much precision here.

\subsubsection{Determination of the series $C_r$}
\label{sec:Cr}

It will be convenient to write
\beq\label{I-K}
I(y)= K(y) +{\frac {y-1}{y}}+{\frac {t}{y-1}}
\eeq
where 
$$
K(y)=t+ wtyq M ( 1,y ).
$$
Consider  the invariant equation~\eqref{eq-inv}. Recall that
$T_m(x)$ is a polynomial in $x$ of degree $m$, which is even
(resp. odd) if $m$ is even (resp. odd). That is, denoting
$m=2\ell+\eps$ with  $\eps\in\{0,1\}$,
\beq\label{Tma}
T_m(x)=\sum_{a=0} ^\ell T_m^{(a)}\;x^{2a+\eps},
\eeq
where $T_m^{(a)}\in \qs$.
Thus the left-hand side of~\eqref{eq-inv} reads
\begin{multline*}
 \sum_{a=0} ^\ell T_m^{(a)}\; 2^{-(2a+\eps)}
\left( \be(4- q ) (\by-1)+( q+2\,\be ) I(y) -q\right)^{2a+\eps}\\
\left((q\nu+\be^2)I(y)^2-q (\nu+1 ) I(y)+ \be t ( q-4 )  ( wq+\be ) +q\right)^{\ell-a},
\end{multline*}
and thus appears as a polynomial  of degree $m$ in $I(y)$, with coefficients in
$\qs[q,\nu,t,w,\by]$ (recall that $\be =\nu-1$). We denote by $L_r(t;y)$ the coefficient of
$I(y)^r$  in this polynomial, so that the invariant equation now reads
\beq \label{eq-inv-K}
\sum_{r=0}^m L_r(t;y)I(y)^r=\sum _{r=0}^{m } C_r\, I(y)^r,
\eeq
where $L_r(t;y) \in \qs[q,\nu, t,w,\by]$.

\begin{Lemma}\label{lem:cr}
The series $C_m, C_{m-1}, \ldots , C_0$ can be determined
  inductively by
  expanding~\eqref{eq-inv-K} in powers of   $y-1$ and
extracting  the  coefficients of $(y-1)^{-m}, \ldots, (y-1)^0$.
This gives, for   $0\le r\le m$, 
$$
C_r=\Pol_r(K(1), K'(1), \ldots, K^{(m-r)}(1),t)
$$
for some polynomial $\Pol_r(x_1, \ldots, x_{m-r+1},t)$ 
having coefficients in $\GK:=\qs(q,\nu,w)$. 
Moreover, $\Pol_r(x_1, \ldots, x_{m-r+1},t)$ has constant term
$\Pol_r(0, \ldots, 0)=L_r(0;1)$ and  contains no
monomial $x_j$,  for $1\le j \le m-r+1$. 
\end{Lemma}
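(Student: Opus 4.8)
The idea is to expand both sides of~\eqref{eq-inv-K} as Laurent series in $u:=y-1$ and to extract successively the coefficients of $u^{-m},u^{-m+1},\dots,u^{0}$; the coefficient of $u^{-r}$ will determine $C_r$ from $C_{r+1},\dots,C_m$ and the derivatives $K^{(j)}(1)$. I would first record the local expansions. By~\eqref{I-K} one has $I(y)=\frac tu+g(u)$, where $g(u)=\sum_{n\ge0}g_nu^n$ with $g_0=K(1)$ and $g_n=\frac1{n!}K^{(n)}(1)+(-1)^{n-1}$ for $n\ge1$; note that $g(u)=\frac u{1+u}+t\cdot(\cdots)$ and that, since $K(y)=t\bigl(1+wqyM(1,y)\bigr)$, every $K^{(n)}(1)$ lies in $t\,\GK[[t]]$. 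Similarly each $L_r(t;y)$, being a Laurent polynomial in $y$ regular at $y=1$, expands as $L_r(t;1+u)=\sum_{n\ge0}\lambda_{r,n}(t)u^n$ with $\lambda_{r,n}(t)\in\GK[t]$ and $\lambda_{r,0}(t)=L_r(t;1)$. Everything rests on the elementary identity
$$[u^{-k}]\,I(y)^{r'}=\sum_{\ell=k}^{r'}\binom{r'}{\ell}t^{\ell}\,[u^{\ell-k}]\,g(u)^{r'-\ell},$$
from which I would draw two facts: (i) $[u^{-k}]I^{r'}$ is a polynomial over $\GK$ in $t$ and in $K(1),\dots,K^{(r'-k-1)}(1)$; (ii) $[u^{-k}]I^{r'}$ is divisible by $t^k$, with $t$-degree-$k$ part $\binom{r'}{k}t^kK(1)^{r'-k}$ (the $K$-derivatives being treated as formal variables of $t$-degree $0$), and moreover $[u^{-k}]I^{k}=t^k$ while $[u^{-k}]I^{r'}=O(t^{k+1})$ for $r'>k$ (using $g(u)=\frac u{1+u}+t\cdot(\cdots)$ together with $K^{(n)}(1)=O(t)$).

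Then I would run the descending induction on $r$. Both sides of~\eqref{eq-inv-K} are Laurent series in $u$ with a pole of order at most $m$, so equating $[u^{-r}]$ is legitimate for $r=m,\dots,0$: the right-hand side contributes $C_r t^r+\sum_{r'>r}C_{r'}[u^{-r}]I^{r'}$ and the left-hand side contributes $\sum_{r'\ge r}\sum_{n\ge0}\lambda_{r',n}(t)[u^{-r-n}]I^{r'}$. Granting inductively that $C_{r'}=\Pol_{r'}(K(1),\dots,K^{(m-r')}(1),t)$ for $r'>r$, fact~(i) makes $C_r t^r$ a polynomial over $\GK$ in $t$ and in $K(1),\dots,K^{(m-r-1)}(1)$, and fact~(ii), together with $C_{r'}\in\GK[[t]]$, makes this polynomial divisible by $t^r$ in $\GK[t][K(1),K'(1),\dots]$; dividing by $t^r$ yields $C_r=\Pol_r(K(1),\dots,K^{(m-r)}(1),t)$, which is the first assertion. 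For the constant term, fact~(ii) shows that the only summand of the left-hand side not in $O(t^{r+1})$ is $\lambda_{r,0}(t)[u^{-r}]I^r=L_r(t;1)\,t^r$, and that $\sum_{r'>r}C_{r'}[u^{-r}]I^{r'}=O(t^{r+1})$; hence $C_r t^r=L_r(t;1)t^r+O(t^{r+1})$, so $C_r=L_r(0;1)+O(t)$, and since every $K^{(j)}(1)$ vanishes at $t=0$ the constant term $\Pol_r(0,\dots,0)$ must equal $L_r(0;1)$.

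The last assertion --- that $\Pol_r$ contains no monomial $x_j$ --- must be obtained inside the same induction, with the $K$-derivatives kept formal, by computing the coefficient of $t^r x_j$ in $C_r t^r$. On the left, divisibility of $[u^{-r-n}]I^{r'}$ by $t^{r+n}$ discards all $n\ge1$ terms, and by fact~(ii) the coefficient of $t^r x_j$ in $\sum_{r'\ge r}\lambda_{r',0}(t)[u^{-r}]I^{r'}$ comes solely from $L_{r'}(0;1)\binom{r'}{r}t^rK(1)^{r'-r}$, which is linear in a single $K$-derivative exactly when $r'=r+1$, contributing $(r+1)L_{r+1}(0;1)$ to the coefficient of $t^rx_1$ and nothing to that of $t^rx_j$ for $j\ge2$. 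On the right, each product $C_{r'}[u^{-r}]I^{r'}$ has $t$-degree at least $r$, so a $t^rx_j$ contribution forces $C_{r'}$ into its $t$-degree-$0$ part, which by the induction hypothesis equals $L_{r'}(0;1)$ plus terms of $x$-degree at least $2$ (here both halves of the hypothesis for $r'=r+1$ are used); multiplying by $\binom{r'}{r}t^rK(1)^{r'-r}$ again yields a $t^rx_1$-contribution only for $r'=r+1$, equal to $(r+1)L_{r+1}(0;1)$, and nothing for $j\ge2$. Since $C_r t^r$ is the left contribution minus the right one, these cancel, so $\Pol_r$ has no monomial $x_j$. The cancellation in this last step is the main obstacle; the divisibility-by-$t^r$ observation is the other point requiring care, but with these in hand the rest is routine bookkeeping.
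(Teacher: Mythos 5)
Your proof is correct, and it follows essentially the same route as the paper's. You write $I(y)=t/(y-1)+g(y-1)$ and binomially expand $I^{r'}$ exactly as the paper does (its $K(y)+\frac{y-1}{y}$ is your $g$); you extract $[(y-1)^{-r}]$ from both sides of~\eqref{eq-inv-K}, run the same descending induction, and prove the constant-term identity and the absence of linear $x_j$ monomials by the same cancellation between the $r'=r+1$ contribution on each side, invoking both inductive claims for $C_{r+1}$. The only cosmetic difference is organizational: the paper packages the coefficient extraction into intermediate quantities $S_{i,j}$, $T_{i,j}$ and an explicit recursion~\eqref{cr-rec}, whereas you work directly with your two facts (i)--(ii) about $[u^{-k}]I^{r'}$; you also obtain the marginally sharper dependence $K(1),\dots,K^{(m-r-1)}(1)$, which is consistent with the paper's remark that the $C_r$ in fact depend on fewer derivatives than stated.
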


The first and last statements in this lemma are easily seen to hold
in the case $q=1$, using~\eqref{C3-sol-11} and~\eqref{Cr-q=1}.

\begin{proof}
Recall the expression~\eqref{I-K} of $I(y)$ in terms of
$K(y)$,
and expand the right-hand side of~\eqref{eq-inv-K} as follows:
\begin{eqnarray*} 
\RHS&=& \sum_{a=0}^{m} C_a\,  \left( K(y)+ \frac {y-1}{y} + \frac t {y-1}\right)^{a}\\
&=& \sum_{i=0}^{m} \frac{t^{i}}{(y-1)^i}\sum_{a=i} ^m {a\choose i}
C_a\,  \left( K(y)+ \frac {y-1}{y} \right)^{a-i}
\\
&=&\sum_{i=0}^{m} \frac{t^{i}}{(y-1)^i} \sum_{j\ge 0} (y-1)^j T_{i,j} 
\end{eqnarray*}
where $T_{i,j}$ is independent of $y$.
The sum over $a =i, \ldots, m$ has been transformed into the sum over $j\ge 0$
using
$$
K(y)=\sum_{j\ge 0} K^{(j)}(1) \, \frac {(y-1)^j}{j!}
\quad \hbox{and} \quad \frac{y-1}y=\sum_{n\ge 0} (-1)^n (y-1)^{n+1}.
$$ 
Hence $T_{i,j}$ is a linear combination of $C_i, C_{i+1}, \ldots,
C_m$, with coefficients in $\qs[K(1), K'(1), \ldots, K^{(j)}(1) ]$.
In particular,
\beq\label{Ti0-expr}
T_{i,0}=\sum_{a=i}^m {a\choose i} C_a \, K(1)^{a-i}.
\eeq
Similarly, the left-hand side of~\eqref{eq-inv-K} reads
\begin{eqnarray*} 
\LHS &=& \sum_{i=0}^{m} \frac{t^{i}}{(y-1)^i}\sum_{a=i} ^m {a\choose i}
L_a(t;y) \left( K(y)+ \frac {y-1}{y} \right)^{a-i}\\
&=&\sum_{i=0}^{m} \frac{t^{i}}{(y-1)^i} \sum_{j\ge 0} (y-1)^j S_{i,j} 
\end{eqnarray*}
where $S_{i,j}$ is a polynomial of $ \GK[K(1), K'(1), \ldots,
K^{(j)}(1), t]$. In particular,
\beq\label{Si0}
S_{i,0}=\sum_{a=i}^m {a\choose i} L_a(t;1)\, K(1)^{a-i}.
\eeq
\begin{table}
\begin{tabular}{|l|l|}
$L_r(t;y)$ 	& Polynomial in $t$ and $\by$ with coeffs. in
$\GK\equiv \qs(q,\nu, w)$\\
$T_{i,j}$ & Linear combination of $C_i, C_{i+1},\ldots,  C_m$ with coeffs. in
$\qs[K(1), \ldots, K^{(j)}(1)]$\\
$S_{i,j}$ & Polynomial in $t$,  $K(1), \ldots, K^{(j)}(1)$ with
coeffs. in $\GK$
\\
$C_r$ (Lemma~\ref{lem:cr}) & Polynomial in $t$,  $K(1), \ldots, K^{(m-r)}(1)$ with
coeffs. in $\GK$
\end{tabular}
\vskip 4mm
\caption{The nature of the  series involved in this section.}
\label{table}
\end{table}

Table~\ref{table} summarizes the properties of the various series met in
this section. 
 The invariant equation~\eqref{eq-inv-K} can now be rewritten
$$
\sum_{i=0}^{m} \frac{t^{i}}{(y-1)^i} \sum_{j\ge 0} (y-1)^j
(S_{i,j} -T_{i,j} )
%
=0,
$$
where $S_{i,j}$ and $T_{i,j}$ do not involve $y$. In particular,
extracting the coefficient of $(y-1)^{-r}$, for $0\le r\le m$, gives
\beq\label{id1}
\sum_{i=r}^m t^{i-r} (S_{i,i-r} -T_{i,i-r} )=0.
\eeq
Recall the expression~\eqref{Ti0-expr} of $T_{i,0}$. Equivalently,
$$
T_{r,0}
= C_r +\sum_{a=r+1} ^m {a\choose r} C_a K(1)^{a-r}.
$$
Hence the  identity~\eqref{id1} gives
\beq\label{cr-rec}
 C_r 
=-\sum_{a=r+1} ^m {a\choose r} C_a K(1)^{a-r}
+ S_{r,0} +\sum_{i=r+1}^m t^{i-r} (S_{i,i-r} -T_{i,i-r} ).
\eeq
Recall that $S_{i,j}$  involves none of the series $C_a$. Moreover,
$T_{i,i-r}$ only involves the series $C_a$ if 
$a\ge i$ (see Table~\ref{table}). Hence the right-hand side of the
above identity only involves $C_a$ if $a\ge r+1$. Consequently, this
identity allows one to determine the coefficients 
$C_m, \ldots, C_1, C_0$  inductively in this 
order. Moreover, the properties of the series $S_{i,j}$ and $T_{i,j}$
 imply  that $C_r$ is of the form  $\Pol_r(K(1), K'(1),
\ldots, K^{(m-r)}(1),t)$, for some polynomial $\Pol_r(x_1, \ldots,
x_{m-r+1},  t)$ having coefficients in $\GK$.

\medskip
We  now address the properties of $\Pol_r$ stated in the lemma. 
 Let us first determine the constant term  of
$\Pol_r(x_1, \ldots,x_{m-r+1},t)$.  
In the recursive expression of $C_r$ given by~\eqref{cr-rec}, all terms coming from the second sum are
multiples of $t$, so that they  do not contribute to this constant term.
  Similarly, the
first sum is a multiple of $K(1)$, and  does not contribute either.
 The constant term of $\Pol_r$ thus reduces to the constant term of $S_{r,0}$,
 seen as a polynomial in $K(1), K'(1), \ldots, K^{(m-r)}(1)$ and $t$.  In
 sight of~\eqref{Si0} this gives
\beq\label{CT}
{\CT}\  \Pol_r(x_1, \ldots, x_{m-r+1},t)= L_r(0;1).
\eeq

Let us now determine the coefficient of the monomial $x_j$ in $\Pol_r(x_1, \ldots,
x_{m-r+1},t)$, for $1\le j \le m-r+1$. Again, the  second sum
of~\eqref{cr-rec}, being a multiple of $t$,  does not give any
such monomial.
In view of~\eqref{Si0}, the
series $S_{r,0}$ gives a term $(r+1) L_{r+1} (0;1)\, x_1$ but no linear
term $x_j$ for $j>1$.  The first sum occurring in~\eqref{cr-rec} is a multiple of $K(1)$. Hence
it does not give any linear term $x_j$ for $j>1$, but it does
give a term $-(r+1)\, (\CT\ \Pol_{r+1})\, x_1$, which, in view of~\eqref{CT},
cancels with the linear term in $x_1$ coming from $S_{r,0}$. This
proves that $\Pol_r$ contains no monomial $x_j$, for $j\ge 1$.
\end{proof}

\subsubsection{The final form of the invariant equation}
\label{sec:final-form}
Now return to the invariant equation~\eqref{eq-inv-K}, 
 replace $I(y)$ by its expression~\eqref{I-K} in terms of $K(y)$
and each series $C_r$ by its polynomial expression in terms of $K(1), \ldots,
K^{(m-r)}(1)$ and 
$t$. By forming the difference of the left-hand side and right-hand
side, one obtains   an equation of the form
\beq\label{eq-gen1}
\Pol(K(y), K(1), \ldots, K^{(m)}(1), t;y)=0
\eeq
where $\Pol(x_0, x_1, \ldots, x_{m+1},t;y)$ is a polynomial in $x_0,
x_1, \ldots, x_{m+1}$, $t$, $\by$ and $1/(y-1)$, having coefficients
in $\GK$.
In the case $q=1$, this is Eq.~\eqref{eq-K-1}. That is,
$$
\Pol(x_0, \ldots, x_{4},t;y)
=-27/2 (\nu-1)^2 
\Big( \nu (1-\by ) x_0 ^{2}+
 ( 2\,\by\nu\,t-\by+\by^2-\nu\,t ) x_0
-\nu  t x_1 +t (1-\by) ( w+\nu\,t+\by ) \Big).
$$

\begin{Lemma} \label{lem:x0}
 Consider $\Pol(x_0, x_1, \ldots, x_{m+1},t;y)$ as a polynomial in $t$
 and the $x_i$'s having coefficients in $\GK[ \by, 1/(y-1)]$.

Then the constant term of $\Pol$, that is, $\Pol(0, \ldots, 0,0;y)$, is zero. 
For $1\le j \le m+1$, the coefficient of the monomial $x_j$ in
$\Pol$ is also zero. The coefficient of the monomial $x_0$ is a non-zero
Laurent polynomial in $y$ with coefficients in $\GK$.
\end{Lemma}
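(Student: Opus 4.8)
The plan is to exploit the explicit shape of $\Pol$. Recall how it arises: in the invariant equation~\eqref{eq-inv-K} one substitutes for $I(y)$ its expression~\eqref{I-K}, i.e. $x_0+\sigma$ with $x_0$ standing for $K(y)$ and
$$
\sigma:=\frac{y-1}{y}+\frac{t}{y-1}=(1-\by)+\frac{t}{y-1},
$$
substitutes $\Pol_r(x_1,\ldots,x_{m-r+1},t)$ for $C_r$ (Lemma~\ref{lem:cr}, with $x_{j+1}$ standing for $K^{(j)}(1)$), and takes the difference of the two sides:
$$
\Pol=\sum_{r=0}^{m}\bigl(L_r(t;y)-\Pol_r(x_1,\ldots,x_{m-r+1},t)\bigr)(x_0+\sigma)^{r}.
$$
I will use three facts: (i) $\sigma^{r}=(1-\by)^{r}+O(t)$, and in this expansion every monomial carrying a negative power of $(y-1)$ also carries a positive power of $t$; (ii) $L_r(t;y)\in\GK[t,\by]$ involves no $x_i$; (iii) by Lemma~\ref{lem:cr}, $\Pol_r$ has constant term $L_r(0;1)$ and contains no monomial $x_j$.

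\emph{Constant term and the monomials $x_j$, $1\le j\le m+1$.} These are quick. Setting all $x_i=0$ and $t=0$ gives $\Pol(0,\ldots,0,0;y)=\sum_{r}\bigl(L_r(0;y)-L_r(0;1)\bigr)(1-\by)^{r}$; this vanishes because, putting $t=0$ in~\eqref{eq-inv-K} and using that $K(y)=t\bigl(1+wyq\,M(1,y)\bigr)$ is a multiple of $t$, one gets $I(y)|_{t=0}=1-\by$ and $C_r|_{t=0}=\Pol_r(0,\ldots,0,0)=L_r(0;1)$, so that $\sum_r L_r(0;y)(1-\by)^{r}=\sum_r L_r(0;1)(1-\by)^{r}$. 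For $1\le j\le m+1$, the monomial $x_j$ occurs neither in $L_r(t;y)$ nor in $(x_0+\sigma)^{r}$, so it can only come from the $\Pol_r$'s; but the pure monomial $x_j$ (no $t$, no other $x_i$) would have to be the product of a monomial $x_j$ of some $\Pol_r$ with the $t^0x_0^0$-part $(1-\by)^{r}$ of $(x_0+\sigma)^{r}$, and $\Pol_r$ has no such monomial. Hence the coefficient of $x_j$ in $\Pol$ is $0$.

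\emph{The coefficient of $x_0$.} Extracting the $x_0$-linear, $t$-free, $\{x_i:i\ge1\}$-free part of $\Pol$, and using $\sigma^{r-1}=(1-\by)^{r-1}+O(t)$ together with $\CT\Pol_r=L_r(0;1)$, this coefficient equals
$$
\Phi(\by):=\sum_{r=1}^{m}r(1-\by)^{r-1}\bigl(L_r(0;y)-L_r(0;1)\bigr)=\partial_I\bigl[g(I,\by)-g(I,1)\bigr]\big|_{I=1-\by},
$$
where $g(I,\by):=\sum_r L_r(0;y)I^{r}$ is the left-hand side of~\eqref{eq-inv} evaluated at $t=0$, viewed as a function of $I$, namely
$$
g(I,\by)=\widetilde D(I)^{m/2}\,T_m\!\left(\frac{(q+2\be)I-q+\be(4-q)(\by-1)}{2\sqrt{\widetilde D(I)}}\right),\qquad \widetilde D(I)=(q\nu+\be^{2})I^{2}-q(\nu+1)I+q,
$$
and $g(I,1)$ is the same with $\by=1$. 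Since $L_r(0;y)-L_r(0;1)\in\GK[\by]$ vanishes at $\by=1$, it is divisible by $(\by-1)$, so $\Phi\in\GK[\by]$ — a Laurent polynomial in $y$ with coefficients in $\GK$, as claimed.

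The substance is that $\Phi\not\equiv0$, and this is the only real obstacle. The plan is to compute the coefficient of $(\by-1)^{1}$ in the expansion of $\Phi$ at $\by=1$. Writing $\epsilon=\by-1$ (so $I=1-\by=-\epsilon$), $X(I)=\frac{(q+2\be)I-q}{2\sqrt{\widetilde D(I)}}$, and using
$$
g(I,\by)-g(I,1)=\sum_{j\ge1}\frac{\bigl(\be(4-q)\epsilon\bigr)^{j}}{2^{j}j!}\,\widetilde D(I)^{(m-j)/2}\,T_m^{(j)}\!\bigl(X(I)\bigr),
$$
one differentiates in $I$ and sets $I=-\epsilon$; the coefficient of $\epsilon^{1}$ in $\Phi$ is $\frac{\be(4-q)}{2}\,\partial_I\bigl[\widetilde D(I)^{(m-1)/2}T_m'(X(I))\bigr]_{I=0}$. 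A short computation gives $\widetilde D(0)=q$, $X(0)=-\frac{\sqrt q}{2}$ and $X'(0)=\frac{\be(4-q)}{4\sqrt q}$ (for either choice of $\sqrt q$); moreover $-\frac{\sqrt q}{2}=-\cos\frac{k\pi}{m}=\cos\frac{(m-k)\pi}{m}$ is one of the $m-1$ distinct roots $\cos\frac{j\pi}{m}$ ($1\le j\le m-1$) of the degree-$(m-1)$ polynomial $T_m'$ — because $T_m'(\cos\phi)=\frac{m\sin m\phi}{\sin\phi}$ — hence these roots are all simple, $T_m'(-\frac{\sqrt q}{2})=0$, and $T_m''(-\frac{\sqrt q}{2})\neq0$. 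The product rule then kills the term carrying $T_m'(X(0))$, leaving the $\epsilon^{1}$-coefficient of $\Phi$ equal to $\frac{\be^{2}(4-q)^{2}}{8}\,q^{(m-2)/2}\,T_m''(-\frac{\sqrt q}{2})$; according to the parity of $m$ this is, up to a nonzero power of $q$, the value of $T_m''$ at a simple zero of $T_m'$, hence a nonzero element of $\GK$ since $\nu-1\neq0$ and $q\neq0,4$. Therefore $\Phi\not\equiv0$, and the lemma follows. (The case $q=1$, $m=3$, where $\Phi=-\frac{27}{2}(\nu-1)^{2}\by(\by-1)$, illustrates the whole computation.)
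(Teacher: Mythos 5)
Your proof is correct and follows the paper's overall architecture: the vanishing of the constant term and of the $x_j$-monomials are handled exactly as the paper does (via the $t=0$ specialization of~\eqref{eq-inv-K} and Lemma~\ref{lem:cr}), and the non-vanishing of $[x_0]\Pol$ is reduced, as in the paper, to showing that the linear coefficient in the expansion around $y=1$ is non-zero, which both proofs recognize as (a non-zero multiple of) $T_m''(-\sqrt q/2)$. Where you genuinely improve on the paper's exposition is in how you reach that coefficient: the paper computes $L_1(0;y)$ explicitly (``preferably done using Maple'') and then needs the closed form $T_m''(-\sqrt q/2)=(-1)^{k+m+1}\frac{4m^2}{4-q}$, whereas you Taylor-expand the two-variable generator $g(I,\by)-g(I,1)$ in $\epsilon=\by-1$ before differentiating in $I$, which kills all but the $j=1$ term by hand and produces the answer $\frac{\be^2(4-q)^2}{8}q^{(m-2)/2}T_m''(-\sqrt q/2)$ without a CAS; and you replace the explicit $T_m''$ formula by the cleaner observation that $T_m'$ has the $m-1$ distinct roots $\cos(j\pi/m)$, so each is simple and $T_m''$ is automatically non-zero there. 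This makes the crucial step fully self-contained while leaving the structure of the proof unchanged.
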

  \begin{proof}
Set  $t=0$ in the identity~\eqref{eq-gen1}.  As $K(y)$ is a multiple of $t$, we
have $K(y)=K(1)=\cdots = K^{(m)}(1)=0$ when $t=0$. This gives $\Pol(0,
\ldots, 0,0;y)=0$, which precisely means that $\Pol$ has no constant term.

\smallskip
 For the second point, consider~\eqref{eq-inv-K}. As $I(y)$ (or more precisely,
 $K(y)$) only gives terms $x_0$ in $\Pol$,  the monomials
 $x_j$, for $j\ge 1$, can only come from the terms $C_r$. But  by
 Lemma~\ref{lem:cr}, $C_r$  contains no such monomial, so $\Pol$ does
 not either.

\smallskip
Finally,  the coefficient of $x_0$ in $\Pol$ can be read off
from~\eqref{eq-inv-K}: 
\begin{eqnarray*}
  [x_0]\Pol&=&\sum_{r=0}^m r  \left( \frac{y-1} y\right)^ {r-1} L_r(0;y) - 
\sum_{r=0}^m r \left( \frac{y-1} y\right)^ {r-1} \, \CT\ \Pol_r \,  \\ 
&=&\sum_{r=1}^m r \left( \frac{y-1} y\right)^ {r-1} (L_r(0;y)-L_r(0;1))
\end{eqnarray*}
by Lemma~\ref{lem:cr}.  Clearly, this is a
Laurent polynomial in $y$ with coefficients in $\GK$, which admits $1$
as a  root. 
In order to prove that this polynomial is non-zero, 
we will prove that
its  derivative with respect to $y$, evaluated at $y=1$, which is
$$
\frac{\partial L_1} {\partial y} (0;1),
$$
is non-zero.
Recall that  the functions $L_r(t;y)$ arise from the expansion in
$I(y)$ of the second invariant of Proposition~\ref{prop:inv-planaires}:
$$
J(y)=
D(y)^{m/2} \  T_m\left(
\frac{\be(4- q ) (\by-1)+( q+2\,\be ) I(y) -q}
{2\sqrt {D(y)}}\right)= \sum_{r=0}^m L_r(t;y)\, I(y)^r.
$$
A straightforward calculation (preferably done using Maple) gives
\begin{multline*}
L_1(0;y)=
- \frac {m(\nu+1)} 2\, {q }^{m/2} \, T_m \left( 
\frac{\be(4- q ) (\by-1) -q}
{2\sqrt {q}}
\right) \\
+
\frac { (\nu-1) \left( 4-q\right) ((1+\nu)\by -\nu)} 4\,{q }^{(m-1)/2} \,
T'_m  \left( \frac{\be(4- q ) (\by-1) -q}
{2\sqrt {q}} \right) 
%
 ,
\end{multline*}
so that
$$
\frac{\partial L_1} {\partial y} (0;1)=
\frac {\left( m-1 \right)  \left( 4-q \right) 
(\nu^2-1) } 4\, {q }^{(m-1)/2}
\,T_m'  \left(  -\frac{\sqrt {q}} 2\right)
-
\frac {\left( 4-q \right) ^{2} (\nu-1)^2} 8\,{q }^{m/2-1} \,
T_m''  \left( -\frac{\sqrt {q}} 2 \right) 
 .
$$
Recall that $q=2+2\cos(2k\pi/m)$, so that $-\sqrt q /2= -\cos
(k\pi/m)= \cos ((k+m)\pi/m)$. Moreover, $T_m(\cos \phi)= \cos (m \phi)$, and the
derivatives of $T_m$ at a point of the form $\cos \phi$ are easily
derived from this identity. This gives
$$
 T_m'  \left(  -\frac{\sqrt {q}} 2\right)=0 
\quad \hbox{and}\quad 
 T_m''  \left(  -\frac{\sqrt {q}} 2\right)= (-1)^{k+m+1}\frac{4 m^2}{4-q},
$$
which allows us to conclude that $\frac{\partial L_1} {\partial y}
(0;1)\not = 0$, so that the coefficient of $x_0$ in $\Pol$ is non-zero,
as claimed.
\end{proof}

\medskip

\noindent
{\em{Proof of Theorem~{\rm\ref{thm:alg-planaires}}.}}
The functional equation~\eqref{eq-gen1} involves a single catalytic
variable, $y$.  However, the case $q=1$ shows that its form may not be
suitable for a direct application of our algebraicity theorem
(see~\eqref{eq-K-1}). As it happens, a simple remedy for this is to
  reintroduce the original series $M(y)$. This is
the counterpart of the transformation of~\eqref{eq-K-1}
into~\eqref{eq-M-1} performed in the case $q=1$. 
So, in~\eqref{eq-gen1}, replace $K(y)$ by $t+twyq M(y)$ (where we 
now denote $M(y)=M(1,y)$) and replace
similarly each derivative $K^{(j)}(1)$ by its expression in terms of
$M$:
$$
  K(1)= t+twqM(1), \quad \hbox{and for } 1\le j \le m, \quad
\quad K^{(j)}(1)= twq (j M^{(j-1)}(1)+M^{(j)}(1)).
$$
Observe the factor $t$ in all these expressions.
 According to Lemma~\ref{lem:x0}, $\Pol(x_0, \ldots, x_{m+1}, t;y)$, seen as a
 polynomial in $t$ 
and the $x_i$'s, has no constant term. This implies that, once the $K$'s
have been replaced by $M$'s,  the resulting equation  contains a
factor $t$: divide it by $t$ to obtain 
\beq\label{eq-gen2}
\Pol'(M(y), M(1), \ldots, M^{(m)}(1), t;y)=0,
\eeq
where 
$$
\Pol'(x_0, \ldots,x_{m+1} ,t;y)=
 \frac 1 t\,  \Pol(t+twy q x_0, t+twq x_1, twq(x_1+x_2)\ldots,
 twq (mx_m+x_{m+1}),t;y).
$$
Have we at last reached  an equation of the
form~\eqref{generic-M}, to which we could apply our algebraicity
theorem? If this were the case, $\Pol'(x_0, \ldots,x_{m+1} 
,0;y)$ should reduce to $x_0-1$.
We have 
\begin{multline*}
  \Pol'(x_0, \ldots,x_{m+1} ,0;y)= (1+wyq x_0) [x_0] \Pol(x_0,\ldots,t;y)
+ (1+wq x_1) [x_1] \Pol(x_0,\ldots,t;y)
\\+ \cdots
  +wq(mx_m+x_{m+1})[x_{m+1}] \Pol(x_0,\ldots,t;y)
+ [t] \Pol(x_0, \ldots ,t;y).
\end{multline*}
But $[x_1]\Pol = \cdots = [x_{m+1}] \Pol=0$ by  Lemma~\ref{lem:x0}, 
so that $\Pol'(x_0, \ldots,x_{m+1} ,0;y)$ reads $a_0(y)x_0+b_0(y)$, where
$a_0(y)=wyq  [x_0] \Pol $ is a non-zero Laurent polynomial in $y$ with
coefficients in $\GK$ (by  Lemma~\ref{lem:x0} again)
and $b_0 \in \GK(y)$.
Hence dividing~\eqref{eq-gen2} by $a_0(y)$ finally gives
$$
M(y)= M_0(y)+ tP_1(M(y), M(1), \ldots, M^{(m)}(1),t;y)
$$
where  $M_0(y) \in \GK(y)$ and $P_1(x_0, \ldots, x_{m+1},t;y)$ is a
polynomial in $t$ and the 
$x_j$'s, and a  rational function in $y$, with coefficients in
$\GK$. By setting $t=0$, we obtain $M_0(y)= 1$ (for the one-vertex map).
 Upon writing
$$
\frac{M^{(i)}(1)}{i!}= \Delta^{i}M(y)- (y-1)\Delta^{i+1}M(y),
$$
where 
$$
\Delta F(y)= \frac{F(y)-F(1)}{y-1},
$$
the equation reads
$$
M(y)= 1+ tP(M(y), \Delta M(y), \ldots, \Delta^{m+1} M(y), t;y)
$$
where $P(x_0, \ldots, x_{m+1},t;y)$ is a
polynomial in $t$ and the 
$x_j$'s, and a  rational function in $y$, with coefficients in
$\GK$.
 Our general algebraicity theorem (Theorem~\ref{generic-thm}) 
implies that $M(y)\equiv M(q,\nu,t,w;1,y)$ is algebraic 
over $\qs(q, \nu,t,w,y)$. Using the identity~\eqref{Potts-Tutte-gfs},
we conclude that the Tutte \gf\ $\gtM(\mu, \nu,  w,z; 1,y)$ is algebraic over
$\qs(\mu, \nu, w,z,y)$ when $(\mu-1)(\nu-1)=q$.
Now by the duality property~\eqref{eq:duality-Tutte-poly},  
$\gtM(\mu, \nu, w,z; 1,y)=\gtM(\nu, \mu, z, w; y,1)$. But the
condition  $(\mu-1)(\nu-1)=q$ is symmetric in $\mu$ and $\nu$, and hence
$\gtM(\mu, \nu, z, w; x,1)$ is algebraic as well, under the same assumption.
Returning to the functional equation~\eqref{eq:tM}, this
implies that $\gtM(\mu, \nu,  w,z; x,y)$ is algebraic. A second
application of~\eqref{Potts-Tutte-gfs} yields the algebraicity of
the Potts series $M(q, \nu, t, w; x,y)$.
\qed

\section{Algebraicity for colored  triangulations}
\label{sec:alg-triang}
We now prove a second algebraicity theorem, which  applies to the
\emm quasi-triangulations, of Section~\ref{sec:eq-quasi-triang}.   We
consider the Potts \gf\  $Q(x,y)\equiv Q(q, 
\nu, t ,w,z ; x,y)$ of these maps, defined by~\eqref{Q-ser-def}.  This series
is characterized by the functional equation~\eqref{eq:Q}.

\begin{Theorem}\label{thm:alg-triang}
 Let $q\not = 0,4$ be of the form $2+2\cos j\pi/m$ for two integers
 $j$ and $m$.
Then the series $Q(q, \nu, t ,w,z ; x,y)$
is algebraic over $\qs(q, \nu, t, w,z,x,y)$. 
\end{Theorem}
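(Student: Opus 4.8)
The proof mirrors that of Theorem~\ref{thm:alg-planaires}, now starting from the functional equation~\eqref{eq:Q}, the invariants of Proposition~\ref{prop:inv-triang}, and the theorem of invariants for triangulations (Theorem~\ref{thm:invariant-triang}), as packaged in Corollary~\ref{coro:eq-inv-Q}. Since $\vv(Q)-1$ and $\ff(Q)-1$ are fixed $\qs$-linear combinations of $\ee(Q)$, $\dig(Q)$ and $\df(Q)$ (edge--face incidence counting together with Euler's relation, as at the start of Section~\ref{sec:inv-triang}), the general series $Q(q,\nu,t,w,z;x,y)$ is obtained from $Q(q,\nu,t,1,1;x,y)$ by substituting suitable monomials in $w$ and $z$ (with rational exponents, themselves algebraic over $\qs(w,z)$) for $t$, $x$ and $y$; as algebraicity is preserved under such substitutions, it suffices to treat the case $w=z=1$. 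The hypothesis on $q$ provides coprime integers $k,m$ with $0<2k<m$ and $q=2+2\cos 2k\pi/m$, so Corollary~\ref{coro:eq-inv-Q} applies and yields the \emph{invariant equation}~\eqref{eq-inv-triang}: an equation with a single catalytic variable $y$, relating $I(y)=tyq\,Q(0,y)-1/y+t/y^2$ to $m+1$ unknown Laurent series $C_0,\ldots,C_m\in\qs(q,\nu)((t))$. The plan is: (i)~determine the $C_r$; (ii)~substitute them back, re-introduce $Q(0,y)$, extract a factor $t$ and divide by a suitable non-zero element of $\qs(q,\nu)(y)$ to reach an equation of the form~\eqref{main-eq} with $u=y$; (iii)~invoke Theorem~\ref{generic-thm} to get $Q(0,y)$ algebraic; (iv)~recover $Q(x,y)$ from~\eqref{eq:Q}.

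\textbf{Determination of the $C_r$ and the final equation (analogues of Lemmas~\ref{lem:cr} and~\ref{lem:x0}).} Write $tI(y)=\tilde K(y)-t/y+t^2/y^2$ with $\tilde K(y)=t^2yq\,Q(0,y)$; then $\tilde K(y)$ vanishes at $y=0$, and every Taylor coefficient $\tilde K^{(j)}(0)/j!$ is a multiple of $t^2$ which is moreover a $\GK$-linear combination (with $\GK=\qs(q,\nu)$) of the coefficients $[y^i]Q(0,y)$ for small $i$. As $(tI(y))^r$ has a pole of order exactly $2r$ at $y=0$, expanding~\eqref{eq-inv-triang} in powers of $y$ and extracting successively the coefficients of $y^{-2m}, y^{-2m+2}, \ldots, y^{-2}, y^0$ determines $C_m, C_{m-1}, \ldots, C_0$ in this order, each $C_r$ being a polynomial in $t$ and the $\tilde K^{(j)}(0)$ over $\GK$. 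The telescoping argument of the proof of Lemma~\ref{lem:cr} applies verbatim and shows in addition that $C_r$ has no monomial linear in a single $\tilde K^{(j)}(0)$. Plugging the $C_r$ back into~\eqref{eq-inv-triang} and expressing $\tilde K(y)$ and the $\tilde K^{(j)}(0)$ in terms of $Q(0,y)$ and its Taylor coefficients at $0$, one obtains a polynomial equation $\Pol\big(Q(0,y),Q(0,0),[y^1]Q(0,y),\ldots,t;y\big)=0$ which, regarded as a polynomial in $t$ and its series arguments, has no constant term; since every one of its monomials therefore carries either an explicit factor $t$ or a factor $\tilde K^{(j)}(0)\propto t^2$, a factor $t$ can be pulled out. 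Dividing further by the coefficient of the $Q(0,y)$-term (see the next paragraph), one arrives at
\[
Q(0,y)=1+t\,P\big(Q(0,y),\Delta Q(0,y),\ldots,\Delta^{(N)}Q(0,y),t;y\big),\qquad \Delta F(y)=\frac{F(y)-F(0)}{y},
\]
for some $N$ and some polynomial $P$ in its first $N+2$ arguments, rational in $y$ over $\GK$ --- the constant ($t^0$) term being $1$, the contribution of the atomic map. By Theorem~\ref{generic-thm} (applied with $u=y$), $Q(0,y)$ is algebraic over $\qs(q,\nu,t,y)$.

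\textbf{The main obstacle.} The delicate point, exactly as for planar maps (Lemma~\ref{lem:x0}), is to show that the coefficient of the $Q(0,y)$-term in the equation just obtained is a \emph{non-zero} element of $\GK(y)$, so that the final division is legitimate. This reduces to computing the coefficient (call it $L_1(0;\,\cdot\,)$) of $I(y)^1$, at $t=0$, in the Chebyshev expansion of the second invariant $J(y)$ of Proposition~\ref{prop:inv-triang}, and checking that $L_1(0;\,\cdot\,)$ --- or a suitable low-order $y$-derivative of it at $y=0$ --- is non-zero; here one uses the evaluations of $T_m$ and of its first two derivatives at $\pm\sqrt q/2=\mp\cos(k\pi/m)$, obtained from $T_m(\cos\phi)=\cos(m\phi)$, precisely as in the last part of the proof of Lemma~\ref{lem:x0}. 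This is the step that genuinely uses the special form $q=2+2\cos 2k\pi/m$ and forces $q\neq 0,4$. Apart from it, the argument is routine bookkeeping --- slightly heavier than in the planar case because here the singular part $-t/y+t^2/y^2$ of $tI(y)$ carries powers of $t$, and because the poles at $y=0$ can reach order $2m$.

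\textbf{Recovery of $Q(x,y)$.} Finally, treat $x$ as a genuine indeterminate. As in Lemma~\ref{lem:kernel-triang} (with $x$ now generic), solving $K(x,tZ)=0$ for $Z$ with $Z|_{t=0}=1$ shows that the kernel of~\eqref{eq:Q} at $w=z=1$ has a root $Y_1=t+O(t^2)$ in $\qs(q,\nu,x)[[t]]$. Writing~\eqref{eq:Q} as $K(x,y)Q(x,y)=R(x,y)$ and using $K(x,Y_1)=0$ gives $R(x,Y_1)=0$; as $R(x,y)$ is affine in $Q_1(x)$, this expresses $Q_1(x)$ rationally in terms of $x,t,q,\nu,Y_1$ and $Q(0,Y_1)$. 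Substituting this expression (and $Q_2(x)=\frac{1-2xt\nu}{t\nu}Q_1(x)$) into $K(x,Y_1)=0$, and then eliminating $Q(0,Y_1)$ by means of the polynomial equation satisfied by the (now algebraic) series $Q(0,y)$ specialised at $y=Y_1$, produces a polynomial equation for $Y_1$ over $\qs(q,\nu,t,x)$; a routine verification (tracking valuations in $t$) shows this equation is non-trivial, so $Y_1$ --- and hence $Q_1(x)$ and $Q_2(x)$ --- is algebraic over $\qs(q,\nu,t,x)$. Since~\eqref{eq:Q} is linear in $Q(x,y)$, the series $Q(x,y)=R(x,y)/K(x,y)$ is a rational function of the algebraic series $Q(0,y)$, $Q_1(x)$, $Q_2(x)$ and of $x,y,t,q,\nu$, hence algebraic over $\qs(q,\nu,t,x,y)$. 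Reinstating $w$ and $z$ as at the outset then gives the algebraicity of $Q(q,\nu,t,w,z;x,y)$ over $\qs(q,\nu,t,w,z,x,y)$.
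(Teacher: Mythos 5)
Your overall strategy matches the paper's: set $w=z=1$, invoke Corollary~\ref{coro:eq-inv-Q}, determine the $C_r$ by extracting the coefficients of $y^{-2m},\ldots,y^0$, substitute back, normalize so Theorem~\ref{generic-thm} applies to $Q(0,y)$, and then recover $Q(x,y)$ from~\eqref{eq:Q} via the kernel root $Y_1$. The determination of the $C_r$ and the recovery step are both essentially the argument of the paper.

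There is, however, a genuine gap in the middle where you pass from the equation in $K(y)=t^2yq\,Q(0,y)$ to the normalized form $Q(0,y)=1+tP(\cdots)$. You argue that, since every monomial of $\Pol$ carries an explicit $t$ or a $\tilde K^{(j)}(0)\propto t^2$, a factor of $t$ can be pulled out and the rest is bookkeeping. This is not enough. After substitution, a single factor $t$ gives an equation whose $t^0$ part is identically $0$, so it cannot be put into the form~\eqref{main-eq}. What is actually needed — and what the paper's Lemma~\ref{lem:x0-triang} establishes by a finer expansion, treating $\Pol$ as a polynomial in $t$, $z=t\bar y$, and the $x_j$'s — is that all "weight $\le 2$'' monomials vanish ($1,t,z,x_0,t^2,tz,z^2$, and the $x_j$ for $j\ge 1$), so that a factor $t^3$ (not $t$) comes out; and in addition the identity $q[zx_0]\Pol+q[tx_1]\Pol+[t^3]\Pol=0$ holds, which, after setting $Q(0,0)=1$, makes the $t^0$ part of the normalized equation exactly $q(Q(0,y)-1)[zx_0]\Pol$. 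Your proposal states neither of these; without them, dividing by the "coefficient of the $Q(0,y)$-term'' does not produce the required form. Likewise, the non-vanishing constant here is $[zx_0]\Pol=[x_0]L_1(x_0,0)+2\,\mathrm{CT}\,\Pol_2$, and the Chebyshev evaluation point is $q/2-1=\cos(2k\pi/m)$ (where $T_m'$ vanishes and $T_m''\neq0$), not $\pm\sqrt q/2$ as in the planar-maps Lemma~\ref{lem:x0}; your appeal to that proof "verbatim'' glosses over a real structural difference (here $z=t\bar y$ is an extra formal variable entangled with $t$, which has no analogue in the planar case). These are the steps that make the triangulation case genuinely harder than the general-maps case, and they need to be spelled out.
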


It follows that the \gf\ of properly $q$-colored triangulations,
studied by Tutte in his long series of papers from 1973 to 1984, is
algebraic at these values of $q$. Indeed, this series is, with our notation,
$q [y^3] Q(q,0,1,1,z;0,y)$.

\smallskip
\noindent{\bf Caveat.}
 The series $Q( q, \nu, t, w,z ; x,y)$ is \emm not, algebraic for a
generic value of $q$. Otherwise, the series $\gtQ( \mu, \nu,  w ,z;
x,y)$ counting quasi-triangulations weighted by their Tutte polynomial
(which is related to $Q$ by the change of
variables~\eqref{Potts-Tutte-gfs}) would be algebraic over $\qs(\mu,
\nu,  w ,z,  x,y)$, for generic values of $\mu$ and $\nu$. However,
it is known that~\cite{mullin-boisees}: 
$$
[y^2]\gtQ(1,1,w,1;0,y)=\sum_{n> 0 } \frac{1} { 2n(n+1)}{{2n}
  \choose n}{{4n-2} \choose {2n-1}}\,  w^{n},
$$
and the asymptotic behaviour of the $n^{\hbox{th}}$ coefficient, being
$\kappa\, 64^n n^{-3}$, prevents this series from being
algebraic~\cite{flajolet-context-free}. 
Again, the above series counts near-triangulations of outer degree 2
 enriched with a spanning tree.

Another way to establish the transcendence of $Q$ is to use
$$
[y^2] \frac{\partial Q}{\partial q} (1,0, 1, w,1 ;0,y)=
\sum_{n\ge 0 } (-1)^{n}
\frac{2\,  (3n)!}{n!(n+1)! (n+2)!}\,  w^{n+1}, 
$$
plus the fact that  the $n^{\hbox{th}}$ coefficient of this series behaves like
$\kappa\, 27^n n^{-4}$. The above identity was proved
by Tutte in 1973~\cite{lambda12}. It is now known that the numbers 
${\partial  \Ppol_M}/{\partial q}\,(1,0)$ which are involved in the above series
count (up to a sign)  
\emm bipolar orientations, of $M$ (see, e.g.,~\cite{greene-zaslavsky,lass-orientations}).

\medskip

As explained at the beginning of Section~\ref{sec:inv-triang}, the variables $w$ and
$z$ are redundant. Hence it suffices to prove Theorem~\ref{thm:alg-triang} for
$w=z=1$. We thus set $w=z=1$ and denote $Q(q,\nu,t,1,1;x,y)$ by
$Q(q,\nu, t; x,y)$. The conditions on $q$ imply that there exist two
coprime integers $k$ and $m$ such that $0<2k<m$ and $q=2+2 \cos
2k\pi/m$. Corollary~\ref{coro:eq-inv-Q} thus applies, and gives a polynomial
equation in $I(y)$ involving $m+1$ unknown series $C_r$. We call this
equation the \emm invariant equation,. 
From this point, we prove 
Theorem~\ref{thm:alg-triang} in two steps: we first show that the
series $C_r$ can be expressed in 
terms of the $y$-derivatives of $Q(0,y)$, evaluated at $y=0$; then, we
prove that, once $I(y)$ and each $C_r$ are replaced, in the invariant equation, by
their expressions in terms of $Q$, our general algebraicity theorem applies. That is to say, the equation
satisfied by $Q(0,y)$ has the form~\eqref{main-eq},  with $u$ replaced by
$y$. This second step is more delicate than the first. The whole proof
is also more complicated than in the case of general planar 
maps, due to the pole of order~2 found in the invariant $I(y)$ at
$y=0$ (see Proposition~\ref{prop:inv-triang}).

Before we study the general case, let us examine thoroughly  a simple
example: $q=1$. We refer  the reader who would like to see more explicit
cases to Sections~\ref{sec:two-triang} and~\ref{sec:three-triang} (respectively
devoted to $q=2$ and $q=3$).

\subsection{A simple example: one-colored triangulations}
\label{sec:one-color-triang}
Take $k=1$ and $m=3$, so that the number of colors is 
$q=1$. Of course, all edges of a $1$-colored
map are monochromatic, so that the variable $\nu $ becomes redundant,
but we keep it for the sake of generality (a degeneracy  actually
occurs if we set $\nu=1$ at this stage). 

The third Chebyshev polynomial is $T_3(x)= 4x^3-3x$. The invariant
equation~\eqref{eq-inv-triang}  thus reads
\beq\label{eq-inv-triangq1}
\frac 1 2 N(y)^3 -\frac 3 2 N(y) D(y)-\sum _{r=0}^{3} C_r\, (tI(y))^r=0,
\eeq
with 
$$
N(y)=
 3 (\nu-1)  t\by+  \nu t I(y)-(\nu-1),
\quad 
I(y)= ty Q ( 0,y ) -\by +t \by^2
$$
and
$$
 D(y)= \nu^2 {t}^{2} I(y)^{2}+(\nu-1)  \left( 4\nu-3 \right)t
 I(y)-3\nu(\nu-1) {t}^{3} +(\nu-1)^2. 
$$
We write
$$
t\, I(y)= K(y) -t\by +t^2\by^2
\quad \hbox{where} \quad K(y)=t^2y Q(0,y).
$$
This is not crucial in this simple case, but will be
convenient in the general case. 

Recall that the  series $C_r$ depend on $\nu$ and  $t$, but not on
$y$. Expand the left-hand side of~\eqref{eq-inv-triangq1} around
$y=0$: the first non-trivial term is $O(y^{-6})$, and one obtains:
$$
-{t}^{6} \left( {\nu}^{3}+C_{{3}} \right) {y}^{-6}+O ( {y}^{-5} ) 
=0,
$$
from which we  determine $C_3$ explicitly:
\beq\label{C3-sol-triang}
C_3=-\nu^3.
\eeq
By pushing the expansion of~\eqref{eq-inv-triangq1} around $y=0$ up to the
term $y^0$, setting $K(0)=0$, and extracting the coefficients of
$y^{-4}$, $y^{-2}$ and finally $y^{-0}$, we find explicit expressions
of the other three series $C_r$: 
\begin{eqnarray} 
\nonumber
C_2&=&
3/2\,\nu\, ( \nu-1 )  ( 5\,\nu-6 ),\\ 
C_1&=&9/2\,{\nu}^{2} ( \nu-1 ) {t}^{3}+3/2\, ( 4\,\nu-3
 )  ( \nu-1 ) ^{2},\label{Cr-q=1-triang}
\\ 
\nonumber C_0&=&27/2\,(\nu-1 ) ^{2}
t\, K'  ( 0 ) 
-27/4\,\nu\, ( \nu-1 ) ^{2}
{t}^{2} K'' ( 0)
 -9/2\,\nu\, ( \nu-1 ) ^{2}{t}^{3}+ ( \nu-1 ) ^{3}
.
\end{eqnarray}
Observe that we have not exploited the fact that  the coefficients of $y^{-5}$, $y^{-3}$,
$y^{-1}$  must be zero as well.

Now replace in~\eqref{eq-inv-triangq1} each series $C_r$ by its
expression. This gives 
\beq\label{eq-K-1-triang}
27/4(\nu-1)^2   \Big( 
-2\nu\,  K ( y ) ^{2}
+2\,t\by ( 1-\nu t\by ) K ( y ) 
-2t\,K'  ( 0 ) + 
\nu{t}^{2} K''  ( 0 ) 
+2\nu \,{t}^{4}\by
\Big) =0.
\eeq
This equation involves a single catalytic variable, $y$. However, it
cannot be immediately written in the form~\eqref{main-eq}: when $t=0$,
the expression between parentheses contains a quadratic term $K(y)^2$,
which is absent from~\eqref{main-eq}.

Let us replace $K(y)$ by $t^2y Q(y)$, $K'(0)$ by $t^2 Q(0)$  and $K''(0)$
by $2t^2Q'(0) $, where
$Q(y)\equiv Q(0,y)$.  More factors come out, including a factor
$t^3$. Precisely, the equation now reads:
\beq\label{eq-M-1-triang}
-27/2(\nu-1)^2t^3
\Big( 
t \nu {y}^{2}  Q ( y ) ^{2}
- ( 1-\nu t\by ) Q ( y ) +Q ( 0 ) -\nu t\,Q'(0) -\nu t\by
\Big) =0,
\eeq
or, after dividing by $27/2(\nu-1)^2 t^3$ and isolating the term $Q(y)$,
\beq\label{1cat-planaires1-triang}
Q(y)= Q(0)+t\nu y^2 Q(y)^2+  t\nu\,  \frac {Q(y)-1-yQ'(0)}y  .
\eeq
Now replace $Q(0)\equiv Q(0,0)$ by its value 1. The resulting equation
has the form~\eqref{main-eq} (with 
$u$ replaced by $y$), so that Theorem~\ref{generic-thm} applies: The series
$Q(y)\equiv Q(1, \nu, t;0,y)$
is algebraic. The algebraicity of $Q(1,\nu, t;x,y)$ follows, as
explained at the end of this  section.
The experts will have recognized
in~\eqref{1cat-planaires1-triang} the standard functional equation
obtained by deleting recursively the root-edge of a
 near-triangulation
 (that is, a map in which all internal faces have
degree 3)~\cite{bender-canfield,mullin-nemeth-schellenberg}.

\subsection{The general case}
%

We now want to  prove that the treatment we have
applied above to~\eqref{eq-inv-triang} in the case $q=1$ can
  be applied for all values $q=2+2\cos 2 k\pi/m$. More precisely:
\begin{itemize}
\item expanding~\eqref{eq-inv-triang} in powers of $y$ and extracting
  the coefficient of $y^{-2r}$ 
provides an expression of the series $C_r$, for $0\le r \le m$, as a
polynomial in
$t$,  $ K'(0), \ldots,  K^{(2m-2r)}(0)$, where $K(y)=t^2yq Q ( 0,y )$,
with coefficients in
$\GK:=\qs(q,\nu)$;
\item after expressing in~\eqref{eq-inv-triang} the invariant $I(y)$
  and each series $C_r$ in terms of $K$, then in terms of $Q$, setting
  $Q(0,0)=1$,  and   finally dividing by $t^3$ and by a non-zero
  element of $\GK$, the resulting equation can be written in the form
\beq\label{generic-Q}
Q(y)=1 + t \, 
P\left(Q(y),  \Delta Q(y), \Delta^{(2)} Q(y), \ldots, 
\Delta^{2m}Q(y), t;y\right),
\eeq
where $Q(y)\equiv Q(0,y)$, 
$\displaystyle ~ \Delta F(y)= \frac{F(y)-F(0)}{y},$
and $P(x_0,  x_1, \ldots, x_{2m}, t;y)$ is a polynomial in its
first $2m+2$ variables  and a Laurent polynomial in $y$, having
coefficients in $\GK$. 
\end{itemize}
One can then apply Theorem~\ref{generic-thm}, and conclude that the
\gf\ $Q(y)\equiv Q(q,\nu,t;0,y)$ is algebraic. We finally return to
the original equation~\eqref{eq:Q} to prove that the
more general series $Q(q,\nu,t;x,y)$ is also
algebraic.

\medskip
\noindent{\bf Remark.} As suggested by the case $q=1$
(Section~\ref{sec:one-color-triang}), the series $C_r$ can be
expressed in terms of $K'(0), \ldots, K^{(2m-2r-4)}(0)$ only, but we
do not need so much precision here.

\subsubsection{Determination of the series $C_r$}
\label{sec:Cr-triang}

It will be convenient to write
\beq\label{I-K-triang}
t I(y)= K(y) -t\by +t^2\by^2,
\eeq
where 
$$
K(y)=t^2yq\;Q(0,y).
$$
Consider  the invariant equation~\eqref{eq-inv-triang}.
With the notation~\eqref{Tma} introduced in Section~\ref{sec:alg-maps}
for Chebyshev polynomials,
the left-hand side of~\eqref{eq-inv-triang} reads 
\begin{multline}
 \LHS=\sum_{a=0} ^\ell T_m^{(a)}\; 2^{-(2a+\eps)}
\left( 
 \be ( 4-q )t \by+ q \nu t I(y)+\be(q-2)
\right)^{2a+\eps} \label{LHS-triang}\\
\left(
q\nu^2 {t}^{2} I(y)^{2}
+\be  \left( 4\be +q \right)t I(y)
-q\be\nu {t}^{3}  \left( 4-q \right) +\be^2
\right)^{\ell-a}.
\end{multline}
Using~\eqref{I-K-triang}, this can be written as a polynomial in
$t\by$, $K(y)$ and $t^3$, of degree $2m$
in $t\by$, having  coefficients in $\GK=\qs(q,\nu)$ (recall that $\be
=\nu-1$). We write this expression as
\beq\label{Li-def-triang}
\LHS= \sum_{i=0}^{2m} (t\by)^i L_i(K(y),t^3),
\eeq
where $L_i(K(y),t^3)$ is a polynomial in $K(y)$ and $t^3$ with
coefficients in $\GK$.
Similarly, the right-hand side of~\eqref{eq-inv-triang} appears as a
polynomial in $C_0,
\ldots, C_m, t\by, K(y)$ with coefficients in $\qs$. It is easily seen
that, when one expands it in $t\by$,  the
coefficient of $(t\by)^i$ only involves $C_{\lceil i/2\rceil}, \ldots
, C_m,K(y)$. More precisely,
\beq\label{RHS-triang}
\RHS= \sum _{a=0}^{m } C_a\,\left(K(y)-t\by+t^2\by^2\right)^a
=  \sum_{i=0}^{2m} (t\by)^i R_i(C_{\lceil i/2\rceil}, \ldots, C_m,K(y))
\eeq
where
\beq\label{Ri-def-triang}
R_i(C_{\lceil i/2\rceil}, \ldots , C_m,K(y))=(-1)^i \sum_{a=\lceil
  i/2\rceil }^m\sum_{b=\lceil i/2\rceil}^{\min (i,a)}  {a\choose
  b}{b\choose {i-b}} C_a\,   K(y)^{a-b}.
\eeq
We  thus write the invariant equation as follows:
\beq \label{eq-inv-triang-K}
\sum_{i=0}^{2m} (t\by)^i L_i(K(y),t^3)
=\sum_{i=0}^{2m} (t\by)^i R_i(C_{\lceil i/2\rceil}, \ldots , C_m,K(y)).
\eeq

\begin{table}
\begin{tabular}{|l|l|}
$L_i(x_0,t)$ 	& Polynomial in $x_0$ and $t$ with coeffs. in
$\GK\equiv \qs(q,\nu)$\\
$T_{i,j}$ & Linear combination of  $C_{\lceil i/2\rceil}, 
\ldots,
 C_m$ with coeffs. in $\qs[K'(0), \ldots, K^{(j)}(0)]$
\\
$S_{i,j}$ & Polynomial in  $  K'(0), \ldots,K^{(j)}(0), t^3$ with
coeffs. in $\GK$
\\
$C_r$ (Lemma~\ref{lem:cr-triang}) & Polynomial in $t$,  $K'(0), \ldots,
K^{(2m-2r)}(0)$ with coeffs. in $\GK$
\end{tabular}
\vskip 4mm
\caption{The nature of the series involved in this section.}
\label{table-triang}
\end{table}

\begin{Lemma}\label{lem:cr-triang}
The series $C_m, C_{m-1}, \ldots , C_0$ can be determined
  inductively by
  expanding~\eqref{eq-inv-triang-K} in powers of   $y$ and 
extracting the  coefficients of $y^{-2m}, y^{-2m+2},\ldots, y^2, y^0$.
%
This gives, for   $0\le r\le m$, 
$$
C_r=\Pol_r( K'(0), \ldots, K^{(2m-2r)}(0),t)
$$
for some polynomial $\Pol_r(x_1, \ldots, x_{2m-2r},t)$ 
 having coefficients in $\GK=\qs(q,\nu)$.
Moreover, $\Pol_r$
contains no monomial $x_j$,  for $1\le j \le 2m-2r$, and no monomial
$tx_j$  for $2\le j \le 2m-2r$. Finally, denoting by $c_r$ the
constant term of $\Pol_r$, we have
\begin{eqnarray}
\nonumber 
\sum_{a=0}^m c_a (z^2-z)^a&=& \sum_{a=0}^{2m} z^a L_a(0,0)\\
&=&
\label{cr-CT}
\tilde D(z)^{m/2} \ 
T_m\left(
\frac{
 \be ( 4-q )z+ q \nu (z^2-z)+\be(q-2)}
{2\sqrt {\tilde D(z)}}
\right)
\end{eqnarray}
where $\beta=\nu-1$ and
$
\tilde D(z)= 
q\nu^2 (z^2-z)^2
+\be  \left( 4\be +q \right)(z^2-z)  +\be^2.
$
\end{Lemma}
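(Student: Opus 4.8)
The plan is to follow the proof of Lemma~\ref{lem:cr} almost verbatim, the only new features being that the expansion is performed at $y=0$ instead of $y=1$, and that the relevant powers of $y$ come in pairs. Since $K(y)=t^2yq\,Q(0,y)$ has valuation at least $1$ in $y$, write $K(y)=\sum_{j\ge1}K^{(j)}(0)y^j/j!$. As $L_i(x_0,t^3)$ and $R_i(C_{\lceil i/2\rceil},\dots,C_m,x_0)$ are polynomial in $x_0$ (see~\eqref{Li-def-triang},~\eqref{Ri-def-triang}), substituting $x_0=K(y)$ and expanding in $y$ gives
$$\LHS=\sum_{i=0}^{2m}\frac{t^i}{y^i}\sum_{j\ge0}y^j\,S_{i,j},\qquad \RHS=\sum_{i=0}^{2m}\frac{t^i}{y^i}\sum_{j\ge0}y^j\,T_{i,j},$$
where $S_{i,j}$ and $T_{i,j}$ do not involve $y$ and have exactly the nature recorded in Table~\ref{table-triang}. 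Extracting from~\eqref{eq-inv-triang-K} the coefficient of $y^{-2r}$ for $r=m,m-1,\dots,0$ in turn yields $\sum_{i=2r}^{2m}t^i(S_{i,i-2r}-T_{i,i-2r})=0$; by~\eqref{Ri-def-triang} the summand $i=2r$ contains $C_r$ with coefficient $1$ (the term $a=b=r$) together with a $\GK$-linear combination of $C_{r+1},\dots,C_m$, so this solves recursively for $C_r$ in terms of $C_{r+1},\dots,C_m$, the $S_{i,i-2r}$ ($2r\le i\le 2m$) and the $T_{i,i-2r}$ ($2r<i\le 2m$). Bookkeeping the number of derivatives — at most $i-2r\le 2m-2r$ occur in the $S$'s and $T$'s, and at most $2m-2a\le 2m-2r-2$ inside each $C_a$ that appears (since $a\ge\lceil i/2\rceil\ge r+1$) — gives $C_r=\Pol_r(K'(0),\dots,K^{(2m-2r)}(0),t)$ with coefficients in $\GK$, the recursion being primed by $C_m=S_{2m,0}=L_{2m}(0,t^3)\in\GK[t^3]$.

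\textbf{Monomial restrictions.} Because $L_i(x_0,t^3)$ is polynomial in $x_0$ and $K(y)$ has valuation $\ge1$ in $y$, the part of $S_{i,i-2r}$ that is linear in the $K$-derivatives comes only from the $x_0$-linear part of $L_i$ and equals $\lambda_i(t^3)\,K^{(i-2r)}(0)/(i-2r)!$ for some $\lambda_i\in\GK[t^3]$, the remainder being of degree $\ge2$ in the derivatives; similarly, the coefficient of each $C_a$ in $T_{i,i-2r}$ has $x_0$-linear part coming from the $b=a-1$ term of~\eqref{Ri-def-triang} and proportional to $K^{(i-2r)}(0)$ (for $i=2r$, the higher $C_a$'s enter only through the integers $\binom{a}{2r-a}$). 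Carrying these through the recursion and multiplying by $t^{i-2r}$ (with $i-2r\ge1$) or by integers, one checks that each monomial of $\Pol_r$ is either of degree $\ge2$ in the $x_j$'s, or of the shape $t^{i-2r}(\text{polynomial in }t^3)\,x_{i-2r}$, or an integer multiple of a monomial of some $C_a$ with $a>r$. The first and third shapes are never forbidden (for the third, by the inductive hypothesis on $C_a$), and the second is a pure $x_j$ only if $i-2r=0$ (impossible) and has the shape $t\,x_j$ only when $i-2r=1$, which forces $j=1$. Hence $\Pol_r$ contains no monomial $x_j$ for $1\le j\le 2m-2r$ and no monomial $t\,x_j$ for $2\le j\le 2m-2r$.

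\textbf{The constant terms.} Since every $K^{(j)}(0)$ is $O(t^2)$ and the last argument of $\Pol_r$ is $t$ itself, the constant term $c_r$ of $\Pol_r$ equals $[t^0]C_r$. As~\eqref{eq-inv-triang-K} holds as an identity in $y$, for every $n\ge0$ the coefficient of $y^{-n}$ gives $\sum_{i\ge n}t^i(S_{i,i-n}-T_{i,i-n})=0$; extracting from this the coefficient of $t^n$, only the $i=n$ summand survives, and at order $t^0$ it reads $L_n(0,0)=[t^0]T_{n,0}=(-1)^n\sum_a\binom{a}{n-a}c_a$ by~\eqref{Ri-def-triang}. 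Since $[z^n](z^2-z)^a=(-1)^n\binom{a}{n-a}$, this says $L_n(0,0)=[z^n]\sum_{a=0}^m c_a(z^2-z)^a$, and summing over $n=0,\dots,2m$ gives the first equality of~\eqref{cr-CT}. For the second equality, substitute $tI(y)=K(y)-t\by+t^2\by^2$ into~\eqref{LHS-triang}, set $z=t\by$, and specialize $K(y)=0$, $t^3=0$: the result equals $\sum_aT_m^{(a)}2^{-(2a+\eps)}N(z)^{2a+\eps}\tilde D(z)^{\ell-a}$ with $N(z)=\be(4-q)z+q\nu(z^2-z)+\be(q-2)$, which is $\tilde D(z)^{m/2}T_m\bigl(N(z)/(2\sqrt{\tilde D(z)})\bigr)$ because $T_m(x)=\sum_aT_m^{(a)}x^{2a+\eps}$ and $\ell=(m-\eps)/2$; and its coefficient of $z^i$ is $L_i(0,0)$, by the first paragraph.

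\textbf{Expected obstacle.} The delicate step is the monomial bookkeeping of the second paragraph: one must keep precise track, along the downward recursion, of which monomials in the $K$-derivatives can occur and with which powers of $t$. This is the analogue of the corresponding part of the proof of Lemma~\ref{lem:cr}, but slightly more involved here because of the pairing of $y$-powers and because the exceptional monomials $t\,x_1$ are legitimately allowed to appear.
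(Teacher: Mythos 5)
Your proof is correct, and for the first two claims it follows the paper's argument closely. The recursion $C_r = -\sum_{a>r}\binom{a}{2r-a}C_a + S_{2r,0} + \sum_{i>2r}t^{i-2r}(S_{i,i-2r}-T_{i,i-2r})$ is the same, and your monomial bookkeeping — that a linear-in-derivative contribution from the second sum is necessarily of the form $t^{i-2r}(\text{poly.\ in }t^3)\,x_{i-2r}$, so a pure $x_j$ is impossible and a $t\,x_j$ forces $i-2r=1$, hence $j=1$ — matches the paper's slightly terser version of the same argument.

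For the constant terms your route is usefully shorter than the paper's. The paper extracts only the coefficients of $y^{-2r}$, deduces that $\sum_a c_a(z^2-z)^a$ and $\sum_a z^a L_a(0,0)$ have the same even part, and then needs Proposition~\ref{prop:source} to show the latter is symmetric under $z\mapsto 1-z$ (hence a polynomial in $z^2-z$, hence determined by its even part). You instead extract the coefficient of $y^{-n}$ for all $n$ — odd ones too — and then the coefficient of $t^n$; since~\eqref{eq-inv-triang-K} is an identity in $y$ this is legitimate, and it yields $L_n(0,0)=(-1)^n\sum_a\binom{a}{n-a}c_a$ for every $n\ge 0$, which is coefficient-by-coefficient agreement (using $[z^n](z^2-z)^a=(-1)^n\binom{a}{n-a}$). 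This bypasses both the even/odd split and the appeal to Proposition~\ref{prop:source} in this step. Your derivation of the second equality — substitute $tI(y)=K(y)-t\by+t^2\by^2$, set $K=0$ and $t^3=0$, rename $z=t\by$, recognize the Chebyshev expansion — makes explicit what the paper compresses into the phrase ``which follows from the definition~\eqref{Li-def-triang} of $L_i$''.
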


All statements of the lemma, apart from the last one, can be checked
at once in the case $q=1$,
using~(\ref{C3-sol-triang}--\ref{Cr-q=1-triang}). 

\begin{proof}
In  the right-hand side of~\eqref{eq-inv-triang-K}, expand
$R_i(C_{\lceil i/2\rceil}, \ldots, C_m,K(y))$ in powers of $y$, using
$$
K(y)=
\sum_{j\ge 1} K^{(j)}(0) \, \frac {y^j}{j!}
$$ 
(since $K(0)=0$). 
This gives
\beq\label{RHS-triang-T}
\RHS\ =\ \sum_{i=0}^{2m} (t\by)^i\, R_i(C_{\lceil i/2\rceil}, \ldots, C_m,K(y))
\ =\ \sum_{i=0}^{2m}  (t\by)^i \sum_{j\ge 0} y^j T_{i,j}, 
\eeq
where $T_{i,j}$ is a linear combination of $C_{\lceil i/2\rceil}, 
\ldots,  C_m$, with coefficients in $\qs[K'(0), \ldots, K^{(j)}(0)]$.
In particular,  one derives from~\eqref{Ri-def-triang} that
\beq\label{Ti0-expr-triang}
T_{2i,0} =R_i(C_{\lceil i/2\rceil}, \ldots, C_m,0)
=\sum_{a=i}^m {{a}\choose {2i-a}} C_a .
\eeq
Similarly, expanding $L_i(K(y),t^3)$ in the left-hand side of~\eqref{eq-inv-triang-K} gives
\beq\label{LHS-triang-S}
\LHS\ = \ 
\sum_{i=0}^{2m} (t\by)^i\, L_i(K(y),t^3)
\ =\ \sum_{i=0}^{2m} (t\by)^i\, \sum_{j\ge 0} y^j S_{i,j}, 
\eeq
where $S_{i,j}$ is a polynomial in $  K'(0), \ldots,K^{(j)}(0), t^3$ with  coefficients in $\GK$.
In particular,
\beq\label{Si0-triang}
S_{i,0}=L_i(0,t^3).
\eeq
Table~\ref{table-triang} summarizes the properties of the various
series met in this section. 
 The invariant equation~\eqref{eq-inv-triang-K} can now be rewritten
$$
\sum_{i=0}^{2m} 
(t\by)^i \sum_{j\ge 0} y^j
(S_{i,j} -T_{i,j} )
%
=0,
$$
where $S_{i,j}$ and $T_{i,j}$ are independent of $y$. In particular,
extracting the coefficient of $\by^{2r}$, for $0\le r\le m$, gives
\beq\label{id1-triang}
\sum_{i=2r}^{2m} t^{i-2r} (S_{i,i-2r} -T_{i,i-2r} )=0.
\eeq
Recall the expression~\eqref{Ti0-expr-triang} of $T_{2i,0}$. Equivalently,
$$
T_{2r,0}=C_r+ \sum_{a=r+1}^m {{a}\choose {2r-a}} C_a .
$$
Hence the  identity~\eqref{id1-triang} gives
\beq\label{cr-rec-triang}
 C_r 
=- \sum_{a=r+1}^m {{a}\choose {2r-a}} C_a
+ S_{2r,0} +
     \sum_{i=2r+1}^{2m} t^{i-2r} (S_{i,i-2r} -T_{i,i-2r} ).
\eeq
Recall that $S_{i,j}$  involves none of the series $C_a$. Moreover,
$T_{i,i-2r}$ only involves the series $C_a$ if
$a\ge i/2$ (see Table~\ref{table-triang}). Hence the right-hand side of
the above identity only involves $C_a$ if $a\ge r+1$. Consequently,
this identity allows one to determine the coefficients
$C_m, \ldots, C_1, C_0$  inductively in this 
order. Moreover, the properties of the series $S_{i,j}$ and $T_{i,j}$
 imply  that $C_r$ is of the form $\Pol_r( K'(0),
\ldots, K^{(2m-2r)}(0),t)$, for some polynomial $\Pol_r(x_1, \ldots,
x_{2m-2r},  t)$ having coefficients in $\GK$.

\medskip
We  now address the properties of $\Pol_r$ stated in the
lemma.  Let us first prove that the coefficient  of
the monomial $x_j$ in  $\Pol_r(x_1, \ldots, x_{2m-2r},t)$, for $j\ge 1$, is zero.
In the recursive expression of $C_r$ given by~\eqref{cr-rec-triang},
all terms coming from the second sum are 
multiples of $t$, so that they  do not contribute to this coefficient. 
  In  sight of~\eqref{Si0-triang}, the coefficient of $x_j$ in
  $S_{2r,0}$ (seen as a polynomial in  $K'(0),
\ldots, K^{(2m)}(0),t$) is 0, and
  thus by a decreasing induction on $r=m, \ldots,0$ we conclude
  from~\eqref{cr-rec-triang} that the coefficient of $x_j$ in $\Pol_r$ is
  zero.

Let us now prove, by a decreasing induction on $r$, that the coefficient of the monomial $tx_j$ in
$\Pol_r(x_1, \ldots, x_{2m-2r},t)$, for $2\le j \le 2m-2r$, is zero. Again, the
term   $S_{2r,0}=L_{2r}(0,t^3)$ does not contain any such monomial. In
the second sum  
of~\eqref{cr-rec-triang}, the monomial $tx_j$ may only come from the term
$t(S_{2r+1,1}- T_{2r+1,1})$
obtained for $i=2r+1$. Recall that both $S_{i,j}$ and $T_{i,j}$ are
  obtained by extracting the coefficient of $y^j$ in certain
  polynomial expressions  in $K(y)$ (see~\eqref{LHS-triang-S} and~\eqref{RHS-triang-T}). Hence $S_{i,1}$ and $T_{i,1}$
  may contain some terms $x_1$, but no term $x_j$ for $j\ge  2$ 
(because $K^{(j)}(0)$ always comes with a power $y^j$). 
Consequently, $t(S_{2r+1,1}- R_{2r+1,1})$ does not contain any
  terms $tx_j$ for $j \ge 2$, and we finally conclude from~\eqref{cr-rec-triang}
  that the coefficient of $tx_j$ in $\Pol_r$ is   zero for $j\ge 2$.

 Let us finally prove the last statement of Lemma~\ref{lem:cr-triang},
which deals with
the constant term $c_r$ of $\Pol_r$. From~\eqref{cr-rec-triang}
and~\eqref{Si0-triang}, one derives that, for $r=0, \ldots, m$,
 $$
c_r  =- \sum_{a=r+1}^m {{a}\choose {2r-a}} c_a
+ L_{2r} (0,0).
$$
This means  
that the following  two polynomials in $z$,
$$
\sum_{a=0}^m c_a (z^2-z)^a
\quad \hbox{ and } \quad 
\sum_{a=0}^{2m} z^a L_a (0,0)
$$ 
 have the same  even part.
It is easy to see that a polynomial in $z^2-z$
is completely determined by its even part. Thus, in order to prove
that the above polynomials  coincide,  it
suffices to prove that the second one is also a
polynomial in $z^2-z$, that is, that 
\beq\label{even-odd}
\sum_{a=0}^{2m} z^a L_a (0,0)= \sum_{a=0}^{2m} (1-z)^a L_a (0,0).
\eeq
Let us use the expression of $\sum_{a=0}^{2m} z^a L_a (0,0)$ given
in the lemma, which follows from the definition~\eqref{Li-def-triang}
of $L_i$. We observe that $\tilde D(z)$ is a polynomial in $(z^2-z)$. Thus, in
order to prove~\eqref{even-odd}, it suffices to prove that
$T_m(x_1)=T_m(x_2)$, where
$$
x_1= \frac{  \be ( 4-q )z+ q \nu (z^2-z)+\be(q-2)}
{2\sqrt {\tilde D(z)}}
\quad \hbox{ and } \quad 
x_2= \frac{  \be ( 4-q )(1-z)+ q \nu (z^2-z)+\be(q-2)}
{2\sqrt {\tilde D(z)}}.
$$
By Proposition~\ref{prop:source}, 
the bivariate polynomial $T_m(z_1)-T_m(z_2)$ is
divisible  by
$z_1^2+z_2^2-(q-2)z_1z_2-\sin^2(2k\pi/m)=z_1^2+z_2^2-(q-2)z_1z_2-q(4-q)/4$. 
But 
$
x_1^2+x_2^2-(q-2)x_1x_2-q(4-q)/4$ is found to be $0$, so that
$T_m(x_1)=T_m(x_2)$. This concludes the proof of the lemma.
\end{proof}

\subsubsection{The final form of the invariant equation}
\label{sec:final-form-triang}
Now return to the invariant equation~\eqref{eq-inv-triang-K}, and replace each
$C_r$ by its polynomial expression in terms of $K'(0), \ldots, K^{(2m-2r)}(0)$ and
$t$. By forming the difference of the left-hand side and right-hand
side, one obtains   an equation of the form
\beq\label{eq-gen1-triang}
\Pol(K(y), K'(0), \ldots, K^{(2m)}(0), t,t\by)=0,
\eeq
where $\Pol(x_0, x_1, \ldots, x_{2m},t,z) \in \GK[x_0,
x_1, \ldots, x_{2m}, t, z]$.
In the case $q=1$, this is Eq.~\eqref{eq-K-1-triang}. That is,
$$
\Pol(x_0, \ldots, x_{6},t,z)=
27/4(\nu-1)^2   \Big( 
-2\nu\,  x_0 ^{2}
+2\,z( 1-\nu z ) x_0
-2t\,x_1 + 
\nu{t}^{2} x_2
+2\nu \,{t}^{3}z
\Big).
$$

\begin{Lemma} \label{lem:x0-triang}
 In the polynomial $\Pol\equiv \Pol(x_0, x_1, \ldots, x_{2m},t,z)$:
 \begin{itemize}
\item[$(i)$]  for $1\le j \le 2m$, 
$$[x_j] \Pol= 0,$$
\item[$(ii)$]  for $2\le j \le 2m$,
$$
[tx_j] \Pol= 0,
$$
\item[$(iii)$]  for $1\le j \le 2m$,
$$
[z x_j] \Pol= 0,
$$
 \item[$(iv)$] the constant term is zero,
\item[$(v)$] the  coefficients of the monomials $t$,  $z$, $x_0$, $t^2$, $z^2$,
  $tz$, $tx_0$,
  $t^2z$, $tz^2$, $z^3$  are zero,
\item[$(vi)$]  finally,
$$
q[z x_0] \Pol= - q[tx_1] \Pol - [t^3]\Pol 
=\frac{m^2} 2q (4-q)(\nu-1)^{m-1}\not =0. 
$$
 \end{itemize}
 \end{Lemma}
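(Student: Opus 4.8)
All six assertions will be read off from the shape of $\Pol=\LHS-\RHS$, where $\LHS$ and $\RHS$ denote the two sides of~\eqref{eq-inv-triang-K} with $K(y)$, $K^{(j)}(0)$ and $t\by$ replaced by the indeterminates $x_0$, $x_j$ and $z$, using Lemma~\ref{lem:cr-triang} and a few low-order coefficient extractions in the functional identity~\eqref{eq-gen1-triang}. For $(i)$, $(ii)$, $(iii)$: the series $L_i$ involve no $x_j$ with $j\ge1$, so for such $j$ we have $[x_j]\Pol=-[x_j]\RHS$, and likewise with $[tx_j]$ and $[zx_j]$. By~\eqref{Ri-def-triang}, $R_0=\sum_{a=0}^m C_a x_0^a$ and $R_1=-\sum_{a=1}^m aC_a x_0^{a-1}$, so a monomial carrying neither $z$ nor $x_0$ can only come from $C_0=\Pol_0$, and one carrying exactly $z^1$ and no $x_0$ only from $-C_1=-\Pol_1$. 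Hence $[x_j]\Pol=-[x_j]\Pol_0$, $[tx_j]\Pol=-[tx_j]\Pol_0$ and $[zx_j]\Pol=[x_j]\Pol_1$, all of which vanish on the stated ranges by Lemma~\ref{lem:cr-triang} (trivially so when $j$ exceeds the number of variables of $\Pol_0$ or $\Pol_1$).

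For $(iv)$ and $(v)$, substitute $x_0=K(y)$, $x_j=K^{(j)}(0)$, $z=t\by$ into~\eqref{eq-gen1-triang} and extract successive coefficients of $t$, using $K(y)=qy\,t^2+O(t^3)$, $K'(0)=qt^2$ (exactly, since $Q(0,0)=1$), $K^{(j)}(0)=O(t^3)$ for $j\ge2$, and $Q(0,y)=1+O(t)$. As $t\by$ and $t$ are $O(t)$ while $x_0$ and the $x_j$ are $O(t^2)$, the $[t^0]$-part is the constant term of $\Pol$, giving $(iv)$. The $[t^1]$-part is $[t]\Pol+\by\,[z]\Pol$, forcing $[t]\Pol=[z]\Pol=0$. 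Using this and $(i)$, the $[t^2]$-part is $[t^2]\Pol+\by\,[tz]\Pol+\by^2\,[z^2]\Pol+qy\,[x_0]\Pol$, so $[t^2]\Pol=[tz]\Pol=[z^2]\Pol=0$ and, as $q\ne0$, $[x_0]\Pol=0$. Using all of this together with $(ii)$ and $(iii)$, the $[t^3]$-part equals
\[
[t^3]\Pol+\by\,[t^2z]\Pol+\by^2\,[tz^2]\Pol+\by^3\,[z^3]\Pol+qy\,[tx_0]\Pol+q\,[zx_0]\Pol+q\,[tx_1]\Pol ;
\]
its vanishing gives $[t^2z]\Pol=[tz^2]\Pol=[z^3]\Pol=0$ and $[tx_0]\Pol=0$ (again $q\ne0$) from the nonzero powers of $y$, which completes $(v)$, together with $q[zx_0]\Pol+q[tx_1]\Pol+[t^3]\Pol=0$ from the $y^0$ term, which is the first identity of $(vi)$.

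For the value in $(vi)$, observe that $[zx_0]\LHS=[x_0^1t^0]L_1(x_0,t^3)$, while $[zx_0]\RHS=-2c_2$ (the coefficient of $x_0$ in $R_1$ being $-2C_2$ by~\eqref{Ri-def-triang}), where $c_2$ is the constant term of $\Pol_2=C_2$; hence $[zx_0]\Pol=[x_0^1t^0]L_1(x_0,t^3)+2c_2$. By~\eqref{LHS-triang} (and the homogeneity argument of Proposition~\ref{prop:inv-triang}), deleting the term $-q\be\nu(4-q)t^3$ inside $D(y)$ rewrites $\sum_i z^iL_i(x_0,0)$ as $B^{m/2}T_m\bigl(A/(2\sqrt B)\bigr)$ with
\[
A=q\nu x_0+(\be(4-q)-q\nu)z+q\nu z^2+\be(q-2),\qquad B=q\nu^2(x_0-z+z^2)^2+\be(4\be+q)(x_0-z+z^2)+\be^2,
\]
so $[x_0^1t^0]L_1(x_0,t^3)$ is the $zx_0$-coefficient of this expression, obtained by a second-order Taylor expansion about $(z,x_0)=(0,0)$, where $A\to\be(q-2)$, $B\to\be^2$ and $A/(2\sqrt B)\to(q-2)/2$; similarly, $c_2$ is obtained by expanding the right-hand side of~\eqref{cr-CT}, a polynomial in $w=z^2-z$, to order $w^2$, which again produces an expression in $T_m$, $T_m'$, $T_m''$ at $(q-2)/2$. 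Since $(q-2)/2=\cos(2k\pi/m)$ and $\sin^2(2k\pi/m)=\tfrac14q(4-q)$, differentiating $T_m(\cos\phi)=\cos m\phi$ twice gives, exactly as in the proof of Lemma~\ref{lem:x0},
\[
T_m\bigl(\tfrac{q-2}{2}\bigr)=1,\qquad T_m'\bigl(\tfrac{q-2}{2}\bigr)=0,\qquad T_m''\bigl(\tfrac{q-2}{2}\bigr)=-\frac{4m^2}{q(4-q)},
\]
and substituting these into the two expansions and simplifying (with Maple, as elsewhere in the paper) yields $[zx_0]\Pol=[x_0^1t^0]L_1(x_0,t^3)+2c_2=\tfrac{m^2}{2}(4-q)(\nu-1)^{m-1}$, which is nonzero since $q\ne4$ and $\nu-1$ is a nonzero indeterminate; multiplying by $q$ and invoking the first identity of $(vi)$ gives the stated formula. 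The genuine difficulty is precisely this last step: the several a priori unrelated contributions issued from $L_1$ and from $c_2$ must conspire to collapse onto the single monomial $\tfrac{m^2}{2}(4-q)(\nu-1)^{m-1}$, and verifying this requires carrying both second-order expansions through in full.
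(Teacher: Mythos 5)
Your proof is correct and follows essentially the same route as the paper's: parts $(i)$--$(iii)$ are deduced by tracking where $x_j$, $tx_j$ and $zx_j$ can originate (namely from $C_0$ or $C_1$) and invoking Lemma~\ref{lem:cr-triang}; parts $(iv)$--$(v)$ and the first identity of $(vi)$ follow from the same term-by-term $t$-expansion of~\eqref{eq-gen1-triang} using $K(y)=qt^2y+O(t^3)$, $K'(0)=qt^2$, $K^{(j)}(0)=O(t^3)$; and the explicit value in $(vi)$ is obtained, as in the paper, by expressing $[zx_0]\Pol=[x_0]L_1(x_0,0)+2\,\CT\,\Pol_2$ in terms of $T_m$, $T_m'$, $T_m''$ at $q/2-1$, using $T_m'(q/2-1)=0$ and $T_m''(q/2-1)=-4m^2/(q(4-q))$, with the final simplification delegated to Maple. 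The only cosmetic difference is your rewriting of $\sum_i z^iL_i(x_0,0)$ as $B^{m/2}T_m(A/(2\sqrt B))$ before extracting the $zx_0$ coefficient, which is a legitimate repackaging of the same computation.
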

  \begin{proof}
 Consider  the functional
 equation~\eqref{eq-inv-triang-K}, with each $C_r$ replaced by its
 expression in terms of $K'(0), \ldots, K^{(2m)}(0)$ and $t$. As $K(y)$ only give terms $x_0$,  the monomials
 $x_j$, for $j\ge 1$,  only occur in the right-hand side, and more
 precisely in the terms $C_r$. But  by
 Lemma~\ref{lem:cr-triang}, $C_r$  contains no such monomial, so $\Pol$ does
 not either. By a  similar argument, no monomial $tx_j$ occurs in
 $\Pol$ for $j\ge 2$. Finally, a monomial $zx_j$ with $j\ge 1$ could
 only arise from the term $C_1(K(y)-t\by +t^2\by^2)$ in~\eqref{RHS-triang}. But this is not the case, as $C_1$
 contains no monomial $x_j$.  We have  proved the first three points
 of the lemma.

Now recall that $K(y)=qt^2yQ(y)$ where $Q(y)\equiv Q(0,y)$, and that
$Q(y)=1+O(ty)$. Moreover, $Q^{(i)}(0)/i!$ counts colored near-triangulations
with outer degree $i$. These maps have at least $\lceil i/2\rceil$
edges. Hence
$$
\begin{array}{lll}
  K(y)&=& qt^2y+O(t^3),\\
K'(0)&=& qt^2Q(0)=t^2q, \\
K''(0)&=&2qt^2Q'(0)=O(t^3), \\
K^{(3)}(0)&=&3qt^2Q''(0)=O(t^3), \\
K^{(4)}(0)&=&4qt^2Q^{(3)}(0)=O(t^4),\\
&\ldots&,\\
K^{(2m)}(0)&=&2mq t^2 Q^{(2m-1)}(0)=O(t^{m+2}).
\end{array}
$$
Using this, let us 
expand~\eqref{eq-gen1-triang} in powers of $t$ to third order: 
\begin{multline}\label{exp1}
\Pol(K(y), K'(0), \ldots, K^{(2m)}(0), t,t\by)=
\CT\, \Pol + t \big( [t] \Pol +\by  [z] \Pol\big)\\
+ t^2 \big( qy [x_0] \Pol + q[x_1] \Pol + [t^2] \Pol +\by [tz] \Pol
+\by^2  [z^2] \Pol\big)
+ O(t^3) =0.
\end{multline}
Recall that the coefficients of $\Pol$ belong to
$\GK=\qs(q,\nu)$, and that we have established in the first part of
the proof that $[x_1] \Pol=0$. 
Hence, the above expansion, followed by an expansion
in powers of $y$, gives at once
$$
\CT\, \Pol = [t] \Pol=  [z] \Pol= [x_0] \Pol = [t^2] \Pol=[tz] \Pol= [z^2] \Pol=0.
$$
 This proves $(iv)$ and part of $(v)$.  Let us push the expansion~\eqref{exp1} one step
further, using  $[x_0]\Pol=[x_2]\Pol=[x_3]\Pol=[zx_1]\Pol=0$
(which we have proved above):
\begin{multline}\label{exp2}
0=\Pol(K(y), K'(0), \ldots, K^{(2m)}(0), t,t\by)=\\
t^3 \big( qy [tx_0] \Pol + q [zx_0] \Pol +q [tx_1] \Pol 
+ [t^3] \Pol + \by [t^2z] \Pol + \by^2 [tz^2] \Pol + \by^3 [z^3] \Pol
\big)
+O(t^4).
\end{multline}
Expanding the coefficient of $t^3$ in powers of $y$ gives
$$
[tx_0] \Pol= [t^2z] \Pol=  [tz^2] \Pol= [z^3] \Pol,
$$
which completes the proof of $(v)$, and
$$
q[zx_0] \Pol + q[tx_1] \Pol + [t^3] \Pol =0,
$$
which proves part of $(vi)$.
Finally, we read off from~\eqref{eq-inv-triang-K}
and~\eqref{RHS-triang} that
$$
[zx_0] \Pol= [x_0] L_1(x_0,0)
+2 \CT\, \Pol_2.
$$
The coefficient of $x_0$ in $L_1(x_0,0)$ can be determined
using~(\ref{LHS-triang}--\ref{Li-def-triang}), preferably using
Maple. It is found to be a linear 
combination of $T_m(q/2-1)$,  $T_m'(q/2-1)$ and  $T_m''(q/2-1)$ with
coefficients in $\GK$. The constant term $c_2$ of $\Pol_2$ can be determined
using~\eqref{cr-CT}: we set $z=(1-\sqrt{1+4u})/2$ in this equation, so
that $z^2-z=u$, 
and extract the coefficient of $u^2$. This gives $c_2= \CT\, \Pol_2$
as a linear combination of $T_m(q/2-1)$,  $T_m'(q/2-1)$ and
$T_m''(q/2-1)$ with coefficients in $\GK$. Given that $q/2-1=\cos
2k\pi/m$, we have  $T_m'(q/2-1)=0$. Putting these results together
gives
$$
[zx_0] \Pol= 
%
-\frac 1 8\,{\beta}^{m-1}q\, ( 4-q ) ^{2}\, T_m'' (q/2-1 ) .
$$
The last statement of Lemma~\ref{lem:x0-triang} then follows from
$$
T_m''(q/2-1)= -\frac{4m^2}{q(4-q)}.
$$
\end{proof}


\noindent
{\em{Proof of Theorem~{\rm\ref{thm:alg-triang}}.}}
The functional equation~\eqref{eq-gen1-triang} involves a single catalytic
variable, $y$.  However, the case $q=1$ shows that its form may not be
suitable for a direct application of our algebraicity theorem
(see~\eqref{eq-K-1-triang}). As it happens, a simple remedy for this is to
  reintroduce the original series $Q(y)\equiv Q(0,y)$. This is the counterpart of the transformation of~\eqref{eq-K-1-triang}
into~\eqref{eq-M-1-triang} performed in the case $q=1$. 
So, in~\eqref{eq-gen1-triang}, replace $K(y)$ by $qt^2y\,Q(y)$ and replace
 each derivative $K^{(j)}(0)$ by $jqt^2 \,Q^{(j-1)}(0)$.
According to Lemma~\ref{lem:x0-triang}, 
$$
\CT\, \Pol = [t] \Pol=  [z] \Pol= [t^2] \Pol=[tz] \Pol= [z^2] \Pol=
      [x_0] \Pol = \cdots = [x_m] \Pol =0.
$$
This implies that, once the series $K$ have been
expressed in terms of $Q$ in~\eqref{eq-gen1-triang}, a factor $t^3$ appears. 
  Divide the equation by $t^3$ to obtain
\beq\label{eq-gen2-triang}
\Pol'(Q(y), Q(0),Q'(0), \ldots, Q^{(2m-1)}(0), t;y )=0,
\eeq
where 
$$
\Pol'(x_0, \ldots,x_{2m} ,t;y)=
 \frac 1 {t^3}\,  \Pol(t^2yqx_0, t^2q x_1,2t^2q x_2, \ldots, 2mt^2q x_{2m},t;t\by)
$$
is a polynomial in $x_0, x_1, \ldots, x_{2m}, t$ and a Laurent
polynomial in $y$. 
Have we at last reached  an equation of the
form~\eqref{generic-Q}, to which we could apply our algebraicity
theorem? If this were the case, $\Pol'(x_0, \ldots,x_{2m} 
,0;y)$ should reduce to $x_0-1$.
 We have 
\begin{multline*}
  \Pol'(x_0, \ldots,x_{2m} ,0;y)= q x_0\big( y  [tx_0] \Pol +[zx_0]
  \Pol\big)\\
+\sum_{i=1}^{2m} i q x_i \big(   [tx_i] \Pol +\by [zx_i]  \Pol\big)
+ [t^3] \Pol + \by [t^2z] \Pol + \by^2 [tz^2] \Pol + \by^3 [z^3] \Pol .
\end{multline*}
By Lemma~\ref{lem:x0-triang}, this reduces to 
$$
  \Pol'(x_0, \ldots,x_{2m} ,0;y)= q x_0[zx_0]  \Pol
+ q x_1    [tx_1] \Pol  +  [t^3] \Pol.
$$
This means that, upon replacing $Q(0)$  by its value $1$,  the
functional equation~\eqref{eq-gen2-triang} can be written in the form
$$
qQ(y) [zx_0]  \Pol + q [tx_1] \Pol  +  [t^3] \Pol 
%
= t P_1( Q(y), Q'(0), \ldots, Q^{(2m-1)}(0), t; y)
$$
for some  $P_1(x_0, x_2, \ldots, x_{2m},t;y ) \in \GK[x_0, \ldots ,
  x_{2m}, t, y , \by]$. Moreover, the last identity of
Lemma~\ref{lem:x0-triang} allows us to rewrite this as
$$
q  \big( Q(y)-1\big) [zx_0] \Pol
= t P_1( Q(y), Q'(0), \ldots, Q^{(2m-1)}(0), t; y).
$$
Upon dividing by $q [zx_0] \Pol$ (which is
non-zero by Lemma~\ref{lem:x0-triang}), this has
the form
$$
Q(y)= 1+ tP_2(Q(y), Q'(0), \ldots, Q^{(2m-1)}(0),t;y)
$$
where   $P_2(x_0, x_2, \ldots, x_{2m+1},t;y)\in \GK[x_0, \ldots ,
  x_{2m}, t, y, \by]$.
 Finally, upon writing
$$
\frac{Q^{(i)}(0)}{i!}= \Delta^{i}Q(y)- y\Delta^{i+1}Q(y),
$$
where 
$$
\Delta F(y)= \frac{F(y)-F(0)}{y},
$$
the equation reads
$$
Q(y)= 1+ tP(Q(y), \Delta Q(y),\Delta^{(2)} Q(y), \ldots, \Delta^{2m} Q(y), t;y)
$$
where $P(x_0, x_1, x_2, \ldots, x_{2m},t;y)$ is a
polynomial in $t$ and the 
$x_j$'s, and a  Laurent polynomial  in $y$, with coefficients in
$\GK$.
Applying the general algebraicity theorem (Theorem~\ref{generic-thm}) 
implies that $Q(y)\equiv Q(0,y)$ is algebraic over $\qs(q, \nu, t, y)$.

Let us complete the proof of Theorem~\ref{thm:alg-triang} by proving
that $Q(x,y)$ is also algebraic.  We return to the functional equation
defining $Q(x,y)$, written in the form $K(x,y)Q(x,y)=R(x,y)$, where
$K(x,y)$ and $R(x,y)$ are given respectively by~\eqref{ker-Q}
and~\eqref{RHS-Q}. Recall that the series $Y_1$ defined in
Lemma~\ref{lem:kernel-triang} satisfies
$K(x,Y_1)=R(x,Y_1)=0$. By eliminating $Q_1(x)$ between these two
equations, one obtains a rational expression of $Q(0,Y_1)$ in terms of
$q, \nu, x, t$ and $Y_1$. But $Q(0,Y_1)$ is algebraic over $\qs(q,
\nu, t, Y_1)$: that is, there exists a non-zero polynomial $\Pol$ such
that $\Pol(q, \nu, t, Y_1, Q(0,Y_1))=0$. Replacing $Q(0,Y_1)$ by its
rational expression in this equation shows that $Y_1$ is algebraic over $\qs(q, \nu, t,
x)$. Then the  expression of $Q(0,Y_1)$ as a rational function of $q,
\nu, x, t$ and $Y_1$ shows that $Q(0,Y_1)$ itself  is algebraic over
$\qs(q, \nu, t, x)$. Finally, writing $R(x,Y_1)=0$ gives a rational
expression of $Q_1(x)$ in terms of $\nu, t, x, Y_1$ and $Q(0,Y_1)$:
hence $Q_1(x)$ is algebraic over $\qs(q, \nu, t, x)$. Returning to the
 functional equation that defines $Q(x,y)$ finally shows that this series is
 algebraic over $\qs(q,\nu, t, x, y)$.
\qed

\section{Two colors: the Ising model}
\label{sec:two}

In this section, we focus on the case $k=1$, $m=4$, that is, on
$q=2$. We give explicit algebraic equations satisfied by \gfs\ of
2-colored planar maps and 2-colored planar triangulations.  In other
words, we solve the Ising model (with no exterior field), averaged
on planar maps or triangulations of a given size. We also briefly report on
the singularity analysis of the solution, which allows us to locate
the critical value $\nu_c$ where a \emm phase transition, occurs.

 \subsection{Two-colored planar maps}
\label{sec:two-planar}

\begin{Theorem}\label{thm:planar-2-colours}
The Potts \gf\ of planar maps $M(2,\nu,t,w,z;x,y)$, defined
by~\eqref{potts-planar-def} and taken at $q=2$, is algebraic.
 The specialization $M(2,\nu,t,w,z;1,1)$ 
has degree  $8$ over  $\qs(\nu, t,w)$. 

When $w=z=1$, the degree decreases to
 $6$, and the equation admits a rational parametrization.
Let $S\equiv S(t)$ be the unique power series in $t$ with constant
term $0$ satisfying
$$
S=t\; \frac{\left( 1+3\,\nu\,S-3\,\nu\,{S}^{2}-{\nu}^{2}{S}^{3} \right)
  ^{2}}
{
  1-2\,S+2\,{\nu}^{2}{S}^{3}-{\nu}^{2}{S}^{4}  }.
$$
Then
\begin{multline*}
  M(2, \nu, t,1,1;1,1)= \frac{
  1+3\,\nu\,S-3\,\nu\,{S}^{2}-{\nu}^{2}{S}^{3}}
{\left(1-2\,S+2\,{\nu}^{2}{S}^{3}-{\nu}^{2}{S}^{4}\right)^2}\times
\\
\left(
{\nu}^{3}{S}^{6}
+2\, {\nu}^{2} (1- \nu ){S}^{5}
+\nu\, ( 1-6\,\nu ) {S}^{4}
-\nu\, ( 1-5\,\nu ) {S}^{3}
+ (1+ 2\,\nu ) {S}^{2}
-(3+ \nu ) S
+1
\right)
.
\end{multline*}
\end{Theorem}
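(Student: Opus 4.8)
The algebraicity of $M(2,\nu,t,w,z;x,y)$ is the case $q=2$ of Theorem~\ref{thm:alg-planaires} (here $k=1$, $m=4$ in the normalization $q=2+2\cos(2k\pi/m)$), so only the explicit statements require proof. The plan is to run the machinery of Section~\ref{sec:alg-maps} at $q=2$. Setting $z=1$ (harmless, by Euler's relation), the relevant Chebyshev polynomial is $T_4(x)=8x^4-8x^2+1$, and Corollary~\ref{coro:eq-inv-M} gives the invariant equation
\[
D(y)^2\,T_4\!\left(\frac{\be(4-q)(\by-1)+(q+2\be)I(y)-q}{2\sqrt{D(y)}}\right)=\sum_{r=0}^{4}C_r\,I(y)^r ,
\]
where $q=2$, $\be=\nu-1$, $I(y)=2wty\,M(1,y)+\tfrac{y-1}{y}+\tfrac{ty}{y-1}$, $D(y)=(2\nu+\be^2)I(y)^2-2(\nu+1)I(y)-2\be t(2w+\be)+2$, and $C_0,\dots,C_4\in\qs(\nu,w)((t))$. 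As in Lemma~\ref{lem:cr} the $I(y)^4$ terms on the two sides cancel, so this is an equation of degree at most $3$ in $I(y)$, hence of degree at most $3$ in $M(y):=M(1,y)$ — a cubic equation with one catalytic variable, to which the cubic extension of the quadratic method applies.

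First I would determine the $C_r$. Following Section~\ref{sec:Cr}, expand the invariant equation around $y=1$ and extract the coefficients of $(y-1)^{-4},\dots,(y-1)^0$; by Lemma~\ref{lem:cr} this gives $C_r=\Pol_r(K(1),\dots,K^{(4-r)}(1),t)$ with $K(y)=t+2wty\,M(1,y)$, and by the Remark of Section~\ref{sec:alg-maps} only $C_1$ (through $M(1)$) and $C_0$ (through $M(1)$ and $M'(1)$) are genuinely unknown, the others being explicit elements of $\qs(\nu,w)$; here $M(1)=M(2,\nu,t,w,1;1,1)$ and $M'(1)=\partial_yM(1,y)|_{y=1}$. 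Substituting these expressions back into the invariant equation, re-writing $K$ in terms of $M(y)$, dividing by $t$ and by the nonzero coefficient of $M(y)$ identified in Lemma~\ref{lem:x0} (whose non-vanishing here rests on $T_4''(-\tfrac{\sqrt2}{2})=32\ne0$), one obtains a polynomial equation
\[
\mathcal{E}\bigl(M(y),M(1),M'(1),t,w;\,y\bigr)=0
\]
of degree at most $3$ in $M(y)$ with the single catalytic variable $y$, having the form~\eqref{generic-M} (with $u$ replaced by $y-1$).

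It remains to solve $\mathcal{E}=0$ and extract $M(1)$. I would apply the generalized quadratic method of the proof of Theorem~\ref{generic-thm} (following~\cite{mbm-jehanne}): for a cubic equation there are two fractional power series $Y_1(t),Y_2(t)$ at which $\partial\mathcal{E}/\partial M(y)$ vanishes, and the equations $\mathcal{E}=\partial_{M}\mathcal{E}=0$ at $y=Y_1$ and $y=Y_2$, after elimination of $Y_1,Y_2,M(Y_1),M(Y_2)$ and $M'(1)$, yield a single polynomial relation for $M(1)$ over $\qs(\nu,t,w)$; a Maple computation shows its minimal polynomial has degree $8$. Specializing $w=1$ (recall $z$ is already $1$), this polynomial factors and the irreducible factor annihilating $M(2,\nu,t,1,1;1,1)$ has degree $6$; the associated plane curve has genus $0$, hence a rational parametrization, and one checks that the uniformizing series $S$ of the statement — which may be identified via this parametrization, or directly with one of the $Y_i$ — together with the displayed rational function of $S$ satisfies both the defining equation of $S$ and the relation for $M(1)$. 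To pin down the correct branch I would verify $S=t+O(t^2)$ and that the resulting series matches the first several coefficients of $M(2,\nu,t,1,1;1,1)$ computed directly from~\eqref{eq:M}.

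The main obstacle is the elimination producing the minimal polynomial of $M(1)$, and the subsequent verification of its degree, of the factorization at $w=z=1$, and of the genus-$0$ parametrization: this is a substantial symbolic computation in $\nu,t,w$ (resultants, factorization, genus of a plane curve) for which there is no conceptual shortcut — in particular, the reduction of the $q=2$ Potts model to the Ising model does not help, since $\nu$ is kept generic — which is why the authors carried it out with Maple.
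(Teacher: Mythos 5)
Your proposal mirrors the paper up to and including the reduction to a cubic equation in one catalytic variable involving only $M(y)$, $M(1)$, $M'(1)$: same invariant equation at $q=2$, $m=4$; same extraction of $C_0,\dots,C_4$ by expansion around $y=1$; same division by the non-vanishing coefficient of $M(y)$ guaranteed by Lemma~\ref{lem:x0}. Where you genuinely diverge is in solving that one-variable equation. You propose the general machinery of~\cite{mbm-jehanne}/Theorem~\ref{generic-thm} (locating fractional power series cancelling $\partial\mathcal E/\partial M(y)$, then eliminating through the resulting $3k$-by-$3k$ system). The paper explicitly mentions that this route would work but deems it computationally heavy, and instead uses a shortcut of Tutte's: since $T_4(x)\pm1$ each have a double root, both $P_\pm(X)=\sum_r C_r X^r\pm D^2$ (with $D$ the quadratic of Proposition~\ref{prop:inv-planaires} read as a polynomial in $X=I(y)$) must have a double root in $X$, so their discriminants vanish. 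This yields two polynomial relations between $M(1)$ and $M'(1)$ directly, without any Puiseux series, without the $\epsilon$-perturbation, and without ever differentiating the catalytic equation; eliminating $M'(1)$ gives the degree-8 polynomial. The trick is special to $q=2+2\cos(2k\pi/m)$, where the Chebyshev structure is present, whereas your route is the general-purpose one and will scale to situations where no such structure is available — the trade-off is precisely the amount of symbolic computation. Your endgame (Maple factorization at $w=z=1$, the factor $(1-(1+\nu)tM(1))^2$ dropping out to leave degree $6$, genus-$0$ check, \texttt{algcurves} parametrization, branch fixed by $S=t+O(t^2)$ and matching initial coefficients of $M$) coincides with the paper's. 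One small imprecision: you list $C_4,C_3,C_2$ as elements of $\qs(\nu,w)$, but $C_2$ is linear in $t$; only $C_4$ and $C_3$ are $t$-free. Also, the aside that $S$ might ``be identified directly with one of the $Y_i$'' is speculative and not needed — once the degree-$6$ polynomial is in hand, verifying the parametrization is a direct substitution.
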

\begin{proof}
The first statement is a specialization of
Theorem~\ref{thm:alg-planaires}. To obtain an explicit equation
satisfied by $M(2,\nu, t,w,z;1,1)$, we
first construct an equation with one catalytic variable satisfied by $M$, as
described in Section~\ref{sec:alg-maps}. Once again, the variable
$z$ is redundant, and $M(2, \nu, t,w,z;x,y)$ has the same degree over
$\qs(\nu, t,w,z,x,y)$ as $M(2, \nu, t,w,1;x,y)$  over
$\qs(\nu, t,w,x,y)$. We thus set $z=1$.

  We write the invariant equation~\eqref{eq-inv} for $q=2$ and
  $m=4$. It involves five unknown series $C_0, \ldots, C_4$, independent
  of $y$. By expanding this equation in the neighborhood of $y=1$, as
  described in Section~\ref{sec:Cr}, we   obtain the following expressions
  for the series $C_r$:
  \begin{eqnarray*}
    C_4 &=&( {\nu}^{2}+2\,\nu-1 )  ( {\nu}^{2}-2\,\nu-1 ) ,\\
    C_3 &=&-4\, ( \nu+1 )  ( {\nu}^{2}-4\,\nu+1 ) 
 ,\\
    C_2 &=&-4\, ( \nu-1 )  ( {\nu}^{3}+3\,{\nu}^{2}-6\,w{\nu}^{2}
-3\,\nu+2\,w-1 ) t-24\,\nu
 ,\\
    C_1 &=& -32\,\nu\,w ( \nu+1 )  ( \nu-1 ) ^{2}{t}^{2}
 M( 1 ) +8\, ( \nu-1 )  ( 3\,{\nu}^{2}-6\,
\nu\,w+2\,w-3 ) t+8+8\,\nu
,\\
 C_0 &=&-32\,\nu\,w ( \nu+1 )  ( \nu-1 ) ^{2}{t}^{3} M'( 1 ) 
-64\,\nu\,{w}^{2} ( \nu+1 )  ( \nu-1 ) ^{2}{t}^{3}  M ( 1 )  ^{2}\\
&&-32\,w ( \nu-1 ) ^{2} ( {\nu}^{2}t-3\,\nu+\nu\,t-1 ) {t}^{2} M( 1) \\
&&-4\, ( \nu-1 ) ^{2} ( {\nu}^{2}-2\,\nu+12\,\nu\,w+1+
4\,w-4\,{w}^{2} ) {t}^{2}-8\, ( \nu-1 )  ( 3\,
\nu-3-2\,w ) t -4.
  \end{eqnarray*}
In the invariant equation~\eqref{eq-inv}, let us now replace each
$C_r$ by its expression in terms of $M$: as was proved for general
values of $k$ and $m$ in 
Section~\ref{sec:final-form}, this gives (after dividing by
a factor $32tw(\nu-1)^2(1+\nu+y-y\nu)(1-\by)^2$)
an equation with one catalytic variable of the form~\eqref{generic-M},
involving the series $M(1)$ and $M'(1)$, or equivalently, the first two
discrete derivatives of $M(y)$. 

To solve this equation and 
obtain an algebraic equation satisfied by $M(1)$, we can  use
the general strategy of~\cite{mbm-jehanne}. 
An alternative, which requires less heavy calculations, relies on an
observation used by Tutte in the enumeration of properly colored
planar triangulations~\cite{tutte-chromatic-solsII,tutte-chromatic-revisited}. 
Consider the following two polynomials in $X$
$$
P_\pm(X):=\sum_{r=0}^4 C_r X^r\pm
\left((2\nu+\be^2)X^2-2 (\nu+1 ) X-2 \be t  ( 2w+\be ) +2
\right)^2 ,
$$
where $\be=\nu-1$. The second 
term is simply the square of the series $D(y)$ defined in Proposition~\ref{prop:inv-planaires}, seen as a
polynomial in  $X\equiv I(y)$.
The invariant equation~\eqref{eq-inv} can be written
$$
P_\pm(I(y))= D(y)^2\left( T_4\left(\frac {N(y)}{2\sqrt {D(y)}}\right)\pm 1\right).
$$
From the fact the polynomials $T_4(x)\pm 1$ both have a double root,
one can derive that $P_+(X)$ and $P_-(X)$ both have a double root in
$X$.
 Hence the discriminant of each of these polynomials
vanishes. This gives two polynomial equations relating $M(1)$ and
$M'(1)$, from which we obtain an equation for $M(1)$ by elimination.

One thus obtains an equation of degree 8 for the series $M(1)\equiv
M(2,\nu,t,w,1;1,1)$. 
It is too big to be written here.
However, when we do not keep track of the number of vertices (that is, when
$w=1$), this equation contains a factor $(1-tM(1)-t\nu M(1))^2$, which
clearly is not~$0$. The remaining factor is thus an algebraic equation
of degree $6$ satisfied by $M(2,\nu,t,1,1;1,1)$. The genus of the
corresponding curve (in $t$ and $M(1)$) is found to be $0$, so that
the curve has a rational parametrization. The one that we give in the
theorem was constructed  with the
help of the {\tt algcurves} package of {\sc Maple}.
\end{proof}

\medskip

\noindent{\bf Singularity analysis.}
We finally give, without a proof that would make this paper even
longer, the results of our analysis of the singularities of $M(2,
\nu,t,1,1;1,1)$. The singularity analysis of algebraic series in
$\ns[[t]]$
has become  quasi-automatic~\cite[Chap.~VII.7]{flajolet-sedgewick}, but of course
things are a bit more delicate here because of the parameter $\nu$.

\begin{Claim}
\label{claim-asympt}
Let $P_1$ and $P_2$ be the following two polynomials:
\begin{eqnarray*}
  P_1(\nu,\rho)&=&  432\,{\nu}^{3} \left( \nu+1 \right) {\rho}^{3}+108\,{\nu}^{2} \left( 
\nu-1 \right) {\rho}^{2}+1-\nu,
\\
P_2(\nu,\rho)&=&432\,{\nu}^{2} \left( \nu+1 \right) ^{4}{\rho}^{4}+72\,\nu\,
 \left( \nu+1 \right) ^{2}{\rho}^{2}-8\, \left( \nu-1 \right)  \left( 
\nu+1 \right) \rho -1.
\end{eqnarray*}
Consider $M(2,\nu,t,1,1;1,1)\equiv M(2,\nu,t)$ as a series in $t$ depending on the
parameter $\nu$. Let $\rho_\nu$ denote its radius of
convergence. Then $\rho_\nu$ is a continuous decreasing function of
$\nu$ for $\nu\ge 0$, which satisfies
$$
\begin{array}{lllllll}
  P_2(\nu, \rho_\nu)&=&0& \hbox{for}& 0 \le \nu \le
  \nu_c:=\frac{3+\sqrt 5}2,\\
P_1(\nu, \rho_\nu)&=&0& \hbox{for}& \nu_c \le \nu.
\end{array}
$$
Moreover, 
$$
\rho_0= \frac 1 8 \quad \hbox{ and } \quad 
\rho_{\nu_c}= \frac {3\sqrt 5 -5}{60}.
$$
The critical behaviour of $M(2,\nu,t)$ is usually the standard behaviour of
planar maps series, with an exponent $3/2$:
$$
M(2,\nu,t)= \alpha_\nu +\beta_\nu(1-t/\rho_\nu)+\gamma_\nu (1-t/\rho_\nu)^{3/2}\,(1+o(1)),
$$
except at $\nu=\nu_c$, where the nature of the singularity changes:
$$
M(2,\nu_c,t)= \alpha_{\nu_c} +\beta_{\nu_c}(1-t/\rho_{\nu_c})+\gamma_{\nu_c} (1-t/\rho_{\nu_c})^{4/3}\,(1+o(1)).
$$
In particular,
 $$
[t^n]M(2,\nu,t)\sim
\left\{
\begin{array}{ll}
  \kappa\, \rho_\nu^n  n^{-5/2}&\hbox{ for  } \nu\not=\nu_c,
\\
 \kappa\, \rho_{\nu_c}^n  n^{-7/3}&\hbox{ for  } \nu=\nu_c.
\end{array}
\right.
$$
\end{Claim}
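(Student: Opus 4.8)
The strategy is standard for algebraic power series with non-negative coefficients, following the method of \cite[Chap.~VII.7]{flajolet-sedgewick}, but with the parameter $\nu$ carried through. First I would start from the rational parametrization given in Theorem~\ref{thm:planar-2-colours}: the series $S\equiv S(t)$ is defined by $S=t\,\Phi(S)$ with
$$
\Phi(S)=\frac{\left(1+3\nu S-3\nu S^2-\nu^2 S^3\right)^2}{1-2S+2\nu^2S^3-\nu^2S^4},
$$
and $M(2,\nu,t)=\Psi(S)$ for the explicit rational function $\Psi$ displayed in the theorem. By the standard theory of inverses of analytic functions (the ``smooth implicit-function schema''), $S(t)$ has a square-root singularity at the value $t=\rho_\nu$ determined by the system $S=\tau\Phi(\tau)$, $1=\tau\Phi'(\tau)$ — equivalently, $\Phi(\tau)-\tau\Phi'(\tau)=0$ at the smallest positive root $\tau=\tau_\nu$, with $\rho_\nu=\tau_\nu/\Phi(\tau_\nu)$ — \emph{provided} $\Psi'(\tau_\nu)\neq0$, in which case $M$ inherits a square-root singularity and hence the exponent $3/2$ and the asymptotics $\kappa\rho_\nu^n n^{-5/2}$.

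The next step is to locate $\rho_\nu$ explicitly. I would eliminate $\tau$ (and $S$) between $S=\tau\Phi(\tau)$, $1=\tau\Phi'(\tau)$ and $\rho=\tau/\Phi(\tau)$ using a resultant computation in Maple; this produces a polynomial in $\nu$ and $\rho$ which factors, and the relevant factors are precisely $P_1(\nu,\rho)$ and $P_2(\nu,\rho)$ from the Claim. One then has to decide, for each $\nu\ge0$, which branch gives the \emph{dominant} singularity (smallest modulus positive root); the check at the endpoints $\rho_0=1/8$ (consistent with the known bipartite-map count, cf.\ the series $M(1,1)$ in Section~\ref{sec:example}) and monotonicity in $\nu$ (by differentiating the defining equations implicitly, or by a direct coefficientwise argument since increasing $\nu$ increases every coefficient of $M(2,\nu,t)$) pins down the picture: $P_2$ governs the regime $0\le\nu\le\nu_c$ and $P_1$ the regime $\nu\ge\nu_c$, where $\nu_c=(3+\sqrt5)/2$. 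The value $\nu_c$ and $\rho_{\nu_c}=(3\sqrt5-5)/60$ are found as the common solution of $P_1(\nu,\rho)=P_2(\nu,\rho)=0$ (together with the resultant of $\partial_\rho P_1$ or a confluence condition), i.e.\ the place where the two candidate singularities collide.

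The delicate part is the change of exponent at $\nu=\nu_c$. The point is that at $\nu=\nu_c$ the two conditions merge: besides $\Phi(\tau)-\tau\Phi'(\tau)=0$ one also has $\Psi'(\tau_{\nu_c})=0$, so the composition $M=\Psi\circ S$ no longer inherits the bare square-root. I would expand $S(t)=\tau_{\nu_c}+a\,(1-t/\rho_{\nu_c})^{1/2}+\cdots$ with $a\neq0$, substitute into $\Psi$, and observe that the linear term vanishes; moreover one must verify that the quadratic term in the local expansion of $\Psi$ at $\tau_{\nu_c}$ also degenerates in the right way, so that the first non-analytic contribution to $M$ comes from the \emph{cube} of the half-integer variable, giving a $(1-t/\rho_{\nu_c})^{3/2}$-type correction composed with $\Psi$ — producing overall an exponent $4/3$ rather than $3/2$. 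Concretely, I expect that at $\nu_c$ the local behaviour of $S$ in $t$ actually has exponent $1/3$ (a confluence of two branch points of the inverse function, which is the generic ``$\theta=1/3$'' scenario at a critical point of the schema), and then $\Psi$ being regular and with non-vanishing derivative at that smooth point transports this to $M(2,\nu_c,t)=\alpha+\beta(1-t/\rho)+\gamma(1-t/\rho)^{4/3}(1+o(1))$. Either way, the mechanism is the merging of singularities, and the careful bookkeeping of which derivative of $\Phi$ (resp.\ $\Psi$) vanishes is what must be checked.

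Finally, transfer theorems (singularity analysis, \cite[Chap.~VI--VII]{flajolet-sedgewick}) convert the local expansions into the coefficient asymptotics: an isolated algebraic singularity of exponent $3/2$ at $\rho_\nu$ on the circle of convergence (uniqueness of the dominant singularity must be argued, again using positivity of coefficients and aperiodicity) yields $[t^n]M(2,\nu,t)\sim\kappa\,\rho_\nu^{-n}n^{-5/2}$ for $\nu\neq\nu_c$, and exponent $4/3$ yields $[t^n]M(2,\nu_c,t)\sim\kappa\,\rho_{\nu_c}^{-n}n^{-7/3}$. (I note the statement of the Claim writes $\rho_\nu^n$ where $\rho_\nu^{-n}$ is meant, since $\rho_\nu<1$; this is a harmless typo.) The continuity and monotonicity of $\nu\mapsto\rho_\nu$ follow from the implicit equations $P_i(\nu,\rho_\nu)=0$ away from $\nu_c$ and from a direct comparison argument at $\nu_c$. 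The main obstacle, as indicated, is the $\nu=\nu_c$ analysis: one has to show that the coincidence $P_1(\nu_c,\rho_{\nu_c})=P_2(\nu_c,\rho_{\nu_c})=0$ really corresponds to a confluence producing exponent $4/3$ (and not, say, a spurious algebraic coincidence with no effect on the asymptotics), which requires tracking the vanishing of the appropriate derivative in the parametrization rather than just the location of roots.
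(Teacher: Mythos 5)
Your general skeleton --- rational parametrization $S=t\Phi(S)$, $M=\Psi(S)$, locating the dominant singularity by eliminating $\tau$ from the critical system, then singularity analysis and transfer --- matches what the paper itself sketches right after Claim~\ref{claim-asympt}: the authors state that the discriminant of the minimal polynomial of $S$ equals $P_1(\nu,t)\,P_2(\nu,t)$ up to $t$-independent factors, that $S$ has a square-root singularity at $\rho_\nu$ for $\nu\ne\nu_c$ but a $(1-t/\rho_{\nu_c})^{1/3}$ singularity at $\nu_c$, and that the behaviour of $M$ is then read off from its rational expression $\Psi(S)$. Your monotonicity argument (each coefficient of $M(2,\nu,t)$ is a polynomial in $\nu$ with nonnegative coefficients) and the Pringsheim/aperiodicity point for uniqueness of the dominant singularity are both sound.

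The mechanism you offer for the exponent change at $\nu=\nu_c$, however, is wrong in both of your two versions. Write $u=1-t/\rho_{\nu_c}$. Your final version asserts that if $S=\tau+s_1u^{1/3}+s_2u^{2/3}+\cdots$ with $s_1\ne0$ and ``$\Psi$ is regular with non-vanishing derivative'' at $\tau$, then $M$ acquires exponent $4/3$. It does not: in that case $M=\Psi(\tau)+\Psi'(\tau)s_1u^{1/3}+\cdots$ has the \emph{same} exponent $1/3$ as $S$. Your earlier version ($S$ of exponent $1/2$ together with $\Psi'(\tau)=0$) also fails, since composing a half-integer Puiseux series with a smooth $\Psi$ whose first derivative vanishes still gives a half-integer exponent ($3/2$, precisely the non-critical behaviour), not $4/3$. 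For $M$ to have exponent $4/3$ one needs the $u^{1/3}$ and $u^{2/3}$ coefficients of $M$ to vanish while the $u^{4/3}$ one does not; since $[u^{1/3}]M=\Psi'(\tau)s_1$ and $[u^{2/3}]M=\Psi'(\tau)s_2+\tfrac12\Psi''(\tau)s_1^2$ with $s_1\ne0$, this forces $\Psi'(\tau_{\nu_c})=\Psi''(\tau_{\nu_c})=0$ \emph{in addition to} the cube-root degeneracy of $S$. That double cancellation is the whole delicate content of the $\nu=\nu_c$ case, and it must be verified directly on the explicit $\Psi$ of Theorem~\ref{thm:planar-2-colours} at the confluent point $(\nu_c,\tau_{\nu_c})$; neither of your two sketches identifies it.
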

\noindent
Figure~\ref{fig:radius} shows a plot of
the curves $P_2(\nu,\rho)=0$ and $P_1(\nu,\rho)=1$.
The first step in the proof is to study the singularities of the
 series $S$ defined in Theorem~\ref{thm:planar-2-colours}. 
This series has constant term 0, and non-negative
 coefficients. The discriminant of the vanishing polynomial of $S$ is,
 up to factor independent of $t$, the product $P_1(\nu,t)P_2(\nu,t)$. 
The series $S$ is found to have a square root singularity at
 $\rho_\nu$, except at $\nu=\nu_c$ where  the singularity is in
 $(1-t/\rho_{\nu_c} )^{1/3}$.   Figure~\ref{fig:radius} shows plots of
 $S(t)$ for several values of $\nu$.  The singular behaviour of
$M(2,\nu,t)$ is then derived from the expression of this series in
terms of $S$.

\begin{figure}[htb]
\includegraphics[height=4cm]{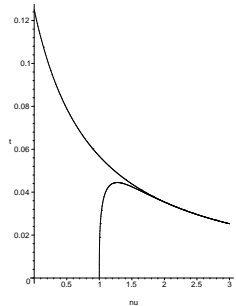}
\caption{The curves $P_2(\nu,\rho)=0$ (top) and $P_1(\nu,\rho)=1$ (bottom), for
  $\nu\in[0,3]$. The two curves meet at  $\nu_c\simeq 2.618$.
  Beyond this value, the curve $P_1$ is
   above $P_2$ (although very close at this scale). For every $\nu$, the radius is
   given by the \emm highest, of the curves.} 
\label{fig:radius}
\end{figure}

\begin{figure}[htb]
\includegraphics[height=3cm]{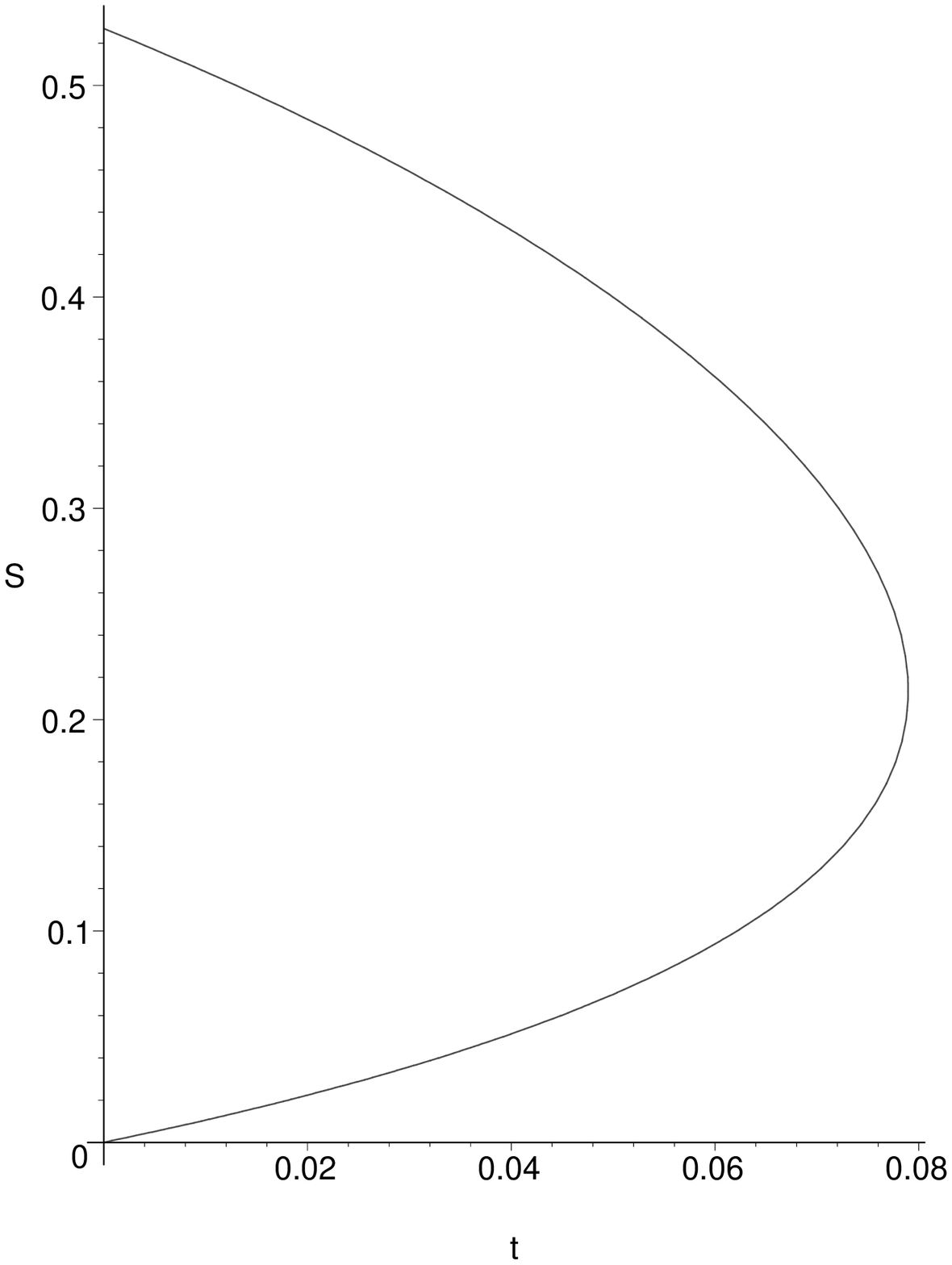}
\hskip 10mm
\includegraphics[height=3cm]{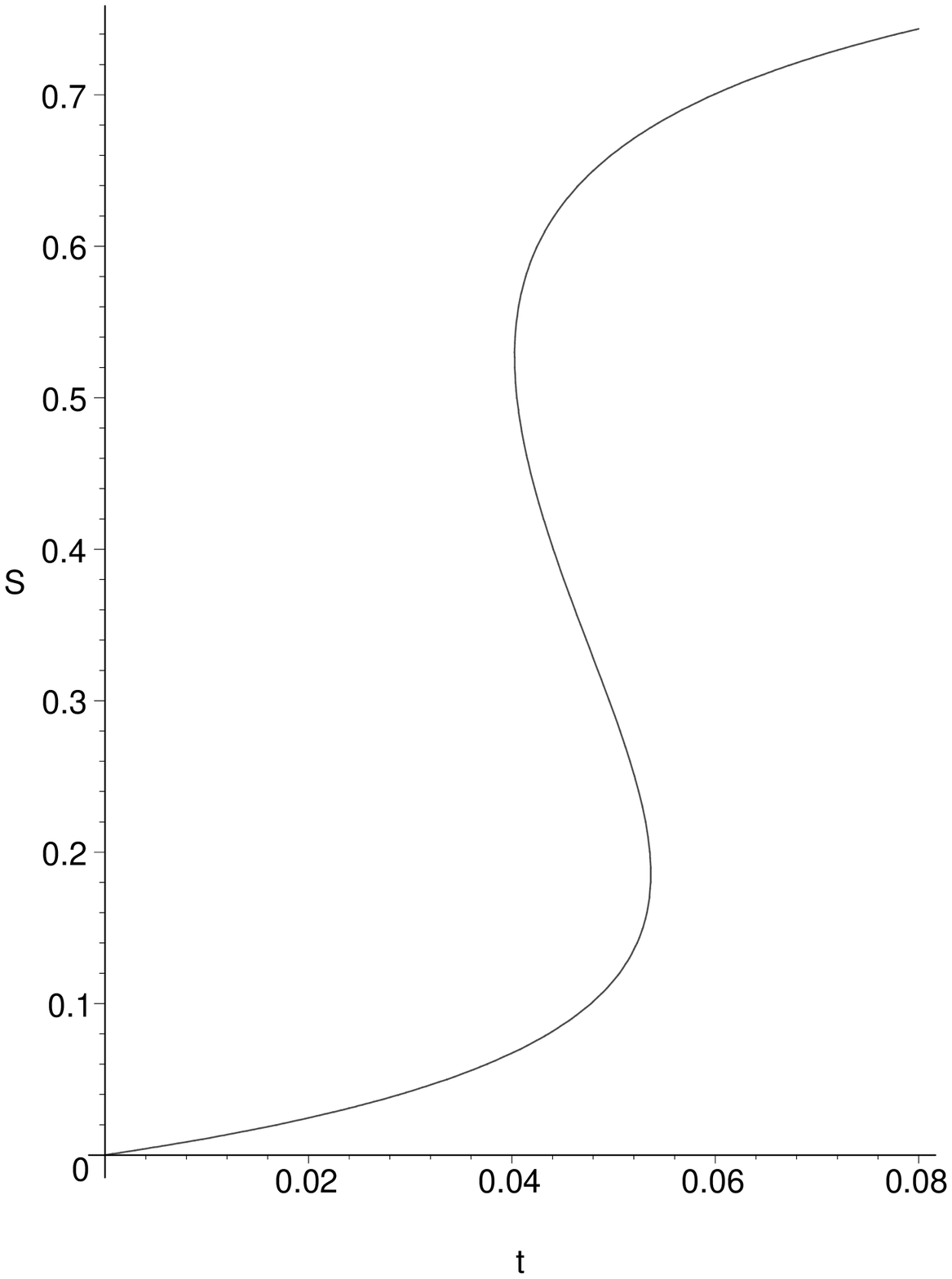}
\hskip 10mm
\includegraphics[height=3cm]{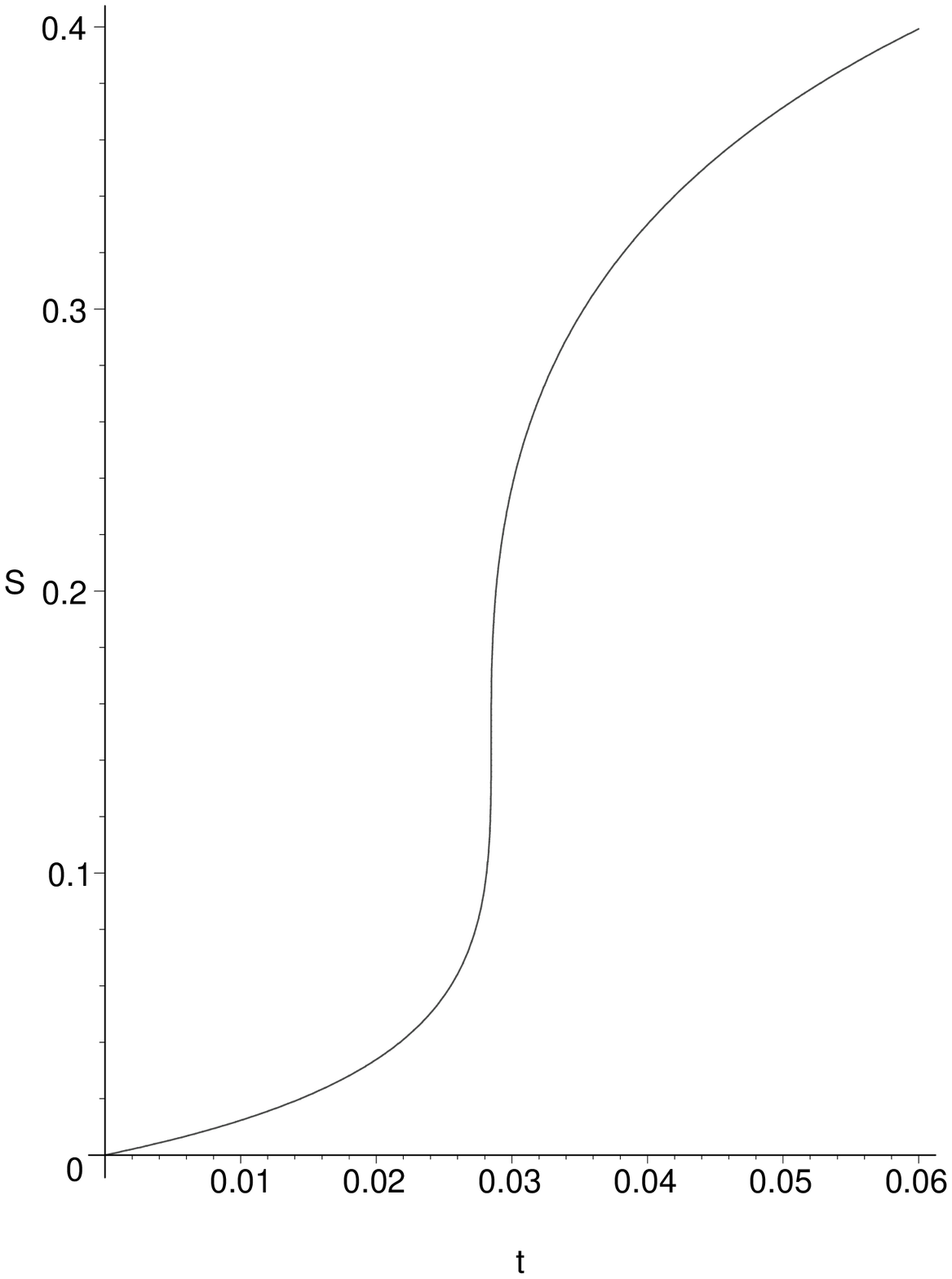}
\hskip 10mm
\includegraphics[height=3cm]{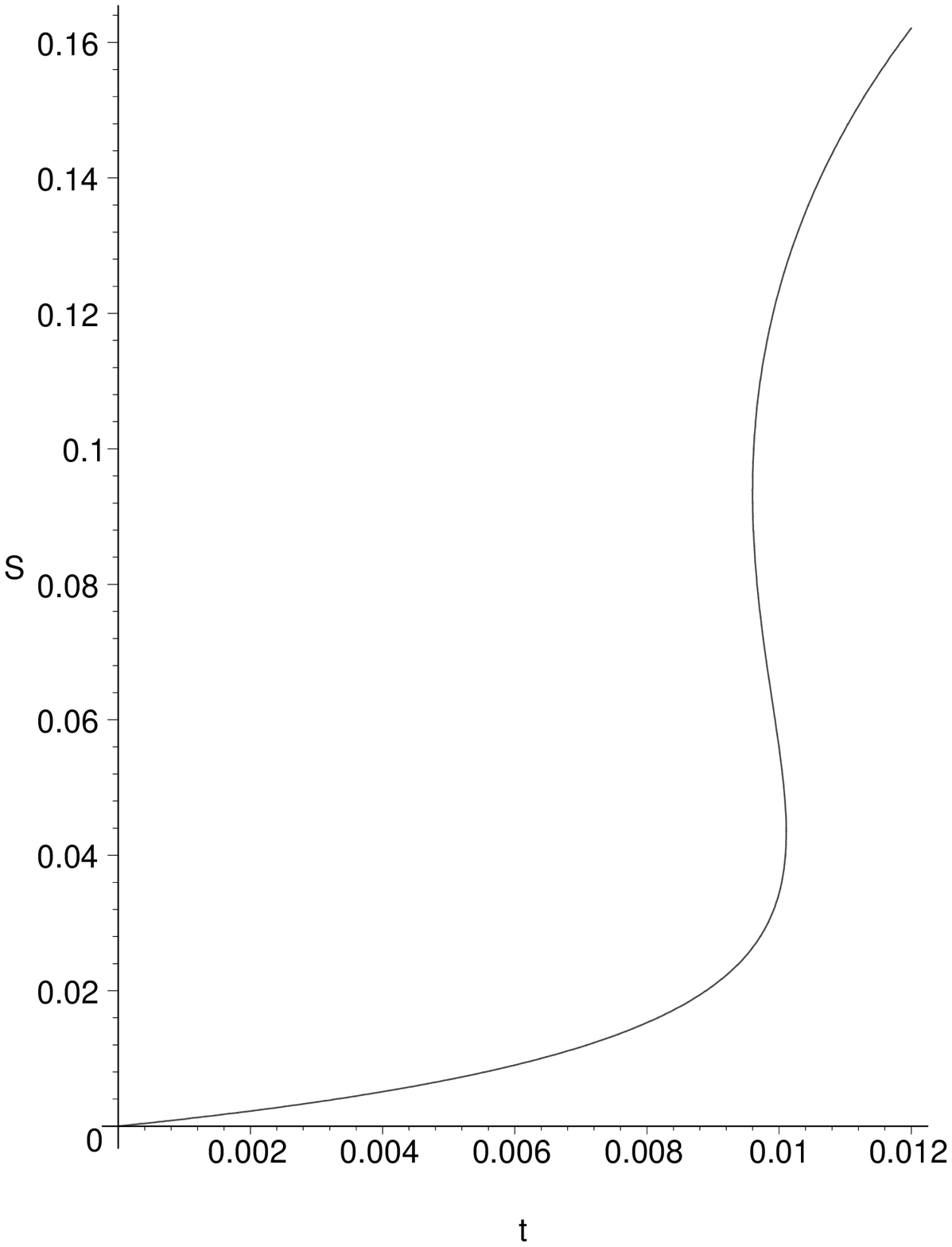}
\caption{The algebraic function $S(t)$ and some of its conjugates, for $\nu=0.5$, $\nu=1.1$, $\nu=\nu_c$
  and $\nu=8$. The function $S$ is the branch that vanishes at the
  origin. The values of $t$ where the tangent is vertical are such
  that $P_1(\nu,t)=0$ or $P_2(\nu,t)=0$. The form of the function is in
  agreement with the fact that the radius of $S$ is given in
  Figure~\ref{fig:radius} by the \emm highest, of the curves.} 
\label{fig:radius2}
\end{figure}

 \subsection{Two-colored triangulations}
\label{sec:two-triang}

\begin{Theorem}\label{thm:triang-2}
 The Potts \gf\ of quasi-triangulations $Q(2,\nu,t,w,z;x,y)$, defined
by~\eqref{Q-ser-def} and taken at $q=2$, is algebraic.

In particular, the series $Q_i(2,\nu,t):=[y^i]Q(2,\nu,t,1,1;0,y)$ that counts
two-colored  near-triangulations of outer degree~$i$ by edges ($t$) and
monochromatic edges ($\nu$) is algebraic of degree (at most) 
$5$ over  $\qs(\nu,t )$ and
admits a rational parametrisation.
Set $v= (\nu+1)/(\nu-1)$. Let $S\equiv S(\nu, t)$ be the unique power
series in $t$ having constant term $v$ and
satisfying
\beq\label{S-def}
t^3={\frac 
{\left( S-v  \right)  \left( S-2+v \right)
 \left( 2\,v-{v}^{2}+2\,S+{S}^{2}-4\,{S}^{3} \right) 
  }
{ 64 \left( 1+v \right) ^{3}{S}^{2}}}.
\eeq
Then $t^i Q_i(2,\nu,t)$ has a rational expression
in terms of $S$ and $v$.
In particular,
$$
t^4Q_1(2,\nu,t)= t^5\nu Q_2(2,\nu,t)={\frac { \left( S-v \right) ^{2}\left( S-2+v \right) 
 \left( -2\,v+{v}^{2}-Sv-{S}^{2}v +3\,{S}^{3}\right)  
}{128  \left( 1+v \right) ^{4}{S}^{2}}},
$$
while
$$
t^6Q_3(2,\nu,t)= \frac{(S-v)^3 (S-2+v)
P(v,S)}{8192 (1+v)^6S^4}
$$
with 
\begin{multline*}
 P(v,S)=
-64\,{S}^{6}+ \left( 232 -128\,v\right) {S}^{5}
- \left(67+ 48\,v-64\,{v}^{2} \right) {S}^{4}
+ \left( 106-102\,v+40\,{v}^{2} \right) {S}^{3}
\\
-2\, \left( v-2 \right)  \left( 32\,{v}^{2}-48\,v-1 \right) {S}^{2}+2
\, \left( 3\,v-1 \right)  \left( v-2 \right) ^{2}S+3\,v \left( v-2
 \right) ^{3}
.
\end{multline*}
\end{Theorem}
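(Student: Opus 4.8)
The plan is to transcribe to quasi-triangulations the argument used for two-colored planar maps in the proof of Theorem~\ref{thm:planar-2-colours}, starting this time from the functional equation~\eqref{eq:Q}. The first assertion, algebraicity of $Q(2,\nu,t,w,z;x,y)$, is merely the case $q=2$ of Theorem~\ref{thm:alg-triang} (for $q=2$ one takes $k=1$, $m=4$). For the explicit results one sets $w=z=1$ --- harmless, as noted at the start of Section~\ref{sec:alg-triang} --- and writes out the invariant equation~\eqref{eq-inv-triang} at $q=2$, $m=4$, where $T_4(x)=8x^4-8x^2+1$; it contains the five $y$-free unknown series $C_0,\dots,C_4$.

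The first step is to run the procedure of Section~\ref{sec:Cr-triang}: expand~\eqref{eq-inv-triang} in powers of $y$ around $y=0$ and extract successively the coefficients of $y^{-8},y^{-6},y^{-4},y^{-2},y^{0}$. By Lemma~\ref{lem:cr-triang} this gives $C_4,C_3,C_2$ as explicit elements of $\qs(\nu,t)$, then $C_1$ as a polynomial in $t$, $\nu$ and $Q_1:=[y]Q(0,y)$, and finally $C_0$ as a polynomial in $t$, $\nu$, $Q_1$ and $Q_3:=[y^3]Q(0,y)$; here one uses $Q(0,0)=1$ and the relation $t\nu Q_2=Q_1$ of Proposition~\ref{prop:eq-Q} to eliminate $Q_2:=[y^2]Q(0,y)$. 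Substituting these expressions back into~\eqref{eq-inv-triang} produces an equation with the single catalytic variable $y$ and the two unknowns $Q_1$ and $Q_3$.

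To extract polynomial equations from it, the plan is to reuse the double-root device of the Ising case. Writing $X=tI(y)$, the quantities $N(y)=\be t(4-q)\by+tq\nu I(y)+\be(q-2)$ and $D(y)$ become, up to the $\by$-term in $N$, respectively affine and quadratic in $X$, and the invariant equation reads $\sum_{r=0}^4 C_rX^r=D(X)^2\,T_4\!\big(N/(2\sqrt{D})\big)$. From $T_4(x)+1=2(2x^2-1)^2$ and $T_4(x)-1=8x^2(x^2-1)$ one gets that $P_\pm(X):=\sum_{r=0}^4 C_rX^r\pm D(X)^2$ equal $\tfrac12\big(N^2-2D\big)^2$ and $\tfrac12\,N^2\big(N^2-4D\big)$ respectively; as in the planar case this forces each $P_\pm(X)$ to have a double root in $X$, hence $\operatorname{disc}_X P_+=\operatorname{disc}_X P_-=0$. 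These are two polynomial relations between $Q_1$ and $Q_3$ over $\qs(\nu,t)$; eliminating $Q_3$ yields a polynomial equation for $Q_1$ (hence for $Q_2=Q_1/(t\nu)$), and feeding it back into one of the discriminant relations yields one for $Q_3$.

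The remaining work is a {\sc Maple} computation, and this --- not the strategy --- is where the effort lies. After discarding a possible spurious factor (compare the factor $(1-tM(1)-t\nu M(1))^2$ encountered for planar maps), one checks that the equation for $Q_1$ has degree $5$ and that the associated plane curve has genus $0$; a rational uniformization is then extracted with the {\tt algcurves} package. It should be exactly the one in the statement: the auxiliary series $S$ of~\eqref{S-def} is the uniformizing parameter, and the substitution $v=(\nu+1)/(\nu-1)$ is what makes all the resulting expressions polynomial in $S$ and $v$. I expect the only real obstacle to be keeping the elimination of $Q_3$ and the curve analysis within reach, and recognizing the clean parametrization afterwards; conceptually the whole argument is a faithful copy of the two-colored planar-map case.
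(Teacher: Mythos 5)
Your proposal mirrors the paper's proof almost exactly: same specialization $w=z=1$, same invariant equation at $(k,m)=(1,4)$, same extraction of $C_4,\ldots,C_0$ by expanding around $y=0$ (the paper also reads off $Q(0,0)=1$ and $t\nu Q_2=Q_1$ from the odd negative powers of $y$ along the way, which is what you are citing from Proposition~\ref{prop:eq-Q}), same Tutte double-root device applied to $P_\pm(X)$ via the factorizations of $T_4(x)\pm 1$, and same elimination and genus-$0$ parametrization with the {\tt algcurves} package. The only part of the statement your sketch does not address is the general claim that \emph{every} $t^iQ_i(2,\nu,t)$ is rational in $(S,v)$; the paper handles this by differentiating the one-catalytic-variable equation $i$ times at $y=0$ to express $Q_i$ as a polynomial in $Q_1,\ldots,Q_{i-1}$ over $\qs(\nu,t)$, observing via Euler's relation and edge/face incidences that $t^iQ_i$ is actually a series in $t^3$, and concluding by induction on $i$ since $t^3$, $Q_1$, $Q_2$, $Q_3$ and $\nu$ are each rational in $(S,v)$.
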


\begin{proof}
The first statement is a specialization of
Theorem~\ref{thm:alg-triang}. Recall that the variables $w$ and $z$
are redundant: we thus focus on the case $w=z=1$. To obtain  explicit
algebraic equations for near-triangulations, we
first construct an equation with one catalytic variable satisfied by
$Q(y)\equiv Q(0,y)$, as
described in Section~\ref{sec:alg-triang}. 

  We write the invariant equation~\eqref{eq-inv-triang} for $q=2$ and
  $m=4$. It involves five unknown series $C_0, \ldots, C_4$, independent
  of $y$. By expanding this equation in the neighborhood of $y=0$ and
  writing that the 
  coefficient of $y^{-2r}$ is zero, we   obtain  explicit
  expressions   for the series $C_r$, as   described in
  Section~\ref{sec:Cr-triang}. Moreover, by  writing that the
  coefficient of $y^{-2r+1}$ is zero, for $1\le r \le 4$, we obtain
  additional identities 
  relating the series $Q^{(i)}(0)$. More precisely:
  \begin{itemize}
  \item [---] by extracting the coefficients of $y^{-8}$, $y^{-6}$,
    $y^{-4}$, we obtain:
    \begin{eqnarray*}
    C_4 &=&-4\nu^4  ,\\
    C_3 &=&8\,{\nu}^{2} \left( \nu-1 \right)  \left( 2\,\nu-3 \right) 
 ,\\
    C_2 &=&16\,{\nu}^{3} \left( \nu-1 \right) {t}^{3}+4\, \left( 5\,\nu+1
 \right)  \left( \nu-1 \right) ^{3};
  \end{eqnarray*}
\item [---]  extracting the coefficients of $y^{-7}$ and $y^{-5}$ does not yield
  new identities;
\item [---]  extracting the coefficient of $y^{-3}$ gives $Q(0)=1$,
  which is not a surprise. From now on we systematically replace every
  occurrence of $Q(0)$ by $1$;
\item [---]  extracting the coefficient of $y^{-2}$ gives
 $$
    C_1 =
-64\,{\nu}^{2}  \left( \nu-1 \right) ^{2}{t}^{4}Q'(0)
+48\,\nu\, \left( \nu-1 \right) ^{2}{t}^{3}+4\,
 \left( 2\,\nu-1 \right)  \left( \nu-1 \right) ^{3}
;
$$
\item [---]  extracting the coefficient of $y^{-1}$ gives:
\beq\label{Q1-Q2}
Q''(0)= \frac{2Q'(0)}{t\nu}.
\eeq
This identity is a special case of the last statement of
Proposition~\ref{prop:eq-Q}, and has a
  simple combinatorial explanation.
From now on we systematically replace every occurrence of $Q''(0)$ by
this expression;
\item [---]  finally, extracting the coefficient of $y^{0}$ gives
  \begin{multline*}
    C_0 =
-{\frac {32}{3}}\,{\nu}^{2} \left( \nu-1 \right) ^{2}{t}^{6}
 Q^{(3)}(0) -32\,
 \left( \nu+1 \right)  \left( \nu-2 \right)  \left( \nu-1 \right) ^{2}
{t}^{4}
 Q'(0)\\
-112\,{\nu}^{2}
 \left( \nu-1 \right) ^{2}{t}^{6}-8\, \left( \nu-4 \right)  \left( \nu
-1 \right) ^{3}{t}^{3}+ \left( \nu-1 \right) ^{4}
.
 \end{multline*}
\end{itemize}

Let us now replace each $C_r$ by its expression in the invariant
equation~\eqref{eq-inv-triang}:  this gives, after dividing by
$32(\nu-1)^3t^3$,
an equation with one catalytic variable for $Q(y)$, of the form~\eqref{generic-Q},
involving  the (only) two unknown series $Q'(0)$ and $Q^{(3)}(0)$. 

To solve this equation and 
obtain an algebraic equation satisfied by $Q'(0)$, we can  use
the general strategy of~\cite{mbm-jehanne}. 
But we  can also apply the alternative method  already used for
2-colored planar maps in the previous subsection. 
Consider the following two polynomials in $X$:
$$
P_\pm(X):=\sum_{r=0}^4 C_r (tX)^r \pm \left(
2\nu^2 {t}^{2} X^{2}
+\be  \left( 4\be +2 \right)t X
-4\be\nu {t}^{3}   +\be^2
\right)^2 
$$
where $\be=\nu-1$.
The second term 
 is simply the square of the series $D(y)$ defined in
Proposition~\ref{prop:inv-triang}, 
seen as a polynomial in $X\equiv I(y)$. Then the polynomials
 $P_+(X)$ and $P_-(X)$ have a double root in $X$. Hence the
discriminant of each of them  
vanishes. This gives two polynomial equations relating  $Q'(0)$ and
$Q^{(3)}(0)$, from which we obtain an equation of degree 5 for
$Q_1(2,\nu,t)\equiv Q'(0)$ by
elimination.  The genus of the
corresponding curve (in $t$ and $Q'(0)$) is found to be $0$, so that
the curve has a rational parametrization, which we have constructed  with the
help of the {\tt algcurves} package of {\sc Maple}.

The expression of $Q_2(2,\nu,t)= Q''(0)/2$ follows
from~\eqref{Q1-Q2}. The expression of $Q_3(2,\nu,t)= Q^{(3)}(0)/6$ can
be obtained using any of the polynomial equations relating  $Q'(0)$ and
$Q^{(3)}(0)$ that we have obtained on the way to our derivation of $Q'(0)$.

Let us finally explain why each series $t^i Q_i(2,\nu,t)$ can be
written in terms of $S$ and $v$. 
The equation with one catalytic
variable satisfied by $Q(y)$ reads
$$
6t^3\nu^2(Q(y)-1)= y\, \Pol(\nu, Q(y), Q_1, Q_3, t,y)
$$
for some polynomial $\Pol$ with integer coefficients.
Differentiating $i$ times with respect to $y$, and then setting $y=0$
thus gives $Q^{(i)}(0)\equiv i! \, Q_i(2,\nu,t)$ as a polynomial in $Q_1,
\ldots, Q_{i-1}$ with coefficients in $\qs(\nu, t)$. By combining
Euler's relation and the edge/face incidence relation, one easily
shows that $t^i Q_i(2,\nu,t)$ is a series in $t^3$. Since $\nu$ can
be expressed in terms of $v$, and $t^3, Q_1, Q_2, Q_3$ can be
expressed rationally in terms of $v$ and $S$, the same holds for any
$t^i Q_i(2,\nu,t)$ by induction on $i$.
\end{proof}

\medskip
\noindent
{\bf Connections with previous work.} This result is very close to the solution of
the Ising model  on \emm near-cubic, maps, derived by Boulatov \& 
Kazakov~\cite{BK87} using matrix integrals and then by Bousquet-M\'elou \&
Schaeffer~\cite{mbm-schaeffer-ising} using bijections with trees.
We say that a planar map is 
\emm near-cubic, if its dual is a near-triangulation; that is, every non-root
vertex  has degree 3.
Then for a generic value of $q$, the series $Q_i(q, \nu, t):= [y^i]
Q(q, \nu,t,1,1;0,y)$, which counts $q$-colored near-triangulations of
outer degree $i$ by edges and monochromatic edges, can be interpreted
in terms of near-cubic maps using the duality
relation~\eqref{eq:duality-Potts-poly}:
\begin{eqnarray}
 Q_i(q,\nu,t) 
&=&
\sum _{M \rm{ near-triang.}\atop \df(M)=i} t^{\ee(M)} \Ppol_M(q,\nu)
\nonumber \\
&=&
\sum _{G \, \rm{ near-cubic} \atop  \dv(G)=i} t^{\ee(G)}
\Ppol_{G^*}(q, \nu)
\nonumber \\&=&
\sum _{G \, \rm{ near-cubic} \atop  \dv(G)=i} t^{\ee(G)}\,
\frac{(\nu-1)^{\ee(G)}} {q^{\vv(G)-1}} \Ppol_G\left(q,
1+\frac{q}{\nu-1}\right)
\hskip 30mm \hbox{by~\eqref{eq:duality-Potts-poly}}
\nonumber \\&=&
\left( \frac{t(\nu-1)} q\right)^{-i} \sum _{G \, \rm{ near-cubic} \atop
  \dv(G)=i} 
 \left( \frac{(\nu-1)^3 t^3}{q^2}\right)^{\ff(G)-1}
\Ppol_G\left(q, 1+\frac{q}{\nu-1}\right).
\label{near-t--near-c}
\end{eqnarray}
We have used in the last line Euler's relation and the edge/vertex
incidence relation, according to which $2\ee(G)= 3(\vv(G)-1)+\dv(G)$.

Let us return to the case $q=2$. The series  $I_i(X,u)$ studied
in~\cite{mbm-schaeffer-ising} counts by non-root vertices (variable $X$) and by
\emm bichromatic, edges  (variable $u$) 2-colored near-cubic maps
$G$ 
such that $\dv(G)=i$ (as in the present paper, the color of the root vertex is
fixed).  The  connection between our series $Q_i(2,\nu,t)$ follows
from~\eqref{near-t--near-c}:
$$
 Q_i(2,\nu,t) =
\left( \frac{t(\nu-1)} 2\right)^{-i} (uX)^i I_i(X,u)
$$
with
$$
u= \frac{\nu-1}{\nu+1} , \quad  \quad X^2 =\frac{(\nu+1)^3 t^3}4.
$$
Via this correspondence, the value of $Q_2(2,\nu,t)$ given in
Theorem~\ref{thm:triang-2}  is
equivalent to the case $X=Y$ of~\cite[Proposition~20]{mbm-schaeffer-ising}. The series
$\bar Q$ defined in the latter reference coincides with the series $S$
defined by~\eqref{S-def}.

\medskip

\noindent{\bf Singularity analysis.}
The singular behaviour of $Q_1(2,\nu,t)$ is similar to that of the series $M(2,
\nu,t,1,1;1,1)$ studied in the previous subsection. Again, we state
our results without proof (see also~\cite{BK87}).

\begin{Claim}
Let $P_1$ and $P_2$ be the following two polynomials:
\begin{multline*}
  P_1(\nu,\rho)\ = \ 
      131072\,{\rho}^{3}{\nu}^{9}-192\,{\nu}^{6} \left( 3\,\nu+5 \right) 
 \left( \nu-1 \right)  \left( 3\,\nu-11 \right) {\rho}^{2}
\\
-48\,{\nu}^{3} \left( \nu-1 \right) ^{2}\rho+ \left( \nu-1 \right)  \left( 4\,{\nu
}^{2}-8\,\nu-23 \right), 
\end{multline*}
$$
P_2(\nu,\rho)\ =\ 
27648\,{\rho}^{2}{\nu}^{4}+864\,\nu\, \left( \nu-1 \right)  \left( {
\nu}^{2}-2\,\nu-1 \right) \rho+ \left( 7\,{\nu}^{2}-14\,\nu-9 \right) 
 \left( \nu-2 \right) ^{2}. \hskip 25mm
$$
Consider $tQ_1(2,\nu,t)$ as a series in $t^3$ depending on the
parameter $\nu$. Let $\rho_\nu$ denote its radius of
convergence. Then $\rho_\nu$ is a continuous decreasing function of
$\nu$ for $\nu> 0$, which satisfies
$$
\begin{array}{lllllll}
  P_2(\nu, \rho_\nu)&=&0& \hbox{for}& 0< \nu \le
  \nu_c:=1+ 1/\sqrt 7,\\
P_1(\nu, \rho_\nu)&=&0& \hbox{for}& \nu_c \le \nu.
\end{array}
$$
Moreover, 
$$
\rho_\nu \rightarrow +\infty \ \hbox{ as } \nu\rightarrow 0
 \quad \hbox{ and } \quad \rho_{\nu_c}
= \frac {25\,\sqrt {7}-55}{864}.
$$
The critical behaviour of $tQ_1(2,\nu,t)$ is usually the standard behaviour of
planar maps series, with an exponent $3/2$:
$$
tQ_1(2,\nu,t)= \alpha_\nu +\beta_\nu(1-t^3/\rho_\nu)+\gamma_\nu (1-t^3/\rho_\nu)^{3/2}\,(1+o(1)),
$$
except at $\nu=\nu_c$, where the nature of the singularity changes:
$$
tQ_1(2,\nu_c,t)= \alpha_{\nu_c} +\beta_{\nu_c}(1-t^3/\rho_{\nu_c})+\gamma_{\nu_c} (1-t^3/\rho_\nu)^{4/3}\,(1+o(1)).
$$
\end{Claim}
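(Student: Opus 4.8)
The plan is to read off the singularity structure from the rational parametrization of Theorem~\ref{thm:triang-2}. Write $v=(\nu+1)/(\nu-1)$ and $\tau=t^3$. By that theorem, $tQ_1(2,\nu,t)$ equals an explicit rational function $g(v,S)$ of $v$ and of the series $S\equiv S(\nu,t)$, where $S$ has constant term $v$ and is the functional inverse, near $S=v$, of $\tau=R(S)$ with
$$
R(S)=\frac{(S-v)(S-2+v)\bigl(2v-v^2+2S+S^2-4S^3\bigr)}{64(1+v)^3S^2}.
$$
Since $tQ_1$ is a generating function of weighted near-triangulations it has non-negative coefficients in $\tau$, so Pringsheim's theorem gives that $\rho_\nu$ is its smallest positive singularity; and for each $n$ the coefficient $[\tau^n]\,tQ_1$ is a polynomial in $\nu$ with non-negative coefficients and positive leading term, so $\rho_\nu$ is automatically non-increasing on $(0,\infty)$. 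The task is thus to locate and classify that smallest singularity.

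First I would determine the singularities of $S$. As the inverse of $R$ near $S=v$, the series $S(\tau)$ can only be singular at a branch value $\rho=R(S_*)$ with $R'(S_*)=0$ (the poles $S=0,\infty$ of $R$ are not attained along the $v$-branch). Clearing $64(1+v)^3S^2$ from $R$, the equation $R'(S_*)=0$ is polynomial of degree $5$ in $S_*$; eliminating $S_*$ between it and $64(1+v)^3S_*^2\rho=(\text{numerator of }R)(S_*)$ gives, up to a factor free of $\tau$, exactly $P_1(\nu,\tau)\,P_2(\nu,\tau)$ --- equivalently this product is, up to content, the discriminant in $S$ of the minimal polynomial of $S$ over $\qs(\nu,\tau)$. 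Since $g$ is rational in $S$, the only candidate singularities of $tQ_1$ are the positive zeros of $P_1$ and of $P_2$ (one also checks that the denominator of $g(v,S)$ does not vanish at a smaller positive value of $\tau$).

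Next I would carry out the crossover analysis. For $\nu>0$, let $\rho^{(i)}_\nu$ be the smallest positive root of $P_i(\nu,\cdot)$; both are continuous in $\nu$. A boundary computation gives $\rho^{(2)}_\nu\to+\infty$ as $\nu\to0^+$ and $\rho^{(2)}_\nu<\rho^{(1)}_\nu$ for small $\nu$, while the inequality is reversed for large $\nu$; hence the two branches cross, and $\rho_\nu=\min(\rho^{(1)}_\nu,\rho^{(2)}_\nu)$ switches from the $P_2$-branch to the $P_1$-branch at the value $\nu_c$ where $P_1(\nu,\cdot)$ and $P_2(\nu,\cdot)$ share a root. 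Solving $\mathrm{Res}_\tau(P_1,P_2)=0$ then yields $\nu_c=1+1/\sqrt7$, and substituting back gives $\rho_{\nu_c}=(25\sqrt7-55)/864$. Strict (not merely weak) monotonicity of $\rho_\nu$ follows by implicit differentiation of $P_i(\nu,\rho_\nu)=0$ along each branch.

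Finally I would identify the exponents. Away from $\nu_c$, $\rho_\nu$ is a \emph{simple} zero of $P_1P_2$, hence a simple zero of $R'$ at the corresponding $S_*=S_*(\nu)$, and an extended Newton--Puiseux expansion gives $S=S_*+c_1(1-\tau/\rho_\nu)^{1/2}+c_2(1-\tau/\rho_\nu)+\cdots$ with $c_1\neq0$. At $\nu=\nu_c$ the zero $\rho_{\nu_c}$ is double, $S_*$ is a degenerate critical point of $R$ (so $R'(S_*)=R''(S_*)=0\neq R'''(S_*)$), and instead $S=S_*+c_1(1-\tau/\rho_{\nu_c})^{1/3}+\cdots$. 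Substituting these into $tQ_1=g(v,S)$ and expanding, the exponents $3/2$ and $4/3$ appear precisely because the first $S$-derivative of $g(v,\cdot)$ --- and at $\nu_c$ also the second --- vanishes at $S_*$; this is the universal ``planar-map'' cancellation, and here it is confirmed by a direct computation on the explicit $g$, after which one checks that $\gamma_\nu\neq0$. Standard transfer theorems for algebraic singularities~\cite{flajolet-sedgewick} then also give $[\tau^n]\,tQ_1\sim\kappa\rho_\nu^{n}n^{-5/2}$ for $\nu\neq\nu_c$ and $\kappa\rho_{\nu_c}^{n}n^{-7/3}$ at $\nu_c$. The main obstacle will be the crossover analysis of the third paragraph: establishing rigorously which of the two root-branches dominates on each side of $\nu_c$ and that $\nu_c$ is exactly where they meet requires a careful real-algebraic study of the one-parameter families $P_1(\nu,\cdot)$ and $P_2(\nu,\cdot)$; the $4/3$ regime is also delicate, since it hinges on the coincidence of the degeneracy of $R$ at $\nu_c$ with the vanishing of the second $S$-derivative of $g$.
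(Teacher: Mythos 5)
The paper does not prove this Claim in full; it only remarks that ``the analysis is similar to the case of general planar maps, but the role that was played by the series $S$ in the proof of Claim~\ref{claim-asympt} is now played by the series $U$ such that $S=v(1-2U)$. In particular, $U$ has constant term $0$ and non-negative coefficients (which is not the case of $S$).'' Your sketch ignores this warning and works directly with $S$, and as a result the central step of your crossover analysis is unjustified and, as stated, incorrect.

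The problematic step is the assertion $\rho_\nu=\min\bigl(\rho^{(1)}_\nu,\rho^{(2)}_\nu\bigr)$. Pringsheim's theorem tells you only that the radius of convergence of $tQ_1$ is a singularity lying on the positive real axis; it does \emph{not} say that it is the \emph{smallest} positive zero of the discriminant $P_1P_2$. Those zeros are the set of all branch points of the algebraic function $S$, but the specific branch with $S(0)=v$ (the one $tQ_1$ is built from) may well be analytic at the smallest ones, which can be ramification points of \emph{other} branches. Indeed, for the closely analogous Claim~\ref{claim-asympt} on general planar maps the paper's Figure~\ref{fig:radius} caption explicitly says ``For every $\nu$, the radius is given by the \emph{highest} of the curves'' --- the radius is the \emph{larger}, not the smaller, of the two relevant discriminant roots. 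Your ``boundary computation'' also does not hold up: for small $\nu>0$, $P_1(\nu,\rho)$ has \emph{no} positive zero at all (e.g.\ at $\nu=\tfrac12$ one finds $P_1(\tfrac12,\rho)=256\rho^3-92.5\rho^2-1.5\rho+13>0$ for every $\rho\geq0$), so the inequality $\rho^{(2)}_\nu<\rho^{(1)}_\nu$ you invoke near $\nu=0$ is vacuous, and the crossover framework of your third paragraph does not get off the ground. The correct way to pin down which branch point is actually hit first --- the step your sketch leaves open --- is precisely the substitution the paper suggests: with $S=v(1-2U)$, the series $U$ has non-negative coefficients and $U(0)=0$, hence $U$ is \emph{increasing} in $\tau$ on $[0,\rho_\nu)$; the radius is therefore $\widetilde R(U_*)$ at the first critical point $U_*>0$ of the re-parametrized map $\tau=\widetilde R(U)$, and this is not in general the smallest critical \emph{value}. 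Without that monotonicity (or an equivalent branch-tracing argument), the identification of which factor, $P_1$ or $P_2$, governs $\rho_\nu$ on which side of $\nu_c$ is not established. The remaining steps you sketch --- continuity of $\rho_\nu$, the vanishing of $\partial_S g$ at $S_*$ (and of $\partial_S^2 g$ at $\nu_c$) behind the exponents $3/2$ and $4/3$ --- are also asserted rather than proved, but those are computations one can in principle carry out; the mislocated radius is a structural gap in the argument.
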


 \noindent{\bf Note.} The analysis is similar to the case of general
 planar maps, but the role that was played by the series $S$ in the
 proof of Claim~\ref{claim-asympt} is now played by the series $U$
 such that $S=v(1-2U)$. In particular, $U$ has constant term 0 and
 non-negative coefficients (which is not the case of $S$).  

 \section{Three colors}
\label{sec:three}
In this section, we focus on the case $k=1$, $m=6$, that is, on
$q=3$. We give explicit algebraic equations satisfied by \gfs\ of
\emm properly, 3-colored planar maps and  triangulations. This
corresponds to $\nu=0$. The case when $\nu$ is generic leads to
equations with one catalytic variable involving four unknown series
(of the form $M^{(i)}(1)$ or $Q^{(i)}(0)$, depending on whether we
deal with general maps or triangulations), and their solution has
defeated us so far. However, we conjecture
an algebraic equation for the series counting properly 3-colored \emm cubic,
maps (Conjecture~\ref{conj}).

 \subsection{Three-colored planar maps}
\label{sec:three-planar}
%
\begin{Theorem}
The Potts \gf\ of planar maps $M(3,\nu,t,w,z;x,y)$, defined
by~\eqref{potts-planar-def} and taken at $q=3$, is algebraic.

The specialization  $M(3,0,t,1,1;1,1)$ 
that counts \emm properly, three-colored planar maps by edges
has degree $4$ over  $\qs(t)$, and  admits a rational parametrization.
Let $S\equiv S(t)$ be the unique power series in $t$ with constant
term $0$ satisfying
\beq\label{S-def-planar3}
t= \frac{ S(1-2\,S^3)  }{\left( 1+2S \right)  ^{3}}.
\eeq
Then
\beq\label{M3-sol}
  M(3,0, t,1,1;1,1)= 
{\frac { \left(1+ 2\,S \right)  
\left(1 -2\,{S}^{2}-4\,{S}^{3}-4\,{S}^{4} \right) }
{ \left(1- 2\,{S}^{3} \right) ^{2}}}.
\eeq
 The coefficient of $t^n$ in this series, which is the number of
properly $3$-colored maps with $n$ edges, is asymptotic to
$
\kappa\, \mu^n n^{-5/2},
$
{where}
$$
\kappa >0 \quad \hbox{and} \quad 
\mu=\frac{22+8\,\sqrt {6}}3.
$$
\end{Theorem}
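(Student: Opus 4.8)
The first assertion of the theorem is immediate: $2+2\cos(\pi/3)=3$, so (with $j=1$, $m=3$) the algebraicity of $M(3,\nu,t,w,z;x,y)$ is a direct specialization of Theorem~\ref{thm:alg-planaires}. For the explicit part of the statement I would specialize to $\nu=0$, $w=z=1$ and write $M(y):=M(3,0,t,1,1;1,y)$, so that the target series is $M(1)$. Writing $q=2+2\cos(2\pi/6)$, we take $k=1$, $m=6$ in Corollary~\ref{coro:eq-inv-M}; the invariant equation~\eqref{eq-inv} then involves the sixth Chebyshev polynomial $T_6(x)=32x^6-48x^4+18x^2-1$ and seven unknown series $C_0,\dots,C_6$ depending on $t$ only.

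The plan is to run, concretely, the argument of Section~\ref{sec:alg-maps}. First I would expand~\eqref{eq-inv} around $y=1$ and extract the coefficients of $(y-1)^{-6},\dots,(y-1)^0$ in turn, which by the mechanism of Lemma~\ref{lem:cr} determines $C_6,\dots,C_0$ as explicit polynomials in $t$ and in $M(1),M'(1),M''(1),M'''(1)$. Substituting these back into~\eqref{eq-inv}, replacing $K(y)$ and its derivatives at $y=1$ by their expressions in terms of $M$, and dividing by $t$ and by a suitable nonzero factor of $\qs(y)$ (Section~\ref{sec:final-form}), I would reach an equation with the single catalytic variable $y$ of the form~\eqref{main-eq} (with $u$ replaced by $y-1$), relating $M(y)$ to the four series $M^{(i)}(1)$, $0\le i\le3$. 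It remains to solve this equation for $M(1)$. One can apply the algorithm of~\cite{mbm-jehanne}; a more economical route, following Section~\ref{sec:two}, introduces $P_\pm(X):=\sum_{r=0}^6 C_rX^r\pm D(y)^3$, where $D(y)$ is viewed as the quadratic in $X\equiv I(y)$ from Corollary~\ref{coro:eq-inv-M}, so that the invariant equation reads $P_\pm(I(y))=D(y)^3\bigl(T_6(N(y)/(2\sqrt{D(y)}))\pm1\bigr)$; since $T_6(x)-1$ and $T_6(x)+1$ both have double roots, one derives that $P_+(X)$ and $P_-(X)$ each have a double root in $X$, whence their discriminants vanish. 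Eliminating $M'(1),M''(1),M'''(1)$ from the resulting relations should produce a degree-$4$ polynomial equation for $M(1)$.

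Next I would check with {\sc Maple}'s {\tt algcurves} package that the curve defined by this quartic in $(t,M(1))$ has genus $0$ and read off the rational parametrisation asserted in the theorem: $M(1)$ is the displayed rational function of the series $S$ characterized by $S(0)=0$ and $t=S(1-2S^3)/(1+2S)^3$. For the asymptotics I would analyse $M(1)$ through this parametrisation. The coefficient $[t^n]M(1)$ equals $\tfrac13\chi_M(3)$ summed over maps with $n$ edges, hence is a nonnegative integer, and $S$ has nonnegative coefficients with $S(0)=0$. The map $S\mapsto t$ is analytic near $S=0$ with $t'(0)=1$, and $t'(S)$ vanishes exactly where $4S^4+8S^3+4S-1=0$; let $S_c$ be the smallest positive root and $t_c:=S_c(1-2S_c^3)/(1+2S_c)^3$. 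Then $S(t)$ is analytic on $[0,t_c)$ and has a square-root singularity at $t_c$ (as $t''(S_c)\ne0$), and $t_c$ is its radius of convergence by Pringsheim. Since $1+2S_c\ne0$ and $1-2S_c^3\ne0$, the rational expression of $M(1)$ in $S$ is analytic at $S_c$, so $M(1)$ inherits a singular expansion $\alpha+\beta(1-t/t_c)+\gamma(1-t/t_c)^{3/2}+o\bigl((1-t/t_c)^{3/2}\bigr)$ with $\gamma\ne0$ (checked explicitly), $t_c$ being its only dominant singularity. Eliminating $S_c$ between $4S^4+8S^3+4S-1=0$ and the defining relation of $t_c$ gives $1/t_c=\mu=(22+8\sqrt6)/3$, and the transfer theorem~\cite[Chap.~VII]{flajolet-sedgewick} then yields $[t^n]M(1)\sim\kappa\,\mu^n n^{-5/2}$ with $\kappa>0$, positivity of $\kappa$ following from that of the coefficients.

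The step I expect to be the main obstacle is the elimination producing the quartic for $M(1)$. For generic $\nu$ the equation with one catalytic variable carries, as the authors observe, four undetermined boundary series $M^{(i)}(1)$ and the elimination is intractable; what makes the properly-coloured case $\nu=0$ go through is that the two discriminant relations coming from $P_+$ and $P_-$ suffice — after a lengthy but purely mechanical computer-algebra computation — to pin down $M(1)$, the answer having only degree $4$ and admitting the rational (genus-$0$) parametrisation above. The remaining verifications ($\gamma\ne0$, the absence of competing dominant singularities, and the identification of $\mu$) are routine once the explicit parametrisation is in hand.
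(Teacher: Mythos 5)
Your proposal diverges from the paper at the key step — and, more importantly, has a gap there. For $q=3$, $\nu=0$, $w=z=1$, the equation with one catalytic variable involves \emph{three} unknown boundary series, $M(1), M'(1), M''(1)$, not four: the paper notes explicitly that setting $\nu=0$ kills $M^{(3)}(1)$ from $C_0$ (just as it kills $M(1)$ from $C_3$, etc.), so your mention of eliminating ``$M'(1),M''(1),M'''(1)$'' miscounts. But that is not the main problem: your discriminant argument, as written, supplies only \emph{two} relations (the discriminants of $P_+$ and $P_-$), and two relations in three unknown series cannot pin down $M(1)$ — the elimination you describe would leave a surface, not the claimed degree-4 curve. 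For $q=2$ the count works ($m=4$: two unknowns, two discriminants), but for $m=6$ it does not; to rescue the idea one would have to exploit the stronger structure of $T_6\pm1$ (e.g.\ $T_6+1=2T_3^2$ is a perfect square, so $P_+$ should in fact have \emph{three} double roots and $P_-$ two, yielding more than two conditions), and you do not do this. The paper's own route is genuinely different: the authors state that both the general algorithm of~\cite{mbm-jehanne} and Tutte's discriminant trick ``require heavy calculations'' here, and instead proceed by guess-and-check — computing many coefficients, conjecturing a degree-4 equation for $M(1)$ (and quadratic expressions over $\qs(S)$ for $M'(1)$, $M''(1)$ in a fixed quadratic extension), and then \emph{verifying} the guess by exhibiting a rational parametrization of the whole quartic curve in $(y,F(y))$ and showing the relevant branch has coefficients in $\qs[y]$, so that it must coincide with $M(y)$.

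A secondary, smaller point: in your asymptotic analysis, analyticity of the rational function $f(S)$ expressing $M(1)$ at $S=S_c$, combined with the square-root singularity of $S$ at $t_c$, would \emph{generically} give a $(1-t/t_c)^{1/2}$ term and hence $n^{-3/2}$, not $n^{-5/2}$. The $n^{-5/2}$ behaviour additionally requires the cancellation $f'(S_c)=0$ (which does hold, but needs to be stated and checked, not just ``$\gamma\ne0$''). The identification $\mu=(22+8\sqrt6)/3$, via the factorization $4S^4+8S^3+4S-1=(2S^2+1)(2S^2+4S-1)$, is correct.
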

\begin{proof}
The first statement is a specialization of
Theorem~\ref{thm:alg-planaires}. We would like to obtain an explicit equation
satisfied by $M(3,\nu, t,w,z;1,1)$.  As described in
Section~\ref{sec:alg-maps},  we
first construct an equation with one catalytic variable satisfied by
$M$.
 Once again, the variable $z$ is redundant, and we set $z=1$.

\medskip
\noindent {\bf{An equation with one catalytic variable.}}
  We start from the invariant equation~\eqref{eq-inv}, written for $q=3$ and
  $m=6$. It involves seven unknown series $C_0, \ldots, C_6$, independent
  of $y$. By expanding this equation in the neighborhood of $y=1$, as
  described in Section~\ref{sec:Cr}, we   obtain explicit expressions
  of the series $C_r$. More precisely, 
  \begin{itemize}
  \item[---]   $C_6, C_5$ and $C_4$ are  polynomials in $\nu$, $t$ and
    $w$,
 \item[---]    $C_3$ is a polynomial in  $\nu$, $t$, $w$ and $M(1)$,
  \item[---]    $C_2 $ is a polynomial in  $\nu$, $t$, $w$,  $M(1)$ and $M'(1)$,
 \item[---]    $C_1$ is a polynomial in  $\nu$, $t$, $w$,  $M(1)$,  $M'(1)$ and
   $M''(1)$,
\item[---]    $C_0$ is a polynomial in  $\nu$, $t$, $w$, $M(1)$,  $M'(1)$,
   $M''(1)$ and $M^{(3)}(1)$.
 \end{itemize}
In the invariant equation~\eqref{eq-inv}, let us now replace each $C_r$ by its expression : as was proved for general values of $k$ and $m$ in
Section~\ref{sec:final-form}, this gives, after dividing by
$$
27tw(\nu-1)^2\, \left(1-\by \right) \left( 1-\nu+\by +\nu \by  \right)  
\left(1-\nu + 2\by+\nu \by\right)  \left(2-2\nu+\by+2\,\nu \by \right)  ,
$$
an equation with one catalytic variable of the form~\eqref{generic-M},
involving the series $M(1)$, $M'(1)$, $M''(1)$   and
$M^{(3)}(1)$, that is, the first four discrete derivatives of $M(y)$. 
Even though the general strategy of~\cite{mbm-jehanne}
 allows one to solve, in theory, this equation, the size of
the calculations has prevented us to do so in the general case.

So let us focus on the simpler case of \emm properly three-colored,
planar maps. That is, we set $\nu=0$ so as to forbid
monochromatic edges. This simplifies the series $C_r$. Indeed, $C_3$
(resp. $C_2$, $C_1$, $C_0$) does not involve $M(1)$ (resp. $M'(1)$,  $M''(1)$,
 $M^{(3)}(1))$ any more. We further ignore the number of vertices by
setting $w=1$. The resulting equation in one catalytic
variable reads
\beq\label{cat-planaires-q3}
\Pol( M(y),M(1), M'(1),M''(1),t;y)=0,
\eeq
and has degree 4 in $M(y)$.  More precisely, the equation can be written
\beq\label{cat-planaires-q3-bis}
 M(y)= 1+ \frac{ty^2}{ 2 \left( 2\,y+1 \right)  \left( y+2 \right)
   \left( y+1 \right) } P(M(y), \Delta M(y), \Delta^{2} M(y), \Delta^{3}M(y),  t,y)
\eeq
where
\begin{multline*}
P(x_0,x_1,x_2,x_3,t,y)=
1+11\,y +4\,{t}^{2}{y}^{2}x_{{3}}
+2 \left( -13\,{y}^{2}+2\,{t}^{2}{y}^{2}+5\,t{y}^{2}-29\,y-2\,ty-9 \right) x_{{0}}
\\
+ \left( 8\,{y}^{3}+2\,t{y}^{3}+32\,{t}^{2}{y}^{3}-62\,t{y}^{2}+54\,{y}^{2}
         +32\,{t}^{2}{y}^{2}-12\,ty+75\,y+25 \right) {x_{{0}}}^{2}
\\
+2\,t{y}^{2} \left( -26\,{y}^{2}+42\,t{y}^{2}+36\,ty-65\,y-26+18\,t \right) {x_{{0}}}^{3}
+36\,{t}^{2}{y}^{4} \left( 2\,y+1 \right) {x_{{0}}}^{4}
\\
+2 \left( 6\,{t}^{2}{y}^{2}-4\,t{y}^{2}
               +16\,{y}^{2}+9\,y-4\,ty+2 \right) x_{{1}}
-2\,ty \left( 22\,t{y}^{2}-33\,{y}^{2}-34\,ty+27\,y+6 \right) {x_{{1}}}^{2}
\\
+36\,{t}^{2}{y}^{2} \left( y-1 \right) ^{2}{x_{{1}}}^{3}
+2\,ty \left( 18\,t{y}^{2}-27\,{y}^{2}+50\,ty-78\,y-12 \right) x_{{0}}x_{{1}}
+36\,{y}^{2} \left( {y}^{2}+2\,y+3 \right) {t}^{2}x_{{1}}{x_{{0}}}^{2}
\\
-36\,{y}^{2} \left( y+3 \right)  \left( y-1 \right) {t}^{2}x_{{0}}{x_{{1}}}^{2}
+2\,ty \left( 6\,ty-11\,y-2 \right) x_{{2}}
+12\,{y}^{2} \left( y+3 \right) {t}^{2}x_{{0}}x_{{2}}
-36\, \left( y-1 \right) {y}^{2}{t}^{2}x_{{1}}x_{{2}}.
\end{multline*}

Still, both the general approach of~\cite{mbm-jehanne} and
Tutte's alternative (used above for two-colored planar maps) require
heavy calculations. Hence we have resorted to Tutte's good old method:
guess and check!

\medskip
\noindent{\bf{An interlude: solving planar maps by guessing and checking.}}
Because things are so heavy with our equation, let us discuss the
principles of this method  on the much simpler example of planar maps
counted by edges (variable $t$) and outer degree 
(variable $y$). The standard equation  with one catalytic variable that
defines the associated \gf\ $G(t;y)\equiv G(y)$ reads
\beq\label{eqG-map}
G(y)=1+ ty^2 G(y)^2 + ty\, \frac{yG(y)-G(1)}{y-1}.
\eeq
It is clear that this equation has a unique solution that is a power
series in $t$. The coefficients of this series are polynomials in
$y$. But how can we determine $G(y)$? Assume that we
find two series, $F(t;y) \equiv F(y) \in \qs[y][[t]]$ and $F_0(t)\equiv F_0 \in
\qs[[t]]$, such that
\beq\label{eqF-map}
F(y)=1+ ty^2 F(y)^2 + ty\, \frac{yF(y)-F_0}{y-1}.
\eeq
Then, by multiplying by $(y-1)$ and setting $y=1$, we discover that
$F_0$ equals necessarily $F(1)$, so that $F(y)$ is the map
\gf\ $G(y)$. Note that it is important that the series $F(y)$ has \emm
polynomial, coefficients in $y$ (or at least, coefficients in $\qs(y)$
having no pole at $y=1$). Otherwise $F(1)$ may not be well-defined.

 From the functional equation~\eqref{eqG-map}, one can compute the first
coefficients of the series $G(y)$. In particular, one can easily \emm
conjecture,, using tools like the {\sc Gfun} package of {\sc
  Maple}~\cite{gfun}, that $G(1)$ is quadratic: 
$$
27\,{t}^{2}G(1)^{2}+G(1)(1-18\,t)-1+16\,t=0.
$$
The corresponding curve has genus 0, and thus admits a rational
parametrization. Let $S\equiv S(t)$ be the unique power series in $t$
satisfying $S=t(1+3S)^2$. Then our conjectured value of $G(1)$ is
$
G(1)=(1-S)(1+3S).
$
Let us define $F_0:= (1-S)(1+3S)$. There exists a unique power series
$F(y)\equiv F(t;y)$ satisfying~\eqref{eqF-map} (indeed, this equation
is quadratic in $F(y)$, and the other root contains negative powers of
$t$). However, this series 
has \emm a priori, coefficients in~$\qs(y)$. We wish  to prove these
coefficients  actually lie in $\qs[y]$. One way goes as follows.
In~\eqref{eqF-map}, replace $F_0$ by its value $(1-S)(1+3S)$, and $t$
by its expression $t=S/(1+3S)^2$. As a curve in $F(y)$ and $y$ over $\qs(S)$, the
resulting equation has genus 0 again, and thus admits a rational
parametrization. Indeed, let $W$ be the unique series in $\qs[y][[t]]$
satisfying
$$
W= y\, \frac{1+SW+S(S+1)W^2}{1+3S}.
$$
Then
$$
F(y)={\frac { \left(1- S(1+S)W \right)  \left( 1+SW+S(S+1){W}^{2} \right) }{1-SW}}.
$$
 This expression shows that $F(y)$ has polynomial
coefficients in $y$, and we have proved that $F(y)$ is the \gf\ $G(y)$
of planar maps.

\medskip
\noindent
{\bf{Back to 3-colored planar maps.}}
Let us return to the functional
equation~(\ref{cat-planaires-q3}--\ref{cat-planaires-q3-bis}). 
It defines a unique 
series $M(y)\equiv M(t;y) \in \qs(y)[[t]]$. Moreover, the form of the
equation implies that the coefficient
of $t^n$ in this series has no pole at $y=1$. (For combinatorial
reasons, we know that the coefficients are
\emm polynomials, in $y$, but the form of the equation does not imply
such a strong statement).
Assume we find 4
series, $F(t;y)\equiv F(y) \in \qs[y][[t]]$, and $F_0, F_1, F_2\in
\qs[[t]]$, such that
$$
\Pol( F(y), F_0, F_1, F_2, t ;y)=0.
$$
By expanding this identity in the
neighborhood of $y=1$, it  follows that $F_0=F(1)$, $F_1=F'(1)$
and $F_2=F''(1)$ (the derivatives being taken with respect to
$y$). Consequently, the series $F(y)$ satisfies the same equation as
$M(y)$, and $F(y)=M(y)$. 

So our first task is to guess the values of $F^{(i)}(1)$, for $0\le i \le
2$. From the functional
equation~\eqref{eq:M} one can compute the first
coefficients of the series $M(y)=M(3,0;t,1;1,y)$. The first 40
coefficients of $M_0:=M(1)$ suffice to conjecture that this series
satisfies
\begin{multline*}  
-12500\,{t}^{6}M_0^{4}
-24\,{t}^{4} \left(71- 1000\,t \right) M_0^{3}
-2\,{t}^{2} \left( 39-1020\,t+7216\,{t}^{2}+3600\,{t}^{3}
\right)M_0^{2}
\\
-M_0\, \left(
1-42\,t+536\,{t}^{2}-1712\,{t}^{3}-9040\,{t}^{4}+864\,{t}^{5} \right) 
+1-40\,t+540\,{t}^{2}-2720\,{t}^{3}+432 \,{t}^{4}
=0.
\end{multline*}
The corresponding curve has genus $0$, and the parametrization by the
series $S$ given
in the theorem is constructed using {\sc Maple}. 

We now compute more coefficients of $M(y)$, in order to conjecture the values of $M'(1)$
and $M''(1)$. In the expansions of these two series, we
systematically replace the variable $t$ by its expression~\eqref{S-def-planar3} in
terms of $S$, as we suspect that  $M'(1)$
and $M''(1)$ will have a high degree over $\qs(t)$, but hopefully a
smaller degree over $\qs(S)$. And indeed, from the first 80 terms of
$M(y)$ (and of course with the help of {\sc Maple}), we conjecture that
$M'(1)$ is quadratic over $\qs(S)$:
$$
M'(1)=
{\frac { \left(1+ 2\,S \right)  
\left(P(S)+ Q(S)\sqrt { (1+2S)(1+2S+4S^2)
} \right)
}{ {S}^{2}\left( 1-2\,{S}^{3} \right) ^{4}}}
$$
with
$$
P(S)= 32\,{S}^{12}+32\,{S}^{11}+12\,{
S}^{10}-32\,{S}^{9}-16\,{S}^{8}+18\,{S}^{7}+14\,{S}^{6}-28\,{S}^{5}-58
\,{S}^{4}-49\,{S}^{3}-25\,{S}^{2}-7\,S-1
$$
and
$$
Q(S)=\left( 1-2\,{S}^{3} \right)  \left(1+2S+ 4\,{S}^{2} \right)
\left( 1+S \right) ^{3}.
$$
Hoping that $M''(1)$ lies in the same quadratic extension of $\qs(S)$
as $M'(1)$, we then look  for
a linear relation between $1$, $M'(1)$ and $M''(1)$ with polynomial
coefficients in $S$ (using the command {\tt hermite\_pade})
and obtain  the conjectured expression:
$$
M''(1)={\frac {2 \left(1+ 2\,S \right)  
\left(\bar P(S)+ \bar Q(S)\sqrt { (1+2S)(1+2S+4S^2)
} \right)
}{ {S}^{3}\left( 1-2\,{S}^{3} \right) ^{6}}}
$$
with
\begin{multline*}
\bar P(S)=
-5-54\,S-300\,{S}^{2}-1082\,{S}^{3}-2721\,{S}^{4}
-4768\,{S}^{5}-5310\,{S}^{6}-1944\,{S}^{7}+4970\,{S}^{8}+10468\,{S}^{9}\\
+8724\,{S}^{10}
+12\,{S}^{11}-8336\,{S}^{12}-10080\,{S}^{13}-6016\,{S}^{14}-1728\,{S}^{15}
+96\,{S}^{16}-64\,{S}^{17}-192\,{S}^{18}-192\,{S}^{19}
\end{multline*}
and
\begin{multline*}
\bar Q(S)=
\left( 1+S \right) ^{3} \left(1- 2\,{S}^{3} \right) 
\left(1+2\,S+ 4\,{S}^{2} \right) \times
\\
 \left(
 8{S}^{7}+8\,{S}^{6}+12\,{S}^{5}-20\,{S}^{4}-48\,{S}^{3}-42\,{S}^{2}-19\,S-5
 \right) .
\end{multline*}

Now return to~\eqref{cat-planaires-q3}. Consider the quartic
equation (in $F(y)$): 
\beq\label{eq-symb}
\Pol\left(F(y), F_0, F_1, F_2, \frac{S(1-2S^3)}{(1+2S)^3};y\right)=0,
\eeq
where $F_0$ (resp. $F_1$, $F_2$) is the \emm conjectured, value of
$M(1)$ (resp. $M'(1)$, $M''(1)$).
The rational function of $S$ that occurs is just the
expression~\eqref{S-def-planar3} of $t$ in  terms of $S$.
When $S=0$, this equation has degree 1 in $F(y)$. 
Hence~\eqref{eq-symb} admits a unique solution in $\qs(y)[[t]]$, denoted
$F(y)\equiv F(t;y)$. As argued above, if we can prove that this series has
coefficients in $\qs[y]$ (or that its coefficients have no pole at
$y=1$), we can conclude that $F(y)=M(y)$, and that the
conjectured values of $M(1)$, $M'(1)$ and $M''(1)$ are correct. 

With the help of the {\tt parametrization} function of {\sc Maple},
and of an extension of it provided by Mark van Hoeij,
we have discovered that the quartic equation~\eqref{eq-symb},
seen as a curve in $y$ and $F(y)$ over $\qs(S, \sqrt { (1+2S)(1+2S+4S^2)})$, admits
a rational parametrization which we now describe. Set $T=2S$, $\Delta=(1+T)(1+T+T^2)$, and
consider the following quartic equation in $W$: 
\beq\label{W-def-3col}
W
 =y\,\frac{P_1+P_2\,\sqrt{\Delta} }
{2(1-TW)(2-TW)(1+T)^2\left((1+T+T^2) (1-TW)+\sqrt \Delta\right)}
\eeq
where
$$
P_1= (1+T)(1+T+T^2) 
\left(4 -12\, T W
- \left( 9\,T+4 \right) T  ^{2}W^{3}
+3 \left( T+1 \right) T^{3}W^{4}
+2  \left( 7\,T+1 \right) TW^{2}\right)
$$
and
\begin{multline*}
P_2=
- \left( T+1 \right) ^{3}T^{3}W^{4}
+ \left( T+1 \right)  \left( T-4 \right) T^{2}W^{3}
+ \left( 5\,{T}^{3}+8\,T^{2}+14\,T+6 \right) TW^{2}\\
-2\,   \left( 3\,T^{2}+5\,T+6 \right) TW
+2\,T^{2}+4\,T+4.
\end{multline*}
Recall that at $t=0$, the series $S$ and $T$ vanish. This implies that~\eqref{W-def-3col} defines a unique series $W\in \qs[y][[t]]$. 
Now in~\eqref{eq-symb}, let us replace 
$S$ by $2T$  and  $y$ by 
its rational expression in terms of $T$, $\sqrt \Delta$ and $W$
derived from~\eqref{W-def-3col}. We
leave it to the reader's computer algebra system to check
that~\eqref{eq-symb} then \emm factors, into a  factor of degree 3 in
$F(y)$, and 
a linear one. Moreover,  setting $t=0$ (that is, $T=0$) shows that
the linear factor is the only one that has a solution $F(y)\in
\qs(y)[[t]]$. Solving it for $F(y)$ gives
$$
F(y)=\frac{P_3(T, \sqrt\Delta, W)}
{P_4(T, \sqrt\Delta, W)}
$$
for two polynomials $P_3$ and $P_4$ with coefficients in $\qs$, such
that $P_4(0,1,W) $ (which is the value taken by $P_4$ when $T=0$) lies
in $\qs$ and is non-zero. This shows  that $F(y)$
belongs to $\qs[y][[t]]$. This
completes our very long proof of the short equation~\eqref{M3-sol}.

A simple singularity analysis~\cite[Chap.~VII.7]{flajolet-sedgewick} of $M(3,0,t,1,1;1,1)$
yields the asymptotic behaviour of the number of $3$-colored planar
maps with $n$ edges. 
\end{proof}

 \subsection{Three-colored triangulations}
\label{sec:three-triang}

By Theorem~\ref{thm:alg-triang},  the series
$Q(3,\nu,t,w,z;x,y)$ is algebraic. Without loss of generality, we can
set $w=z=1$. We can also focus on near-triangulations (no digon
allowed) by considering $Q(y)\equiv Q(3, \nu, t,1,1;0,y)$. This series
counts three-colored near-triangulations by  edges (variable  
  $t$), monochromatic edges (variable $\nu$)  and outer degree
  (variable $y)$. It is algebraic over   $\qs(\nu,t,y )$. 

In what follows, we first  describe the construction of the equation
with one catalytic variable satisfied by $Q(y)$. Alas, it
involves four unknown series $Q^{(i)}(0)$, and we have not succeeded
in solving it for a generic value of $\nu$.  We solve it, however, for
$\nu=0$, thus counting \emm proper, colourings of near-triangulations. But
this result can probably be obtained by simpler means. Due to the duality
relation~\eqref{near-t--near-c}, another interesting case is $\nu=-2$,
for which our series actually counts proper 3-colorings of near-cubic maps. We have not
solved the equation in this case, but we state a conjecture for its
solution, due to Bruno Salvy (and obtained by computing many
coefficients of the solution).

\medskip
  We start from the invariant equation~\eqref{eq-inv-triang}, written
  for $q=3$ and 
  $m=6$. It involves 7 unknown series $C_0, \ldots, C_6$, which are independent
  of $y$. By expanding this equation in the neighborhood of $y=0$ and writing that the
  coefficient of $y^{-2r}$ is zero, we   obtain  explicit
  expressions   for the series $C_r$, as   described in
  Section~\ref{sec:Cr-triang}. Moreover, by  writing that the
  coefficient of $y^{-2r+1}$ is zero, for $1\le r \le 6$, we obtain
  additional identities 
  relating the series $Q^{(i)}(0)$. More precisely:
  \begin{itemize}
  \item [---] by extracting the coefficients of $y^{-12}$, $y^{-10}$,
    $y^{-8}$, we obtain:
    \begin{eqnarray*}
    C_6 &=&-27\nu^6  ,\\
    C_5 &=&
27\,{\nu}^{4} \left( \nu-1 \right)  \left( 2\,\nu-5 \right) 
 ,\\
    C_4 &=&
9/2\,{\nu}^{2} \left( \nu-1 \right)  \left( 18\,{t}^{3}{\nu}^{3}+35\,{
\nu}^{3}-75\,{\nu}^{2}+30\,\nu+10 \right);
  \end{eqnarray*}
\item [---]  extracting the coefficients of $y^{-11}$ and $y^{-9}$ does not yield
  new identities;
\item [---]  extracting the coefficient of $y^{-7}$ gives $Q(0)=1$,
  which is not a surprise. From now on we systematically replace every
  occurrence of $Q(0)$ by $1$;
\item [---]  extracting the coefficient of $y^{-6}$ gives an
  expression of $C_3$ as a polynomial in $\nu$, $t$ and $Q'(0)$;
\item [---]  extracting the coefficient of $y^{-5}$ gives the standard identity
  between $Q''(0)$ and $Q'(0)$:
\beq\label{Q1-Q2-3}
Q''(0)= \frac{2Q'(0)}{t\nu}.
\eeq
From now on we systematically replace every occurrence of $Q''(0)$ by
this expression;
\item [---]   extracting the coefficient of $y^{-4}$ gives an
  expression of $C_2$ as a polynomial in $\nu$, $t$, $Q'(0)$ and $Q^{(3)}(0)$;
 \item [---]  extracting the coefficient of $y^{-3}$ gives an
   expression of  $Q^{(4)}(0)$ in terms of  $\nu$, $t$, $Q^{(3)}(0)$ and $Q'(0)$:
\beq\label{Q4-Q13-3}
Q^{(4)}(0)=-24\left( 6+{\frac {1}{{t}^{3}{\nu}^{2}}} \right) 
Q'(0) 
+4\,{\frac { \left( 1+\nu \right) 
Q^{(3)}(0)
}{\nu\,t}}+24\,{\frac {\nu+2}{\nu\,t}}.
\eeq
\item [---]   extracting the coefficient of $y^{-2}$ gives an
  expression of $C_1$ as a polynomial in $\nu$, $t$, $Q'(0)$,
  $Q^{(3)}(0)$ and $Q^{(5)}(0)$; 
\item [---]  extracting the coefficient of $y^{-1}$ gives an
   expression of  $Q^{(6)}(0)$ in terms of  $Q^{(5)}(0)$, $Q^{(3)}(0)$ and $Q'(0)$;
 \item [---] finally,  extracting the coefficient of $y^{0}$ gives an
  expression of $C_0$ as a polynomial in $\nu$, $t$, $Q'(0)$,
  $Q^{(3)}(0)$, $Q^{(5)}(0)$  and $Q^{(7)}(0)$.
\end{itemize}

Let us now replace each $C_r$ by its expression in the invariant
equation~\eqref{eq-inv-triang}:  this gives, after dividing by
$54(\nu-1)^5t^3$,
an equation with one catalytic variable for $Q(y)$, 
of the form~\eqref{generic-Q},
involving  the series $Q'(0)$, $Q^{(3)}(0)$, $Q^{(5)}(0)$ and
$Q^{(7)}(0)$. 

This is enough to conclude that $Q(3, \nu, t,1,1;0,y)$ is algebraic, but
too big to be solved with the methods that are available at the
moment. However, the case $\nu=0$ comes out very easily. As explained
further down, this is, unfortunately, not very surprising.

\begin{Theorem}\label{thm:triang-3-nu0}
 The series $Q(3,0,t,1,1;0,y)$, which counts properly three-colored
 near-triangulations  by  edges (variable   $t$),  and outer degree
  (variable $y)$ is algebraic of degree $6$ over    $\qs(t,y )$. 

Let $Q_i(t):=[y^i]Q(3,0,t,1,1;0,y)$ be the series  that counts (by edges)
properly three-colored  near-triangulations of outer degree~$i$. Then
$Q_1(t)=0$ and for $i\ge 2$, each $Q_i(t)$ is
(at most) quadratic  over  $\qs(t )$ and admits a rational
parametrization.
Let   $S\equiv S(t)$ be  the unique series in $t$ satisfying
$$
S=t^3(1+2S)^2.
$$
Then $t^iQ_i(t)$ admits a rational expression in terms of $S$. In
particular,
\beq\label{Q-nu0-sol}
Q_2(t)=2t(1+S-S^2) \quad \hbox{and} \quad Q_3(t)= 2S(1-S).
\eeq
\end{Theorem}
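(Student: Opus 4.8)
The plan is to run, at $q=3$ and $\nu=0$, the same argument as for $3$-colored planar maps in Section~\ref{sec:three-planar}, taking advantage of the fact that the invariant machinery of Section~\ref{sec:inv-triang} collapses when $\nu=0$. Specialising Corollary~\ref{coro:eq-inv-Q} to $q=3$, $m=6$ (with $w=z=1$) and then setting $\nu=0$, one has $\be=-1$, so the invariant $D(y)$ of Proposition~\ref{prop:inv-triang} reduces to $D(y)=tI(y)+1$, which is \emph{linear} in $I(y)$, while the numerator $\be t(4-q)\by+tq\nu I(y)+\be(q-2)$ reduces to $-(t\by+1)$, which no longer depends on $I(y)$. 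Hence the second invariant $J(y)$ is only cubic in $I(y)$; accordingly $C_4=C_5=C_6=0$ (in agreement with the values $C_6=-27\nu^6$, $C_5=27\nu^4(\nu-1)(2\nu-5)$, $C_4=\frac{9}{2}\nu^2(\nu-1)(18t^3\nu^3+\cdots)$ displayed in Section~\ref{sec:three-triang}), while $C_3$ becomes a constant. One point needs care: the relations used in Section~\ref{sec:three-triang} to eliminate the even-indexed derivatives, such as $Q''(0)=2Q'(0)/(t\nu)$, are singular at $\nu=0$. This is harmless, because $Q'(0)=[y]Q(0,y)=Q_1(t)$ is divisible by $\nu$ — an outer-degree-$1$ near-triangulation carries a loop on its boundary, so every colouring has a monochromatic edge and $\Ppol_Q(3,\nu)$ is a multiple of $\nu$ — so that the $\nu\to0$ limit of each such relation is regular; alternatively one rederives the equation with one catalytic variable directly from~\eqref{eq:Q} at $\nu=0$. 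Either way, expanding the specialised invariant equation around $y=0$ and extracting the negative powers of $y$ (as in Section~\ref{sec:final-form-triang}) expresses $C_2,C_1,C_0$ in terms of $t$ and the two surviving unknown series $Q'(0)$ and $Q^{(3)}(0)$; reinstating $Q(y)\equiv Q(0,y)$ and dividing by the appropriate power of $t$ yields a polynomial equation with one catalytic variable $y$ for $Q(y)$, of the form~\eqref{main-eq} (with $u$ replaced by $y$).

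From here there are two routes. The principled one is to apply Theorem~\ref{generic-thm} — equivalently the quadratic-method extension of~\cite{mbm-jehanne} — and then eliminate $Q'(0)$ and $Q^{(3)}(0)$: this proves algebraicity of $Q(y)$ and, after elimination, produces its minimal polynomial over $\qs(t,y)$, from which the degree (which the computation returns as $6$) and the minimal polynomials of $Q_2(t)$ and $Q_3(t)$ are read off. The lighter, and in practice more feasible, route is the guess-and-check procedure already used in Section~\ref{sec:three-planar}: compute many coefficients of $Q(3,0,t,1,1;0,y)$ directly from~\eqref{eq:Q}; guess $Q_1(t)=0$ and quadratic equations for $Q_2(t)$ and $Q_3(t)$ (via \texttt{Gfun} and \texttt{hermite\_pade}); observe that the corresponding curves have genus $0$ and extract a common uniformising series $S$ satisfying $S=t^3(1+2S)^2$, together with the rational expressions $Q_2=2t(1+S-S^2)$ and $Q_3=2S(1-S)$; then substitute the conjectured boundary values $Q'(0)$, $Q^{(3)}(0)$ into the one-catalytic equation and check the identity. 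The check is conclusive: that equation, being of the form~\eqref{main-eq} with $u=y$, has a \emph{unique} solution which is a power series in $t$ whose coefficients are Laurent polynomials in $y$ with no pole at $y=0$, and the candidate built from the genus-$0$ parametrisation (whose denominator is invertible at $t=0$) has exactly that property, so it must equal $Q(y)$, and the conjectured values of $Q_1,Q_2,Q_3$ and of $Q(y)$ are thereby justified.

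It remains to collect the bookkeeping. Algebraicity of $Q(3,0,t,1,1;0,y)$ over $\qs(t,y)$ is in any case a direct specialisation of Theorem~\ref{thm:alg-triang}; the precise degree $6$ is the degree in $Q(y)$ of the irreducible polynomial obtained above (irreducibility checked by computer algebra). The series $S$ satisfies the quadratic equation $4t^3S^2+(4t^3-1)S+t^3=0$ over $\qs(t)$, and each $t^iQ_i(t)$ is a rational function of $S$: differentiating the one-catalytic equation $i$ times and setting $y=0$ expresses $Q^{(i)}(0)$ polynomially in $Q(0),\dots,Q^{(i-1)}(0)$ over $\qs(t)$, and the edge/vertex incidence relation for near-triangulations, $\ee=3(\vv-1)-\df$ (whence $t^{\ee+\df}$ is a power of $t^3$), shows $t^iQ_i\in\qs[[t^3]]$; an induction on $i$ starting from the explicit expressions for $t^3$, $Q_1$, $Q_2$, $Q_3$ in terms of $S$ then gives the claim, so every $Q_i(t)$ with $i\ge2$ is at most quadratic over $\qs(t)$, while $Q_1(t)=0$ for the combinatorial reason above. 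The genuine difficulties are thus (i) arranging the $\nu\to0$ specialisation so that the $1/\nu$ singularities in the derivative relations cancel — the one spot where a small combinatorial input (loops are monochromatic) is unavoidable — and (ii) the sheer size of the elimination, which is precisely why the guess-and-check argument, validated by uniqueness of the one-catalytic solution, is the pragmatic way to finish.
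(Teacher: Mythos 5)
Your plan diverges from the paper's at the crucial step. You propose to set $\nu=0$ directly in the invariant equation (Corollary~\ref{coro:eq-inv-Q}), exploiting that $D(y)=tI(y)+1$ is then linear in $I(y)$ and the Chebyshev argument's numerator $-(t\by+1)$ is $I(y)$-free, so that $J(y)$ has degree $3$ in $I(y)$ and $C_4=C_5=C_6=0$; these observations are correct and the route is sound in principle. The paper does something different: it keeps $\nu$ generic, observes that the one-catalytic equation it has built degenerates to the triviality $Q'(0)=0$ at $\nu=0$, and fixes this by substituting $Q^{(3)}(0)$ via~\eqref{Q4-Q13-3} and then $Q'(0)$ via~\eqref{Q1-Q2-3}, dividing by $\nu$, and only then setting $\nu=0$. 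Your remark about the $1/\nu$ in relations such as $Q''(0)=2Q'(0)/(t\nu)$ somewhat misses what is actually at stake: the combinatorial fact that $Q'(0)$ is a multiple of $\nu$ is true, but the paper's problem is not a singular limit of a relation — it is that the whole equation acquires an overall factor of $\nu$ after the substitutions, which is why the paper divides by $\nu$ rather than merely taking a regular limit. And the ``alternative'' you offer (rederive directly from~\eqref{eq:Q} at $\nu=0$) is not a shortcut: \eqref{eq:Q} still has two catalytic variables at $\nu=0$, so the full invariant machinery is required — this is the same as your main route, not a distinct one.

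The genuine gap is that you do not carry out the derivation, and you reach for guess-and-check as the pragmatic endpoint. That heavy artillery is not needed here: the paper's specialised equation~\eqref{1-cat-Q-nu0} has a \emph{single} unknown series, $Q''(0)$, and is solved directly by the quadratic method; you instead hypothesise (without computing) that ``two surviving unknown series $Q'(0)$ and $Q^{(3)}(0)$'' remain, thereby over-estimating the difficulty. The tameness here is not an accident — as the paper's Remark after the theorem explains, properly 3-colourable triangulations are essentially bipartite maps in disguise, so a one-catalytic, one-unknown equation is what one should expect. Your final bookkeeping is correct and essentially identical to the paper's: $Q_1(t)=0$ because an outer-degree-$1$ near-triangulation carries a loop; the edge/vertex relation $\ee=3(\vv-1)-\df$ forces $t^iQ_i\in\qs[[t^3]]$; and differentiating the one-catalytic equation $i$ times at $y=0$, together with the rational expressions of $t^3$, $Q_2$, $Q_3$ in $S$, gives the rational expression of each $t^iQ_i$ in $S$ by induction. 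In short: the specialisation-first route is a legitimate variant, but the proof is not finished because the one-catalytic equation is never produced and the solution is never obtained — and the guess-and-check fallback, while valid, is unnecessary and obscures how easy this case actually is.
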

\begin{proof}
Let us set $\nu=0$ in the equation with one catalytic variable
obtained by the above construction: this simply gives $Q'(0)=0$, which
is obvious because 
maps counted by this series have a loop. This also follows
from~\eqref{Q1-Q2-3}. 
 In order to obtain a non-trivial equation, we proceed as follows: 
 we first replace $Q^{(3)}(0)$ by its expression in terms of $Q'(0)$ and $Q^{(4)}(0)$ derived from~\eqref{Q4-Q13-3}, and then $Q'(0)$ by its expression in terms of $Q''(0)$ derived from~\eqref{Q1-Q2-3}. We finally divide the resulting equation by $\nu$, and set $\nu=0$.   
 This gives for properly 3-colored
near-triangulations a much simpler equation, with only one unknown
series $Q''(0)$:
\begin{multline}\label{1-cat-Q-nu0}
  4\,{y}^{5}{t}^{3}  Q ( y ) ^{3}
+{y}^{2}t \left({t}^{2} -10\,ty-8\,{y}^{2} \right)   Q ( y  ) ^{2}
+ \left( 6\,{t}^{2}{y}^{3}+2\,ty-2\,{t}^{2}+4\,{y}^{2} \right) Q ( y )
\\ -{t}^{3}{y}^{2}-2\,ty+2\,{t}^{2}-4\,{y}^{2
}+{y}^{2}{t}^{2}  Q''(0)=0 .
\end{multline}
This equation is easily solved, using either the quadratic method, or
the more general method of~\cite{mbm-jehanne}, and one obtains, with $Q_2=Q''(0)/2$:
$$
8{t}^{5}\,Q_2^{2}+ \left( 1-12\,{t}^{3}-8\,{t}^{6} \right) Q_2
-2\,t \left( 1-11\,{t}^{3}-{t}^{6} \right) =0,
$$
from which the first part of~\eqref{Q-nu0-sol} easily follows.

For $i\ge 3$, \eqref{1-cat-Q-nu0} allows to compute $t^2 Q_i(t)$
inductively as a  
polynomial in $t$ and the series $Q_j(t)$, for $2\le j<i$. 
\end{proof}

{\begin{figure}[t!]
\begin{center} 
\input{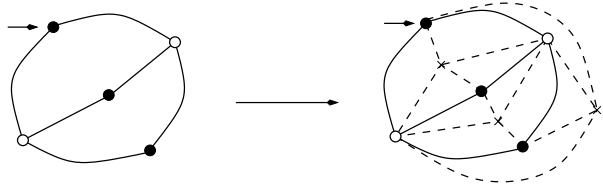}
\caption{A bipartite map and the corresponding Eulerian  triangulation.}
\label{fig:bip-triang} 
\end{center}
\end{figure}}

\noindent{\bf Remark.} A triangulation admits a proper 3-coloring if
and only if it is Eulerian, that is, if its faces are
2-colorable. 
The condition is necessary because
each face of a properly 3-colored triangulation contains, in clockwise
order, either 
the colors 1, 2, 3, or the colors 1, 3, 2, and two adjacent faces are
of different types. 
 That the condition is sufficient can be proved  by induction on the
 face number. 
Moreover, Eulerian triangulation admits exactly 6
proper colorings. 
But there is a standard bijection between bipartite
maps with $n$ edges and Eulerian triangulations with $3n$ edges, illustrated in
Figure~\ref{fig:bip-triang}. Hence counting 3-colorable triangulations
should not be harder than counting bipartite maps, which, as recalled
in Section~\ref{sec:example}, can be done with a single catalytic
variable by simply removing an edge  (see~\eqref{M-eq-y-20}). Even
though the 3-colorable \emm near,-triangulations 
considered here are a bit more general, it is not very surprising to
find, for their enumeration, an equation with one catalytic variable and
only one unknown series (see~\eqref{1-cat-Q-nu0}).


A more exciting perspective is to obtain the \gf\ of properly
3-colored \emm cubic, maps. Indeed, according
to~\eqref{near-t--near-c}, 
$$
 Q_i(3,-2,t) :=[y ^i] Q(3, -2, t, 1,1;0,1)=
(-t)^{-i} \sum _{G \, \rm{ near-cubic} \atop \dv(G)=i} 
 (-3t^3)^{\ff(G)-1}
\Ppol_G\left(3, 0\right).
$$
Thus we have access in particular to the series
$$
C(z):= \sum _{G \, \rm{ near-cubic} \atop \dv(G)=1} 
 z^{\ff(G)}\Ppol_G\left(3, 0\right) =
 4\,{z}^{3}+84\,{z}^{4}+1872\,{z}^{5}
+46464\,{z}^{6}
+O \left( {z}^{7} \right) 
.
$$
Fig.~\ref{fig:cubiques} justifies the value of the first two
coefficients of this series.
By computing recursively many coefficients of $C(z)$, and
feeding {\sc Gfun} with them, Bruno Salvy has reached the following rather
formidable conjecture. 
How likely is it to hold? The  equation below involves ``only'' 87
non-zero coefficients, while it holds at least up to order
$O(z^{169})$. It holds significantly further modulo $p$ for numerous
values of $p$, and so we believe it to be true.

Note that, in contrast with all solutions obtained so far, the genus of the
corresponding curve
is not 0, but 1. So we cannot hope for a rational parametrization.
\begin{figure}[ht!]\begin{center} \input{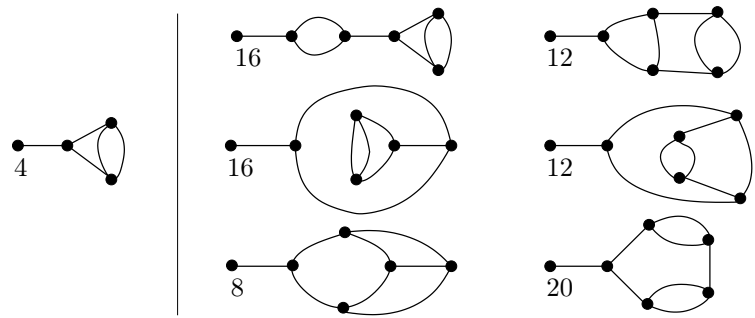}\caption{The loopless near-cubic maps with root-degree 1 and
  3 or 4 faces, and their number of proper 3-colorings (the color of the
  root is fixed).
In each column, the first two maps correspond to the same graph.}\label{fig:cubiques} \end{center}\end{figure}

\begin{Conjecture}
\label{conj}  
The \gf\ $C\equiv C(z)$ of properly $3$-colored near-cubic maps in which the degree of
the root-vertex is $1$, counted by faces, is algebraic of degree $11$
and satisfies:
\begin{multline*}
  922337203685477580800000\,{C}^{11}
+9007199254740992\,
 \left(194560000\,z -5971077 \right) {C}^{10}
\\
+4294967296\,
 \left( 
280335535308800\,{z}^{2}
-25398219177984\,z
+446991689475
 \right) {C}^{9}
\\
-1024\,
\left(
379991218559385600000\,{z}^{4}
-188284129271105978368\,{z}^{3}
+ 74426563120993402880\,{z}^{2}
\right.
\\
\left.
-3460024309515976704\,z
+60644726921050599 
\right) {C}^{8}
\\
-1024
\, \left( 
855256650185747464192\,{z}^{5}
+198557240861845880832\,{z}^{4}
+7030700057733103616\,{z}^{3}
\right.
\\
\left.
-2005025500677518336\,{z}^{2}
+65719379546147724\,z 
-1261082394855783
\right) {C}^{7}
\\
-64\,
 \left(
13794761675403801133056\,{z}^{6}
+1749420037224685109248\,{z}^{5}
-278771160986127695872\,{z}^{4}\right.
\\
\left.
+3443220359730862080\,{z}^{3}
+294527021649617744\,{z}^{2}
- 12400864344288084\,z
+586081179814293
\right) {C}^{6}
\\
-16\, \left(
32829338688610212249600\,{z}^{7}
-541704013946292273152\,{z}^{6}
-549137038895633924096\,{z}^{5} 
\right.
\\
\left.
+41876669882140680192\,{z}^{4}
-936289577498747840\,{z}^{3}
\right.
\\
\left.
+12987916499676352\,{z}^{2}
+208517314053540\,z
-54447680943015
 \right) {C}^{5}
\\
-32\, \left( 
124515522497539473408\,{z}^{9}
+6242274275823592669184\,{z}^{8}
-898808183791057633280\,{z}^{7}
\right.
\\
\left.
-5275329284641325056\,{z}^{6}
+6539785066149118976\,{z}^{5}
-361493662811609868\,{z}^{4}
\right.
\\
\left.
+9979948894517522\,{z}^{3} 
-432679480767965\,{z}^{2}
+6248694091833\,z
+378858660750
\right) {C}^{4}
\\
-8\, \left( 
747093134985236840448\,{z}^{10}
+5932367633073989222400\,{z}^{9}
-1529736206124490686464\,{z}^{8}
\right.
\\
\left.
+132585839072566050816\,{z}^{7}
-3048630269218258944\,{z}^{6}
-135087570198766176\,{z}^{5}
\right.
\\
\left.
+5706147748413032\,{z}^{4}
-229584590608200\,{z}^{3}
+23755821897083\,{z}^{2}
-152875558308\,z
-27738626328
 \right) {C}^{3}
\\
+ \left( 
-3361919107433565782016\,{z}^{11}
-6012198464670331305984\,{z}^{10}
+2332964327872863928320\,{z}^{9}
\right.
\\
\left.
-341248528343609901056\,{z}^{8}
+24933054438553903104\,{z}^{7}
-994662704339242816\,{z}^{6}
\right.
\\
\left.
+33270083406272816\,{z}^{5}
-1608971168541300\,{z}^{4}
+7467003627448\,{z}^{3}
\right.
\\
\left.
+5037279798640\,{z}^{2}
-194388001728\,z
+808501760
 \right) {C}^{2}
\\
+ z\,\left( 
-840479776858391445504\,{z}^{11}
-157618519659107057664\,{z}^{10}
+157170928122096254976\,{z}^{9}
\right.
\\
\left.
-34691457904249143296\,{z}^{8}
+3785139252232855552\,{z}^{7}
-224694559056638912\,{z}^{6}
\right.
\\
\left.
+6999136302319904\,{z}^{5}
-197576502742812\,{z}^{4}
+19551640345287\,{z}^{3}
\right.
\\
\left.
-1347626230088\,{z}^{2} 
+40099744688\,z
-404250880
\right) C
\\
-4\,{z}^{4} \left( 
19698744770118549504\,{z}^{9}
-8025289374453202944\,{z}^{8}
+1366977099830657024\,{z}^{7}
\right.
\\
\left.
-120213529404735488\,{z}^{6}
+5234026490678784\,{z}^{5}
-86995002866345\,{z}^{4} 
\right.
\\
\left.
+4680668094111\,{z}^{3}
-691486996440\,{z}^{2}
+31610476208\,z
-404250880
\right) 
=0.
\end{multline*}
\end{Conjecture}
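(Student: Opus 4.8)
\noindent\textbf{Towards a proof of Conjecture~\ref{conj}.}
The plan is to follow, for $q=3$ and $\nu=-2$, exactly the route used in Section~\ref{sec:three-triang} to produce an equation with a single catalytic variable, and then to solve that equation by the elimination method of~\cite{mbm-jehanne} in the extended form of Theorem~\ref{generic-thm}. First I would specialize the functional equation~\eqref{eq:Q} for quasi-triangulations to $q=3$, $w=z=1$; since $q=2+2\cos(2\pi/6)$ with $k=1$, $m=6$, Corollary~\ref{coro:eq-inv-Q} applies and yields the invariant equation, a polynomial identity in $I(y)$ whose coefficients involve the seven unknown series $C_0,\dots,C_6$. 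Expanding this identity around $y=0$ and extracting the coefficients of $y^{-12},y^{-10},\dots,y^0$ expresses each $C_r$ as a polynomial in $t$, $\nu$ and the derivatives $Q^{(i)}(0)$, while extracting the coefficients of the odd powers $y^{-11},y^{-9},\dots,y^{-1}$ gives the auxiliary relations: $Q(0)=1$, the relation~\eqref{Q1-Q2-3} between $Q''(0)$ and $Q'(0)$ (a special case of the last statement of Proposition~\ref{prop:eq-Q}), the relation~\eqref{Q4-Q13-3} for $Q^{(4)}(0)$, and the analogous relation for $Q^{(6)}(0)$. Substituting these back, dividing by $54(\nu-1)^5t^3$ and setting $\nu=-2$, one is left with an equation of the form~\eqref{generic-Q} for $Q(y)\equiv Q(3,-2,t,1,1;0,y)$ whose only unknown constants are $Q'(0)$, $Q^{(3)}(0)$, $Q^{(5)}(0)$ and $Q^{(7)}(0)$.

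Next I would solve this equation. Theorem~\ref{generic-thm} already gives, unconditionally, that $Q(3,-2,t,1,1;0,y)$, and hence $Q_1(3,-2,t):=[y^1]Q(3,-2,t,1,1;0,y)$, is algebraic; by the duality computation~\eqref{near-t--near-c} one has $C(-3t^3)=3t^4\,Q_1(3,-2,t)$, and since $t^iQ_i$ is a power series in $t^3$ and $Q_1$ is algebraic over $\qs(t)$, which is itself algebraic over $\qs(t^3)$, the series $C(z)$ is algebraic over $\qs(z)$. To get the explicit equation of the conjecture one must carry out the elimination underlying~\cite{mbm-jehanne}: perturb the equation as in~\eqref{epsilon-eq}, determine the small fractional-power-series roots $U_i$ of~\eqref{Ui-expansion} (one for each order of discrete derivative still present), write $P=\partial P/\partial x_0=0$ at each $U_i$, adjoin the relations among the $Q^{(i)}(0)$, and eliminate $Q(y)$, the $U_i$ and all but one of the unknown constants by resultants or a Gr\"obner basis. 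Equivalently, one may mimic the ``guess and check'' argument of Section~\ref{sec:three-planar}: guess algebraic expressions for $Q'(0)$, $Q^{(3)}(0)$, $Q^{(5)}(0)$, $Q^{(7)}(0)$ over $\qs(t)$ from the many computed coefficients of $C(z)$, substitute them into the one-catalytic-variable equation, and prove that the resulting solution $Q(y)$ has coefficients with no pole at $y=0$ by exhibiting a parametrization of the plane curve in $(y,Q(y))$; that forces $Q(y)$ to be the true series and validates the guesses. Either way one ends with a polynomial equation in $C$ and $z$, which one then checks to coincide with the $87$-term, degree-$11$ polynomial stated.

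The main obstacle is the sheer size of this elimination. With four genuinely independent unknown constants, and derivatives of order up to $7$ entering the one-catalytic-variable equation, the \cite{mbm-jehanne} system involves a correspondingly large number of auxiliary roots $U_i$, and eliminating everything produces polynomials of enormous size; moreover the answer is a curve of genus~$1$, so, in contrast with every case solved in the paper, there is no rational parametrization to fall back on (at best an elliptic one). This is precisely why the statement is offered as a conjecture: the computation is in principle routine but at or beyond the reach of current computer algebra, and one has for now only the strong empirical evidence recorded after the statement (agreement up to $O(z^{169})$, and far beyond modulo many primes). A secondary difficulty, should the elimination be pushed through, is isolating the correct irreducible factor among the parasitic ones introduced along the way; here the structural constraints already available --- $Q(0,0)=1$, the reductions~\eqref{Q1-Q2-3} and~\eqref{Q4-Q13-3}, and the fact that $C(z)\in\zs[[z]]$ with a prescribed initial segment --- single out the right branch.
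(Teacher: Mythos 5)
Your ``proof proposal'' is not, and cannot be, a proof: the statement in question is stated as a \emph{Conjecture} precisely because no one (the authors included) has been able to carry out the elimination. What you have written is an accurate and thoughtful description of the obstruction, and it matches the paper's own account closely: the one-catalytic-variable equation for $q=3$ involves the four unknown constants $Q'(0),Q^{(3)}(0),Q^{(5)}(0),Q^{(7)}(0)$, which is ``too big to be solved with the methods that are available at the moment''; the explicit polynomial was produced by computing many coefficients of $C(z)$ and feeding them to {\sc Gfun}; and the resulting curve has genus $1$, so, unlike every case solved in the paper, there is no rational parametrization to exploit either in the elimination itself or in the ``guess and check'' verification step. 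You correctly record the duality identity $C(-3t^3)=3t^4\,Q_1(3,-2,t)$ (after reduction of $t^iQ_i$ to a series in $t^3$) and correctly observe that the algebraicity of $C(z)$ itself, as opposed to the explicit degree-$11$ equation, already follows from Theorem~\ref{thm:alg-triang}.

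Two small cautions on wording. First, the route via Theorem~\ref{generic-thm} and the roots $U_i$ of~\eqref{Ui-expansion} is what \emph{would} in principle deliver a proof, but you should not suggest that the only issue is raw CPU time: the guess-and-check validation step used in Section~\ref{sec:three-planar} relies specifically on producing a \emph{rational} parametrization of the curve in $(y,Q(y))$ so as to certify that $Q(y)$ has polynomial coefficients in $y$; in genus $1$ there is no such thing, so even that ``easier'' alternative requires a genuinely new ingredient (an elliptic parametrization and an argument that it certifies regularity at $y=0$). Second, your phrase ``one for each order of discrete derivative still present'' over-simplifies the count of the series $U_i$ in~\cite{mbm-jehanne}; the number $k$ in~\eqref{Ui-expansion} is determined by the structure of the equation after the perturbation step, not directly by the highest derivative order. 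Neither point changes the conclusion: this remains, as the paper says, a conjecture supported by agreement to order $O(z^{169})$ and beyond modulo many primes.
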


\section{Non-separable maps}
\label{sec:non-sep}

  A map is 
 \emph{separable} if it is the atomic map $m_0$ (one vertex, no edge)
 or can be obtained by gluing two  non-atomic maps  at a vertex (more
 precisely, a corner of the first map is glued to a corner of the
 second map). 
Observe that both
 maps with one edge are non-separable. 

Several authors have addressed the enumeration of families of colored 
non-separable planar maps. For instance, the series $T(x,y)$
 defined by~\eqref{eq-Tutte} and studied by Tutte in his long series of
 papers counts  non-separable \emm near-triangulations,  (all
 internal faces have degree 3). Also, Liu wrote a
 functional equation for the \gf\ of non-separable planar maps weighted
by their Tutte polynomial~\cite{liu-non-sep}, which was further
studied by Baxter~\cite{baxter-dichromatic}.

In this section, we first prove that the latter problem is equivalent to
the enumeration of general planar maps (weighted, of course, by their Tutte
polynomial).
In particular, the algebraicity result of Theorem~\ref{thm:alg-planaires} 
translates into an algebraicity result for colored non-separable  maps.
Then, we show how Tutte's equation~\eqref{eq-Tutte} can be recovered from
our equation~\eqref{eq:Q} obtained for quasi-triangulations.

\subsection{From general  to non-separable planar maps}
Let $\mN$ be the set of non-separable planar maps and 
let $\gN(q,\nu,t,w,z;x,y)\equiv \gN(x,y)$ be the associated Potts
generating function: 
$$
 \gN(x,y)
=\frac{1}{q}\sum_{N\in\mN}t^{\ee(N)}w^{\vv(N)-1}z^{\ff(N)-1}
x^{\dv(N)}y^{\df(N)}\Ppol_N(q,\nu).
$$
The following proposition relates 
$\gN(x,y)$ to the Potts \gf\  of general planar maps,
denoted by   $\gM(x,y)$ and defined by~\eqref{potts-planar-def}.

\begin{Proposition}\label{prop:MN}
 The series  $\gM$  and  $\gN$ are related by:
$$
\gM(x,y)
=1+\frac{\gM(1,1)\gM(x,y)}{\gM(x,1)\gM(1,y)}\, 
\gN\left(q,\nu,t\gM(1,1)^2,w,z;x\frac{\gM(x,1)}{\gM(1,1)},y\frac{\gM(1,y)}{\gM(1,1)}\right)
$$
where $ \gM(x,y)\equiv\gM(q, \nu, t,w,z;x,y)$.
\end{Proposition}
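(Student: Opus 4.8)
The plan is to establish the identity by a standard decomposition of a rooted planar map into non-separable "blocks" arranged along the root-vertex and root-face, keeping careful track of how the Potts polynomial factorizes under one-vertex amalgamation (equation~\eqref{eq:Potts-1components}). First I would recall the classical block-tree structure: any rooted map $M$ which is not atomic has a well-defined root-edge lying in a unique non-separable component $N$, the "root-block". Deleting the vertices of $N$ (except for keeping them as attaching points) leaves a collection of rooted maps hanging off the corners of $N$. One must organize this so the recursion closes: I would parametrize $M$ by the root-block $N\in\mN$ together with, at each corner of $N$, an arbitrary rooted map glued in. The degrees interact as follows — the root-vertex degree $\dv(M)$ is the sum of $\dv(N)$ and the root-vertex-degrees of the maps glued at the corners of $N$ incident to the root-vertex; similarly for $\df(M)$ and the root-face. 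The number of edges adds, and each one-vertex gluing contributes a factor $1/q$ to the Potts polynomial by~\eqref{eq:Potts-1components}, while $\vv$ and $\ff$ add up (minus the overcounting of shared vertices/faces, absorbed into the "$-1$" normalizations).

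The key computational step is to resum the maps glued at the corners. At a corner of $N$ incident to the root-vertex but not the root-face, one glues an arbitrary rooted map weighted so that its contribution, summed over all choices, is exactly $\gM(x,1)$ (the $y$-variable being set to $1$ because the glued map's root-face is merely internal); at a corner incident to the root-face but not the root-vertex, the sum gives $\gM(1,y)$; at a corner incident to neither, the sum gives $\gM(1,1)$; and at the unique corner incident to both the root-vertex and the root-face of $N$ (the root-corner of $N$), the glued map is precisely what carries the remaining $x,y$-dependence, contributing $\gM(x,y)$. Counting corners: a non-separable map with $\dv(N)=i$ and $\df(N)=j$ has exactly $\dv(N)$ corners at the root-vertex and $\df(N)$ at the root-face, one of which is shared; and $\ee(N)$ corners total minus those, i.e. the remaining corners are "interior" and each gets a factor $\gM(1,1)$. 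Tracking these multiplicities, the generating function for a fixed $N$ becomes $\gM(x,1)^{\dv(N)-1}\gM(1,y)^{\df(N)-1}\gM(1,1)^{(\text{interior corners})}\gM(x,y)$ times the appropriate powers of $t$, $w$, $z$, $x$, $y$ and the $q$-factors from the gluings; one then checks that the exponent bookkeeping collapses exactly to the substitution $t\mapsto t\gM(1,1)^2$, $x\mapsto x\gM(x,1)/\gM(1,1)$, $y\mapsto y\gM(1,y)/\gM(1,1)$ inside $\gN$, with the prefactor $\gM(1,1)\gM(x,y)/(\gM(x,1)\gM(1,y))$ accounting for the corner at the root-corner of $N$ and the correction of the shared corner. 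The "$+1$" is the contribution of the atomic map $m_0$, which has no root-block.

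The main obstacle I anticipate is the exponent accounting in the last step: verifying that the number of interior corners of $N$, expressed via Euler's relation and the corner count ($2\ee(N)$ total corners $=\sum_v \deg v$), combines with the one-vertex gluing factors $1/q$ and the shifts in $\vv,\ff$ so that precisely two extra powers of $\gM(1,1)$ appear per edge of $N$ and the $x,y$-substitutions come out clean. This is a finite but delicate computation; the cleanest route is probably to not count corners of $N$ directly but instead to set up the decomposition so that each \emph{edge} of $N$ is associated with a fixed number of glued maps (e.g. via a canonical "fan" decomposition, or by using the known substitution-type recursion for non-separable components), making the $\gM(1,1)^2$ per edge manifest. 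Once the exponent identity is confirmed, the rest is a direct substitution and comparison of the two generating-function expressions. I would also double-check the degenerate cases (the two one-edge maps, which are non-separable) to make sure the normalization constants $\vv(N)-1$, $\ff(N)-1$ are consistent.
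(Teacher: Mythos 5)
Your proposal is correct and follows essentially the same block-decomposition argument as the paper's own proof: a non-atomic map is decomposed into its non-separable root-core $N$ plus an arbitrary rooted map glued at each of $N$'s corners, with corners classified by incidence to the root-vertex and root-face to produce the factors $\gM(x,1)$, $\gM(1,y)$, $\gM(1,1)$, $\gM(x,y)$, and the factor $1/q$ per one-vertex gluing from~\eqref{eq:Potts-1components}. The one slip is that the total number of corners of $N$ is $2\ee(N)$, not $\ee(N)$; with that corrected, the interior-corner count is $2\ee(N)-\dv(N)-\df(N)+1$, and the exponent bookkeeping closes exactly as you anticipate, yielding the claimed substitution with $t\mapsto t\gM(1,1)^2$.
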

\noindent

\begin{proof}
A non-atomic map decomposes into a non-separable map  (the \emph{core})
containing the root-edge, in the corners of which are attached
other rooted maps.
This decomposition is illustrated in Figure
\ref{fig:non-separable-core}. It induces a bijection between non-atomic maps
and pairs consisting  of a non-separable map $N$ (the core) and a ordered
sequence of  $2\ee(N)$ maps $M_1,\ldots,M_{2\ee(N)}$ (since $2\ee(N)$
is the number of corners of $N$). 

\begin{figure}[ht!]\begin{center} \input{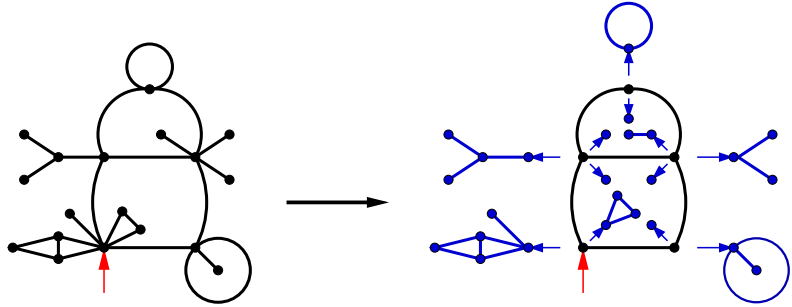}\caption{Decomposition of a map into a non-separable
  core in the corners of which  are attached other maps.}\label{fig:non-separable-core} \end{center}\end{figure}

Let $M$ be a non-atomic map and let $(N;M_1,M_2,\ldots,M_{2\ee(N)})$ be
its image by the decomposition.  One clearly has 
$$
\ee(M)= \ee(N) + \sum_{i=1}^{2\ee(N)}\ee(M_i),
$$
$$
\vv(M)=\vv(N)+\sum_{i=1}^{2\ee(N)}(\vv(M_i)-1),\quad \quad 
\ff(M)=\ff(N)+\sum_{i=1}^{2\ee(N)}(\ff(M_i)-1),
$$
and by \eqref{eq:Potts-1components} the Potts polynomial of  $M$ is 
$$
\Ppol_M(q,\nu)=\Ppol_N(q,\nu)\prod_{i=1}^{2\ee(N)}
\frac{\Ppol_{M_i}(q,\nu)}{q}.
$$
Moreover, exactly $\dv(N)$ of  the maps $ M_i$
contribute to the degree $\dv(M)$ of the root-vertex of $M$.
Similarly, $\df(N)$ of these maps contribute to the degree $\df(M)$ of
the root-face of $M$.  
Finally,  exactly one of these maps  contributes to both $\dv(M)$ and $\df(M)$. 
These observations imply that  $\gM(x,y)$ satisfies 
\begin{multline*}
\gM(x,y)= 1 + \frac 1 q \sum_{N \in \mN} t^{\ee(N)}w^{\vv(N)-1}z^{\ff(N)-1}x^{\dv(N)}y^{\df(N)}\Ppol_N(q,\nu)
\\
\times
\gM(x,y) \gM(x,1)^{\dv(N)-1}\gM(1,y)^{\df(N)-1}\gM(1,1)^{2\ee(N)-\dv(N)-\df(N)+1}
 \end{multline*}
which yields the equation of the proposition.
\end{proof}

\begin{Corollary}\label{coro:non-sep}
Let $q\not=0,4$ be of the form $2+2\cos j \pi/m$, with $j, m \in \zs$. Then the Potts \gf\ of non-separable planar maps,
 $\gN(q,\nu, t,w,z;x,y)$, is algebraic over $\qs(q,\nu,t,w,z;x,y)$.
 \end{Corollary}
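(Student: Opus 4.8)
The plan is to deduce Corollary~\ref{coro:non-sep} from Theorem~\ref{thm:alg-planaires} using the functional relation of Proposition~\ref{prop:MN}, which expresses the Potts \gf\ $\gN$ of non-separable planar maps in terms of the Potts \gf\ $\gM$ of general planar maps. The strategy rests on the observation that algebraicity is preserved under composition of algebraic series, so it suffices to invert the substitution appearing in Proposition~\ref{prop:MN} and check that all the building blocks are algebraic.

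First I would recall, from Theorem~\ref{thm:alg-planaires} (applied with $q=2+2\cos j\pi/m$, $q\not=0,4$), that the series $\gM(q,\nu,t,w,z;x,y)$ is algebraic over $\qs(q,\nu,t,w,z,x,y)$. Specializing the variables, the series $\gM(1,1)$, $\gM(x,1)$, $\gM(1,y)$ and $\gM(x,y)$ are then all algebraic over $\qs(q,\nu,t,w,z,x,y)$ as well (substituting $1$ for a free variable in an algebraic series preserves algebraicity). Next, writing the identity of Proposition~\ref{prop:MN} as
\[
\gN\Bigl(q,\nu,t\gM(1,1)^2,w,z;\,x\tfrac{\gM(x,1)}{\gM(1,1)},\,y\tfrac{\gM(1,y)}{\gM(1,1)}\Bigr)
= \bigl(\gM(x,y)-1\bigr)\,\frac{\gM(x,1)\gM(1,y)}{\gM(1,1)\,\gM(x,y)},
\]
the right-hand side is algebraic over $\qs(q,\nu,t,w,z,x,y)$. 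So the composed series on the left is algebraic. It remains to argue that $\gN(q,\nu,t,w,z;x,y)$ itself — before the substitution — is algebraic. For this I would introduce new indeterminates and use the invertibility of the change of variables: set $T = t\,\gM(1,1)^2$, $X = x\,\gM(x,1)/\gM(1,1)$, $Y = y\,\gM(1,y)/\gM(1,1)$, and observe that $T, X, Y$ are algebraic functions of $t,x,y$ (with coefficients rational in the remaining parameters), that this map is a formal change of variables (it is the identity to lowest order: $T = t + O(t^2)$, $X = x + O(t)$, $Y = y + O(t)$, since $\gM(1,1)=1+O(t)$, $\gM(x,1)=1+O(t)$, $\gM(1,y)=1+O(t)$), and hence can be inverted to express $t,x,y$ as algebraic functions of $T,X,Y$ (and the parameters). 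Substituting these inverse expressions into the displayed identity shows that $\gN(q,\nu,T,w,z;X,Y)$, viewed as a function of the independent variables $T,X,Y$, equals an algebraic function of $T,X,Y,q,\nu,w,z$; renaming $T,X,Y$ back to $t,x,y$ gives the claim.

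The main obstacle, and the only point requiring care, is the invertibility of the substitution $(t,x,y)\mapsto(T,X,Y)$ in the appropriate ring of formal power series, together with the bookkeeping needed to conclude that algebraicity over $\qs(q,\nu,w,z,x,y)((t))$ (the natural setting, since all these series live in $\qs[q,\nu,w,z,x,y][[t]]$) transfers correctly through the inversion and renaming of variables. Concretely one checks that $T$ has valuation $1$ in $t$ and that, modulo $T$, the pair $(X,Y)$ reduces to $(x,y)$; this makes the substitution a well-defined, invertible endomorphism of the relevant completed local ring, and the inverse substitution applied to a polynomial relation satisfied by $\gN$ after substitution yields a polynomial relation satisfied by $\gN$ before substitution. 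Since $q$ is fixed to a specific algebraic number here, one should also note that the coefficients of the resulting polynomial equation lie in $\qs(q,\nu,w,z,x,y)$, which is what the statement asserts. Everything else is a routine application of the closure properties of algebraic series under the field operations and under algebraic substitution.
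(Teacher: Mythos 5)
Your proof is correct and takes essentially the same approach as the paper: both deduce the algebraicity of $\gN$ from Theorem~\ref{thm:alg-planaires} via the substitution in Proposition~\ref{prop:MN}, with the paper defining the inverse substitution $T,X,Y$ implicitly (as series in fresh indeterminates $s,u,v$ satisfying $T=s\,\gM(T;1,1)^{-2}$, etc.) while you define the forward substitution and argue its invertibility; these are two phrasings of the same argument. The only place where the paper is slightly more careful is in justifying that the inverse series are algebraic (e.g.\ noting that a polynomial relation $P(t,\gM(1,1)^2)=0$ becomes $P(T,s/T)=0$), which you flag as the "one point requiring care" without spelling it out.
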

 \begin{proof}
Let  $s, u, v$ denote three indeterminates. Consider the following
 system:
$$
 T=s\, {\gM(T;1,1)}^{-2}, 
$$
$$
 X =\displaystyle 
u\,\frac{\gM(T;1,1)}{\gM(T;X,1)}, \quad \quad
Y=\displaystyle 
v\, \frac{\gM(T;1,1)}{\gM(T;1,Y)}, 
$$
where $\gM(t;x,y)$ stands for $\gM(q,\nu, t, w,z;x,y)$.
Recall that  $\gM(t;x,y)$ is a series in $t$ with coefficients in
$\qs[q,\nu,w,z,x,y]$, satisfying $\gM(t;x,y)=1+O(t)$. This implies
that the first equation defines $T$ uniquely as a series in
$s$ with coefficients in $\qs[q,\nu, w,z]$.  Moreover,
$T=s+O(s^2)$. Finally, the 
algebraicity of $\gM$ (Theorem~\ref{thm:alg-planaires}) implies that
of $T$. Indeed, if 
$P(t,\gM(t;1,1)^2)=0$ for some non-trivial polynomial $P$ (with
coefficients in $\qs(q,\nu,w,z)$), then $P(T, s/T)=0$ and
$P(t,s/t)$ is not trivially $0$. 
Similarly, the second and third equations above  respectively define $X$ and $Y$ as algebraic power series in $s$
with coefficients in
$\qs(q,\nu,w,z, u)$ (resp. $\qs(q,\nu,w,z, v)$). By  Proposition~\ref{prop:MN},
\beq\label{eq:MN}
\gN(q,\nu,s,w,z;u,v)= 
\frac {\gM(T;X,1) \gM(T;1,Y)}{\gM(T;1,1)\gM (T;X,Y)}
 \left(  \gM(T;X,Y)-1\right).
\eeq
Given that each of the series $\gM$, $T$, $X$ and $Y$, is algebraic,
 $\gN(q,\nu,s,w,z;u,v)$ is algebraic over $\qs(q,\nu,s,w,z;u,v)$.
\end{proof}

The connection between the series $\gM$ and $\gN$ can be used to
convert the functional equation \eqref{eq:M} into a functional
equation for $\gN$. 
\begin{Corollary}
 The Potts \gf\ $N(q,\nu,s,w,z;u,v)\equiv N(u,v)$ of non-separable
 planar maps satisfies:    
\begin{multline*}
  \gN(u,v)= (q\!+\!\nu\!-\!1)\,swu{v}^{2}+\nu\,sz{u}^{2}v\\
\!\!\!\!\!\!+uvzs\frac{\gN(u,v)-v\gN(u,1)}
{v-1-\gN(1,v)+v\gN(1,1)}
+(\nu\!-\!1)\, uvws\frac{\gN(u,v)-u\gN(1,v)}
{u-1-\gN(u,1)+u\gN(1,1)}. 
\end{multline*}
\end{Corollary}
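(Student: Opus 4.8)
The strategy is to feed the functional equation~\eqref{eq:M} for general planar maps into the ``dictionary'' provided by Proposition~\ref{prop:MN}, specialized exactly as in the proof of Corollary~\ref{coro:non-sep}, and then simplify. First I would recall the three algebraic series $T\equiv T(s)$, $X\equiv X(s)$, $Y\equiv Y(s)$ introduced there, defined by $T=s\,\gM(T;1,1)^{-2}$, $X=u\,\gM(T;1,1)/\gM(T;X,1)$ and $Y=v\,\gM(T;1,1)/\gM(T;1,Y)$, where $\gM(t;x,y)$ abbreviates $\gM(q,\nu,t,w,z;x,y)$. Since $T=s+O(s^2)$, $X=u+O(s)$, $Y=v+O(s)$ and $u,v$ are generic (so that $X-1$ and $Y-1$ are invertible), the substitution $(t,x,y)\mapsto(T,X,Y)$ in~\eqref{eq:M} is legitimate.

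Applying Proposition~\ref{prop:MN} at the points $(x,y)=(X,1),(1,Y),(1,1),(X,Y)$ and using the defining relations above to simplify the arguments of $\gN$ (the first argument always becoming $T\gM(T;1,1)^2=s$, and the dilated variables $X\gM(T;X,1)/\gM(T;1,1)$, $Y\gM(T;1,Y)/\gM(T;1,1)$ becoming $u$ and $v$), one gets the substitutions
$$\gM(T;X,1)=1+\gN(u,1),\qquad \gM(T;1,Y)=1+\gN(1,v),\qquad \gM(T;1,1)=1+\gN(1,1),$$
together with the rearrangement of~\eqref{eq:MN}
$$\gM(T;X,Y)=\frac{(1+\gN(u,1))(1+\gN(1,v))}{(1+\gN(u,1))(1+\gN(1,v))-(1+\gN(1,1))\gN(u,v)}.$$
Moreover $T=s/(1+\gN(1,1))^2$, and $X-1$, $Y-1$ become, after clearing the factors $1+\gN(u,1)$ and $1+\gN(1,v)$, precisely the quantities $u-1-\gN(u,1)+u\gN(1,1)$ and $v-1-\gN(1,v)+v\gN(1,1)$ that appear as denominators in the target equation.

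The second step is essentially mechanical: substitute all of the above into~\eqref{eq:M} taken at $(t,x,y)=(T,X,Y)$. Each occurrence of $\gM$ on the right-hand side is one of $\gM(T;X,Y),\gM(T;1,Y),\gM(T;X,1),\gM(T;1,1)$, hence becomes a rational function of $\gN(u,v),\gN(u,1),\gN(1,v),\gN(1,1)$ and $u,v,s,w,z,q,\nu$. Clearing the common denominator $(1+\gN(u,1))(1+\gN(1,v))-(1+\gN(1,1))\gN(u,v)$ that enters through $\gM(T;X,Y)$, one checks that all the auxiliary factors $1+\gN(u,1)$, $1+\gN(1,v)$, $1+\gN(1,1)$ cancel and the identity collapses to the stated equation for $\gN(u,v)$. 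Concretely, the monomials $(q+\nu-1)swuv^2$ and $\nu szu^2v$ come from the first two terms of~\eqref{eq:M} (their spurious lower-order pieces cancelling against the two ``divided'' terms), while the two quotient terms of the target equation arise from pairing the $\dfrac{x\gM(x,y)-\gM(1,y)}{x-1}$ term with the $\gM(x,y)\gM(1,y)$ term, and the $\dfrac{y\gM(x,y)-\gM(x,1)}{y-1}$ term with the $\gM(x,y)\gM(x,1)$ term.

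The only genuine obstacle is the size of the intermediate rational function: one must verify the cancellation of the numerous factors of $1+\gN(\cdot,\cdot)$. This is best organized by multiplying~\eqref{eq:M} through by $\gM(T;X,1)\,\gM(T;1,Y)\,\big((1+\gN(u,1))(1+\gN(1,v))-(1+\gN(1,1))\gN(u,v)\big)$ before substituting, after which the computation is routine and readily confirmed with a computer algebra system; no new idea beyond the dictionary above is required.
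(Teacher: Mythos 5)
Your proposal is correct and follows essentially the same approach as the paper's own proof: both feed the functional equation~\eqref{eq:M} into the dictionary of Proposition~\ref{prop:MN} and simplify, the only cosmetic difference being that you pre-invert the change of variables (defining $T,X,Y$ so that the arguments of $\gN$ come out directly as $s,u,v$), whereas the paper substitutes $t,x,y\mapsto$ expressions in $S,U,V$ and appeals to genericity at the end. Both set-ups are already present in the proof of Corollary~\ref{coro:non-sep}, and your explicit identification of the denominators $u-1-\gN(u,1)+u\gN(1,1)$, $v-1-\gN(1,v)+v\gN(1,1)$ as the numerators of $X-1$, $Y-1$ is the right key observation.
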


\begin{proof}
By specializing the equation of Proposition~\ref{prop:MN} to $x=1$ and/or
$y=1$, one obtains: 
\beq\label{Mx11y}
\gM(x,1)=1+\gN(S;U,1),\quad \quad \gM(1,y)=1+\gN(S;1,V),\quad \quad 
\gM(1,1)=1+\gN(S;1,1)
\eeq
with $N(s;u,v)\equiv N(q,\nu, s, w,z;u,v)$ and
\beq\label{eq:SUV}
S=t\, M(1,1)^2, \quad U= x\, \frac{M(x,1)}{M(1,1)} \quad \hbox{ and }\quad  V=
y\, \frac{M(1,y)}{M(1,1)}.
\eeq
This allows us to express $\gM(x,y)$  in terms of specializations of $\gN$:
$$
\gM(x,y)=\left(1-\frac{1+\gN(S;1,1)}{(1+\gN(S;U,1))(1+\gN(S;1,V))}
\gN(S;U,V)\right)^{-1},
$$
Now, in the functional equation \eqref{eq:M} defining $M(x,y)$,   we replace  the
indeterminates $t$, $x$, $y$ by rational expressions  of $S$, $U$, $V$
and specializations of $M$,  using~\eqref{eq:SUV}. Then, we use \eqref{Mx11y} and the above
equation to express all occurrences  of $M$ in terms of $N$.
This gives the equation of the corollary, at $(s,u,v)=(S,U,V)$. Given
that $t, x$ and $y$ can be recovered from $S$, $U$ and $V$
(using~\eqref{eq:SUV} and~\eqref{Mx11y})), this equation must hold at
a  \emm generic, point $(s,u,v)$.  
\end{proof}

Given the relation \eqref{eq:Tutte=Potts} between the  Potts and Tutte 
polynomials, one can translate the equation for $N$ 
into an equation for the series 
$$
   \gN\left((\mu-1)(\nu-1),\nu,t,\frac{w}{\nu-1},z;x,y\right)
=\sum_{N\in\mN} t^{\ee(N)}w^{\vv(N)-1}z^{\ff(N)-1}x^{\dv(N)}y^{\df(N)}
\Tpol_N(\mu,\nu).
$$
One thus recovers the equation of~\cite[Thm.~4.2]{Liu:dichromatic-sums},  
obtained by a recursive approach.

\subsection{Properly colored  non-separable near-triangulations}
\label{sec:non-sep-triang}

Let us return to the equation~\eqref{eq-Tutte} on which Tutte
worked for more than 10 years. The series $T(x,y)$ defined by
this equation  is
\beq\label{T-def}
T(x,y)= \sum_{T\in \mT} z^{\ff(T)-1}x^{\dv(T)} y^{\df(T)} \Ppol_T(q,0),
\eeq
where the sum runs over all non-separable \emm near-triangulations, (maps
in which all internal faces have degree 3).
We now explain how~\eqref{eq-Tutte} can be recovered from the
functional  equation~\eqref{eq:Q} defining $Q(x,y)$.

Recall that $\mQ$ is the set of \emm quasi-triangulations,, that is,
rooted maps such that every
internal face is either a digon incident to the root-vertex or a
triangle. Let us say that a face of a map is  \emm simple, if it is
not incident twice to the same vertex. Then a map is non-separable if
and only if each of its faces is simple.
Define the following subsets of $\mQ$:
\begin{enumerate}
\item[--]  $\mR$ consists of non-atomic maps of $\mQ$ whose  root-face
  is simple, 
\item[--] $\mS$ is the set of non-separable maps in $\mQ$. 
\end{enumerate}
We denote by $\gR(x,y)\equiv\gR(q,\nu,t,w,z;x,y)$ and  $\gS(x,y)\equiv\gS(q,\nu,t,w,z;x,y)$ the
 corresponding Potts \gfs, defined by analogy
 with~\eqref{Q-ser-def} (in particular, with the factor $2^{\ddig(\cdot)}$). Clearly,
$$
\mT \subsetneq \mS  \subsetneq \mR  \subsetneq \mQ.
$$ 
In what follows, we first establish a connection between the series
 $\gR(x,y)$ and $\gQ(x,y)$ (Lemma~\ref{lem:QR}), from which we derive a
 functional equation satisfied by $\gR$. Then, we explain that, even
 though the sets $\mS$ and $\mR$ are distinct, \emm the series $\gS$ and
 $\gR$ coincide when $\nu=0$,, that is, when one counts proper
 colorings (Lemma~\ref{lem:RS}).  Finally, we find a simple connection
 between the series $\gS$ and $\gT$  (Lemma~\ref{lem:ST}), from which
 Tutte's equation~\eqref{eq-Tutte} can be derived.

 \begin{Lemma}\label{lem:QR}
The series $\gQ$ and $\gR$ are related as follows:
$$
\gQ(x,y)=1+\frac{\gQ(x,y)}{\gQ(0,y)}\, \gR\left(x,y\gQ(0,y)\right).
$$
 \end{Lemma}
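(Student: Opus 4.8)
The plan is to establish the identity by a combinatorial decomposition of quasi-triangulations based on the structure of the root-face. A map $Q\in\mQ$ is either atomic (contributing the term $1$), or non-atomic. In the latter case, consider the boundary of the root-face. Starting from the root-corner and walking along the root-face, one meets a sequence of maximal ``simple pieces'' glued at cut-vertices: concretely, the root-face of $Q$ is simple if and only if $Q\in\mR$, and in general $Q$ is obtained from a map $R\in\mR\cup\{m_0\}$ whose root-face has some degree $k$, by substituting into each of the $k$ corners of the root-face of $R$ a map of $\mQ$ whose \emph{only} internal structure visible from the outside is again of this type. The cleanest way to organize this is to say: a non-atomic $Q\in\mQ$ decomposes uniquely as a map $R\in\mR$ in which, at each of the $\df(R)$ corners incident to the root-face, one glues a rooted map of $\mQ$ \emph{having no internal digon doubly incident to the root}, i.e.\ a map counted by $\gQ(0,y)$. (The role of $x$, which tracks internal digons incident to the root-vertex, stays attached to $R$, hence the substituted maps must be $x$-free, which is exactly what $\gQ(0,y)$ records; the factor $2^{\ddig}$ is handled consistently because digons doubly incident to the root-vertex only arise in $R$.)

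First I would make this decomposition precise and check it is a bijection: given $Q$, the core $R$ is obtained by ``pruning'' the pendant submaps hanging in the corners of the root-face; conversely, given $R$ and a sequence of $\df(R)$ maps in the relevant class, the gluing reconstructs a unique element of $\mQ$. Then I would track the statistics. Writing $Q$ for the reconstructed map and $(R;M_1,\dots,M_{\df(R)})$ for its image, Euler's relation and the edge count give $\ee(Q)=\ee(R)+\sum_i\ee(M_i)$, $\vv(Q)-1=(\vv(R)-1)+\sum_i(\vv(M_i)-1)$, $\ff(Q)-1=(\ff(R)-1)+\sum_i(\ff(M_i)-1)$, while $\dig(Q)=\dig(R)$, and the degree of the root-face satisfies $\df(Q)=\sum_i\df(M_i)$ (each corner of the root-face of $R$ is replaced by the whole root-face boundary of $M_i$). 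The Potts polynomial multiplies as in~\eqref{eq:Potts-1components}, with one factor $1/q$ per gluing, i.e.\ $\Ppol_Q(q,\nu)=\Ppol_R(q,\nu)\prod_i \Ppol_{M_i}(q,\nu)/q$.

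Assembling these into generating functions, the contribution of a fixed core $R$ with $\df(R)=k$ is
$$
\frac1q\,t^{\ee(R)}w^{\vv(R)-1}z^{\ff(R)-1}x^{\dig(R)}2^{\ddig(R)}\Ppol_R(q,\nu)\cdot\big(\gQ(0,y)\big)^{k},
$$
since each of the $k$ substituted maps contributes a factor $\gQ(0,y)$ in which the exponent of $y$ records $\df(M_i)$ — and these exponents add up to $\df(Q)$. Hence summing over $R\in\mR$, the variable $y$ gets replaced by $y\,\gQ(0,y)$ inside the series of $\mR$, and one extra overall factor must be corrected: the core itself carries a factor $y^{\df(R)}$ in the definition of $\gR$, but after substitution we want $\big(y\gQ(0,y)\big)^{\df(R)}$, which is precisely the effect of evaluating $\gR$ at the second argument $y\gQ(0,y)$; the leftover discrepancy between ``$\gQ$ counts $Q$'' and ``$\gR$ counts $R$ with the substituted pieces'' is the single pendant map attached in the root-corner itself, which accounts for the prefactor $\gQ(x,y)/\gQ(0,y)$ exactly as in the general maps computation of Proposition~\ref{prop:MN}. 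This yields
$$
\gQ(x,y)=1+\frac{\gQ(x,y)}{\gQ(0,y)}\,\gR\big(x,y\gQ(0,y)\big),
$$
as claimed.

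\textbf{Main obstacle.} The routine part is the bookkeeping of $\ee,\vv,\ff$ and the Potts polynomial. The genuinely delicate point — and where I would spend the most care — is the correct handling of internal digons and the factor $2^{\ddig}$ under the decomposition: one must argue that a digon doubly incident to the root-vertex of $Q$ can only live in the core $R$ (never created by the gluing, and never destroyed), so that $\dig(Q)=\dig(R)$ and $\ddig(Q)=\ddig(R)$ hold on the nose, which is what makes the variable $x$ and the weight $2^{\ddig}$ pass transparently to $\gR$ while the substituted maps are forced to be $x$-free (hence $\gQ(0,y)$). Getting the root-corner substitution right — i.e.\ justifying the exact form of the prefactor $\gQ(x,y)/\gQ(0,y)$ rather than, say, $\gQ(x,y)$ or $1/\gQ(0,y)$ alone — is the second subtle point, and I would verify it by checking the equation to low order in $t$.
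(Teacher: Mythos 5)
Your decomposition — a non-atomic $Q\in\mQ$ as a core $R\in\mR$ with pendant quasi-triangulations attached in the $\df(R)$ corners of the root-face of $R$ — is exactly the paper's. But two bookkeeping claims you make are wrong, and they are precisely the ones that produce the formula; as written, the correct prefactor $\gQ(x,y)/\gQ(0,y)$ and the substitution $y\mapsto y\gQ(0,y)$ only appear through an unargued appeal to analogy with Proposition~\ref{prop:MN}, after your own explicit computations have led somewhere else.

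First, $\df(Q)$ is not $\sum_i\df(M_i)$: the root-face of $Q$ still traverses the $\df(R)$ edge-sides bounding the root-face of $R$, in addition to the root-face boundaries of the $M_i$, so $\df(Q)=\df(R)+\sum_i\df(M_i)$. Your contribution $\gQ(0,y)^{k}$ for a core of root-face degree $k$ carries no factor $y^{\df(R)}$, and taking that at face value would give $y\mapsto\gQ(0,y)$, not $y\mapsto y\gQ(0,y)$. Second, the digon bookkeeping you flag as the delicate point — $\dig(Q)=\dig(R)$, $\ddig(Q)=\ddig(R)$, hence all pendants $x$-free and counted by $\gQ(0,y)$ — is false, and is in fact the opposite of the correct statement. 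The pendant $Q_1$ attached at the root-corner of $R$ shares its root-vertex with $Q$; since every internal digon of a quasi-triangulation is incident to its root-vertex, an internal digon of $Q_1$ is an internal digon of $Q$ incident to the root-vertex of $Q$. Thus $Q_1$ is an \emph{unrestricted} quasi-triangulation, $\dig(Q)=\dig(R)+\dig(Q_1)$, $\ddig(Q)=\ddig(R)+\ddig(Q_1)$, and $Q_1$ contributes a factor $\gQ(x,y)$. It is only $Q_2,\dots,Q_{\df(R)}$ — whose root-vertices are \emph{not} the root-vertex of $Q$ — that are forced to have no internal digons (otherwise $Q$ would have an internal digon away from its root-vertex and fail to be a quasi-triangulation), and these contribute $\gQ(0,y)$. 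Putting the two corrections together, each core $R$ contributes its $y$-free weight times $y^{\df(R)}\gQ(x,y)\gQ(0,y)^{\df(R)-1}$, and summing over $R\in\mR$ yields the prefactor $\gQ(x,y)/\gQ(0,y)$ and the substituted argument $y\gQ(0,y)$ directly, with no analogy needed.
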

 \begin{proof}
  Any  non-atomic map in $\mQ$ decomposes into a map $R$
in $\mR$ containing the root-edge, together with some rooted maps in
$\mQ$ attached to 
the  corners of the root-face of $R$. This
decomposition is illustrated in Figure~\ref{fig:R-core}.  It induces a
bijection between non-atomic maps in $\mQ$ and pairs made of a
non-separable map $R$ in $\mR$ and a ordered sequence
$Q_1,\ldots,Q_{\df(R)}$ of  
maps in $\mQ$ such that $Q_2,\ldots,Q_{\df(R)}$
have no internal digons 
(by convention, $Q_1$ is the map attached to the root corner of $R$). 
This bijection translates into the equation of the lemma.
\end{proof}

\begin{figure}[ht!]\begin{center} \input{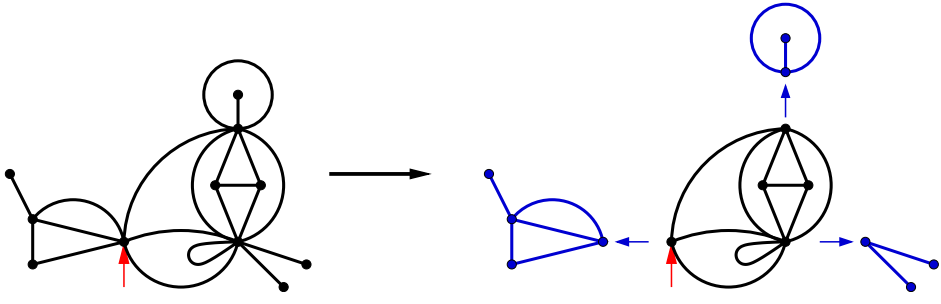}\caption{Decomposition of a map in $\mQ$ 
as a map in $\mR$ (containing the
root-edge) to which are attached some maps in~$\mQ$.}\label{fig:R-core} \end{center}\end{figure} 

One can combine  this result with the functional
equation~\eqref{eq:Q} defining   $\gQ(x,y)$ to obtain  a
functional equation for the series $\gR(x,y)$. It suffices to express
the ingredients of~\eqref{eq:Q}, namely, $\gQ(x,y)$, $\gQ(0,y)$,
$Q_1(x)$, $Q_2(y)$ and $y$, in terms of $Y:=y\gQ(0,y)$ and of 
specializations of $\gR$. 
Setting $x=0$ in Lemma~\ref{lem:QR}  
gives $\gQ(0,y)=1+\gR(0,Y)$. Thus we can now express $\gQ(x,y)$,
$\gQ(0,y)$ and $y$ in terms of $Y$ and specializations of $\gR$. 
Moreover, expanding the equation of Lemma~\ref{lem:QR}   around
$y=0$, and using the obvious relations $\gQ(0,0)=1$,  $\gQ(x,0)=1$, 
$\gR(x,0)=0$, gives
$$
\gQ_1(x)= \gR_1(x) \quad \hbox{and
} \quad \gQ_2(x)=\gR_2(x)+\gR_1(x)^2.
$$
We now replace in~\eqref{eq:Q} the terms $\gQ(x,y), \gQ(0,y), Q_1(x),
Q_2(y)$ and $y$ by their expressions in terms of $Y$ and $\gR$. 
This gives the following equation for $\gR$:
\begin{eqnarray}
  R(x,Y)&=&
{Y}^{2}wt ( 1-x\nu zt ) q+ ( \nu-1 ) wt{Y}^{2}
+xzt ( 1+\nu-x\nu zt ) R ( x,Y ) 
+xztY ( 1-x\nu zt ) R_1 ( x ) 
\nonumber \\
&+&zt ( 1-x\nu zt ) R_1 ( x ) R ( x,Y ) 
+ ( \nu-1 ) Yzt ( 1-x\nu zt )  
\left( 2\,xR_1 ( x ) +R_2 ( x ) +  R_1 ( x ) ^{2} \right) 
%
\nonumber\\
&+&zt ( 1-x\nu zt )  ( 1+R ( 0,Y )  ) {\frac {  R ( x,Y ) -YR_1 ( x )   }{Y}}
+ ( \nu-1 ) wtY {\frac {  R ( x,Y ) -R ( 0,Y )  }{x}}.\nonumber
\end{eqnarray}
Given that $Y=y\gQ(x,y)$, this equation actually holds for
a \emm {generic}, value of $Y$.

\begin{Lemma}\label{lem:RS}
The following identity holds:
$$
\gS(q,0,t,w,z;x,y)=\gR(q,0,t,w,z;x,y).
$$
Moreover, this series reads
$$
\gS(q,0,t,w,z;x,y)=\sum_{S\in  \mS}t^{\ee(S)}w^{\vv(S)-1}z^{\ff(S)-1}x^{\dig(S)}y^{\df(S)}\Ppol_S(q,0). 
$$
\end{Lemma}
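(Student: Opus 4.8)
The statement has two parts: first, that $\gS = \gR$ when $\nu = 0$; second, an explicit monomial expansion of this common series over the set $\mS$ of non-separable quasi-triangulations (note the absence of the factor $2^{\ddig(\cdot)}$, which is the real content of the second assertion). The plan is to prove the second part first, then deduce the first.

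For the second part, the key observation is that a \emph{non-separable} map has no internal digon that is doubly incident to the root-vertex: if a digon were incident twice to the root-vertex, that face would be non-simple, contradicting non-separability (recall the criterion stated just before: a map is non-separable iff every face is simple). Hence $\ddig(S) = 0$ for every $S \in \mS$, so $2^{\ddig(S)} = 1$, and the defining series of $\gS$ (built by analogy with~\eqref{Q-ser-def}, hence carrying a priori the factor $2^{\ddig(\cdot)}$) collapses to the stated sum without that factor. This step is essentially immediate once the non-separability criterion is invoked.

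For the first part, I would argue that $\mS$ and $\mR$ carry the same colorings when $\nu = 0$, i.e. that the maps in $\mR \setminus \mS$ contribute $0$ to the Potts polynomial $\Ppol(q,0) = \chi(q)$. A map $R \in \mR$ is non-atomic, has simple root-face, and every internal face is a triangle or a digon incident to the root-vertex. If $R$ is not non-separable, then some \emph{internal} face of $R$ is non-simple. An internal triangle that is non-simple must have two of its three vertices identified, hence carries a loop or a repeated edge; more precisely, a non-simple triangle forces the existence of a monochromatic edge in \emph{every} proper coloring, so $\chi_R(q) = 0$. The remaining case is a non-simple internal digon: a digon incident twice to the same vertex is a pair of parallel edges forming a loop-like bigon, again forcing $\chi_R(q) = 0$ (a digon that is non-simple has its two vertices equal, so it is a pair of loops). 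Thus only the non-separable maps in $\mR$ survive at $\nu = 0$, giving $\gS = \gR$ there. The main obstacle, and the step to handle carefully, is making the case analysis on "non-simple internal face" airtight: one must check that a non-simple face of $R$ — given that $R$'s only internal faces are triangles and root-incident digons, and its root-face is simple — genuinely forces two \emph{adjacent} vertices (endpoints of a single edge) to coincide, which is what kills the chromatic polynomial; the subtlety is excluding configurations where a face is non-simple only through a "pinch" at a vertex that is not the endpoint of one of its own bounding edges, but for triangles and digons this cannot happen since every pair of vertices on the face boundary is joined by an edge of the face.

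I would then conclude by combining the two parts: $\gS(q,0,t,w,z;x,y) = \gR(q,0,t,w,z;x,y)$ by the chromatic-vanishing argument, and this series equals the displayed sum over $\mS$ because the factor $2^{\ddig(S)}$ is trivial on $\mS$. One cross-check worth recording is that the functional equation for $\gR$ derived just above (specialized to $\nu = 0$) then becomes a functional equation for $\gS$, which should be consistent with the later reduction to Tutte's equation~\eqref{eq-Tutte} via Lemma~\ref{lem:ST}; this is not needed for the proof but confirms the identification is the right one.
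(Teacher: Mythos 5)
Your proof is correct and follows essentially the same approach as the paper: you invoke the non-separability criterion (every face simple), observe that a non-simple internal face of degree $2$ or $3$ forces a loop (hence $\Ppol_R(q,0)=\chi_R(q)=0$ for $R\in\mR\setminus\mS$), and note that $\ddig(S)=0$ for $S\in\mS$ to drop the factor $2^{\ddig(\cdot)}$. You are more explicit than the paper about why low-degree non-simple faces must contain a loop (the paper states this without justification), but the argument is the same; the only minor wobble is the phrase ``loop or a repeated edge,'' since a repeated edge alone does not kill the chromatic polynomial, though you then correctly pin the vanishing on the loop.
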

\begin{proof}
 Any map $R$ in $\mR\setminus \mS$ has a  non-simple \emm internal,
 face. This face has degree 2 or 3. Thus $R$ has a loop and
 $\Ppol_R(q,0)=0$. This proves the first identity.

Now a map of $\mS$, being non-separable, cannot contain digons that
are doubly-incident to the root. 
Thus the term $2^{\ddig(S)}$ can be removed from the description of
$\gS$ (for any value of $\nu$).
\end{proof}

\begin{Lemma}
\label{lem:ST}
The series $\gS(x,y)$ and $\gT(x,y)$ are related by
$$
\gS(q, 0, 1,1,z;x,y)= \frac 1 q\, \gT\left(\frac{1}{1-xz},y\right). 
$$
\end{Lemma}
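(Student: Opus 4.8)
The plan is to show that the series $\gS(q,0,1,1,z;x,y)$ counting properly $q$-colored non-separable quasi-triangulations, and $\frac1q \gT(\frac1{1-xz},y)$, count essentially the same objects, by setting up a bijection (or at least a generating-function identity) between the combinatorial families underlying the two series. Recall that by definition~\eqref{T-def}, $\gT(x,y)$ is the sum over non-separable near-triangulations $T$ of $z^{\ff(T)-1}x^{\dv(T)}y^{\df(T)}\Ppol_T(q,0)$, while by Lemma~\ref{lem:RS}, $\gS(q,0,t,w,z;x,y)=\sum_{S\in\mS}t^{\ee(S)}w^{\vv(S)-1}z^{\ff(S)-1}x^{\dig(S)}y^{\df(S)}\Ppol_S(q,0)$, where $\mS$ is the set of non-separable quasi-triangulations. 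At $w=t=1$ this becomes $\sum_{S\in\mS}z^{\ff(S)-1}x^{\dig(S)}y^{\df(S)}\Ppol_S(q,0)$. So the first step is to understand the difference between $\mS$ (non-separable quasi-triangulations, which may have internal digons incident to the root-vertex) and the set $\mT$ of non-separable near-triangulations (no digons at all), and to see that the variable $x$ — which marks $\dv$ in $\gT$ but marks the number of internal digons $\dig$ in $\gS$ — is precisely what accounts for this.

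The key step is the following structural observation: a non-separable quasi-triangulation $S$ is obtained from a non-separable near-triangulation $T$ by ``inflating'' the root-vertex, i.e. replacing the root-vertex of $T$ (of degree $d=\dv(T)$) by a chain of digons. More precisely, I would argue that a map $S\in\mS$ is determined by its ``digon-free core'' $T\in\mT$ together with the data of how the internal digons are inserted around the root-vertex; since each internal digon of $S$ is incident to the root-vertex and $S$ is non-separable, the digons form a nested/linear structure sitting in the corners of the root-vertex of $T$. Counting these insertions: if the root-vertex of $T$ has degree $d$, then there are $d$ corners at the root-vertex, and the number of ways to distribute internal digons is captured by the generating function $\sum_{k\ge0}(\text{ways to place }k\text{ digons in }d\text{ corners})\,x^k$. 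Because the digons are incident to the root-vertex on both sides and the map stays non-separable, the combinatorics should reduce to $\bigl(\tfrac1{1-xz}\bigr)^{d}$ or a similar clean factor — here the $z$ tracks the extra face created by each digon and the $x$ tracks the digon itself. Substituting $\dv(T)=d$, this replaces $x^{\dv(T)}$ in $\gT$ by $\bigl(\tfrac1{1-xz}\bigr)^{\dv(T)}$, which is exactly the substitution $x\mapsto\tfrac1{1-xz}$ in the variable conjugate to $\dv$. The factor $\frac1q$ on the right should come from the normalization: $\gT$ does not divide by $q$ (see~\eqref{T-def}, where $\Ppol_T(q,0)$ appears without a $\frac1q$), whereas $\gS$, following the convention~\eqref{Q-ser-def}, carries a $\frac1q$; and since $\Ppol_T(q,0)=\Ppol_S(q,0)$ under the inflation (inserting a digon and a new vertex multiplies the chromatic polynomial by a factor that, by~\eqref{eq:Potts-1components} at $\nu=0$, works out consistently — one must check the bookkeeping of vertices and the $\frac1q$'s carefully), the counts match up to this single overall $\frac1q$.

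Concretely, I would carry out the proof as follows. First, fix $w=t=1$ in $\gS$ as in the statement. Second, establish the inflation bijection $\mS\leftrightarrow\{(T,\text{digon data})\}$ rigorously: start from $S\in\mS$, delete all internal digons (each deletion merges two parallel edges into one), verify the result is a non-separable near-triangulation $T$, and verify the inverse operation; the main point to check is that non-separability of $S$ forces the digons to be ``stacked'' in a controlled way at the root-vertex. Third, compute the generating function contribution of the digon data for a fixed $T$ with $\dv(T)=d$: show it equals $\bigl(1/(1-xz)\bigr)^d$ (tracking $\dig$ by $x$, and the $\ff$-increment — hence the $z$ — correctly; note $\ff(S)-1$ vs $\ff(T)-1$ differ by the number of internal digons). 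Fourth, check that $\Ppol_S(q,0)$ relates to $\Ppol_T(q,0)$ and that the vertex/face counts interact with the $z$'s and the $\frac1q$ as claimed. Fifth, sum over $S\in\mS$ by summing first over the digon data and then over $T\in\mT$, obtaining $\sum_{T\in\mT}z^{\ff(T)-1}\bigl(\tfrac1{1-xz}\bigr)^{\dv(T)}y^{\df(T)}\Ppol_T(q,0)$, times the normalization, which is exactly $\frac1q\gT(\tfrac1{1-xz},y)$.

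The main obstacle I expect is the precise bookkeeping in the inflation step: getting the exponent of $z$ right (each inserted digon adds one face and one vertex, and one must make sure $z^{\ff(S)-1}$ matches $z^{\ff(T)-1}$ times the $z$'s absorbed into the $1/(1-xz)$ factors), and confirming that the chromatic-polynomial factors and the $\frac1q$ normalizations in the two different conventions — $\gT$ in~\eqref{eq-Tutte}/\eqref{T-def} versus $\gQ,\gS$ in~\eqref{Q-ser-def} — combine to give exactly one overall $\frac1q$. A secondary subtlety is verifying that non-separability is preserved in both directions of the bijection: deleting an internal digon of a non-separable map could a priori create a separating vertex, so one needs the hypothesis that all internal faces are digons-at-the-root or triangles to rule this out. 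Once these local checks are done, the generating-function manipulation is routine.
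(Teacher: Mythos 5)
Your proposal is correct and takes essentially the same route as the paper: both rest on the bijection between a non-separable quasi-triangulation $S$ and the pair (non-separable near-triangulation $T$, digon data at the root-vertex), with the per-edge generating-function factor $1/(1-xz)$ and the observation that proper colorings are unchanged. Two small inaccuracies in your write-up, neither fatal: the digons do not ``sit in corners'' but rather arise by opening each edge incident to the root-vertex into a chain of $k\ge 1$ parallel edges (this is also what makes non-separability of $S$ automatic, since $S$ has no loops so no digon is doubly incident to the root); and inserting a digon does \emph{not} add a vertex — it only adds a face — which is exactly why $\Ppol_S(q,0)=\Ppol_T(q,0)$ holds cleanly and why only $z$, not $w$, appears in the $1/(1-xz)$ factor.
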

\begin{proof}
The set $\mT$ of non-separable near-triangulations coincides with the
set of maps in $\mS$ having no internal digon. Recall that maps of
$\mS$ have no digon doubly incident to the root. Thus, one obtains a map
in $\mT$ by taking a map in  
$\mS$ and \emph{closing} all internal digons
(that is, by identifying the two edges incident to the
digon). Conversely, any map in $\mS$ is obtained from a map in $\mT$
by \emph{opening} each of the edges incident to the root vertex into a
sequence of parallel edges $e_1,\ldots,e_k$ for $k\geq 1$, such that
$e_i$ and $e_{i+1}$ are edges incident to a common digon for all
$i=1,\ldots,k-1$.  The chromatic polynomial is unchanged by
the opening and closing of digons. In view of~\eqref{T-def} and
Lemma~\ref{lem:RS}, this gives the equation of the
lemma (the factor $q$ comes from the fact that Tutte's series $\gT$
weights maps by $\Ppol(q, 0)$ rather than $\Ppol(q, 0)/q$).
\end{proof}

We can now recover Tutte's equation for $T(x,y)$. Set $\nu=0$ and $t=w=1$
in the equation found for $R$ above. 
As maps of outer degree 1 have a loop,
$\R_1(x)=0$ when $\nu=0$. By Lemma~\ref{lem:RS}, we can safely replace $R$ by
$S$.  Finally, let us replace  $x$ by
$(1-1/x)/z$. By Lemma~\ref{lem:ST}, $S(x,y)$ becomes $T(x,y)/q$, $S(0,y)$ becomes $T(1,y)/q$,  and
$S_2(x)$ becomes $\gT_2(x)/q$.  This gives Tutte's
equation~\eqref{eq-Tutte}. As the saying goes, \emm la boucle est boucl\'ee.,


\bigskip
\noindent
{\bf Acknowledgements.} We are grateful to 
Mark van Hoeij for his help with the
{\tt parametrization} function of {\sc Maple}, and to Bruno Salvy,
whose efforts led to Conjecture~\ref{conj}.  We also thank Bertrand
Eynard for his patience in explaining us the methods of~\cite{eynard-bonnet-potts,Eynard-Kristjansen:On-model1}.


\bibliographystyle{plain}
\bibliography{coloured.bib}

\end{document}